\documentclass[a4paper,10pt,reqno]{amsart}

\usepackage{a4wide,color,array}
\usepackage{cite}
\usepackage{hyperref}
\usepackage{amsmath,amssymb,amsthm,color}
\hypersetup{linktocpage,colorlinks,linkcolor={red},citecolor={blue}}
\usepackage[english]{babel}
\usepackage[T1]{fontenc}
\usepackage[utf8]{inputenc}
\usepackage{dsfont}
\usepackage{enumitem}
\usepackage{marginnote}%
\usepackage[a4paper,left=2.6cm,right=2.6cm,top=3cm,bottom=3.5cm]{geometry}%
\setlength{\marginparwidth}{2.6cm}

\newcommand{\RomanNumeralCaps}[1]
    {\MakeUppercase{\romannumeral #1}}

\usepackage{accents}

\usepackage{xspace}
\usepackage{bm}				
\usepackage{bbm,upgreek}		
\usepackage[e]{esvect}		
\usepackage{esint}			
\usepackage{etoolbox}			
\usepackage{calc}				
\usepackage{eso-pic}			
\usepackage{datetime}			

\usepackage{lmodern}
\numberwithin{equation}{section}

\usepackage{enumitem}
\setlist{nosep}
\setlist{noitemsep}

\newtheorem{theorem}{Theorem}%

\newtheorem{proposition}{Proposition}[section]
\newtheorem{lemma}[proposition]{Lemma}%
\newtheorem{corollary}{Corollary}%

\newtheorem{remark}{Remark}[section]%

\newtheorem{assumption}[remark]{Assumption}

\numberwithin{equation}{section}%

\newcommand{\dQ}{\mathbb{Q}}%
\newcommand{\dR}{\mathbb{R}}%

\newcommand{\dT}{\mathbb{T}}
\newcommand{\ve}{\varepsilon}

\newcommand{\dP}{\mathbb{P}}%
\newcommand{\dE}{\mathbb{E}}%
\newcommand{\Var}{\mathrm{Var}}%
\newcommand{\Cov}{\mathrm{Cov} }%


\newcommand{\Error}{\mathrm{Error}}
\newcommand{\supp}{\mathrm{supp}}
\newcommand{\sgn}{\mathrm{sgn}}

\newcommand{\mc}{\mathcal}
\newcommand{\hN}{\frac{N}{2}}

\newcommand{\dive}{\mathrm{div}}
\newcommand{\Id}{\mathrm{Id}}
\newcommand{\diag}{\mathrm{diag}}
\newcommand{\dd}{\mathrm{d}}

\newcommand{\Gap}{\mathrm{Gap}}
\newcommand{\Hc}{\mathcal{H}}%

\newcommand{\fluct}{\mathrm{fluct}}
\newcommand{\Fluct}{\mathrm{Fluct}}

\newcommand{\dGi}{\dP_{N,\beta}}

\newcommand{\Ent}{\mathrm{Ent}}
\newcommand{\dGiQ}{\dQ_{N,\beta}}

\newcommand{\dV}{\mathrm{V}}
\newcommand{\dW}{\mathrm{W}}

\newcommand{\reg}{\mathrm{reg}}

\newcommand{\FF}{\mathrm{F}}

\newcommand{\dB}{\mathrm{B}}

\newcommand{\cotan}{\mathrm{cotan}}

\newcommand{\Riesz}{\mathrm{Riesz}}
\newcommand{\Sine}{\mathrm{Sine}}
\def\smallint{\begingroup\textstyle \int\endgroup}
\newcommand{\TV}{\mathrm{TV}}

\newcommand{\ogap}{\overline{\mathrm{Gap}}}
\newcommand{\Loop}{\mathrm{Loop}}
\newcommand{\Diag}{\mathrm{Diag}}
\newcommand{\wN}[1]{w_N^{(#1)}} 
\newcommand{\wo}{\wN{1}}  
\newcommand{\wt}{\wN{2}}  
\newcommand{\gap}{\mathrm{gap}}
\newcommand{\Anch}{\mathrm{Anch}}
\newcommand{\dBL}{\mathrm{d}_{\mathrm{BL}}}

\begin{document}
\title{Optimal local laws and CLT for the circular Riesz gas}
\author{Jeanne Boursier}
\address{
(JB) Department of Mathematics, Columbia University}

\begin{abstract}
We study the long-range one-dimensional Riesz gas on the circle, a continuous system of particles interacting through a Riesz kernel. We establish near-optimal rigidity estimates on gaps valid at any scale. 
Leveraging these local laws together with Stein's method, we prove a quantitative Central Limit Theorem for linear statistics. The proof is based on a mean-field transport and a fine analysis of the fluctuations of local error terms using various convexity and monotonicity arguments. Using a comparison principle for the Helffer-Sjöstrand equation, the method can handle very singular test functions, including characteristic functions of intervals.
\end{abstract}

\maketitle

\setcounter{tocdepth}{1}
\tableofcontents

\section{Introduction}

\subsection{Setting of the problem}
In this paper, we study the one-dimensional Riesz gas on the circle. We denote $\dT:=\dR/\mathbb{Z}$. For a parameter $s\in (0,1)$, let us consider the Riesz $s$-kernel on $\dT$, defined by 
\begin{equation}\label{eq:formula claim}
    g(x)=\lim_{n\to\infty}\left(\sum_{k=-n}^{n}\frac{1}{|k+x|^s}-\frac{2}{1-s}n^{1-s}\right)=\zeta(s,x)+\zeta(s,1-x),
\end{equation}
where $\zeta(s,x)$ stands for the Hurwitz zeta function \cite{berndt}. Note that $g$ is the fundamental solution of the fractional Laplace equation on the circle
\begin{equation}\label{eq:frac laplace}
\begin{cases}
    (-\Delta)^{\frac{1-s}{2}}g=c_s(\delta_0-1)\\
    \int g=0,
\end{cases}
\end{equation}
with $c_s$ given by
\begin{equation}\label{eq:defcs}
    c_s=\frac{\Gamma(\frac{1-s}{2})}{\Gamma(\frac{s}{2})}\frac{\sqrt{\pi}}{2^{1-s}}.
\end{equation}
We endow $\dT$ with the natural order $x<y$ if $x=x'+k$, $y=y'+k'$ with $k, k'\in \mathbb{Z}$, $x',y'\in [0,1)$ and $x'<y'$ and work on the set of ordered configurations $$D_N=\{X_N=(x_1,\ldots,x_N)\in \dT^N:x_2-x_1<\cdots <x_N-x_1\}.$$ On $D_N$ let us consider the energy
\begin{equation}\label{eq:deff energy}
    \Hc_N:X_N\in D_N\mapsto N^{-s}\sum_{i\neq j}g(x_i-x_j),
\end{equation}
where $g$ is given by \eqref{eq:formula claim}. The circular Riesz gas, at the inverse temperature $\beta>0$, is defined by the Gibbs measure
\begin{equation*}
    \dd\dP_{N,\beta}=\frac{1}{Z_{N,\beta}}\exp(-\beta \Hc_N(X_N) )\mathds{1}_{D_N}(X_N)\dd X_N,
\end{equation*}
where $Z_{N,\beta}$ is the normalization constant, called \emph{partition function}, given by
\begin{equation*}
    Z_{N,\beta}=\int_{D_N}e^{-\beta \Hc_N(X_N)}\dd X_N.
\end{equation*}
Throughout the paper, $s$ is a fixed parameter in $(0,1)$.


The choice of normalization in the definition of the energy (\ref{eq:deff energy}) is a natural choice because it makes $\beta$ the effective inverse temperature governing the microscopic scale behavior. 

The model described above belongs to a family of interacting particle systems called \emph{Riesz gases}. On $\dR^{d}$, these are associated to a kernel of the form $|x|^{-s}$ with $s>0$. The Riesz family also contains in dimensions $1$ and $2$ the so-called \emph{log-gases} with kernel $-\log|x|$. For $d\geq 3$ and $s=d-2$, $|x|^{-s}$ is the fundamental solution of the Laplace equation on $\dR^d$ and therefore corresponds to the Coulomb interaction. The parameter $s$ determines the singularity as well as the \emph{range} of the interaction. When $s>d$, the interaction is short-range and the system, referred to as \emph{hypersingular Riesz gas}, resembles a nearest-neighbor model. For $s\in (0,d)$ or $s=0$ and $d=1, 2$, Riesz gases are long-range particle systems, which, as such, have attracted much attention both in mathematical and physical contexts. 

The 1D log-gas, also called the $\beta$-ensemble, has been extensively studied in the last few decades, partly because of its connection to random matrix theory. Indeed, it corresponds, in the cases $\beta \in \{1,2,4\}$, to the distribution of the eigenvalues of $N\times N$ symmetric/Hermitian/symplectic random matrices with independent Gaussian entries (see the original paper of Dyson \cite{MR148397}). The 1D log-gas also appears in many other contexts, such as zeros of random polynomials, zeros of the Riemann zeta function, and is conjectured to be related to the eigenvalues of random Schr\"odinger operators \cite{lewin2022coulomb}. The 2D Coulomb gas is another fundamental model. Among many other examples, it is connected to non-Hermitian random matrices, Ginzburg-Landau vortices, Fekete points, complex geometry, the XY model, and the KT transition \cite{serfaty2018systems}. For other values of $s$, let us mention that the case $s=2$ in dimension $1$ is an integrable system, called \emph{classical Calogero-Sutherland model}. The study of minimizers of Riesz interactions is also a dynamic topic \cite{hardin2005minimal,chafaï2021solution} and is the subject of long-standing conjectures related to sphere packing problems \cite{blanc:hal-01139322}. From a statistical physics perspective, even in dimension $1$, the Riesz gas is not fully elucidated, since the classical theory of the 60-70s \cite{ruelle1974statistical} cannot be applied due to the long-range nature of the interaction. The reader may refer to the nice review \cite{lewin2022coulomb}, where an account of the literature and many open problems on Riesz gases are given.

As the number of particles $N$ tends to infinity, the empirical measure
\begin{equation*}
    \mu_N:=\frac{1}{N}\sum_{i=1}^N \delta_{x_i}
\end{equation*}
converges almost surely under $\dGi$ (in a suitable topology) to the uniform measure on the circle. This result can be obtained through standard large deviations techniques (see, for instance, \cite[Ch.~2]{2014arXiv1403.6860S} for the case of Riesz gases on the real line, which adapts readily to the periodic setting). In the large $N$ limit, particles tend to spread uniformly in the circle, which suggests that particles spacing (or gaps) $N(x_{i+k}-x_i)$ concentrate around the value $k$. The first goal of this paper is to quantify the fluctuations of the gaps around their mean. We establish the optimal size of the fluctuations of $N(x_{i+k}-x_i)$, which turns out to be in $O(\beta^{-\frac{1}{2}}k^{\frac{s}{2}})$, as conjectured in the recent physics paper \cite{santra2021gap}. This type of result is referred to in the literature as a rigidity estimate. It was intensively investigated for $\beta$-ensembles (see, for instance, \cite{bourgade2012bulk,bourgade2014universality,bourgade2014edge,bourgade2022optimal}), but the correct observable in that case is $x_i-\gamma_i$, where $x_i$ is the $i$-th particle and $\gamma_i$ the classical location of the $i$-th particle, that is, the corresponding quantile of the equilibrium measure arising in the mean-field limit.

A complementary way to study the rigidity of the system is to investigate the fluctuations of linear statistics of the form
\begin{equation}\label{eqintro:Fluct}
    \Fluct_N[\xi(\ell_N^{-1}\cdot )]:=\sum_{i=1}^N \xi(\ell_N^{-1}x_i)-N\ell_N \int_{\dT}\xi,
\end{equation}
where $\xi:\dT\to \dR$ is a given measurable test function and $(\ell_N)$ a sequence of numbers in $(0,1]$. For smooth test functions, many central limit theorem (CLT) results are available in the literature on 1D-log gases, including \cite{johansson1998fluctuations,shcherbina2013fluctuations,borot2013asymptotic,bourgade2012bulk,bourgade2014universality,bekerman2018clt,bourgade2022optimal,hardy2021clt}. For 1D Riesz gases with $s\in (0,1)$, to our knowledge, no prior results on CLT for linear statistics are known. In this paper, we obtain a \emph{quantitative} CLT for (\ref{eqintro:Fluct}), which is valid at all scales $(\ell_N)$ down to microscopic scales $\ell_N\gg \frac{1}{N}$. A major direction in random matrix theory is to establish CLTs for (\ref{eqintro:Fluct}) allowing for test functions which are \emph{as singular as possible}. Indeed, it is a natural question to capture the fluctuations of the number of points and of the logarithmic potential, which are key observables for the log-gas. The question of optimal regularity of the test function has also drawn some interest because it encapsulates non-universality phenomena in the context of Wigner matrices. One of the goals of this paper is to provide a robust method that allows singular test functions to be treated in a systematic way. The main question we investigate therefore is \emph{regularity issue}. Using new concentration inequalities, we can treat singular test functions, including characteristic functions of intervals and inverse power functions up to the critical power $\frac{s}{2}$. In particular, we obtain a CLT for the number of points, which extends to a Riesz setting some of the recent results of \cite{bourgade2022optimal}.

Let us now introduce the main tools and objects used in the proofs. For any reasonable Gibbs measure on $D_N$ (or $\dR^N$), the fluctuations of any (smooth) statistic $F:D_N\to \dR$ are related to the properties of a partial differential equation called the Helffer-Sjöstrand (H.-S.) equation, which is sometimes referred to as a Witten Laplacian (on $1$-forms). This equation appears in \cite{sjostrandpotential,sjostrandpot2,helffer1994correlation}. It is more extensively studied in \cite{HELFFER1998571,helffer1998remarks,naddaf1997homogenization}, where it is used to establish correlation decay, uniqueness of the limiting measure, and log-Sobolev inequalities for models with convex interactions. The purpose of the present paper is to show how the analysis of Helffer-Sjöstrand equations provides powerful tools to study the fluctuations of linear statistics with singular test functions.

The proof of the near-optimal rigidity is essentially similar to \cite{bourgade2014universality}. It exploits the convexity of the interaction and is thus very specific to 1D systems. The method is mainly based on a concentration inequality for divergence-free functions and on a key convexity result due to Brascamp and Lieb \cite{Brascamp2002}.

The method of proof of the CLT for linear statistics starts with performing the mean-field transport argument usually attributed to Johansson \cite{johansson1998fluctuations}. When studying the Laplace transform of linear statistics $\Fluct_N[\xi]$, this consists in applying a well-chosen change of variables on each point, depending only on its position, to transport the uniform measure on the circle to the perturbed equilibrium measure (perturbed by the effect of adding $t\Fluct_N[\xi]$ to the energy). In this paper, variances are computed instead of Laplace transforms and the implementation of the transport of \cite{johansson1998fluctuations} takes the form of an integration by parts. This argument is a variation of the so-called loop equations (see \cite{bourgade2014universality} for various comments on this topic). It is the starting point of many CLTs in Coulomb gases and $\beta$-ensembles, but received a more systematic analysis in the series of works \cite{leble2018fluctuations,bekerman2018clt,leble2018clt,leble2020local,serfaty2020gaussian}. One can also interpret this transport as a mean-field approximation of the H.-S. equation associated with (the gradient of) linear statistics.

Since the transport is an approximate solution (a mean-field approximation) of the H.-S. equation, it creates an error term, sometimes called a loop equation term, which is essentially a local weighted energy and the heart of the problem is to estimate its fluctuations. In contrast, in \cite{leble2018fluctuations,bekerman2018clt,leble2018clt,serfaty2020gaussian} the typical size of this error term is evaluated in the sense of large deviation,rather than the size of its fluctuations. However, we point out that the latter seems intractable in dimension $d\geq 2$ because of the lack of convexity. The core of this paper is about the control of the fluctuations of this loop equation term through the analysis of the related H.-S. equation. Our proof is based on the Brascamp-Lieb inequality, the near-optimal rigidity estimates on gaps and nearest-neighbor distances, and a comparison principle for the Helffer-Sjöstrand equation.

The use of the comparison principle mentioned above (also known in \cite{cartier1974inegalites,helffer1994correlation}) is one of the main novelties of the article. This is the key technical tool for treating linear statistics with singular test functions. Indeed, after performing loop equation techniques, we will study singular local quantities for which standard concentration inequalities, such as the Brascamp-Lieb or log-Sobolev inequalities, do not give the right order of fluctuations.

The central limit theorem is then obtained from a rather straightforward application of Stein's method. We show how the mean-field transport naturally leads to an approximate Gaussian integration by parts formula. As a result, quantifying normality reduces to controlling the variance of the loop equation term.

\subsection{Main results}

Throughout, we fix a long-range parameter \(s\in(0,1)\).

\subsubsection{Rigidity of gaps and local particle numbers}

Our first theorem extends the rigidity estimate of \cite[Thm.~3.1]{bourgade2014universality} from the log gas to the one-dimensional Riesz gas.

\begin{theorem}[Near-optimal rigidity]\label{theorem:almost optimal rigidity}
For $\ve>0$ set $\delta(\ve):=\frac{\ve}{4(s+2)}$. Then, there exists \(\ve_{0}>0\) such that for every \(\ve\in(0,\ve_{0})\) the following holds. There exist constants $c>0$ and $C>0$ depending on $\beta$, $s$ and $\ve$ such that:
\begin{enumerate}
\item (Gap fluctuations) For every \(i\in\{1,\ldots,N\}\) and $k\in \{1,\ldots,\lfloor \tfrac{N}{2}\rfloor\}$,
\begin{equation}\label{statement:gaps}
\dGi\!\bigl(\,|N(x_{i+k}-x_{i})-k|\ge k^{\frac{s}{2}+\ve}\bigr)\leq 
Ce^{-ck^{\delta(\ve)}}.
\end{equation}
\item (Discrepancy in an interval) For every \(a\in\dT\) and \(\ell_{N}\in(0,\tfrac14]\),
\begin{equation}\label{statement:dis}
\dGi\!\left(\Bigl|\sum_{i=1}^{N}\mathds{1}_{(a-\ell_{N},\,a+\ell_{N})}(x_{i})-2N\ell_{N}\Bigr|
            \ge(N\ell_{N})^{\frac{s}{2}+\ve}\right)
\leq Ce^{-c(N\ell_{N})^{\delta(\ve)}}.
\end{equation}
\end{enumerate}
\end{theorem}

\subsubsection{Singular linear statistics: variance expansion}

Next, we prove sharp variance estimates, valid at any scale, for possibly singular test functions satisfying the following assumption:

\begin{assumption}[Admissible test function]\label{assumption:test func}
Let $\ell_N\in (0,1]$ and let \(\xi\in L^2(\ell_N^{-1}\dT)\). Let $\psi$ be given by 
\[
\psi'=(-\Delta)^{\frac{1-s}{2}}\xi\quad \text{and}\quad \int \psi=0.
\]

\begin{enumerate}[label=(A\arabic*)]

\item \textbf{Global Hölder regularity.}  
There exists $\ve>0$ such that $\xi'=\chi$ for some $\chi\in \mathcal{C}^{1-s+\ve}(\ell_N^{-1}\dT)$. 
\item \textbf{Piecewise \(\mathcal C^{2-s}\) regularity.}  
For some integer \(p\ge0\) and points
\(a_{1}<\dots<a_{p}<1\), letting $a_{p+1}:=a_1$, we have that $\psi$ is $\mathcal{C}^2$ on every open arc $(a_j,a_{j+1})$.

\item \textbf{Controlled algebraic singularities.}  
There exists a constant $C>0$ and an exponent \(\alpha\in[s-1,\tfrac{s}{2})\) such that
\begin{equation}\label{eqdef:singularity psi''}
|\psi''(x)|\;\le\;
C\left(\sum_{j=1}^{p}\frac{1}{|x-a_{j}|^{\,2-s+\alpha}}\mathds{1}_{|x|\leq 1}+\frac{1}{|x|^{3-s}}\mathds{1}_{|x|\geq 1} \right),
\qquad x\in \ell_N^{-1}\dT\setminus\{a_{1},\dots,a_{p}\}.
\end{equation}

\item \textbf{Support.}  
Either \(\ell_{N}\equiv1\),  
or \(\ell_{N}\to0\) and, in this case, \(\xi\) is supported in \(\bigl(-\tfrac12,\tfrac12\bigr)\).

\end{enumerate}
\end{assumption}

\begin{remark}\label{remark:assumption xi}
    We can observe that, given $\alpha>0$, the function $x\in \dT\setminus\{0\}\mapsto |x|^{-\alpha}$ satisfies Assumption \ref{assumption:test func} if and only if $\alpha<\frac{s}{2}$. Moreover, the function $\mathds{1}_{(a,b)}$ satisfies Assumption \ref{assumption:test func} (as opposed to the case where $s=0$).
\end{remark}

Let us recall the definition of the fractional Sobolev seminorm on the circle $\Vert\cdot\Vert_{H^{\alpha}(\dT)}$, for $\alpha>0$. Let $h\in L^2(\dT)$ with Fourier coefficients $\hat{h}(k)$, $k\in\mathbb{Z}$. Whenever it is finite, we call $\Vert h\Vert_{H^{\alpha}(\dT)}^2$ the quantity
\begin{equation*}
    \Vert h\Vert_{H^{\alpha}(\dT) }^2:=\sum_{k\in\mathbb{Z}}|k|^{2\alpha}|\hat{h}|^2(k).
\end{equation*}
Similarly, the fractional Sobolev seminorm of a function $h\in L^2(\dR)$, denoted $\Vert h\Vert_{H^{\alpha}(\dR)}$, is defined by
\begin{equation}
   \Vert h\Vert_{H^{\alpha}(\dR) }^2:=\int_{\dR} |\xi|^{2\alpha}|\hat{h}|^2(\xi)\dd \xi,
\end{equation}
where $\hat{h}$ stands for the Fourier transform of $h$.

Let $(\ell_N)$ be a sequence taking values in $(0,1]$. For any function $\xi\in H^{\frac{1-s}{2}}(\ell_N^{-1}\dT)$, define
\begin{equation}\label{def:sigmaN}
    \sigma_{\ell_N}^2(\xi):=\frac{(2\pi)^{1-s}}{2 \beta c_s } \ell_N^{-s}\Vert\xi(\ell_N^{-1}\cdot)\Vert_{H^{\frac{1-s}{2}}(\dT)}^2.
\end{equation}
If $\xi\in H^{\frac{1-s}{2}}(\dT)$ is supported on $(-\frac{1}{2},\frac{1}{2})$, let $\xi_0:\dR\to\dR$ be given for all $x\in \dR$ by
\begin{equation}\label{eq:defxi0}
    \xi_0(x)=\begin{cases}
        \xi(x) & \text{if $|x|\leq \frac{1}{2}$}\\
        0 & \text{if $|x|>\frac{1}{2}$}
    \end{cases}
\end{equation}
and 
\begin{equation}\label{def:sigma0}
    \sigma_0^2(\xi):=\frac{(2\pi)^{1-s}}{2\beta c_s}\Vert \xi_0\Vert_{H^{\frac{1-s}{2}}(\dR)}^2.
\end{equation}

\begin{theorem}[Variance of singular linear statistics]\label{theorem:quantitative variance}
Let $\xi\in L^2(\ell_N^{-1}\dT)$ and $(\ell_N)$ satisfy Assumption \ref{assumption:test func}. Let $\alpha\in [s-1,\frac{s}{2})$ be such that \eqref{eqdef:singularity psi''} holds.

For all $\ve>0$, there exists a constant $C>0$ depending on $\beta$, $s$, $\ve$ and $\xi$ such that
\begin{equation}\label{eq:variance expansion theorem}
   \Bigr| \Var_{\dGi}[ \Fluct_N[\xi(\ell_N^{-1}\cdot)]]-(N\ell_N)^s\sigma_{\ell_N}^2(\xi)\Bigr|\leq  C(N\ell_N)^{\max(2s-1,2\alpha)+\ve }.
\end{equation}
Moreover, there exists a constant $C>0$ depending on $s$ and $\xi$ such that
\begin{equation*}
 | \sigma_{\ell_N}^2(\xi)- \sigma_{0}^2(\xi)|\leq C\ell_N^{2-s}.
\end{equation*}
\end{theorem}

Since $\alpha<\frac{s}{2}$, note that $\xi\in H^{\frac{1-s}{2}}(\ell_N^{-1}\dT)$ and that the error term in \eqref{eq:variance expansion theorem} is $o((N\ell_N)^s)$.

Theorem~\ref{theorem:quantitative variance} implies that the number variance of the 1-D Riesz gas grows like \(N^{s}\), the same order as for any smooth linear statistic.  This growth rate interpolates smoothly between:  
\begin{itemize}[leftmargin=*]
\item the log-gas case \(s=0\), where smooth statistics have variance of order $1$ but where interval counts fluctuate like \(\log N\) because \(\mathds{1}_{(-a,a)}\notin H^{1/2}(\dT)\) \cite{bekerman2018clt}, and  
\item the Poisson-like case \(s=1\), where the variance is linear in \(N\).
\end{itemize}
Thus for every \(s\in(0,1)\) the Riesz gas is \emph{hyperuniform} in the sense of \cite{Torquato_2016}: the number-variance is much smaller than for i.i.d variables.

\subsubsection{Central limit theorem}

Let $\dBL$ be the bounded-Lipschitz distance between probability measures on $\dR$, defined for all $\mu$ and $\nu$ by
\begin{equation*}
\dBL(\mu,\nu):=\sup_{f\in \mathcal{C}^1(\dR)}\left\{ \int f\dd (\mu-\nu):\Vert f\Vert_{L^\infty(\dR)}\leq 1,\Vert f'\Vert_{L^\infty(\dR)}\leq 1\right\}.
\end{equation*}
We establish the following result:

\begin{theorem}[CLT for singular linear statistics]\label{theorem:CLT}
Let $\xi\in L^2(\ell_N^{-1}\dT)$ and $(\ell_N)$ satisfy Assumption \ref{assumption:test func}. Let $\alpha\in [s-1,\frac{s}{2})$ be such that \eqref{eqdef:singularity psi''} holds. Assume in addition that \(\ell_{N}\gg N^{-1}\). Let
\(Z_{N}\sim\mc N\!\bigl(0,\sigma_{\ell_{N}}^{2}(\xi)\bigr)\) and for $\ell\in \{0,1\}$, let \(Z_\ell\sim\mc N\!\bigl(0,\sigma_{\ell}^{2}(\xi)\bigr)\).
\begin{enumerate}
\item The random variables
\((N\ell_{N})^{-s/2}\Fluct_{N}\bigl[\xi(\ell_{N}^{-1}\cdot)\bigr]\)
converge in distribution to \(Z_\ell\), where $\ell:=\lim_{N\to \infty}\ell_N\in\{0,1\}$.
\item For every \(\ve>0\), there exists a constant $C>0$ depending on $\beta$, $s$, $\ve$ and $\xi$ such that
\begin{align*}
\dBL\bigl(\mathrm{Law}((N\ell_{N})^{-s/2}\Fluct_{N}),\mathrm{Law}(Z_{N})\bigr)
&\leq C(N\ell_{N})^{\ve+\max(-\frac{s}{2},\frac{s-1}{2},\,\alpha-\frac{s}{2})}.
\end{align*}
\end{enumerate}
\end{theorem}

Theorem \ref{theorem:CLT} can be interpreted as the convergence of the field $$\sum_{i=1}^Ng(x_i-\cdot)-N\int g(x-\cdot )\dd x$$ to a fractional Gaussian field. Observe that if $\xi$ and $\chi$ are smooth test functions with disjoint support, then the asymptotic covariance is, in general, not equal to $0$, meaning that the corresponding fractional field does not exhibit spatial independence. This reflects the non-local nature of the fractional Laplacian $(-\Delta)^{\frac{1-s}{2}}$ for $s\in (0,1)$.

\subsubsection{Examples}

By Remark~\ref{remark:assumption xi}, Theorem~\ref{theorem:CLT} gives a CLT for the number of points in an interval and for the observable gaps.

\begin{corollary}[CLT for the number of points in an interval]\label{corollary:fluctuations gaps disc}
Let $a\in (0,\frac{1}{2})$ and $(\ell_N)$ be a sequence taking values on $(0,1]$ such that $\ell_N\gg \frac{1}{N}$. We have $$\frac{1}{\sqrt{N^s \zeta(-s,2a\ell_N)}}  \left(\sum_{i=1}^N \mathds{1}_{(-a\ell_N,a\ell_N) }(x_i)-2aN\ell_N\right)\underset{\mathrm{Law}}{\Longrightarrow}\mathcal{N}(0,\sigma^2),$$ where
\begin{equation}\label{def:sig}
    \sigma^2:=\frac{2\cotan(\frac{\pi}{2}s)}{\beta 4^s\pi s}.
\end{equation}

Let $(k_N)$ be a sequence taking values in $\Bigr\{1,\ldots,\lfloor\hN\rfloor\Bigr\}$ such that $k_N\to \infty$ as $N\to \infty$. We have
$$\frac{1}{\sqrt{N^s \zeta(-s,\frac{k_N}{N})}}(N(x_{1+k_N}-x_1)-k_N)\underset{\mathrm{Law}}{\Longrightarrow}\mathcal{N}(0,\sigma^2),$$
where $\sigma^2$ is as in \eqref{def:sig}.
\end{corollary}

Corollary \ref{corollary:fluctuations gaps disc} is an extension of the results on the fluctuations of single particles in the bulk for $\beta$-ensembles, see \cite{gustavsson2005gaussian,claeys2021much} for the GUE case and \cite{bourgade2022optimal}.

\begin{corollary}[CLT for power-law observables]\label{corollary:power type}
Let $\alpha\in (0,\frac{s}{2})$. Then,
\begin{equation*}
    N^{-\frac{s}{2}}\left(\sum_{i=1}^N \zeta(-\alpha,x_i)-N\int \zeta(-\alpha,x)\dd x\right)\underset{\mathrm{Law}}{\Longrightarrow}\mathcal{N}(0,\sigma^2),
\end{equation*}
where
\begin{equation*}
 \sigma^2:= (2\pi)^{-1+2\alpha-s} \frac{c_\alpha^2}{\beta c_s}\zeta(1+s-2\alpha),
\end{equation*}
with $c_\alpha$ and $c_s$ are as in \eqref{eq:defcs}.
\end{corollary}

\begin{remark}
The borderline observable \(x\mapsto|x|^{-s/2}\) lies exactly at the edge of \(H^{\frac{1-s}{2}}(\dT)\); for \(s=0\) it reduces to the classical cases \(\mathds{1}_{(-a,a)}\) and \(-\log|x|\) whose log–correlated CLT is proved in \cite{bourgade2022optimal}. 
\end{remark}

\subsection{Context, related results, open questions}
\subsubsection{Rigidity of $\beta$-ensembles}
As mentioned in the Introduction, Theorems \ref{theorem:almost optimal rigidity} and \ref{theorem:CLT} are natural extensions to the circular Riesz gas of some known results on the fluctuations of $\beta$-ensembles. We refer again to \cite{bourgade2012bulk,bourgade2014universality,bourgade2014edge,bourgade2022optimal} for rigidity estimates, to  \cite{johansson1998fluctuations,shcherbina2013fluctuations,borot2013asymptotic,bekerman2018clt,lambert2019quantitative,hardy2020clt,peilen2022local} for CLTs for linear statistics with smooth test functions, and to \cite{hardy2020clt,lambert2021mesoscopic} for the case of the circular $\beta$-ensemble. In the case of the GUE, that is for $\beta=2$ with a quadratic potential, a CLT for test functions in $H^{\frac{1}{2}}$ is obtained in \cite{sosoe2013regularity} using a Littlewood-Paley-type decomposition argument. However, as observed in \cite[Rem.~1.3]{lambert2021mesoscopic}, the minimal regularity of the test function depends on $\beta$. Indeed, for $\beta=4$, leveraging variance expansions of \cite{jiang2015moments}, \cite{lambert2021mesoscopic} exhibits a test function in $H^{\frac{1}{2}}$ such that the associated linear statistic does not have a finite limit. Since the characteristic function of a given interval is not in $H^{\frac{1}{2}}$, the asymptotic scaling of discrepancies in intervals is not of order $1$. It is proved in \cite{gustavsson2005gaussian} that for the GUE, eigenvalues $x_i$ in the bulk fluctuate in $O(\sqrt{\log i})$ and that discrepancies are of order $\sqrt{\log N}$. A general CLT for the characteristic functions of intervals and for the logarithm function is given in the recent paper \cite{bourgade2022optimal}. Concerning the method of proof, let us point out a very similar variation on Stein's method developed in \cite{lambert2019quantitative}, see also \cite{hardy2020clt} for a high-temperature regime.

\subsubsection{Local laws and fluctuations for the Langevin dynamics}
A related and much-studied question concerns the rigidity of the Dyson Brownian motion, an evolving gas of particles whose invariant distribution is given by $\beta$-ensemble. The time to equilibrium at the microscopic scale of Dyson Brownian motion was studied in many papers including \cite{erdHos2011universality,erdos2012gap}, see also \cite{bourgade2021extreme} for optimal relaxation estimates. A central limit theorem at mesoscopic scale for linear statistics of the Dyson Brownian motion is established in \cite{huang2016local}, thus exhibiting a time-dependent covariance structure.

\subsubsection{Decay of the correlations and Helffer-Sjöstrand representation}
The decay of gap correlations of $\beta$-ensembles has been extensively studied in \cite{erdos2014gap}, where a power-law decay in the inverse squared distance is established. The starting point of \cite{erdos2014gap} is based on a representation of the correlation function by a random walk in a dynamic random environment or in other words on a dynamic interpretation of the Helffer-Sjöstrand operator. The paper \cite{erdos2014gap} then develops a sophisticated homogenization theory for a system of discrete parabolic equations. In a different context, a more direct analysis of the Helffer-Sjöstrand operator has been developed in the ground work \cite{naddaf1997homogenization} to characterize the scaling limit of the gradient interface model in an arbitrary dimension $d\geq 1$. Combining ideas from \cite{naddaf1997homogenization} and from quantitative stochastic homogenization, the paper \cite{armstrong2019c} then shows that the free energy associated with this model is at least $\mathcal{C}^{2,\alpha}$ for some $\alpha>0$. We also refer to the recent paper \cite{thoma2021thermodynamic} which studies in a similar framework the scaling limit of the non-Gaussian membrane model. In non-convex settings, far fewer results are available in the literature. One can mention the work \cite{dario2020massless} which establishes the optimal decay for the two-point correlation function of the Villain rotator model in $\mathbb{Z}^d$, for $d\geq 3$ at low temperature. It could be interesting to develop a direct method to analyze the large-scale decay of the Helffer-Sjöstrand equation in the context of one-dimensional Riesz gases. We plan to address this question in future work.

\subsubsection{Uniqueness of the limiting point process}
The question of the decay of the correlations mentioned above is related to a property of uniqueness of the limiting measure. One expects that after rescaling, chosen so that the typical distance between consecutive points is of order $1$, the point process converges, in a suitable topology, to a certain point process $\Riesz_\beta$. For $s=0$, the limiting point process called $\Sine_\beta$ is unique and universal as proved in \cite{bourgade2012bulk,bourgade2014universality}. The existence of a limit was first established in \cite{valko2009continuum} for $\beta$-ensembles with quadratic exterior potential, together with a sophisticated description and in \cite{killip2009eigenvalue} for the circular $\beta$-ensemble. The $\Sine_\beta$ process has also been characterized as the unique minimizer of the free energy functional governing the microscopic behavior in \cite{erbar2018one} using a displacement convexity argument. In \cite{boursier2022decay}, we prove the existence of a limiting point process $\Riesz_\beta$ for the circular Riesz ensemble.

\subsubsection{1D hypersingular Riesz gases}  
Although the 1D hypersingular Riesz gas (i.e. $s>1$) is not hyperuniform, its fluctuations are also of interest. In such a system, the macroscopic and microscopic behaviors are \emph{coupled}, a fact which translates into the linear response associated with linear statistics (in contrast with long-range particle systems, the linear response is a combination of a mean-field change of variables—moving each point according only to its position—and local perturbations). Simple heuristic computations show that the limiting variance is then proportional to an $L^2$ norm (after subtraction of the mean) with a factor depending on the second-order derivative of the free energy. 

\subsubsection{Fluctuations of Riesz gases in higher dimension}
For $d\geq 1$ and $s$ smaller than $d$, the existence of a thermodynamic limit for the Riesz gas (after extraction of a subsequence) is delicate as the energy is long-range. It was obtained in \cite{leble2017large} for $s\in (\min(d-2,0),d)$, leveraging an electric formulation of the Riesz energy, see \cite{petrache2017next}, and a screening procedure introduced in \cite{sandier2012ginzburg} and then improved in \cite{Rougerie2013HigherDC,petrache2017next}. The first task to study the fluctuations of higher dimensional long-range Riesz gases is to establish local laws, that is, to control the number of points and the energy in cubes of small scales. This was done for the Coulomb gas in arbitrary dimensions down to the microscopic scale in the paper \cite{Armstrong2019LocalLA} using sub-additive and super-additive approximating energies. Due to the lack of convexity, establishing a CLT or even a Poissonian rigidity estimate for linear statistics of Riesz gases in arbitrary dimensions is a delicate task. In dimension 2, since long-range interactions are overwhelmingly dominant, a CLT for linear statistics with smooth test functions can be proved, see \cite{leble2018fluctuations,bauerschmidt2016two,leble2017local}, without proving any ``probabilistic cancellation'' on local quantities, but only a ``quenched cancellation'' on some angle term. Let us finally mention the work \cite{leble2021two} where the 2D Coulomb gas is shown to be hyperuniform, meaning that the variance of the number of points in a ball scales much smaller than the volume. The paper \cite{leble2021two} establishes an important quantitative translation invariance property based on refinements of Mermin-Wagner type arguments, see also \cite{thoma2022overcrowding}. In higher dimensions, far fewer results are available. One can mention the result of \cite{serfaty2020gaussian} which treats the 3D Coulomb gas at high temperature ``under a no phase transition assumption''. A simpler variation of the 3D Coulomb gas, named \emph{hierarchical} Coulomb gas, has also been investigated in the work \cite{chatterjee2019rigidity}, followed by \cite{ganguly2020ground}.

\subsection{Outline of the main proofs}

\paragraph{\bf{Rigidity}}
The proof of Theorem \ref{theorem:almost optimal rigidity} is similar to the proof of \cite[Th.~3.1]{bourgade2014universality}. Let us explain the main steps.

First, we establish a local law on gaps: for all $\delta>0$, there exist $\kappa>0$, $c>0, C>0$ such that for every $k=1,\ldots, \lfloor N/2\rfloor$ and every $i=1,\ldots,N$,
\begin{equation}\label{eq:firstLA}
    \dGi(N(x_{i+k}-x_i)\geq k^{1+\delta})\leq Ce^{-ck^{\kappa}}.
\end{equation}
This estimate is derived via a bootstrap procedure combined with certain concentration inequalities, whose details we omit here.

Let us explain how to prove Theorem \ref{theorem:almost optimal rigidity} from the estimate (\ref{eq:firstLA}). Let $d$ be the distance on $\{1,\ldots,N\}$ given for all $i,j\in \{1,\ldots,N\}$ by $d(i,j)=\min(|j-i|,N-|j-i|)$. For every $k=1,\ldots,\lfloor N/2\rfloor$ and $i=1,\ldots,N$, define the block of size $k$ centered around $i$, 
\begin{equation*}
    I_k(i):=\{j\in \{1,\ldots,N\}:d(i,j)\leq k\}
\end{equation*}
and the average of $(x_j)_j$ over the block $I_k(i)$,
\begin{equation*}
    x_i^{[k]}:=\frac{1}{2k+1}\sum_{j\in I_k(i)}x_j.
\end{equation*}
Fix $k$ and $i$, and let $p\geq 1$ be a large number. Set $\alpha:=\frac{1}{p}$. The idea in \cite{bourgade2014universality} is to decompose $x_i$ as follows:
\begin{equation*}
    x_i=x_i^{[k]}+\sum_{m=0}^{p-1}\Bigr(x_i^{[\lfloor k^{m\alpha}\rfloor]}-x_i^{[\lfloor k^{(m+1)\alpha}\rfloor] }\Bigr).
\end{equation*}
For each $k=1,\ldots,p-1$, denote
\begin{equation*}
    G_m:=N\Bigr(x_i^{[\lfloor k^{m\alpha}]}-x_i^{[\lfloor k^{(m+1)\alpha \rfloor}] }\Bigr).
\end{equation*}
One may note that as $m$ decreases, $|\nabla G_m|^2$ becomes larger, while $G_m$ becomes more localized, since it depends only on the variables in $J_{m+1}:=I_{[\lfloor k^{(m+1)\alpha}\rfloor]}(i)$. We therefore need to exploit the convexity of the interactions within $J_{m+1}$. To gain uniform convexity, we consider a smooth, non-negative, convex function $\theta:\dR^+\to\dR^+$ such that $\theta(x)=0$ for $x\in [0,1]$ and $\theta(x)=x^2$ for $x>2$. For some fixed $\ve>0$, we define the forcing term
\begin{equation*}
    \FF_m:=2\sum_{i,j \in J_{m+1}:i< j}\theta\Bigr(\frac{N(x_{j}-x_i)}{k^{1+\ve}}\Bigr)
\end{equation*}
and introduce the locally constrained measure
\begin{equation*}
    \dd\dGiQ(X_N)\propto e^{-\beta (\mc{H}_N+\FF_m)(X_N)}\mathds{1}_{D_N}(X_N)\dd X_N.
\end{equation*}

Let $\mu$ be the law of $(x_i)_{i\in J_{m+1}}$ when $X_N$ is distributed according to $\dGiQ$. Using the Brascamp-Lieb inequality, one can show that the density of $\mu$ can be written as
\begin{equation*}
    \dd\mu(x)=e^{-\beta (H+\FF_m(x)+\tilde{H}(x))}\mathds{1}_{D_{|J_{m+1}|}}(x)\dd x,
\end{equation*}
with $\nabla^2 \tilde{H}\geq 0$ and $H$ defined by
\begin{equation*}
    H:x=(x_i)_{i\in J_{m+1}}\in D_{|J_{m+1}|}\mapsto N^{-s}\sum_{i,j \in J_{m+1}:i\neq j}g(x_j-x_i).
\end{equation*}
By construction, for all $U\in \dR^{|J_{m+1}|}$,  we have
\begin{multline}\label{eq:UHU i}
   U\cdot \nabla^2 (H+\FF_m) U=2\sum_{i< j\in J_{m+1}}\Bigr(N^{-s}g''(x_j-x_i)+\theta''\Bigr(\frac{N}{K}(x_j-x_i)\Bigr)\Bigr(\frac{N}{k}\Bigr)^2\Bigr)(u_j-u_i)^2\\
  \geq N^{-s}\Bigr(\frac{k^{(m+1)\alpha(1+\ve)}}{N}\Bigr)^{-(s+2)}\sum_{i,j\in J_{m+1}}(u_j-u_i)^2.
\end{multline}
The point is that when $\sum_{i\in J_{m+1}} u_i=0$, 
\begin{equation*}
\sum_{i,j\in J_{m+1}}(u_j-u_i)^2=2|J_{m+1}|\sum_{i\in J_{m+1}}u_i^2.
\end{equation*}
Inserting this into \eqref{eq:UHU i} yields
\begin{equation*}
   U\cdot\nabla^2 (H+\FF_m) U\geq c_1\sum_{i\in J_{m+1}} u_i^2\quad \text{where $c_1:=2|J_{m+1}|N^{-s}\Bigr(\frac{k^{(m+1)\alpha}}{N}\Bigr)^{-(s+2)(1+\ve)}$}.
\end{equation*}
Besides, one can observe that 
\begin{equation*}
    |\nabla G_m|^2=O\Bigr(\frac{N^2}{k^{m\alpha}}\Bigr).
\end{equation*}
Since $H$, $\FF_m$, $\tilde{H}$ and $G_m$ are independent of $\sum_{i\in J_{m+1}}x_i$, one can argue with the Bakry-Emery criterion that for all $t\in \dR$,
\begin{equation*}
    \log\dE_{\dGiQ}[e^{tG_m}]=t\dE_{\dGiQ}[G_m]+O\Bigr(t^2 \frac{N^2}{k^{m\alpha}}c_1^{-1}\Bigr)=t\dE_{\dGiQ}[G_m]+O(t^2 k^{s+\kappa\ve+2\alpha}).
\end{equation*}

Using the log-Sobolev inequality and the local law (\ref{eq:firstLA}), one can easily show by taking $\alpha$ small enough, that for all $\ve>0$, $G_m=O(k^{\frac{s}{2}+\ve})$ with high probability (under $\dGi$), which shows that
\begin{equation*}
    N(x_{i+k}-x_{i})=N(x_{i+k}^{[k]}-x_{i}^{[k]})+O(k^{\frac{s}{2}+\ve}),
\end{equation*}
with high probability. Using similar arguments, it is easy to prove $N(x_{i+k}^{[k]}-x_{i}^{[k]})=k+O(k^{\frac{s}{2}+\ve})$ with high probability, which will conclude the proof of Theorem \ref{theorem:almost optimal rigidity}.

We set $\ell_N = 1$ for simplicity and focus on the fluctuations of the linear statistic $\Fluct_N[\xi]$, where $\xi\in L^2(\dT)$ satisfies Assumption~\ref{assumption:test func}. Let $\delta>0$ be such that $\xi\in H^{\frac{1-s}{2}+\delta}(\dT)$ and let $\alpha\in[s-1,\tfrac s2)$ be such that the singularity condition \eqref{eqdef:singularity psi''} holds.

With these hypotheses in place, we now describe how to derive both the variance expansion formula (Theorem~\ref{theorem:quantitative variance}) and the central limit theorem (Theorem~\ref{theorem:CLT}).\\

\paragraph{\bf{Mollification}}
Let $K$ be a smooth kernel supported on $(-\frac{1}{2},\frac{1}{2})$ and let $K_\ell:=\ell^{-1}K(\ell^{-1}\cdot)$. We consider the smooth test function $\xi_\reg:=\xi*K_{1/N}$. We first show that for all $\ve>0$, there exists $C>0$ such that
\begin{equation}\label{eq:errorreg}
    \Var_{\dGi}[\Fluct_N[\xi-\xi_\reg]]\leq CN^{\ve+\max(2\alpha,0)}.
\end{equation}

\paragraph{\bf{The Helffer-Sjöstrand equation}}
Let \( F : D_N \to \mathbb{R} \) be sufficiently smooth. The fluctuations of \( F \) are related to a partial differential equation via the representation
\begin{equation}\label{eq:helffer sjos repr}
    \Var_{\dGi}[F] = \mathbb{E}_{\dGi}[ \nabla F \cdot \nabla \phi],
\end{equation}
where \( \phi\in H^1(\dGi) \) solves the Poisson equation
\begin{equation}\label{eq:Lmuphi}
 \mathcal{L} \phi = F - \mathbb{E}_{\dGi}[F],
\end{equation}
where \( \mathcal{L} \) denotes the generator
\[
    \mathcal{L} := \beta \nabla \mathcal{H}_N \cdot \nabla - \Delta.
\]
Note that \eqref{eq:helffer sjos repr} follows directly by integration by parts, once it is known that \eqref{eq:Lmuphi} admits a solution. Differentiating \eqref{eq:Lmuphi} yields the so-called Helffer--Sjöstrand equation:
\begin{equation}\label{eq:intro helffer equation}
  \begin{cases}
       A_1 \nabla \phi = \nabla F  & \text{in } D_N, \\
       \nabla \phi \cdot \vec{n} = 0  & \text{on } \partial D_N,
   \end{cases}
\end{equation}
where \( A_1 \) is formally given by
\[
    A_1 := \beta \nabla^2 \mathcal{H}_N + \mathcal{L} \otimes I_N.
\]

When \( F \) is a function of the gaps, or when \( g \) is replaced by a bounded kernel, the existence and uniqueness of a solution to \eqref{eq:Lmuphi} roughly follow from the Lax--Milgram lemma. Indeed, under such an assumption, the Poincaré inequality holds, which ensures the coercivity of the associated bilinear form. To analyze the solution of \eqref{eq:intro helffer equation}, we will employ various tools based on mean-field approximations, convexity, and monotonicity.

Since \( \partial_{ij} \mathcal{H}_N \leq 0 \) for every \( i \neq j \), it is standard that \(\dGi\) satisfies the FKG inequality, meaning that the covariance between any two increasing functions is non-negative. This can be formulated by stating that \( \mathcal{L}^{-1} \) preserves the cone of increasing functions, or equivalently that $A_1$ preserves the cone of gradients of increasing functions: if \( \nabla F \geq 0 \) (coordinate-wise), then \( \nabla \phi := A_1^{-1} \nabla F \geq 0 \). A useful consequence is the following: if \( F, G : D_N \to \mathbb{R} \) are such that \( |\nabla F| \leq \nabla G \), then
\begin{equation}\label{eqintro:comparison}
   \Var_{\dGi}[F] \leq \Var_{\dGi}[G].
\end{equation}

This comparison principle can be extended to non-gradient vector fields, at the price of working with the variables $(x_i-x_1)_{2\leq i\leq N}$. This upgraded comparison principle will serve as a key tool for analyzing the fluctuations of certain complicated singular functions.

\paragraph{\bf{Mean-field transport}}
It turns out that when $F$ is a linear statistic, then the solution $\nabla \phi$ of (\ref{eq:intro helffer equation}) can be approximated by a transport $\Psi_N$ of the form 
\begin{equation*}
\Psi_N:X_N\in D_N\mapsto \frac{1}{N^{1-s}}(\psi(x_1),\ldots,\psi(x_N)),
\end{equation*}
for some well-chosen map $\psi:\dT\to\dR$.

Let $F:=\Fluct_N[\xi_\reg]$, where we recall that $\xi_\reg=\xi*K_{1/N}$. Letting 
\begin{equation*}
\Diag_{\dT^2}:=\{(x,y)\in\dT^2:x=y\},
\end{equation*}
one may write
\begin{equation*}
    \nabla \mc{H}_N\cdot \Psi_N=N\iint_{\Delta^c}g'(x-y)(\psi(x)-\psi(y))\dd \mu_N(x)\dd\mu_N(y),
\end{equation*}
where $\mu_N:=\frac{1}{N}\sum_{i=1}^N\delta_{x_i}$. Let us expand $\mu_N$ around the Lebesgue measure $\dd x$ on $\dT$ and denote $\fluct_N:=N(\mu_N-\dd x)$. Noting that the constant term vanishes, one can check that
\begin{equation}\label{eq:nabla Hc_n splitt}
    \nabla \mc{H}_N\cdot \Psi_N=2\int (-g'*\psi)\fluct_N+\frac{1}{N^{1-s}}\Loop[\psi]
\end{equation}
where
\begin{equation}\label{eq:intro Apsi}
   \Loop[\psi]:=\iint_{\Diag_{\dT^2}^c}(\psi(x)-\psi(y))N^{-s}g'(x-y)\dd\fluct_N(x)\dd\fluct_N(y).
\end{equation}

If we neglect the sub-leading terms $\dive \Psi_N$ and $\Loop[\psi]$, we therefore obtain 
\begin{equation*}
    \nabla \mc{H}_N\cdot \Psi_N-\dive\Psi_N \simeq F,
\end{equation*}
by choosing $\psi$ given by 
\begin{equation*}
    \psi'=-\frac{1}{2\beta c_s}(-\Delta)^{\frac{1-s}{2}}\xi_\reg\quad \text{and}\quad \int \psi=0.
\end{equation*}
The central task of the paper is to show that for all $\ve>0$, there exists a constant $C>0$ such that
\begin{equation}\label{eq:bound variance A intro}
   \Var_{\dGi}[\Loop[\psi]] \leq CN^{\ve+\max(1,2 (\alpha+1-s))}.
\end{equation}

\paragraph{\bf{Splitting the variance of the next-order term}}
Denote 
\begin{equation*}
    \zeta_0:(x,y)\in\dT^2 \mapsto \frac{\psi(x)-\psi(y)}{x-y},
\end{equation*}
so that 
\begin{equation}\label{eq:in A}
   \Loop[\psi]=\iint_{\Diag_{\dT^2}^c}\zeta_0(x,y)N^{-s}\tilde{g}(x-y)\dd\fluct_N(x)\dd\fluct_N(y),
\end{equation}
where $\tilde{g}:x\in\dT\setminus\{0\}\mapsto xg'(x)$. Note that for every $i\in \{1,\ldots,N\}$,
\begin{align*}
    &\partial_i\Loop[\psi]\\
    &\quad =\underbrace{2\int_{y\neq x_i}\partial_1 \zeta_0(x_i,y)N^{-s}\tilde{g}(x_i-y)\dd\fluct_N(y)}_{\simeq \dV_i}+\underbrace{2\int_{y\neq x_i}\zeta_0(x_i,y)N^{-s}\tilde{g}'(x_i-y)\dd\fluct_N(y)}_{\simeq \dW_i}.
\end{align*}
By sub-additivity,
\begin{equation*}
  \Var_{\dGi}[\Loop[\psi]]\leq 2\underbrace{\dE_{\dGi}[\dV \cdot A_1^{-1}\dV]}_{(\RomanNumeralCaps{1})}+2\underbrace{\dE_{\dGi}[\dW \cdot A_1^{-1}\dW]}_{(\RomanNumeralCaps{2})}.
\end{equation*}

\paragraph{\bf{Control on $(\RomanNumeralCaps{2})$ with Poincaré inequality in gap coordinates}} 
In gap coordinates, the microscopic force \( \dW \) behaves well: there exists a vector field \( \tilde{\dW} \) such that for all \( U_N \in \mathbb{R}^N \),
\[
    \dW \cdot U_N = -\sum_{i=1}^N \tilde{\dW}_i \, N(u_{i+1} - u_i),
\]
and which typically satisfies (i.e., with overwhelming probability), for all $\ve>0$, the estimate
\begin{equation}\label{eq:in W2}
    |\tilde{\dW}|^2 \leq N^{\ve+\max(1,2 (\alpha+1-s))}.
\end{equation}

By introducing a penalty for configurations with large nearest-neighbor distances, one can modify the Gibbs measure into a new one that is uniformly log-concave with respect to the variables \( N(x_{i+1} - x_i) \), for \( i = 1, \ldots, N \). Applying the Brascamp-Lieb inequality and using \eqref{eq:in W2}, we obtain that for all \( \ve > 0 \), there exists \( C > 0 \) such that
\begin{equation}\label{eqintro:V1 main}
    (II) \leq N^{\ve+\max(1,2 (\alpha+1-s))}.
\end{equation}

\paragraph{\bf{Control on $(\RomanNumeralCaps{1})$ with the comparison principle}}
In substance, one should think of $\dV$ as satisfying for every $i=1,\ldots,N$,
\begin{equation}\label{eq:in Vi}
    |\dV_i|\leq C |\psi''(x_i)|+\text{``Lower order terms''}.
\end{equation}
Note that for instance if $\xi=\mathds{1}_{(a,b)}$, $\psi''$ blows like $\max(|x|, \tfrac{1}{N})^{-(2-s)}$. 
Therefore for such singular $\psi$, the Poincaré inequality does not provide satisfactory estimates for $(\RomanNumeralCaps{1})$. If $\dV_i$ was exactly given by $\psi''(x_i)$ for every $i=1,\ldots,N$, one could bound $\dE_{\dGi}[\dV\cdot A_1^{-1}\dV]$ by the variance of $\Fluct_N[\psi']$.

The idea is to use the comparison principle (\ref{eqintro:comparison}) to compare $\dE_{\dGi}[\dV\cdot A_1^{-1}\dV]$ to the variance of a linear statistic, which is easier to handle using the rigidity estimates of Theorem \ref{theorem:almost optimal rigidity}. Let $\chi:\dT\to\dR$ be such that $\chi=C|\psi''|$. Equation (\ref{eq:in Vi}) can be rewritten in the form
\begin{equation*}
    |\dV|\leq \nabla \Fluct_N[\chi].
\end{equation*}
Applying the extension of \eqref{eqintro:comparison} to general vector fields, we get that, roughly, 
\begin{equation*}
  \dE_{\dGi}[\dV\cdot A_1^{-1}\dV]\leq \Var_{\dGi}[\Fluct_N[\chi]]+\text{``Lower order terms''}.
\end{equation*}
By the comparison principle again and the rigidity estimates of Theorem \ref{theorem:almost optimal rigidity}, 
\begin{equation*}
   \Var_{\dGi}[\Fluct_N[\chi]]\leq N^{\ve+\max(1,2 (\alpha+1-s))}.
\end{equation*}
which yields
\begin{equation}\label{eqintro:term II}
    (\RomanNumeralCaps{1})\leq N^{\ve+\max(1,2 (\alpha+1-s))}.
\end{equation}

Let us emphasize that the bound on $\dV$ is in fact slightly different and requires working with the variables $(x_i-x_1)_{2\leq i\leq N}$. Combining \eqref{eqintro:V1 main} and \eqref{eqintro:term II} yields \eqref{eq:bound variance A intro}, thereby completing the proof of Theorem \ref{theorem:quantitative variance}.
\\

\paragraph{\bf{Central limit theorem}}
The starting point of the proof of the CLT of Theorem \ref{theorem:CLT} is very similar to \cite{lambert2019quantitative} and proceeds by Stein's method. Let $G_N:=N^{-\frac{s}{2}}\Fluct_N[\xi_\reg]$. We shall prove that for all $\eta\in \mathcal{C}^1(\dR)$ such that $\Vert\eta'\Vert_{L^\infty(\dR)}\leq 1$ and $\Vert\eta\Vert_{L^\infty(\dR)}\leq 1$, 
\begin{equation}\label{eq:intro approximate stein}
    \dE_{\dGi}[\eta(G_N)G_N]=\sigma_{1}^2(\xi_\reg)\dE_{\dGi}[\eta'(G_N)]+\Error_N,
\end{equation}
with $\sigma_1^2(\xi_\reg)$ as in \eqref{def:sigmaN} and $\Error_N$ a small error term. The fundamental observation of Stein is that this approximate integration by parts formula quantifies a distance to normality. Indeed, letting $Z$ be a centered random variable with variance $\sigma_{1}^2(\xi_\reg)$ and $h:\dR\to \dR$ smooth, one can solve the ODE
\begin{equation}\label{eq:intro stein equation}
    \sigma_1^2(\xi_\reg) x\eta (x)-\eta'(x)=h(x)-\dE[h(Z)]
\end{equation}
and (\ref{eq:intro approximate stein}) can be written as
\begin{equation*}
    \dE_{\dGi}[h(G_N)]-\dE[h(Z)]=\Error_N,
\end{equation*}
which shows that $G_N$ is approximately Gaussian. We now explain how to obtain (\ref{eq:intro approximate stein}). Let $\nabla \phi=A_1^{-1}\nabla G_N$. By integration by parts,
\begin{equation}\label{eq:ipp clt}
    \dE_{\dGi}[\eta(G_N)G_N]=\dE_{\dGi}[\sigma_{1}^2(\xi_\reg)\eta'(G_N)\nabla G_N\cdot \nabla \phi].
\end{equation}
The goal is then to prove that $\nabla G_N\cdot \nabla \phi$ concentrates around $\sigma_{1}^2(\xi_\reg)$. As explained above, $\nabla \phi$ may be approximated by the transport ${N^{-1+\frac{s}{2}}}\Psi$ where $\Psi:X_N\in D_N \mapsto (\psi(x_1),\ldots,\psi(x_N))$. Performing this approximate transport allows one to replace (\ref{eq:ipp clt}) by
\begin{multline}\label{eq:ipp clt 2}
    \dE_{\dGi}[\eta(G_N)G_N]-\sigma_{1}^2(\xi_\reg)\dE_{\dGi}[\eta'(G_N)]\\=\underbrace{\dE_{\dGi}\Bigr[\eta'(G_N)\Bigr(\frac{1}{N}\sum_{i=1}^N \xi_\reg'(x_i)\psi(x_i)-\sigma_{1}^2(\xi_\reg)\Bigr)\Bigr]}_{(I)}+\underbrace{\frac{1}{N^{1-\frac{s}{2}}}\Cov_{\dGi}[\eta(G_N),\beta\Loop[\psi]-\Fluct_N[\psi']]}_{(II)}.
\end{multline}
The error term \( (I) \) is controlled using the local laws, while the error term \( (II) \) is controlled using \eqref{eq:bound variance A intro}. Together with \eqref{eq:errorreg}, this concludes the proof of the central limit theorem of Theorem \ref{theorem:CLT}.

\subsection{Structure of the paper}


\begin{itemize}
  \item Section \ref{section:preliminaries} presents useful preliminaries on the fractional Laplacian on the circle.
  \item Section \ref{section:helffer} establishes the well‑posedness of the Helffer–Sjöstrand equation and derives consequences of convexity and monotonicity.
  \item Section \ref{section:rigidity} completes the proof of the near‑optimal rigidity result (Theorem \ref{theorem:almost optimal rigidity}).
  \item Section \ref{section:optimal scaling} proves the variance expansion (Theorem \ref{theorem:quantitative variance}).
  \item Section \ref{section:CLTs} contains the proof of the central limit theorem (Theorem \ref{theorem:CLT}) and its corollaries.
  \item Appendix \ref{section:existence} proves the existence and uniqueness results stated in Section \ref{section:helffer}.
  \item Appendix \ref{section:auxiliary} contains proofs of various discrete convolution and energy estimates.
\end{itemize}

\subsection{Notation}
\begin{itemize}
\item We denote $d:\{1,\ldots,N\}^2\to\mathbb{N}$ the symmetric distance defined for each $1\leq i,j\leq N$ by
$$d(i,j)=\min(|j-i|,N-|j-i|).$$ 
\item We let $$\Diag_{\dT^2}:=\{(x,y)\in \dT^2:x=y\}.$$ 
\item For any vector field $\psi:\Omega\to\dR^m$ where $\Omega$ is an open set of $\dR^n$, we let $D\psi$ be the matrix of partial derivatives of $\psi$. We also write $\nabla^2 f$ for the Hessian of a real-valued function $f$. 
\item For a matrix $M\in \mc{M}_N(\dR)$, we denote by $\| M\|_F$ the Frobenius norm of $M$ 
\begin{equation*}
    \| M\|_F:=(\mathrm{Tr}(MM^T))^{\frac{1}{2}}.
\end{equation*}
\item For all $\alpha\in (0,1)$, we let $\mathcal{C}^{\alpha}(\dT)$ be the space of $\alpha$-Hölder continuous functions from $\dT$ to $\dR$.
\end{itemize}

\subsection{Acknowledgments}
The author thanks Sylvia Serfaty, Thomas Leblé, Djalil Chafaï, Gaultier Lambert, David Garcia‑Zelada, and Ahmed Bou-Rabee for many helpful comments, and the anonymous referee for invaluable feedback on an earlier version of this manuscript.

This work was supported by a grant from the ``Fondation CFM pour la Recherche'' and by the ERC Advanced Grant LDRAM (ERC‑2019‑ADG 884584).
\vspace{1cm}

\section{Preliminaries}\label{section:preliminaries}

\subsection{The fundamental solution of the fractional Laplacian on the circle}\label{sub:fundamental}We begin by justifying that the fundamental solution of the fractional Laplace equation on the circle is given by (\ref{eq:formula claim}). Roughly, (\ref{eq:formula claim}) corresponds to the periodic summation of $x\mapsto|x|^{-s}$, which is the fundamental solution of $(-\Delta)^{\frac{1-s}{2}}$ on the real line.

For all complex variables $s$ and $a$ such that $\mathrm{Re}(s)>1$ and $a\neq 0,-1,-2,\ldots$, let
\begin{equation}\label{eq:Hurwitz}
    \zeta(s,a):=\sum_{n=0}^{\infty}\frac{1}{(n+a)^s}.
\end{equation}
Given $a\neq 0,-1,-2,\ldots$, one can uniquely extend $\zeta(\cdot,a)$ into a meromorphic function on the full complex plane with a unique pole at $s=1$, which is simple with a residue equal to $1$. This function is called the Hurwitz zeta function \cite{berndt}.

For any integrable function $f\in L^1(\dT)$, let
\begin{equation*}
    \hat{f}(k)=\int_{\dT}f(y)e^{-2i\pi ky}\dd y, \quad \text{for all $k\in \mathbb{Z}$.}
\end{equation*}

\begin{lemma}\label{lemma:fundamental}\
\begin{enumerate}
\item Let $g$ be the solution of \eqref{eq:frac laplace}. Let $\zeta(s,x)$ be the Hurwitz zeta function \eqref{eq:Hurwitz}. Then, for all $x\in \dT\setminus\{0\}$,
\begin{equation}\label{eq:expression g}
 g(x)=\zeta(s,x)+\zeta(s,1-x)=\lim_{n\to\infty}\left(\sum_{k=-n}^n \frac{1}{|k+x|^s}-\frac{2}{1-s}n^{1-s}\right) \, .
\end{equation}

\item Let $\alpha\in (0,\frac{1}{2})$. For every $f\in \mathcal{C}^\infty(\dT)$, we have
\begin{equation}\label{eq:frac s}
(-\Delta)^{\alpha}f(x)
  =
  C_{\alpha}\,
  \operatorname*{p.v.}_{\!y\in(0,1)} \sum_{n\in \mathbb{Z}} \int_{\mathbb{R}}
  \frac{\,f(x)-f(y)\,}{|x-y-n|^{1+2\alpha}}\,
  \mathrm dy,
  \quad \text{where}\quad
  C_{\alpha}
  :=
  \frac{2^{\,2\alpha}\,
        \Gamma\!\bigl(\frac{1+2\alpha}{2}\bigr)}
       {\sqrt{\pi}\,\bigl|\Gamma(-\alpha)\bigr|}.
\end{equation}
\end{enumerate}
\end{lemma}

The above lemma is standard, but for completeness, we include a proof, following roughly \cite{roncal2016fractional}.

\begin{proof}\
\paragraph{\bf{Step 1: semi-group representation}}
Let $g$ be the unique solution of \eqref{eq:frac laplace}. We first derive the semi-group representation for $(-\Delta)^{-\frac{1-s}{2}}$. Let $\alpha\in (0,1)$. Recall
\begin{equation}\label{eq:lambda-s}
    \lambda^{-\alpha}=\frac{1}{\Gamma(\alpha)}\int_0^{\infty}e^{-\lambda t}\frac{\dd t}{t^{1-\alpha}},\quad \text{for all $\lambda>0$} \, . 
\end{equation}
The fractional Laplacian $(-\Delta)^{-\alpha}$ is defined by the following Fourier multiplier: for all $f\in\mathcal{C}^{\infty}(\dT)$ such that $\int f=0$, letting $h=(-\Delta)^{-\alpha}f$, we have
\begin{equation*}
    \hat{h}(k)=(2\pi)^{-2\alpha}|k|^{-2\alpha}\hat{f}(k),\quad \text{for all $k\in \mathbb{Z}\setminus\{0\}$} \quad \text{and}\quad \hat{h}(0)=0. 
\end{equation*}
Let $f\in \mathcal{C}^{\infty}(\dT)$ be such that $\int f=0$ and let $h=(-\Delta)^{-\alpha}f$. Applying \eqref{eq:lambda-s} to $\lambda=|k|^2$ for $k\neq 0$ gives
\begin{equation}\label{eq:fourier1}
     \hat{h}(k)=\frac{1}{\Gamma(\alpha)}\int_0^{\infty}e^{-|k|^2 t}\widehat{f}(k)\frac{\dd t}{t^{1-\alpha}} \, , 
\end{equation}
which also holds for $k=0$ since $\int f=\hat{f}(0)=0$. Let $(W_t)_{t\geq 0}$ be the heat kernel of the Laplacian on $\dT$. Recall that the Fourier coefficients of $W_t$ are given by 
\begin{equation*}
    \widehat{W_t}(k)=e^{-4t\pi^2|k|^2},\quad \text{for all $k\in \mathbb{Z}$, $t\geq 0$} \, . 
\end{equation*}
One may therefore rewrite \eqref{eq:fourier1} as
\begin{equation*}
   \hat{h}(k)=\frac{1}{\Gamma(\alpha)}\int_0^{\infty} \widehat{f*W_t}(k)\frac{\dd t}{t^{1-\alpha}}.
\end{equation*}
It follows that
\begin{equation}\label{eq:semi bis}
  h(x)= (-\Delta)^{-\alpha}f(x)=\frac{1}{\Gamma(\alpha)}\int_0^{\infty} f*W_t(x)\frac{\dd t}{t^{1-\alpha}}\quad \text{for all $x\in \dT$}.
\end{equation}

We now derive an explicit expression for $g$ starting from \eqref{eq:semi bis}. Recall
\begin{equation*}
    W_t(x)=\sum_{k\in \mathbb{Z}}e^{-4t\pi^2|k|^2}e^{2i\pi kx}.
\end{equation*}
Recall the Poisson-summation identity: for all Schwartz-class function $\phi:\dR\to\dR$, we have 
\begin{equation*}
    \sum_{k\in \mathbb{Z}}\phi(x+k)=\sum_{n\in \mathbb{Z}}\hat{\phi}(n)e^{2i\pi nx}\quad \text{for all $x\in \dR$.}
\end{equation*}
Applying this to $\phi(y):=e^{-4t\pi^2 y^2}$ gives that for all $x\in \dT$ and $t>0$,
\begin{equation}\label{eq:heat}
    W_t(x)=\frac{1}{\sqrt{4\pi t}}\sum_{n\in \mathbb{Z}}e^{-\frac{1}{4t}(x-n)^2}.
\end{equation}
Taking $\alpha=\frac{1-s}{2}\in (0,1)$ in \eqref{eq:semi bis}, we deduce from a regularization argument that 
\begin{equation*}
g(x):=c_s(-\Delta)^{-\frac{1-s}{2}}(\delta_0-1)\Bigr|_x=\frac{c_s}{\Gamma(\frac{1-s}{2})}\int_0^{\infty}(W_t(x)-1)\frac{\dd t}{t^{\frac{1+s}{2}}}.
\end{equation*}
Then, inserting \eqref{eq:heat}, we get
\begin{equation}\label{eq:invert me}
    g(x)=\frac{c_s}{\Gamma(\frac{1-s}{2})\sqrt{4\pi}}\int_0^{\infty}\sum_{k\in \mathbb{Z}}\Bigr(e^{-\frac{1}{4t}(x-k)^2 }-\int_{\dT}e^{-\frac{1}{4t}(y-k)^2 }\dd y\Bigr) \frac{\dd t}{t^{1+\frac{s}{2}}}.
\end{equation}

\paragraph{\bf{Step 2: absolute integrability of $\sum_k u_k^x$}}
Define the sequence of functions
\begin{equation*}
    u_k^x:t\in (0,\infty)\mapsto\frac{1}{t^{1+\frac{s}{2}}}\Bigr(e^{-\frac{1}{4t}(x-k)^2 }-\int_{\dT}e^{-\frac{1}{4t}(y-k)^2 }\dd y\Bigr),\quad k\in \mathbb{Z}^{*}.
\end{equation*}
Let us prove that $\sum_k\int_0^\infty|u_k^x|(t)\dd t<\infty$. We have
\begin{equation*}
   \sum_{k\in \mathbb{Z}\setminus\{0\}}e^{-\frac{1}{4t}(k-x)^2}=2\sum_{k=1}^\infty e^{-\frac{1}{4t}(k-x)^2}.
\end{equation*}
Since $k\in \mathbb{N}\mapsto e^{-\frac{1}{4t}(k-x)^2}$ is decreasing on $[1,\infty]$, we get
\begin{equation*}
 \sum_{k\in \mathbb{N}\setminus\{0\}}e^{-\frac{1}{4t}(k-x)^2}\leq 2\int_{1-x}^\infty e^{-\frac{1}{4t}y^2}\dd y\leq \int_{\dR} e^{-\frac{1}{4t}y^2}\dd y=\sqrt{4\pi t}.
\end{equation*}
Similarly,
\begin{equation*}
 \sum_{k\in \mathbb{N}\setminus\{0\}}e^{-\frac{1}{4t}(-k-x)^2}\leq \sqrt{4\pi t}.
\end{equation*}
It follows that
\begin{equation*}
 \sum_{k\in \mathbb{Z}\setminus\{0\}}e^{-\frac{1}{4t}(-k-x)^2}\leq 2\sqrt{4\pi t}
\end{equation*}
and in particular,
\begin{equation*}
\int_{\dT}  \sum_{k\in \mathbb{Z}\setminus\{0\}}e^{-\frac{1}{4t}(k-y)^2}\dd y \leq \sqrt{4\pi t}.
\end{equation*}
Therefore, since $s\in (0,1)$, we obtain
\begin{equation}\label{eq:01}
    \sum_{k\in \mathbb{Z}}\int_0^1 |u_k^x|(t)\dd t\leq C\int_0^1 \frac{1}{t^{\frac{1+s}{2}}} \dd t <\infty.
\end{equation}

We now consider the region $t\geq 1$. By Taylor expansion, there exist constants $C>0$ and $C'>0$ such that for all $t>0$,
\begin{equation*}
    |u_k^x|(t)\leq \frac{C}{t^{2+\frac{s}{2}} }|(k-x)^2-k^2| \leq C' \frac{|k|+1}{t^{2+\frac{s}{2}}}.
\end{equation*}
Thus, there exists a constant $C>0$ such that for all $t>0$,
\begin{equation*}
   \sup_{t\in (0,1)} \sum_{k:|k|\leq (\sqrt{t})^{1+\frac{s}{4}}} |u_k^x|(t)\leq \frac{C}{t^{1+\frac{s}{2}-\frac{s}{4}}}.
\end{equation*}
Hence, integrating the above between $1$ and $+\infty$, we find that
\begin{equation*}
  \sum_{k:|k|\leq (\sqrt{t})^{1+\frac{s}{4}}}\int_1^\infty |u_k^x|(t)\dd t<\infty.
\end{equation*}

On the other hand, by comparison with an integral, one can check that there exist constants $C>0$ and $c>0$ such that for all $t\in (0,1)$,
\begin{equation*}
   \sum_{k:|k|> (\sqrt{t})^{1+\frac{s}{4}}} e^{-\frac{1}{4t}(k-x)^2}\leq C\sqrt{t}e^{-ct^{\frac{s}{4}}}.
\end{equation*}
Thus,
\begin{equation*}
\int_1^\infty \left(\sum_{k:|k|> (\sqrt{t})^{1+{\frac{s}{4}}}}e^{-\frac{1}{4t}(k-x)^2} \right)\dd t<\infty.
\end{equation*}
Similarly,
\begin{equation*}
  \int_1^\infty \left(\sum_{k:|k|> (\sqrt{t})^{1+{\frac{s}{4}}}}\int_{\dT}e^{-\frac{1}{4t}(k-y)^2}\dd y\right) \dd t<\infty.
\end{equation*}
Combining the above displays gives
\begin{equation}\label{eq:1inf}
     \sum_{k\in \mathbb{Z}}\int_1^\infty |u_k^x|(t)\dd t <\infty.
\end{equation}
With \eqref{eq:01} and \eqref{eq:1inf}, we conclude that
\begin{equation*}
    \sum_{k\in \mathbb{Z}}\int_0^\infty |u_k^x|(t)\dd t <\infty.
\end{equation*}

\paragraph{\bf{Step 3: evaluation of the integral}}
Combining \eqref{eq:01} and \eqref{eq:1inf} and invoking Fubini’s theorem, we may interchange the summation and the integral in \eqref{eq:invert me}, which gives
\begin{equation*}
\begin{split}
    g(x)&=\frac{c_s}{\Gamma(\frac{1-s}{2})\sqrt{4\pi}}\sum_{k\in \mathbb{Z}}\int_0^{\infty}u_k^x(t)\dd t\\
    &=\frac{c_s}{\Gamma(\frac{1-s}{2})\sqrt{4\pi}}\sum_{k\in \mathbb{Z}}\left(\int_0^{\infty}e^{-\frac{1}{4t}(x-k)^2}\frac{\dd t}{t^{1+\frac{s}{2}} } -\int_0^{\infty}\int_{\dT}e^{-\frac{1}{4t}(y-k)^2}\frac{\dd t}{t^{1+\frac{s}{2}} }\dd y\right)\\
    &=\frac{c_s\Gamma(\frac{s}{2})2^s}{\Gamma(\frac{1-s}{2})2\sqrt{\pi} }\sum_{k\in \mathbb{Z}}\left(\frac{1}{|x-k|^{s}}-\int_{\dT}\frac{\dd y}{|y-k|^{s}}\right)\\
    &=\lim_{n\to \infty}\sum_{k=-n}^n\left(\frac{1}{|x-k|^{s}}-\int_{\dT}\frac{\dd y}{|y-k|^{s}}\right)\\
    &=\lim_{n\to\infty}\left(\sum_{k=-n}^n\frac{1}{|x+k|^s}-\frac{2}{1-s}n^{1-s} \right),
\end{split}
\end{equation*}
where we have used the change of variables $t\in(0,\infty)\mapsto \frac{1}{t}$ and \eqref{eq:lambda-s} in the third equality, the expression of $c_s$ \eqref{eq:defcs} in the fourth equality, and the fact that 
\begin{equation*}
    \int_{-n}^{n+1}\frac{1}{x^s}\dd x=\frac{2}{1-s}n^{1-s}+o_n(1)
\end{equation*}
in the last equality.

\paragraph{\bf{Step 4: rewriting $g$}}
By the Euler-Maclaurin formula, for all $a\in \dT\setminus\{0\}$, 
\begin{equation}\label{eq:euler mac}
   \lim_{n\to\infty}\left(\sum_{k=0}^n \frac{1}{(k+a)^s}-\frac{1}{1-s}n^{1-s}\right)=-\frac{1}{1-s}a^{1-s}+\frac{1}{ a^s}-s\int_0^{\infty} \frac{t-\lfloor t\rfloor -\frac{1}{2}}{(t+a)^{1+s}}\dd t.
\end{equation}
If $s$ is complex-valued with $\mathrm{Re}(s)>1$, then the left-hand side of \eqref{eq:euler mac} is given $\zeta(s,a)$ \eqref{eq:Hurwitz}. It is argued in \cite[Eq.~(5.2)]{berndt} via an analytic continuation argument that $\zeta(s,a)$ coincides with the right-hand side of \eqref{eq:euler mac} when $\Re(s)>-1$ and $a\in (0,1)$. As a consequence, we find that for all $x\in \dT$ and $s\in (0,1)$
\begin{equation*}
    g(x)=\zeta(s,1-x)+\zeta(s,x).
\end{equation*}

\paragraph{\bf{Step 5: proof of \eqref{eq:frac s}}}
Let $f\in \mathcal{C}^\infty(\dT)$. For all $\lambda>0$, we have 
\begin{equation*}
    \lambda^\alpha=\frac{\alpha}{\Gamma(1-\alpha)}\int_0^\infty (1-e^{-\lambda t})\frac{\dd t}{t^{1+\alpha}}.
\end{equation*}
Hence,
\begin{equation*}
    \widehat{(-\Delta)^{\alpha}f}(k)=(2\pi|k|^2)^{\alpha}\hat{f}(k)=\frac{\alpha}{\Gamma(1-\alpha)}\int_0^\infty (1-e^{-4\pi^2|k|^2t})\hat{f}(k)\frac{\dd t}{t^{1+\alpha}}.
\end{equation*}
We deduce that 
\begin{equation*}
(-\Delta)^{\alpha}f= \frac{\alpha}{\Gamma(1-\alpha)}\int_0^\infty (f-f*W_t)\frac{\dd t}{t^{1+\alpha}}.
\end{equation*}
Recalling \eqref{eq:heat}, we have 
\begin{equation*}
    f*W_t(x)=\frac{1}{\sqrt{4\pi t}}\sum_{n\in \mathbb{Z}} \int f(y) e^{-\frac{1}{4t}(x-y-n)^2}.
\end{equation*}
By proceeding as in Step 2, we may show that one can invert the integral and the sum, which yields
\begin{equation*}
    (-\Delta)^\alpha f=\frac{\alpha}{\Gamma(1-\alpha)\sqrt{4\pi} }\sum_{n\in \mathbb{Z}} \left(\int (f(x)-f(y)) \int_0^\infty e^{-\frac{(x-y-n)^2}{4t}}\frac{\dd t}{t^{\frac{3}{2}+\alpha}}\dd y\right).
\end{equation*}
We can check that 
\begin{equation*}
\int_0^\infty e^{-\frac{(x-y-n)^2}{4t}}\frac{\dd t}{t^{\frac{3}{2}+\alpha}}=2^{1+2\alpha}\Gamma(\alpha+\tfrac{1}{2})\frac{1}{|x-y-n|^{1+2\alpha}}.
\end{equation*}
Combining the above displays gives \eqref{eq:frac s}.
\end{proof}

\subsection{Inverse Riesz transform}

\begin{lemma}\label{lemma:inverse riesz}Let $\xi\in \mathcal{C}^{\infty}(\dT)$. Let $\psi\in \mathcal{C}^\infty(\dT)$ be given by  \begin{equation}\label{eq:transport}
    \psi'=-\frac{1}{2c_s}(-\Delta)^{\frac{1-s}{2}}\xi \quad \text{and}\quad \int \psi=0.
\end{equation}
\begin{enumerate}
\item We have
\begin{equation}\label{eq:an expression for psi}
  \psi(x)=-\frac{4^{-s}}{\pi\tan(\frac{\pi}{2}s)}\operatorname*{p.v.}_{\substack{k\in\mathbb Z}} \sum_{k\in \mathbb{Z}}\int_{\dT}\frac{\xi(y)-\int \xi}{|x-y-k|^{1-s}}\sgn(x-y-k)\dd y,\quad \text{for all $x\in \dT$}.
\end{equation}
\item We have
\begin{equation}\label{eq:mf var}
\begin{split}
  \frac{(2\pi)^{1-s}}{2 c_s}\Vert\xi\Vert_{H^{\frac{1-s}{2}}(\dT)}^2 &=\iint g''(x-y)(\psi(x)-\psi(y))^2\dd x\dd y-2\int \xi'(x)\psi(x)\dd x\\
  &=-\int \xi'(x) \psi(x)\dd x.
\end{split}
  \end{equation}
  \item 
  Assume that $\xi$ is supported on $(-\frac{1}{2},\frac{1}{2})$. Let $\ell_N\in (0,1]$. Let $\xi_0:\dR\to\dR$ such that
\begin{equation}\label{eq:def xi00}
    \xi_0(x)=\begin{cases}
       \xi(x)&\text{if $|x|\leq \frac{1}{2}$}\\
       0 &\text{if $|x|>\frac{1}{2}$}
    \end{cases}.
\end{equation}
Then, there exists a constant $C>0$ depending on $s$ such that
  \begin{equation}\label{eq:scaling frac}
    \Bigr|\Vert\xi(\ell_N^{-1}\cdot)\Vert_{H^{\frac{1-s}{2}}(\dT)}-\ell_N^s \Vert \xi_0\Vert_{H^{\frac{1-s}{2}}(\dR) }^2\Bigr|\leq C\ell_N^2\Vert \xi_0\Vert_{L^2(\dT)}^2.
  \end{equation}
\end{enumerate}
\end{lemma}

\begin{proof}
Let $\chi\in \mathcal{C}^{\infty}(\dT)$ such that $\xi-\int \xi=\chi'$. Since $\partial_x (-\Delta)^{\frac{1-s}{2}}=(-\Delta)^{\frac{1-s}{2}} \partial_x$ on $\mathcal{C}^\infty(\dT)$, we have
\begin{equation}
    \psi=-\frac{1}{2c_s}(-\Delta)^{\frac{1-s}{2}}\chi.
\end{equation}
Since $\chi\in \mathcal{C}^\infty(\dT)$, by \eqref{eq:frac s},
\begin{equation}\label{eq:psi'}
    \psi(x)=-\frac{c_s'}{2 c_s}\operatorname*{p.v.}_{\!y\in(0,1)} \int_{\dT} \sum_{k\in \mathbb{Z}}\frac{\chi(x)-\chi(y)}{|x-y-k|^{2-s}}\dd y,
\end{equation}
where
\begin{equation*}
    c_s'=\frac{2^{1-s}\Gamma(1-\frac{s}{2}) }{|\Gamma(-\frac{1-s}{2} )|\pi^{\frac{1}{2}} }=\frac{2^{-s}(1-s)\Gamma(1-\frac{s}{2}) }{\Gamma(\frac{1+s}{2} )\pi^{\frac{1}{2}} }.
\end{equation*}
Using Euler's reflection lemma, we next compute
\begin{equation*}
    \frac{c_s'}{c_s}=\frac{2^{1-2s}(1-s)}{\pi}\frac{\Gamma(1-\frac{s}{2})\Gamma(\frac{s}{2})}{\Gamma(\frac{1-s}{2})\Gamma(\frac{1+s}{2})}=\frac{2^{1-2s}(1-s)}{\pi}\frac{\sin(\frac{\pi(1-s)}{2})}{\sin(\frac{\pi s}{2})}=\frac{(1-s)2^{1-2s}}{\pi\tan(\frac{\pi}{2}s)}.
\end{equation*}
Thus for any $x\in\dT$
\begin{equation*}
    \psi(x)=-\frac{1-s}{4^s\pi\tan(\frac{\pi}{2}s)}\operatorname*{p.v.}_{\!y\in(0,1)} \sum_{k\in\mathbb{Z}}\int_{\dT}\frac{\chi(x)-\chi(y)}{|x-y-k|^{2-s}}\dd y.
\end{equation*}
By integration by parts, and using that $\int \psi=0$ and $\psi\in \mathcal{C}^\infty(\dT)$, we get
\begin{equation*}
  \psi(x)=-\frac{4^{-s}}{\pi\tan(\frac{\pi}{2}s)}\operatorname*{p.v.}_{\substack{k\in\mathbb Z}} \sum_{k\in \mathbb{Z}}\int_{\dT}\frac{\xi(y)-\int \xi}{|x-y-k|^{1-s}}\sgn(x-y-k)\dd y.
\end{equation*}

By integration by parts,
\begin{multline*}
    \int g''(x-y)(\psi(x)-\psi(y))^2\dd x\dd y=-\int g'(x-y)\psi'(x)2(\psi(x)-\psi(y))\dd x\dd y\\
    =2\int g(x-y)\psi'(x)\psi'(y)
    =\int \psi' (2g*\psi')
    =-\int \psi' \xi=\int \psi\xi',
\end{multline*}
proving \eqref{eq:mf var}. Moreover, 
\begin{equation*}
    \int_{\dT} \psi'\xi=\sum_{k\in \mathbb{Z}} \widehat{\psi'}(k)\hat{\xi}(k)=-\frac{(2\pi)^{1-s}}{2 c_s}\sum_{k\in \mathbb{Z}} |k|^{1-s}|\hat{\xi}(k)|^2=-\frac{(2\pi)^{1-s}}{2 c_s} \Vert \xi\Vert_{H^{\frac{1-s}{2}}(\dT)}^2.
\end{equation*}
Assembling the two above displays shows \eqref{eq:mf var}.

Let us now prove \eqref{eq:scaling frac}. Assume that $\xi$ is supported on $(-\frac{1}{2},\frac{1}{2})$. Let $\xi_0:\dR\to\dR$ such that
\begin{equation*}
    \xi_0(x)=\begin{cases}
       \xi(x)&\text{if $|x|\leq \frac{1}{2}$}\\
       0 &\text{if $|x|>\frac{1}{2}$}.
    \end{cases}
\end{equation*}
We have
\begin{equation*}
\begin{split}
\Vert\xi(\ell_N^{-1}\cdot)\Vert_{H^{\frac{1-s}{2}}(\dT)}^2&=\int_{\dT}\mathds{1}_{|x|<\frac{1}{2}\ell_N} \xi(\ell_N^{-1}x)\sum_{k\in \mathbb{Z}}\int_{\dT}\frac{\xi(\ell_N^{-1}x)-\xi(\ell_N^{-1}y)}{|x-y+k|^{2-s}}\mathds{1}_{|y|<\frac{1}{2}\ell_N}\dd y\dd x\\&=\int_{\dR} \xi_0(\ell_N^{-1}x)\sum_{k\in \mathbb{Z}}\int_{\dR}\frac{\xi_0(\ell_N^{-1}x)-\xi_0(\ell_N^{-1}y)}{|x-y+k|^{2-s}}\dd y \dd x\\
   &=\ell_N^s\Vert \xi_0\Vert_{H^{\frac{1-s}{2}} (\dR)}^2+\sum_{k\in \mathbb{Z}\setminus\{0\}}\int_{\dR}\xi_0(\ell_N^{-1}x)\frac{\xi_0(\ell_N^{-1}x)-\xi_0(\ell_N^{-1}y)}{|x-y+k|^{2-s}}\dd y\dd x\\
   &=\ell_N^s\Vert\xi_0\Vert_{H^{\frac{1-s}{2}}(\dR) }^2+O\Bigr(\ell_N^2 \int |\xi_0(x)||\xi_0(x)-\xi_0(y)|\dd x\dd y\Bigr)\\
   &=\ell_N^s\Vert\xi_0\Vert_{H^{\frac{1-s}{2} } (\dR)}^2+O(\ell_N^2 \Vert\xi_0\Vert_{L^2(\dR)}^2).
\end{split}
\end{equation*}
\end{proof}

Next, we apply the pointwise formula \eqref{eq:an expression for psi} to indicator functions and inverse power functions.

\begin{lemma}[Explicit formulas]\label{lemma:explicit formulas}
Let $\zeta(s,a)$ be the Hurwitz zeta function \eqref{eq:Hurwitz}. 
\begin{enumerate}
\item Let $\xi:=\mathds{1}_{(-a,a)}-2a$ for some $0<a<\frac{1}{2}$ and $\psi$ be given by (\ref{eq:an expression for psi}). We have
\begin{equation}\label{eq:formulapsi}
    \psi(x)=\frac{1}{4^s\pi s\tan(\frac{\pi}{2}s)}(\zeta(-s,x+a)-\zeta(-s,x-a)),\quad \text{for all $x\in \dT$}.
\end{equation}
Moreover,
\begin{equation}\label{eq:formulaindic}
   \frac{(2\pi)^{1-s}}{2c_s}{\Vert\xi\Vert_{H^{\frac{1-s}{2}}(\dT)}^2}=\frac{2\cotan(\frac{\pi}{2}s)}{4^{s} \pi s}\zeta(-s,2a).
\end{equation}
\item Let $\alpha \in (0,\frac{s}{2})$ and $\xi:=\zeta(-\alpha,\cdot)$. We have
\begin{equation*}
    \frac{(2\pi)^{1-s} }{2c_s}\Vert\xi\Vert_{H^{\frac{1-s}{2}}(\dT)}^2=(2\pi)^{-1+2\alpha-s} \frac{c_\alpha^2}{c_s}\zeta(1+s-2\alpha).
\end{equation*}
\end{enumerate}
\end{lemma}

\begin{proof}
Let $\xi:=\mathds{1}_{(-a,a)}$ for $a\in (0,\frac{1}{2})$. Formula (\ref{eq:formulapsi}) follows from (\ref{eq:an expression for psi}). Moreover, by (\ref{eq:mf var}) and since $\zeta(-s,0)=0$,
\begin{equation*}
    \frac{(2\pi)^{1-s}}{2c_s}\Vert\xi\Vert_{H^{\frac{1-s}{2}}(\dT)}^2=\int \xi\psi'=\psi(a)-\psi(-a)=\frac{\cotan(\frac{\pi}{2}s)}{4^s\pi s}2\zeta(-s,2a).
\end{equation*}

Let $\alpha\in (0,\frac{s}{2})$ and $\xi:=\zeta(-\alpha,\cdot)$. By Lemma \ref{lemma:fundamental}, we have 
\begin{equation*}
    (-\Delta)^{\frac{1-\alpha}{2}}\xi=c_{\alpha}\delta_0.
\end{equation*}
Therefore, for every $k\in\mathbb{Z}$,
    \begin{equation*}
        \hat{\xi}(k)=c_\alpha\frac{\mathds{1}_{k\neq 0}}{|k|^{1-\alpha}(2\pi)^{1-\alpha}}.
    \end{equation*}
We have
    \begin{equation*}
\Vert\xi\Vert_{H^{\frac{1-s}{2}}(\dT)}^2=\sum_{k\in \mathbb Z}|k|^{1-s}|\hat{\xi}(k)|^2=\frac{c_\alpha^2}{(2\pi)^{2(1-\alpha)}}\sum_{k\in \mathbb{Z}\setminus\{0\}} \frac{1}{k^{1-2\alpha+s} }.
    \end{equation*}
    Moreover,
    \begin{equation*}
      \sum_{k\in \mathbb{Z}\setminus\{0\}} \frac{1}{k^{1-2\alpha+s} }=2\zeta(1-2\alpha+s).
    \end{equation*}
    Combining the above displays, we get 
    \begin{equation*}
      \frac{(2\pi)^{1-s}}{2 c_s}\Vert\xi\Vert_{H^{\frac{1-s}{2}}(\dT)}^2=(2\pi)^{1-s-2(1-\alpha)}\frac{c_\alpha^2}{c_s}\zeta(1+s-2\alpha).
    \end{equation*}
   \end{proof}

\section{The Helffer-Sjöstrand equation}\label{section:helffer}
In this section, we review some basic properties of the Helffer-Sjöstrand equation. We first state some existence and uniqueness results, then adapt a known comparison principle to the circular setting, and finally record various concentration estimates. 

\subsection{Basic properties}

In this subsection we introduce the H.-S. equation and state some standard existence and uniqueness results, following in part \cite{armstrong2019c}. Let $\mu$ be a probability measure on $D_N$ of the form $$\dd\mu(X_N)=e^{-H(X_N)}\mathds{1}_{D_N}(X_N)\dd X_N,$$ with $H:D_N\to \dR$ measurable. We make the following assumption on $H$:

\begin{assumption}\label{assumption:gibbs measure}
The Hamiltonian $H:D_N\to\dR$ is of the form
\begin{equation}\label{eq:chiij}
H:X_N\mapsto \sum_{i\neq j}\chi_{ij}(|x_i-x_j|),
\end{equation}
for a family of functions $\chi_{ij}:(0,\infty)\to \dR$ such that for every $i,j\in \{1,\ldots,N\}$, $i\neq j$, $\chi_{ij}=\chi_{ji}$ and
\begin{equation}\label{eq:growth chi}
  \chi_{ij}\in \mathcal{C}^{\infty}(0,\infty),\quad
  \chi_{ij}''\geq c>0\quad \text{and}\quad \lim_{x\to 0}\chi_{ij}(x)=+\infty.
\end{equation}
\end{assumption}

Let $F:D_N\to \dR$ be a smooth enough function. We seek to rewrite the variance of $F$ under $\mu$ in a tractable way. Define on $\mathcal{C}^{\infty}(D_N)$ the Langevin operator
\begin{equation*}
    \mc{L}^\mu=\nabla H\cdot \nabla-\Delta,
\end{equation*}
with $\nabla$ and $\Delta$ denoting the gradient and Laplace operators on $\dT^N$. Recall the integration by parts formula under $\mu:$ for all $\phi,\psi\in \mathcal{C}^\infty(D_N)$, we have 
\begin{equation}\label{eq:IPPLmu}
    \dE_\mu[(\mc{L}^\mu \phi)\psi]=\dE[\nabla \phi\cdot \nabla \psi].
\end{equation}

We study the well-posedness of the Helffer–Sjöstrand equation
\begin{equation}\label{eq:pose me}
   A_1^\mu\nabla\phi=\nabla F \quad\text{on }D_N.
\end{equation}
Define the norm
\begin{equation*}
    \Vert F\Vert_{H^{1}(\mu) }=\dE_{\mu}[F^2]^{\frac{1}{2}}+\dE_{\mu}[|\nabla F|^2]^{\frac{1}{2}}.
\end{equation*}
Let $H^1(\mu)$ be the completion of $\mathcal{C}^{\infty}(D_N)$ with respect to the norm $\Vert \cdot \Vert_{H^1(\mu)}$. Also, define the norm
\begin{equation*}
    \Vert F \Vert_{H^{-1}(\mu) }:=\sup \{ |\dE_{\mu}[F G]|:G\in H^1(\mu), \Vert G\Vert_{H^1(\mu)}\leq 1\}
\end{equation*}
and let $H^{-1}(\mu)$ be the dual of $H^1(\mu)$, defined as the completion of $\mathcal{C}^{\infty}(D_N)$ with respect to the norm $\Vert \cdot \Vert_{H^{-1}(\mu)}$.

Define the map
\begin{equation}\label{def:projection Pi}
   \Gap_N:X_N\in D_N\mapsto N(x_2-x_1,x_3-x_1,\ldots,x_N-x_{N-1},x_1-x_N)\in \dR^{N}.
\end{equation}

\begin{proposition}[Existence and representation]\label{proposition:existence HS equation}
Let $\mu$ satisfy Assumption \ref{assumption:gibbs measure} and let $\mu':=\Gap_N\# \mu$. Let $F\in H^1(\mu)$. Assume that $F$ is of the form $F=G\circ \Gap_N$ with $G\in H^1(\mu')$. Then, there exists a unique $\nabla \phi\in L^2(\{1,\ldots,N\},H^1(\mu))$ such that
\begin{equation}\label{eq:the HS}
A_1^\mu\nabla \phi=\nabla F \quad \text{ on }D_N,
\end{equation}
with the equality being, for each coordinate, an equality of elements of $H^{-1}(\mu)$. 

Moreover, the solution of (\ref{eq:the HS}) is the unique minimizer of the functional
\begin{equation}\label{eq:variatt}
\nabla\phi \mapsto \dE_{\mu}[\nabla\phi\cdot \nabla^2 H\nabla\phi+\|\nabla^2\phi\|_F^2 -2 \nabla F\cdot \nabla\phi],
\end{equation}
on maps $\nabla\phi\in L^2(\{1,\ldots,N\},H^1(\mu))$ such that $\nabla\phi\cdot \vec{n}=0$ a.e. on $\partial D_N$. 

Finally, the variance of $F$ may be represented as
\begin{equation}\label{eq:representation variance}
    \Var_{\mu}[F]=\dE_{\mu}[\nabla \phi\cdot \nabla F ]
\end{equation}
and the covariance between $F$ and any function $G\in H^1(\mu)$ as
\begin{equation*}
    \Cov_{\mu}[F,G]=\dE_{\mu}[\nabla \phi\cdot \nabla G].
\end{equation*}
\end{proposition}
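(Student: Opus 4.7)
The plan is to prove existence and uniqueness by the direct method of calculus of variations applied to the quadratic functional in (\ref{eq:variatt}), and then to derive the variance and covariance identities by integration by parts. The principal technical issue is that the density of $\mu$ is not bounded from below on $D_N$ (the pair interaction may blow up at coinciding points), so I would first reduce to gap coordinates in order to work with a strictly log-concave measure on a convex domain.

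First I would push everything forward by $\Pi$. Writing $y_i=x_{i+1}-x_1$ and $\tilde H(y)$ for the expression of $H$ in these coordinates on the open cone $\Pi(D_N)\subset \dR^{N-1}$, Assumptions \ref{assumptions:gibbs measure} imply that $\nabla^2 H$ is positive semidefinite with kernel exactly $\dR(1,\ldots,1)$; restricted to the complementary subspace one sees that $\nabla^2 \tilde H$ is positive definite and $\tilde H$ is strictly convex. Hence $\mu'=\mu\circ \Pi^{-1}$ is strictly log-concave and satisfies a Brascamp--Lieb/Poincar\'e inequality. In the alternative case where $\chi$ is bounded one stays on $D_N$: the density of $\mu$ is then comparable to Lebesgue, and the standard Neumann Poincar\'e inequality on the bounded open set $D_N$ plays exactly the same role.

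Next, writing $F=G\circ \Pi$ in the gap case, I would consider the functional
\begin{equation*}
J'(\psi)=\dE_{\mu'}[\psi\cdot \nabla^2 \tilde H\,\psi+|D\psi|^2-2\nabla G\cdot\psi]
\end{equation*}
on $L^2(\{1,\ldots,N-1\},H^1(\mu'))$. The quadratic part is strictly convex and weakly lower semicontinuous; coercivity follows by combining $\dE_{\mu'}[|D\psi|^2]$ with the Poincar\'e inequality for $\mu'$ (which bounds $\|\psi\|_{L^2(\mu')}^2$ by $\|D\psi\|_{L^2(\mu')}^2$ up to a constant vector, absorbed by the positive-definite $\nabla^2\tilde H$ term) and Cauchy--Schwarz on the linear term, using $\nabla G\in L^2(\mu')$. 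The direct method then delivers a unique minimizer $V$; testing against smooth compactly supported vector fields, its Euler--Lagrange equation reads $A_1^{\mu'}V=\nabla G$ in the sense of $H^{-1}(\mu')$, with the natural Neumann condition on $\partial \Pi(D_N)$ built into the absence of boundary restrictions on the test space. Pulling $V$ back by $\Pi$ yields the vector field claimed in (\ref{eq:the HS}).

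The most delicate step, which I expect to be the main obstacle, is recovering a genuine scalar potential $\phi$ such that the minimizer coincides with $\nabla \phi$ and justifying the variance representation (\ref{eq:representation variance}) rigorously despite the singular density of $\mu$. For this I would separately solve the scalar Poisson equation $\mc{L}^{\mu'}\phi'=G-\dE_{\mu'}[G]$ by minimizing $\psi\mapsto \dE_{\mu'}[|\nabla\psi|^2-2(G-\dE[G])\psi]$, coercive by the Poincar\'e inequality and solvable because the right-hand side has zero $\mu'$-mean. Differentiating this scalar Poisson equation and invoking uniqueness of the minimizer of $J'$ identifies $\nabla\phi'$ with $V$; setting $\phi=\phi'\circ\Pi$ and approximating by smooth compactly supported gradients, integration by parts (no boundary contributions appear because the density of $\mu'$ vanishes on $\partial \Pi(D_N)$ in the unbounded-$\chi$ case, and standard Neumann boundary terms cancel in the bounded-$\chi$ case) yields
\begin{equation*}
\Var_\mu[F]=\dE_\mu[F\,\mc{L}^\mu\phi]=\dE_\mu[\nabla F\cdot \nabla\phi],
\end{equation*}
and the covariance identity follows by the same argument applied to $G\in H^1(\mu)$ in place of $F$. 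Once this reduction to gap coordinates is carried out, the rest is standard elliptic theory; the subtle point is confirming that approximations by compactly supported smooth test fields are dense in the relevant weighted Sobolev spaces so that no boundary flux survives.
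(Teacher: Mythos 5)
Your overall plan matches the paper's (pass to gap coordinates, observe that $\nabla^2\tilde H\ge c\,\mathrm{Id}$ uniformly there — this is exactly the Brascamp--Lieb/Poincar\'e observation the paper uses — solve the scalar Poisson equation by the direct method, minimize the quadratic functional), so the architecture is right. However, the step you flag as "most delicate" and then dispatch in one sentence — ``differentiating this scalar Poisson equation and invoking uniqueness of the minimizer of $J'$ identifies $\nabla\phi'$ with $V$'' — is the actual technical content of the proposition, and you do not supply it. Passing from $\mc{L}^{\mu'}\phi'=G-\dE_{\mu'}[G]$ to $A_1^{\mu'}\nabla\phi'=\nabla G$ is not a formal differentiation: it requires the commutator identity $[\partial_j,\partial_i^*]=(\nabla^2 H)_{i,j}$, which is precisely where the Hessian term in the Helffer--Sj\"ostrand operator comes from. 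Concretely, for any test function $w$ with $w\cdot\vec n=0$ on the boundary one writes
\begin{equation*}
\dE_\mu[w\,\partial_i F]=\dE_\mu[(\partial_i^*w)\,\mc{L}^\mu\phi]=\dE_\mu[\nabla(\partial_i^*w)\cdot\nabla\phi]=\dE_\mu[w\,\mc{L}^\mu(\partial_i\phi)]+\dE_\mu[w\,e_i\cdot\nabla^2 H\nabla\phi],
\end{equation*}
and this chain is what turns the scalar Poisson solution into a weak solution of the vector equation. Without it, the two objects you have produced — the vector-field minimizer $V$ and the scalar potential $\phi'$ — are not linked, and the variance formula (which you deduce from $\mc{L}^\mu\phi$) does not transfer to the minimizer.

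Two secondary issues. First, ``invoking uniqueness of the minimizer of $J'$'' to identify $V$ with $\nabla\phi'$ presupposes that $\nabla\phi'\in L^2(\{1,\ldots,N-1\},H^1(\mu'))$, i.e.\ that the scalar Poisson solution has square-integrable second weak derivatives; the scalar existence theory gives $\phi'\in H^1(\mu')$ only, and the needed regularity is exactly what the weak H.-S.\ identity above encodes, so the argument is circular as stated. Second, your case split ``unbounded $\chi$'' vs.\ ``bounded $\chi$'' does not match the hypotheses of the proposition, which are ``$F$ is a function of the gaps'' or ``$\chi$ is bounded''; the stated assumptions on $\chi$ do not force $\chi\to\infty$ at $0$, and the boundary terms in the integration by parts vanish because of the imposed Neumann condition $\psi\cdot\vec n=0$, not because the density of $\mu'$ degenerates on $\partial\Pi(D_N)$.
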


The identity \eqref{eq:representation variance} is called the Helffer-Sjöstrand formula. The proof of Proposition \ref{proposition:existence HS equation} is postponed to Appendix \ref{section:existence}.

When $\nabla F$ is replaced by a non-gradient vector field $v$, the solution is generally non-unique. To ensure uniqueness, we also need to assume that $\sum_{i=1}^N v_i=0$ and that for every $i\in \{1,\ldots,N\}$, $v_i$ is a function of the gaps.

\begin{proposition}[Well-posedness for non-gradient vector fields]\label{prop:well posed non grad}
Let $\mu$ satisfy Assumption~\ref{assumption:gibbs measure} and let $\mu':=\Gap_N\#\mu$. Let $v\in L^2(\{1,\ldots,N\},H^{-1}(\mu))$ be such that $v\cdot (e_1+\cdots+e_N)=0$ and for each $i\in \{1,\ldots,N\}$, $v_i=w_i\circ \Gap_N$ for some $w_i\in H^{-1}(\mu')$. Then, there exists a unique $\psi\in L^2(\{1,\ldots,N\},H^1(\mu))$ such that
\begin{equation}\label{eq:Helf non gradd}
   \begin{cases}
        A_1^\mu \psi=v & \text{on }D_N  \\
       \psi\cdot (e_1+\cdots+e_N)=0 &\text{on }D_N.
    \end{cases}
\end{equation}
Moreover, the solution of \eqref{eq:Helf non gradd} is also the unique minimizer of 
\begin{equation*}
    \psi\mapsto \dE_{\mu}[\psi \cdot \nabla^2 H\psi+\|D\psi\|_F^2-2v\cdot \psi],
\end{equation*}
over maps $\psi \in L^2(\{1,\ldots,N\},H^1(\mu))$.
\end{proposition}

The proof of Proposition \ref{prop:well posed non grad} is postponed to Appendix \ref{section:existence}. When $v$ satisfies the assumption of Proposition \ref{prop:well posed non grad}, we denote $\psi=(A_1^\mu)^{-1}v$ as the solution of \eqref{eq:Helf non gradd}.

\begin{lemma}\label{lemma:first properties}
Let $\mu$ satisfy Assumption \ref{assumption:gibbs measure} and let $\mu':=\Gap_N\#\mu$. Let $v, w \in L^2(\{1,\ldots,N\},H^{-1}(\mu))$ satisfy the assumption of Proposition \ref{prop:well posed non grad}. We have
    \begin{equation}\label{eq:subadd sum 0}
        \dE_{\mu}[(v+w) \cdot (A_1^\mu)^{-1}(v+w)]\leq 2\Bigr(\dE_{\mu}[v \cdot (A_1^\mu)^{-1}v]+\dE_{\mu}[w \cdot (A_1^\mu)^{-1}w]\Bigr).
    \end{equation}
\end{lemma}

\begin{proof}
Since $v-w$ satisfies $(e_1+\cdots+e_N)\cdot (v-w)=0$, by Proposition \ref{prop:well posed non grad}, $(A_1^\mu)^{-1}(v-w)$ is well-defined. Moreover, by integration by parts \eqref{eq:IPPLmu},
\begin{multline*}
    \dE_{\mu}[(v-w)\cdot (A_1^\mu)^{-1}(v-w)]\\=\dE_\mu\Bigr[(A_1^\mu)^{-1}(v-w)\cdot \nabla^2 H (A_1^\mu)^{-1}(v-w)+\Vert D((A_1^\mu)^{-1}(v-w)) \Vert_F^2\Bigr] \geq 0,
\end{multline*}
since $\nabla H^2\geq 0$. By linearity, this implies \eqref{eq:subadd sum 0}. 
\end{proof}

\subsection{Monotonicity and consequences}\label{sub:mono}
We now state some monotonicity results related to the FKG inequality. Recall that a measure $\rho$ on $\dR^N$ is said to satisfy the FKG inequality if for any functions $F:\dR^N\to\dR$, $G:\dR^N\to\dR$ in $L^2(\rho)$ which are increasing in each variable, we have
\begin{equation*}
    \Cov_{\rho}[F,G]\geq 0.
\end{equation*}
A standard condition for $\rho$ to satisfy the FKG inequality \cite{bakryemery} is that $\rho$ can be written as
\begin{equation*}
    \dd \rho(X_N)=e^{-H(X_N)}\dd X_N
\end{equation*}
with $H:\dR^N\to\dR$ is a twice continuously differentiable function satisfying
\begin{equation}\label{eq:as FKG}
    \partial_{ij}H\leq 0 \quad \text{for every $1\leq i, j\leq N, i\neq j$}.
\end{equation}
Note that for smooth Gibbs measures, the FKG property is roughly equivalent to a maximum principle for the equation \eqref{eq:the HS}, which states that $(\mc{L}^\mu)^{-1}$ preserves the cone of smooth functions that are increasing in each variable. In the next lemma, we show that the FGK inequality allows us to compare the variance of two functions via their gradients. 

\begin{lemma}\label{lemma:general mono}
Let $\mu$ be a probability measure on $D_N$ of the form $\dd\mu(X_N)=e^{-H(X_N)}\dd X_N$ with $H\in \mc{C}^2(D_N)$ such that 
\begin{equation}\label{eq:+infty}
    \lim_{d(x,\partial D_N)\to 0}H(x)=+\infty.
\end{equation}
Assume that
\begin{equation}
    \partial_{ij}H\leq 0 \quad \text{for every $i, j\in \{1,\ldots,N\}, i\neq j$}
    \label{eq:Hijcondition} . 
\end{equation}
Let $F, G\in H^1(\mu)$ be such that for every $i\in\{1,\ldots,N\}$, 
\begin{equation}\label{eq:h1}
   |\partial_i F|\leq \partial_i G.
\end{equation}
Then,
\begin{equation}\label{eq:csq1}
    \Var_{\mu}[F]\leq \Var_{\mu}[G].
\end{equation}
\end{lemma}

\begin{proof}
It is standard, see, for example \cite[Th.~1.3]{bakryFKG} for the case of $\dR^N$, that $\mu$ satisfies the FKG inequality. That is,  for all measurable non-decreasing functions $f$ and $g$, the covariance between $f$ and $g$ under $\mu$ is non-negative.

Let $F, G\in H^1(\mu)$ satisfy \eqref{eq:h1}. One may write
\begin{equation*}
\Var_\mu[G]=\Var_\mu[F]+\Cov_\mu[G+F,G-F]
\end{equation*}
Since $G-F$ and $F+G$ are non-decreasing, their covariance is non-negative, which concludes the proof of \eqref{eq:csq1}.
\end{proof}

We observe that~$\mu$ which satisfies Assumption \ref{assumption:gibbs measure} satisfies the condition \eqref{eq:Hijcondition}. Since particles are ordered, one may write, for all $X_N\in D_N$,
\begin{equation*}
    H(X_N)=2\sum_{i<j}\chi_{ij}(x_j-x_i).
\end{equation*}
Notice that for every $i\in \{1,\ldots,N\}$ and all $X_N\in D_N$,
\begin{equation*}
    \partial_i H(X_N)=-2\sum_{j:j>i}\chi_{ij}'(x_j-x_i)+2\sum_{j:j<i}\chi_{ij}'(x_i-x_j).
\end{equation*}
Therefore, for every $i,j\in \{1,\ldots,N\}$ with $i\neq j$ and all $X_N\in D_N$, we have 
\begin{equation*}
    \partial_{ij}H(X_N)=-2\chi_{ij}''(|x_j-x_i|)\leq 0.
\end{equation*}
Therefore, if $\mu$ which satisfies Assumption \ref{assumption:gibbs measure}, then it satisfies the FKG inequality and, therefore, the conclusion of Lemma \ref{lemma:general mono}.

In fact, for such measures $\mu$, one can establish a stronger maximum principle. Let us work with the push-forward $\mu^*$ of $\mu$ by the anchoring map
\begin{equation}\label{def:Anch}
\Anch_N:x\in D_N\mapsto (x_2-x_1,x_3-x_1,\ldots,x_N-x_1)\in \dR^{N-1}.
\end{equation}

We now prove a maximum principle for the solution of $A_1^{\mu^*}\psi=v$. This is similar to the maximum principle derived in \cite{cartier1974inegalites,helffer1994correlation}.

 \begin{lemma}[Monotonicity]
 \label{lemma:comparison pri}
 Let $\mu$ satisfy Assumption \ref{assumption:gibbs measure} and let $\mu^*:=\Anch_N\# \mu$, where $\Anch_N$ is the linear map defined in \eqref{def:Anch}. Let $v\in L^2(\{1,\ldots,N-1\},H^{-1}(\mu^*))$, and let $\psi\in L^2(\{1,\ldots,N-1\},H^{1}(\mu^*))$ be the solution of 
\begin{equation}\label{eq:A1til}
    A_1^{\mu^*}\psi=v.
\end{equation}
If
\begin{equation*}
v_i \geq 0 \quad \text{a.e. on } \Anch_N(D_N),\quad \text{for every $i\in \{1,\ldots,N-1\}$} , 
\end{equation*}
then,
\begin{equation}\label{eq:tildepsi}
    \psi_i\geq 0 \quad \text{a.e. on }  \Anch_N(D_N),\quad \text{for every $i\in \{1,\ldots,N-1\}$}.
\end{equation}
\end{lemma}

\begin{remark}\label{remark:strict conv}
Let $H^*$ be such that $H=H^*(\Anch_N(x))$. We can notice that for all $U_{N-1}=(u_2,\ldots,u_N)\in \dR^{N-1}$,
\begin{equation*}
\begin{split}
    U_{N-1}\cdot \nabla^2 H^* U_{N-1}&\geq \chi_{12}''(x_2-x_1)u_2^2+ \sum_{i=2}^{N-1}\chi_{i,i+1}''(x_{i+1}-x_i)(u_{i+1}-u_i)^2\\
    &\geq \frac{1}{C}\Bigr(u_1^2+\sum_{i=2}^{N-1}(u_{i+1}-u_i)^2\Bigr).
\end{split}
\end{equation*}
Thus, there exists $C>0$ such that $\nabla^2H*\geq \frac{1}{C}\Id$. Since $\mu^*$ is uniformly log-concave, the existence and uniqueness of a solution to \eqref{eq:A1til} follow from the same reasoning as in the proof of Proposition \ref{prop:well posed non grad}.
\end{remark}

 \begin{proof}[Proof of Lemma \ref{lemma:comparison pri}]
Let $H^*$ be such that $H=H^*(\Anch_N(x))$. Let ${\psi}^+$ and ${\psi}^-$ be the positive and negative parts of ${\psi}$. Taking the scalar product of the equation $A_1^{\mu^*} \psi=v$ with $\psi^-$ gives
 \begin{equation*}
     {\psi}^-\cdot \nabla^2 H^* {\psi}+{\psi}^-\cdot (\mc{L}^{\mu^*}\otimes I_{N-1}) {\psi}= {\psi}^-\cdot {v}\geq 0.
 \end{equation*}
By integration by parts under ${\mu^*}$, since for every $i,j=1,\ldots,N$, $i\neq j$, we have $\lim_{x\to 0} \chi_{ij}(x)=+\infty$, 
 \begin{equation*}
   \dE_{{\mu^*}}[{\psi}^-\cdot (\mc{L}^{\mu^*}\otimes I_{N-1}) {\psi}]=\dE_{{\mu^*}}\left[\sum_{i=1}^N \nabla {\psi}_i^- \cdot \nabla {\psi}_i \right].
 \end{equation*}
 Indeed, the boundary term in this integration by parts vanishes due to \eqref{eq:growth chi}. Combining the two above displays, we get
 \begin{equation*}
     \dE_{{\mu^*}}\left[{\psi}^-\cdot \nabla^2 H^* {\psi} +\sum_{i=1}^{N-1} \nabla {\psi}_i^- \cdot \nabla {\psi}_i \right]\geq 0,
 \end{equation*}
Note first that for every $i\in \{1,\ldots,N-1\}$, $\nabla {\psi}_i^-\cdot \nabla {\psi}_i=-|\nabla {\psi}_i^-|^2$. Hence,
\begin{equation*}
\begin{split}
   \dE_{{\mu^*}}\left[{\psi}^-\cdot \nabla^2 H^* {\psi} +\sum_{i=1}^{N-1} \nabla {\psi}_i^- \cdot \nabla {\psi}_i \right]&=\dE_{{\mu^*}}\left[{\psi}^-\cdot \nabla^2 H^* {\psi} -\|D{\psi}^-\Vert_F^2\right]\\
   &=\dE_{{\mu^*}}\left[-{\psi}^-\cdot \nabla^2 H^* {\psi}^-+{\psi}^-\cdot \nabla^2 H^* {\psi}^+ -\|D{\psi}^-\Vert_F^2\right]. 
\end{split}
\end{equation*}
Also observe that, since ${\psi}_i^+{\psi}_i^-=0$,
 \begin{equation}\label{eq:neg}
     {\psi}^-\cdot \nabla^2 H^* {\psi}^+=2\sum_{i< j}\chi_{ij}''(x_j-x_i)({\psi}_i^- {\psi}_i^+-{\psi}_i^-{\psi}_j^+)=-2\sum_{i<j}\chi_{ij}''(x_j-x_i){\psi}_i^- {\psi}_j^+\leq 0,
 \end{equation}
where we have used that $\chi_{ij}''\geq 0$ in the last inequality. We deduce from the above three displays that
\begin{equation*}
     \dE_{{\mu^*}}[{\psi}^-\cdot \nabla^2 H^*{\psi}^-]\leq 0.
\end{equation*}
By Assumption \ref{assumption:gibbs measure} (see Remark \ref{remark:strict conv}), there exists $c>0$ such that 
\begin{equation*}
     \dE_{{\mu^*}}[{\psi}^-\cdot \nabla^2 H^*{\psi}^-]\geq c\dE_{{\mu^*}}\left[\sum_{i=1}^{N-1}({\psi}_{i}^-)^2\right].
\end{equation*}
This shows that ${\psi}^-=0$, which concludes the proof of the lemma.
\end{proof}

The maximum principle of Lemma \ref{lemma:comparison pri} allows us to extend Lemma \ref{lemma:general mono}.

\begin{lemma}[$A_1^{-1}$-energy comparison]\label{lemma:energy comparisons}Let $\mu$ satisfy Assumption \ref{assumption:gibbs measure} and let $\mu^*:=\Anch_N\# \mu$, where $\Anch_N$ is the linear map defined in \eqref{def:Anch}. Let ${v}, {w} \in L^2(\{1,\ldots,N-1\},H^{-1}(\mu^* ))$. If
\begin{equation}\label{eq:assumption wi vi}
  |{v}_i| \leq {w}_i, \quad \text{a.e. on }\Anch_N(D_N) \quad \text{for every $i\in \{1,\ldots,N-1\}$} , 
\end{equation}
then 
\begin{equation*}
    \dE_{\mu^*}\Bigr[{v}\cdot (A_1^{\mu^*})^{-1}{v}\Bigr]\leq  \dE_{\mu^*}\Bigr[{w}\cdot (A_1^{\mu^*})^{-1}{w}\Bigr].
\end{equation*}
In particular, if $F, G\in H^{1}(\mu)$ satisfy
\begin{equation*}
    |\partial_i F|\leq \partial_i G,\quad \text{a.e. on }\Anch_N(D_N)\quad \text{for every $i\in \{1,\ldots,N-1\}$} , 
\end{equation*}
then
\begin{equation*}
    \Var_{\mu^*}[F]\leq \Var_{\mu^*}[G].
\end{equation*}
\end{lemma}

\begin{proof}[Proof of Lemma \ref{lemma:energy comparisons}]
For $y=(y_1,\ldots,y_{N-1})\in \dR^{N-1}$, we use the notation $y\geq 0$ when for every $i\in \{1,\ldots,N-1\}$, $y_i\geq 0$. 

Let $v^+$ and $v^-$ be the positive and negative parts of $v$. Using the fact that $A_1^{\mu^*}$ is self-adjoint on $L^2(\{1,\ldots,N-1\},H^1(\mu^*))$, we have 
\begin{equation*}
    \dE_{{\mu^*}}[w\cdot (A_1^{\mu^*})^{-1}w]-\dE_{{\mu^*}}[v\cdot (A_1^{{\mu^*}})^{-1}v]=\dE_{{\mu^*}}[(v+w)\cdot (A_1^{{\mu^*}})^{-1}(w-v)].
\end{equation*}
Note that since $w-v\geq 0$, by Lemma \ref{lemma:comparison pri}, we have $(A_1^{{\mu^*}})^{-1}(w-v)\geq 0$. Moreover, since $w+v\geq 0$, we get
\begin{equation*}
   \dE_{{\mu^*}}\Bigr[(v+w)\cdot (A_1^{{\mu^*}})^{-1}(w-v)\Bigr]\geq 0, 
\end{equation*}
which is the desired result. The in particular assertion is immediate. 
\end{proof}

\subsection{Brascamp-Lieb inequality}

\begin{lemma}[Brascamp-Lieb inequality \cite{Brascamp2002}]\label{lemma:brascamp lieb inequality}Let $\mu$ satisfy Assumption \ref{assumption:gibbs measure}. Let $\mu':=\Gap_N\#\mu$. Let $F\in L^2(\mu)$ be of the form $F=G\circ \Gap_N$ with $\nabla G\in L^2(\{1,\ldots,N\},H^{-1}(\mu'))$. We have
\begin{equation}\label{eq:bblieb}
    \Var_{\mu}[F]\leq \dE_{\mu}[\nabla F\cdot (\nabla^2 H)^{-1}\nabla F],
\end{equation}
where $(\nabla^2 H)^{-1}$ is taken in the sense of the inverse on $\{x\in \dR^N:x\cdot (e_1+\cdots+e_N)=0\}$.
\end{lemma}

This result is standard, but for completeness, the proof is sketched below.

\begin{proof}
By Proposition~\ref{proposition:existence HS equation}, the variance of \(F\) may be written as
\[
\Var_{\mu}[F] = -\min_{\phi\in H^1(\mu)}\; \dE_{\mu}\Bigl[\nabla\phi\cdot \nabla^2 H\,\nabla\phi + \Vert\nabla^2\phi\Vert_F^2 - 2\,\nabla F\cdot\nabla\phi\Bigr].
\]
Discarding the non-negative term \(\dE_{\mu}[\Vert\nabla^2\phi\Vert_F^2]\) yields
\[
\Var_{\mu}[F] \le -\min_{\phi\in H^1(\mu)}\; \dE_{\mu}\Bigl[\nabla\phi\cdot \nabla^2 H\,\nabla\phi - 2\,\nabla F\cdot\nabla\phi\Bigr].
\]
Notice that for any $\phi\in H^1(\mu)$,
\begin{equation*}
   \nabla\phi\cdot \nabla^2 H\,\nabla\phi - 2\,\nabla F\cdot\nabla\phi\geq \min_{U_N\in \dR^N}(U_N\cdot \nabla^2 H\,U_N - 2\,\nabla F\cdot U_N).
\end{equation*}
It follows that
\[
\Var_{\mu}[F] \le -\dE_{\mu}\left[\min_{U_N\in \mathbb{R}^N}\Bigl(U_N\cdot \nabla^2 H\,U_N - 2\,\nabla F\cdot U_N\Bigr)\right].
\]
We have \( \nabla F\cdot (e_1 + \cdots e_n) = 0\). Therefore, for all $U_N\in \dR^N$,
\begin{equation*}
    U_N\cdot \nabla^2 H U_N-2\nabla F\cdot U_N=U_N'\cdot \nabla^2 H U_N'-2\nabla F\cdot U_N',
\end{equation*}
where $$U_N'=U_N-\frac{1}{N}U_N\cdot(e_1+\cdots+e_N)(e_1+\cdots+e_N).$$ 
Hence, we may restrict the minimization to \(U_N\in \dR^N\) satisfying \(U_N\cdot (e_1+\cdots+e_N)=0\). Recall that \(\nabla^2 H\) preserves this subspace and is uniformly positive definite on it, and thus invertible. In particular, 
\[
\min_{U_N:U_N\cdot (e_1+\cdots+e_N)=0}\Bigl(U_N\cdot \nabla^2 H\,U_N - 2\,\nabla F\cdot U_N\Bigr)
= -\nabla F\cdot (\nabla^2 H)^{-1}\nabla F.
\]
Therefore,
\[
\Var_{\mu}[F] \le \dE_{\mu}\Bigl[\nabla F\cdot (\nabla^2 H)^{-1}\nabla F\Bigr],
\]
which is the desired inequality.
\end{proof}

Note that Lemma \ref{lemma:brascamp lieb inequality} is only valid for functions of the gaps since $\nabla^2 H$ is only strictly positive definite on the subspace of vectors of mean $0$, i.e. on the subspace orthogonal to $e_1+\cdots+e_N$.

\subsection{Log-Sobolev inequalities and Gaussian concentration}

We now review some standard results on log-Sobolev inequalities and Gaussian concentration for log-concave measures on $\dR^N$ and derive some stronger estimates valid for measures satisfying Assumption \ref{assumption:gibbs measure}, following \cite{bourgade2014universality}.

Let us first recall a crucial convexity result proved in \cite{Brascamp2002}.

\begin{lemma}\label{lemma:convexity exterior}
Let $\mu$ satisfy Assumption \ref{assumption:gibbs measure}. Let $I\subset \{1,\ldots,N\}$ be of cardinality $m$ and denote by $\pi_I$ the projection on the coordinates $(x_i)_{i\in I}$. Split $H$ into $H=H_1\circ \pi_I+H_2$ with
\begin{equation*}
    H_1:(x_i)_{i\in I}\in D_m\mapsto \sum_{i\neq j \in I}\chi(|x_i-x_j|), 
\end{equation*}
\begin{equation*}
   H_2:X_N\in D_N\mapsto \sum_{i\in I^c }\sum_{j\in I^c:j\neq i}\chi(|x_i-x_j|)+2\sum_{i\in I^c }\sum_{j\in I}\chi(|x_i-x_j|).  
\end{equation*}
Define
\begin{equation}\label{def:wh}
    \tilde{H}:z\in D_m \mapsto -\log \int e^{-H_2(z,y)}\mathds{1}_{D_N}(z,y)\dd y,
\end{equation}
where we use the coordinates $z=(x_i)_{i\in I}, y=(x_i)_{i\in I^c}$. 
\begin{enumerate}
\item The density of $\nu := \pi_I\#\mu$ is given by
\begin{equation}\label{eq:nuw}
    \dd\nu(x)\propto e^{-(H_1+\tilde{H})(x)}\mathds{1}_{D_m}(x)\dd x.
\end{equation}
\item The Hessian of~$\tilde H$ is positive-definite,
\begin{equation*}
    \nabla^2 \tilde{H}\geq 0.
\end{equation*}
\end{enumerate}
\end{lemma}

\begin{proof}
The statement~\eqref{eq:nuw} is immediate. We prove that the Hessian of (\ref{def:wh}) is non-negative. 

For every smooth map \(f:D_{N}\to\mathbb{R}\) define
\[
\nabla_{1}f := \bigl(\partial_{i}f\bigr)_{i\in I},\qquad
\nabla_{2}f := \bigl(\partial_{i}f\bigr)_{i\in I^{\mathrm c}} .
\]
Accordingly, split the Hessian into its four block components:
\[
\nabla^{2}f =
\begin{pmatrix}
\nabla^{2}_{11}f & \nabla^{2}_{12}f\\
\nabla^{2}_{21}f & \nabla^{2}_{22}f
\end{pmatrix},
\]
where for example $\nabla^{2}_{11}f=(\partial_{ij}f)_{i\in I,j\in I}$.

Fix $z\in D_m$, $v\in \dR^m$ and let $$h:t\mapsto \tilde{H}(z+t v).$$ Since for every $i,j\in \{1,\ldots,N\}$, $i\neq j$,
\begin{equation*}
    \lim_{x\to 0}\chi_{ij}(x)=+\infty,
\end{equation*}
one can check that for all $t\in \dR$,
\begin{equation}\label{eq:h''}
    h''(t)=\dE_{\dP(x+tv )}[v\cdot (\nabla^2_{1 1}H_2) v]-\Var_{\dP(x+tv)}[v\cdot \nabla_1 H_2 ],
\end{equation}
where for any $z\in D_m$, we define $\dP(z)$ to be the probability measure
$$\dd \dP(z)=\frac{1}{Z(z)}e^{-H_2(z,y) }\mathds{1}_{(z,y)\in D_N}\dd y.$$ Since the Hessian of $y\mapsto H_2(x,y)$ is non-negative, the Brascamp-Lieb inequality (Lemma \ref{lemma:brascamp lieb inequality}) implies
\begin{equation*}
    \Var_{\dP(x+tv)}[v\cdot \nabla_1H_2 ]\leq \dE_{\dP(x+tv)}[v\cdot (\nabla^2_{12}H_2) (\nabla^2_{22}H_2)^{-1} (\nabla^2_{12}H_2) v ].
\end{equation*}
Furthermore, since $\nabla^2 H_2$ is non-negative, its Schur complement is non-negative, which gives
\begin{equation*}
    \nabla^2_{11}H_2-\nabla^2_{12}H_2(\nabla^2_{22}H_2)^{-1}\nabla^2_{12}H_2\geq 0.
\end{equation*}
Inserting this into \eqref{eq:h''} shows that $\nabla^2 \tilde{H}\geq 0$.
\end{proof}

Let us recall the standard log-Sobolev inequality for uniformly log-concave measures on $\dR^N$, which is a special case of the Bakry-Emery criterion \cite{bakryemery}. Let $\mu$ and $\nu$ be two probability measures on $D_N$. Recall that the relative entropy of $\mu$ with respect to $\nu$ is defined by
\begin{equation}\label{def:entropy}
    \Ent(\mu\mid\nu)=\int \log \Bigr(\frac{\dd \mu}{\dd \nu}\Bigr)\dd \mu \in [0,+\infty],
\end{equation}
if $\mu$ is absolutely continuous with respect to $\nu$ and $\Ent(\mu\mid\nu)=+\infty$ otherwise. Also recall the Fisher information of $\mu$ with respect to $\nu$, 
\begin{equation}\label{def:fisher}
    \mathrm{Fisher}(\mu\mid \nu)=\int \Bigr|\nabla \log \frac{\dd \mu}{\dd \nu}\Bigr|^2\dd \mu,
\end{equation}
if $\mu$ is absolutely continuous with respect to $\nu$ and $\mathrm{Fisher}(\mu\mid\nu)=+\infty$ otherwise.

\begin{lemma}[Bakry-Emery \cite{bakryemery}]\label{lemma:log sobolev}
Let $K$ be a convex domain of $\dR^N$. Let $w>0$ and let $\gamma^w$ be a centered Gaussian distribution on $\dR^N$ with covariance matrix $\frac{1}{w}I_N$. Let $\gamma_K^w$ be defined by conditioning $\gamma^w$ on $K$. Assume that $\nu$ is a probability measure on $K$ of the form $\dd \nu=f \dd \gamma_K^w$ with $f:K\to\dR$ Borel and log-concave. Then $\nu$ satisfies a log-Sobolev inequality with constant $2w$, meaning for all probability measures $\mu$ on $K$,
\begin{equation*}
    \mathrm{Ent}(\mu\mid \nu)\leq 2w\mathrm{Fisher}(\mu\mid \nu).
\end{equation*}
Moreover $\nu$ satisfies Gaussian concentration: for every $F\in H^1$, we have
\begin{equation}\label{eq:the standard}
    \log \dE_{\nu}[e^{tF}]\leq t\dE_{\nu}[F]+\frac{w}{2}t^2\sup_{K}|\nabla F|^2,\quad \text{for all $t\in \dR$}.
\end{equation}
\end{lemma}

We now state a key concentration result due to \cite{bourgade2014universality}. Recall that if $\mu$ satisfies Assumption \ref{assumption:gibbs measure}, then there exists $c>0$ such that 
\begin{equation*}
    U_N\cdot \nabla^2 H U_N\geq c\sum_{i,j}(u_i-u_j)^2,\quad \text{for all }U_N\in \dR^N.
\end{equation*}
The crucial observation is that when $\sum_{i=1}^N u_i=0$, then 
\begin{equation*}
\sum_{i,j}(u_i-u_j)^2= \sum_{i,j}(u_i^2+u_j^2)=2N\sum_{i=1}^N u_i^2.
\end{equation*}
Therefore, for all $U_N\in \dR^N$ such that $\sum_{i=1}^N u_i=0$, we have
\begin{equation}\label{eq:new low}
    U_N\cdot \nabla^2 H U_N\geq 2N c\sum_{i=1}^N u_i^2.
\end{equation}
Furthermore, one can observe that $\nabla \phi:=(A_1^\mu)^{-1}\nabla F$ satisfies $\partial_1\phi+\cdots+\partial_N\phi=0$ when $\partial_1F+\cdots+\partial_NF=0$, since $\phi$ is a function of the gaps. As we will see below, combining this with (\ref{eq:new low}) gives the following Gaussian concentration estimate:

\begin{lemma}\label{lemma:div free}
Let $I\subset \{1,\ldots,N\}$ be an interval of integers and $\pi_I$ be the projection on the coordinates $(x_i)_{i\in I}$. Let $\mu$ satisfy Assumption \ref{assumption:gibbs measure}. Suppose that for every $i,j\in \{1,\ldots,N\}$, $i\neq j$,
\begin{equation*}
    \lim_{x\to 0}\chi_{ij}(x)=+\infty , 
\end{equation*}
and that there exists a constant $c_1>0$ such that for every $i,j\in I$ with $i\neq j$ and $X_N\in D_N$,
\begin{equation}\label{eq:chiij'}
    \chi_{ij}''(x_j-x_i)\geq \frac{c_1}{2}>0.
\end{equation}
Let $F=G\circ \pi_I$, for some $G\in H^1(\pi_I\#\mu)$. If $F$ is independent of $\sum_{i\in I}x_i$, i.e. $\sum_{i\in I}\partial_i F=0$, then for all $t\in \dR$, we have
\begin{equation*}
    \log \dE_{\mu}[e^{t F} ]\leq t \dE_{\mu}[F]+\frac{t^2}{2 c_1 |I|}\sup |\nabla F|^2  .
\end{equation*}
\end{lemma}

The proof of Lemma \ref{lemma:div free} can be found in \cite[Le.~3.9]{bourgade2014universality} and can be readily adapted to our circular setting. For completeness, we give the proof, following essentially line by line the proof of \cite{bourgade2014universality}.

\begin{proof}
Without loss of generality, suppose that $I=\{1,\ldots,m\}$. 

On $D_N$ introduce the coordinates $(x,y)$ with $x=(x_i)_{i\in I}\in D_m$ and $y=(x_i)_{i\in I^c}\in D_{N-m}$. The energy $H$ can be split into $H(x,y)=H_1(x)+H_2(x,y)$ with
\begin{equation*}
    H_1(x)=\sum_{i,j\in I,i\neq j}\chi_{ij}(|x_i-x_j|)
\end{equation*}
and
\begin{equation*}
    H_2(x,y)=\sum_{i,j\in I^c,i\neq j}\chi_{ij}(|x_i-x_j|)+2\sum_{i\in I,j\in I^c}\chi_{ij}(|x_i-x_j|).
\end{equation*}
Observe that $\nabla^2 H_1>0$ and $\nabla^2 H_2\geq 0$, and that $H_1$ is independent of $\sum_{i\in I}x_i$, i.e., $\sum_{i\in I}\partial_iH_1=0$.

Now we introduce on $D_m$ the coordinates $x=(z,\omega)$ with $z=(x_1,\ldots,x_{m-1})\in D_{m-1}$ and $\omega=m^{-\frac{1}{2}}\sum_{i=1}^{m}x_i$. Observe that this change of variables can be written $(z,\omega)=M(x_1,\ldots,x_m)$, with $M$ orthogonal. Since $H_1$ is independent of $\omega$, one can write it as $H_1=\tilde{H}_1(z)$ for some map $\tilde{H}_1$. Similarly, $F$ can be written $F=\tilde{F}(z)$ for some map $\tilde{F}$.

Set $q=(\omega,y)$. Let $\nu$ be the push-forward of $\mu$ by the map $(z,q)\mapsto z$. The density of $\nu$ is given by $\dd\nu(z)\propto e^{-\tilde{H}(z) }\dd z$ where
\begin{equation}\label{eq:definition H tilde}
    \tilde{H}(z)=-\log \int e^{-H(z,q)}\dd q.
\end{equation}
Fix $z\in D_{m-1}$ and $v\in\dR^{m-1}$. Consider $h:t\mapsto \tilde{H}(z+tv)$. As in Lemma \ref{lemma:convexity exterior}, denote by $\nabla_{qq}^2,\nabla_{qz}^2,\nabla_{zq}^2,\nabla^2_{zz}$ block matrices of the Hessian. We have
\begin{equation*}
   h''(t)=\dE_{\dP(z+tv)}[v\cdot \nabla^2_{zz}H(z,\cdot) v]-\Var_{\dP(z+tv)}[v\cdot (\nabla_zH)(z,\cdot)],
\end{equation*}
where for any $z\in D_{m-1}$, $\dP(z)$ stands for the probability measure
\begin{equation*}
   \dd \dP(z)=\frac{1}{Z(z)}e^{-H(z,q)}\dd q.
\end{equation*}
 Using the Brascamp-Lieb inequality (see Lemma \ref{lemma:brascamp lieb inequality}), one can write 
\begin{equation*}
    \Var_{\dP(z+tv)}[v\cdot (\nabla_zH)(z,\cdot)]\leq \dE_{\dP(z+tv)}[v\cdot (\nabla_{zq}^2H(\nabla_{qq}^2H)^{-1}\nabla_{zq}^2H)(z,\cdot)v].
\end{equation*}
Hence,
\begin{equation*}
  h''(t)\geq \dE_{\dP(z+tv)}[v\cdot(\nabla^2_{zz}H)(z,\cdot)v-v\cdot(\nabla_{zq}^2H(\nabla_{qq}^2H)^{-1}\nabla_{zq}^2H)(z,\cdot)v].
\end{equation*}
Thus, by taking $t=0$,
\begin{equation}\label{eq:claim H zz}
   \nabla^2_{zz} \tilde{H}(z)\geq \dE_{\dP(z)}[\nabla_{zz}^2H(z,\cdot)-(\nabla_{zq}^2H(\nabla_{qq}^2H)^{-1}\nabla_{zq}^2H)(z,\cdot)].
\end{equation}
Since $H_1$ is independent of $q$, we have
\begin{equation*}
    \nabla^2_{zq }H(\nabla^2_{qq}H)^{-1}\nabla^2_{zq}H=\nabla^2_{zq} H_2( \nabla^2_{qq}H_2)^{-1}\nabla^2_{zq}H_2.
\end{equation*}
Hence, 
\begin{equation}\label{eq:sH}
 \nabla_{zz}^2H-\nabla_{zq}^2H(\nabla_{qq}^2H)^{-1}\nabla_{zq}^2H= \nabla^2_{zz}H_1+\nabla^2_{zz}H_2 -\nabla^2_{zq}H_2(\nabla^2_{qq} H_2)^{-1}\nabla^2_{zq}H_2.
\end{equation}
Since $\nabla^2_{zz}H_2\geq 0$, the Schur complement is also non-negative:
\begin{equation*}
    \nabla^2_{zz}H_2 -\nabla^2_{zq}H_2(\nabla^2_{qq} H_2)^{-1}\nabla^2_{zq}H_2\geq 0.
\end{equation*}
Therefore, by \eqref{eq:sH},
\begin{equation*}
  \nabla_{zz}^2H- \nabla_{zq}^2H(\nabla_{qq}^2H)^{-1}\nabla^2_{zq}H\geq \nabla^2_{zz}H_1.
\end{equation*}
Inserting this into (\ref{eq:claim H zz}), we deduce from \eqref{eq:chiij'} that for all $u\in \dR^{m-1}$,
\begin{equation*}
    u\cdot (\nabla^2_{zz}\tilde{H})(z)u\geq \dE_{\dP(z)}[ u\cdot \nabla^2_{zz}H_1(z,\cdot) u].
\end{equation*}
Next notice that, in view of the assumption \eqref{eq:chiij'}, 
\begin{equation*}
  u\cdot \nabla^2_{zz}H_1 u= {\tilde{M}u}\cdot \nabla_{xx}^2H_1(x)(\tilde{M}u)\geq  \frac{c_1}{2} \sum_{i\neq j}( (\tilde{M}u)_i  -(\tilde{M}u)_j)^2,
\end{equation*}
where $\tilde{M}$ denotes the first $m-1$ columns of $M$. Moreover, 
\begin{equation*}
    \sum_{ i\neq j}( (\tilde{M}u)_i  -(\tilde{M}u)_j)^2=2m\sum_{i=1}^{m}u_i^2.
\end{equation*}
Combining the last three displays, we conclude that
\begin{equation*}
    \nabla^2_{zz}\tilde{H}\geq c_1 m \mathrm{Id}.
\end{equation*}
Therefore, by the Bakry-Emery criterion stated in Lemma \ref{lemma:log sobolev}, for all $t\in \dR$, we have
\begin{equation*}
   \dE_{\mu}[e^{t F}]= \dE_{\nu}[e^{t \tilde{F} }]\leq e^{t \dE_{\nu}[\tilde{F} ] +\frac{t^2}{2 c_1 m} \sup |\nabla_z \tilde{F}|^2  }. 
\end{equation*}
Since $M$ is orthogonal, $|\nabla_z \tilde{F}|^2=|\nabla F|^2 $. This concludes the proof.
\end{proof}

\section{Near-optimal rigidity}\label{section:rigidity}
This section is dedicated to the proof of the rigidity result stated in Theorem \ref{theorem:almost optimal rigidity}. The approach draws on several techniques introduced in the seminal work \cite[Th.~3.1]{bourgade2014universality}. Since our setting is the circle, the expected values of the gaps can be computed explicitly. As a result, we avoid one of the main technical challenges of \cite{bourgade2014universality}, namely estimating the accuracy of the standard positions. Our first objective is to establish the following local law for the gaps: we show that for every $i\in  \{1,\ldots,N\}$ and $1 \leq k \leq \lfloor\hN\rfloor$, the quantity $N(x_{i+k} - x_i)$ is typically of order $k$. To achieve this, we carry out a multiscale analysis, inspired by \cite{bourgade2014universality}, which 
allows us to bootstrap the local law down to microscale. The argument relies on a convexification procedure, which we describe first.

\subsection{Comparison to a constrained Gibbs measures}
Since the Hessian of the energy degenerates when particles are far apart, one cannot directly obtain Gaussian concentration estimates for $\dGi$. Following \cite{bourgade2014universality}, we add a convexifying term to the Hamiltonian that penalizes configurations with large gaps. Let $\theta:\dR^+\to\dR^+$ be a smooth function such that $\theta(x)=x^2$ for $x>2$, $\theta(x)=0$ for $x\in [0,1]$ and $\theta''\geq 0$ on $(0,\infty)$. Let $i_0\in\{1,\ldots,N\}$, $L\in\{1,\ldots,\lfloor\hN\rfloor\}$ and $R>0$. Let us define 
\begin{equation*}
I:=\{i,\ldots,i+L\},
\end{equation*}
with the convention that this discrete interval is taken modulo $N$. Then, we define the forcing term
\begin{equation}\label{eq:def add F k_i}
\mathrm{F}:=2\sum_{i,j\in I:i<j}\theta\Bigr(\frac{N}{R}(x_{j}-x_{i})\Bigr)
\end{equation}
and the locally constrained Gibbs measure 
\begin{equation}\label{eqdef:locally constrained}
\dd\dGiQ(X_N)=\frac{1}{K_{N,\beta}}e^{-\beta(\Hc_N+\mathrm{F})(X_N) }\mathds{1}_{D_N}(X_N)\dd X_N,
\end{equation}
where 
\begin{equation*}
    K_{N,\beta}:=\int_{D_N} e^{-\beta(\Hc_N+\mathrm{F})(X_N) }\mathds{1}_{D_N}(X_N)\dd X_N.
\end{equation*}
In what follows, we often choose $R=L^{1+\ve}$ for some $\ve>0$. 

Recall that the total variation distance between two probability measures $\mu$ and $\nu$ on $D_N$ is defined by
\begin{equation*}
    \mathrm{TV}(\mu,\nu)=\sup_{\mc{A}\in \mc{B}(D_N) }|\mu(\mc{A})-\nu(\mc{A})|,
\end{equation*}
where $\mc{B}(D_N)$ is the set of measurable subsets of $D_N$. By the Pinsker inequality, see \cite[Ch.~5]{zbMATH01633816}, we have
\begin{equation}\label{eq:pinsker}
   \mathrm{TV}(\mu,\nu)^2\leq 2 \Ent(\mu\mid\nu),
\end{equation}
where $\Ent(\cdot\mid \nu)$ is the relative entropy with respect to $\nu$. Using (\ref{eq:pinsker}) and the uniform log-concavity of the constrained measure (\ref{eqdef:locally constrained}) in the window $I$, one may derive the following control:

\begin{lemma}\label{lemma:comparisons log sob}
Let $i_0\in\{1,\ldots,N\}$, $L\in\{1,\ldots,\lfloor\hN\rfloor\}$ and $R>0$. Let $$I:=\{i,\ldots,i+L\}.$$
Denote by $\pi_I$ the projection $\pi_I:X_N\in D_N\mapsto (x_i)_{i\in I}\in D_{L+1}$. Let $\dGiQ$ be the probability measure (\ref{eqdef:locally constrained}). There exists a constant $C>0$ depending only on $\beta$ and $s$ such that
\begin{equation*}
    \mathrm{TV}(\pi_I\#\dGi,\pi_I\#\dGiQ)^2\leq C  L^5R^{s} \dE_{\dGi}\Bigr[\Bigr(\frac{N}{R}(x_{i_0+L}-x_{i_0})\Bigr)^2\mathds{1}_{x_{i_0+L}-x_{i_0}\geq \frac{R}{N}} \Bigr].
\end{equation*}
\end{lemma}
\begin{proof}
For every $k=1,\ldots,\lfloor N/2\rfloor,$ let us denote
\begin{equation*}
   \ogap_k:X_k\in D_k\mapsto (N(x_{2}-x_1),\ldots,N(x_k-x_{k-1}))\in \dR^{k-1}.
\end{equation*}
Recall the relative entropy defined in \eqref{def:entropy}. Applying Pinsker's inequality (\ref{eq:pinsker}) to $\mu=\pi_I\#\dGi$ and $\nu=\pi_I\#\dGiQ$ gives
\begin{equation}\label{eq:pi}
    \TV(\pi_I\#\dGi,\pi_I\#\dGiQ)^2\leq 2\Ent(\pi_I\#\dGi\mid \pi_I\#\dGiQ).
\end{equation}
Let us recall that for any measurable map $T:\pi_I(D_N)\to \dR^{L+1}$, we have
\begin{equation*}
 \Ent((T\circ\pi_I)\#\dGi\mid (T\circ\pi_I)\#\dGiQ)\leq \Ent(\pi_I\#\dGi\mid \pi_I\#\dGiQ).
\end{equation*}
This can be proven by using, for instance, the variational representation of the entropy. Applying this to the map $T:(x_i)_{i\in I}\mapsto(x_{i_0},\ogap_{L+1}((x_i)_{i\in I}))$ and to its inverse, we obtain that 
\begin{equation}\label{eq:entro t}
 \Ent(\pi_I\#\dGi\mid \pi_I\#\dGiQ )=   \Ent(\mathrm{Law}_{\dGi}(x_{i_0},\ogap_{L+1}\circ\pi_I(X_N))\mid \mathrm{Law}_{\dGi}(x_{i_0},\ogap_{L+1}\circ\pi_I(X_N))).
\end{equation}
Recall that the law of $x_{i_0}$ under $\dGi$ is uniform, as is the law of $x_{i_0}$ under $\dGiQ$. Moreover, under $\dGi$ (similarly $\dGiQ$), $x_{i_0}$ is independent from $\ogap_{L+1}(x_{i_0})$. We thus get from the tensorization property of the entropy and \eqref{eq:entro t} that
\begin{multline*}
    \Ent(\pi_I\#\dGi\mid \pi_I\#\dGiQ )= \Ent((\ogap_{L+1}\circ \pi_I)\#\dGi\mid (\ogap_{L+1}\circ\pi_I)\#\dGiQ )+\Ent(\dd x|\dd x)\\=\Ent((\ogap_{L+1}\circ \pi_I)\#\dGi\mid (\ogap_{L+1}\circ\pi_I)\#\dGiQ ).
\end{multline*}

By Lemma \ref{lemma:convexity exterior}, the push-forward of $\dGiQ$ by the map $\pi_I$ has density proportional to $\exp(-\beta (H+\tilde{H}))$ with $\nabla^2 \tilde{H}\geq 0$ and $H$ defined by
\begin{equation*}
    H:(x_i)_{i\in I}\in D_{L+1}\mapsto N^{-s}\sum_{i,j\in D_{L+1}:i\neq j}g(x_i-x_j)+\mathrm{F}\circ\pi_I.
\end{equation*}
By definition of $\mathrm{F}$ \eqref{eq:def add F k_i} and $\theta$, for all $u\in \dR^{L+1}$,
\begin{equation*}
   u\cdot \nabla^2 Hu\geq C_0\sum_{i,j\in I:i<j}\frac{(N(u_{j}-u_i))^2}{R^{s+2}}\\ \geq C_0\sum_{i\in I \setminus \max I}\frac{(N(u_{i+1}-u_i))^2}{R^{s+2}},
\end{equation*}
for some constant $C_0>0$ independent of $N$, $L$ and $R$. We deduce that the push-forward of $\dGi$ by the map $\ogap_{L+1}\circ\pi_I$ has density proportional to $\exp(-\beta(H_0+\tilde{H}_0))$ with $\nabla^2 H_0\geq \frac{C_0}{R^{s+2}}I$ and $\nabla^2 \tilde{H}_0\geq 0$.

Therefore, by Lemma \ref{lemma:log sobolev}, $(\ogap_{L+1}\circ\pi_I)\#\dGiQ$ satisfies a log-Sobolev inequality with constant $2c^{-1}$ where $c:=\frac{C\beta}{R^{s+2}}$. It follows that 
\begin{multline*}
   \mathrm{Ent}(( \ogap_{L+1}\circ\pi_I)\#\dGi\mid (\ogap_{L+1}\circ\pi_I)\#\dGiQ)\\ \leq 2c^{-1}\mathrm{Fisher}(( \ogap_{L+1}\circ\pi_I)\#\dGi\mid (\ogap_{L+1}\circ\pi_I)\#\dGiQ),
\end{multline*}
where the Fisher information has been defined in \eqref{def:fisher}. One can write $\mathrm{F}=\mathrm{G}\circ \ogap_{L+1}\circ\pi_I$ for some $\mathrm{G}:\dR^{L}\to\dR$. This gives the bound
\begin{equation}\label{eq:log sob Gapk}
    \mathrm{Ent}(( \ogap_{L+1}\circ\pi_I)\#\dGi\mid \nu)\leq CR^{s+2}\dE_{\dGi}[|\nabla (\mathrm{G}\circ \ogap_{L+1}\circ\pi_I)|^2].
\end{equation}

Let us bound the right-hand side of the last display. Note that we have the explicit expression
\begin{equation*}
    \mathrm{G}(y_{i_0},\ldots,y_{i_0+L-1})=2\sum_{i,j\in I:i<j }\theta\Bigr(\frac{1}{R}\sum_{k=i}^{j-1}y_k\Bigr).
\end{equation*}
Therefore, for every $l\in I\setminus \{\max(I)\}$, 
\begin{equation*}
    \partial_l \mathrm{G}(y_{i_0},\ldots,y_{i_0+L-1})=2\sum_{i,j\in I:i<j}\frac{1}{R}\theta'\Bigr(\frac{1}{R}\sum_{k=i}^{j-1}y_k\Bigr)\mathds{1}_{i\leq l<j}.
\end{equation*}
Hence, since $\theta'$ is non-negative and increasing,
\begin{equation*}
\begin{split}
   |\partial_l \mathrm{G}(y_{i_0},\ldots,y_{i_0+L-1})|&\leq 
   2\sum_{i,j\in I:i<j}\frac{1}{R}\theta'\Bigr(\frac{1}{R}\sum_{k=i_0}^{i_0+L-1}y_k\Bigr)\mathds{1}_{i\leq l<j}\\
   &\leq \frac{2}{R}\sum_{i,j\in I:i<j}\theta'\Bigr(\frac{N}{R}(x_{i_0+L}-x_{i_0}) \Bigr)\mathds{1}_{i\leq l<j}\\
   &\leq C\frac{L^2}{R} \theta'\Bigr(\frac{N}{R}(x_{i_0+L}-x_{i_0}) \Bigr).
\end{split}
\end{equation*}
Hence,
\begin{equation*}
    \sum_{l\in I\setminus \{\max I\}} |\partial_l \mathrm{G}(y_{i_0},\ldots,y_{i_0+L-1})|^2\leq C\frac{L^5}{R^2} (\theta')^2\Bigr(\frac{N}{R}(x_{i_0+L}-x_{i_0}) \Bigr).
\end{equation*}
Since $\theta$ is smooth and $\theta'(0)=0$, we have $|\theta'(x)-\theta'(0)|=\theta'(x)\leq C|x|$ on $(0,2]$. Since $\theta(x)=x^2$ for $x>2$, there exists a constant $C>0$ such that for all $x\in (0,\infty)$, $|\theta'(x)|\leq Cx$. We deduce from this and the last display that the Fisher information is bounded by
\begin{equation*}
\begin{split}
  \dE_{\dGi}[|\nabla (\mathrm{G}\circ \ogap_{L+1}\circ\pi_I)|^2] &\leq C\frac{L^5}{R^2}\dE_{\dGi}\Bigr[\Bigr(\theta'\Bigr(\frac{N}{R}(x_{i_0+L}-x_{i_0})\Bigr)\Bigr)^2\Bigr]\\
&\leq C\frac{L^5}{R^2}\dE_{\dGi}\Bigr[\Bigr(\theta'\Bigr(\frac{N}{R}(x_{i_0+L}-x_{i_0})\Bigr)\Bigr)^2\mathds{1}_{|x_{i_0+L}-x_{i_0}|\geq \frac{R}{N}}\Bigr]\\
   &\leq C\frac{L^5}{R^2}\dE_{\dGi}\Bigr[\Bigr(\frac{N}{R}(x_{i_0+L}-x_{i_0-L})\Bigr)\Bigr)^2\mathds{1}_{x_{i_0+L}-x_{i_0}\geq \frac{R}{N}}\Bigr],
\end{split}
\end{equation*}
where we have used that $\theta'(x)=0$ if $x\in (0,1)$. Inserting this into \eqref{eq:log sob Gapk} and using \eqref{eq:pi} concludes the proof of the lemma.
\end{proof}

\subsection{First local law}
We now prove that each gap $N(x_{i+k}-x_i)$ is typically of order $k$ with an exponentially small probability of deviation.

\begin{lemma}\label{lemma:initial estimate}
Set
\begin{equation}\label{def:delta0}
\delta_0:=\frac{1-s}{2(s+2)}.
\end{equation}
Let $\delta>0$. There exist $c>0$ and $C>0$ such that for every $i\in \{1,\ldots,N\}$ and $1\leq k\leq \lfloor\hN\rfloor$,
\begin{equation}\label{eq:decay nearest neigh}
    \dGi(N(x_{i+k}-x_i)\geq k^{1+\delta})\leq C\exp\Bigr({-c k^{2\min(\delta,\delta_0\frac{1+\delta}{1+\delta_0})}}\Bigr).
\end{equation}
\end{lemma}

The proof of Lemma \ref{lemma:initial estimate} is inspired by the multiscale analysis of \cite{bourgade2014universality}. We proceed by a bootstrap on scales: assuming that the local law (\ref{eq:decay nearest neigh}) holds for some $k\in\{1,\ldots,\lfloor\hN\rfloor\}$, Lemma \ref{lemma:comparisons log sob} allows us to convexify the measure within a window of size $k$ without significantly perturbing it. Moreover, the convexified measure satisfies better concentration estimates, allowing one to prove via Lemma \ref{lemma:div free} that (\ref{eq:decay nearest neigh}) holds at a slightly smaller scale.

\begin{proof}[Proof of Lemma \ref{lemma:initial estimate}]\
\paragraph{\bf{Step 1: setting the bootstrap}}
Define 
\begin{equation*}
\delta_0:=\frac{1-s}{2(s+2)}\in (0,1).
\end{equation*}
We wish to prove that there exist $c_0>0$ and $C_0>0$ independent of $N$ such that for every $i\in \{1,\ldots,N\}$, $1\leq k\leq \lfloor\hN\rfloor$ and all $\delta>0$,
\begin{equation}\label{eq:boot stat}
    \dGi(N(x_{i+k}-x_i)\geq k^{1+\delta})\leq \begin{cases}C_0e^{-c_0 k^{2\delta}} & \text{if $\delta\in (0,\delta_0]$}\\
    C_0 e^{-c_0 k^{2\delta_0\frac{1+\delta}{1+\delta_0}}}  & \text{if $\delta>\delta_0$.}
    \end{cases}
\end{equation}  
Observe that (\ref{eq:boot stat}) trivially holds for $k=\lfloor N/2\rfloor $ since $0\leq N(x_{i+\lfloor N/2\rfloor}-x_i)\leq N$. 

Let $K\in \{1,\ldots,\lfloor N/2\rfloor\}$. Assume that (\ref{eq:boot stat}) holds for every $k\geq K$. Fix $\delta\in (0,\delta_0]$ and fix
\begin{equation}\label{eq:alpha0}
    \alpha_0\in \Bigr(0,1-\frac{1+s}{2-\delta(s+2)}\Bigr) , 
\end{equation}
to be determined below.
Note that since $\delta\leq \delta_0<\frac{1-s}{s+2}$, we have $1-\frac{1+s}{2-\delta(s+2)}>0$. We will show, for a particular choice of~$\alpha_0$, that (\ref{eq:boot stat}) holds for every $k\geq K^{1-\alpha_0}$.

Let $i\in \{1,\ldots,N\}$, $k\in\{K^{1-\alpha_0},\ldots,\lfloor\hN\rfloor\}$ and fix
\begin{equation}\label{eq:lower gamma}
  \gamma\in (\delta(1-\alpha_0),\delta_0) , 
\end{equation}
to be determined below.
Define 
\begin{equation*}
I:=\{i,\cdots,i+K\}.
\end{equation*}
Let $\theta:\dR^+\to\dR^+$ be a smooth cutoff function with $\theta(x)=x^2$ for $x>2$, $\theta=0$ on $[0,1]$ and $\theta''\geq 0$ on $(0,\infty)$. Define, as in \eqref{eq:def add F k_i}
\begin{equation}\label{eq:forcing}
    \FF:=2\sum_{l,j\in I:l<j}\theta\Bigr(\frac{N}{K^{1+\gamma}}(x_{l}-x_j)\Bigr) , 
\end{equation}
and, as in~\eqref{eqdef:locally constrained}, 
\begin{equation*}
    \dd \dGiQ(X_N)=\frac{1}{K_{N,\beta}}e^{-\beta (\Hc_N+\FF)(X_N) }\mathds{1}_{D_N}(X_N)\dd X_N.
\end{equation*}
Since $x_{i+k}-x_i$ is a function of $(x_i)_{i\in I}$, we have that
\begin{equation}\label{eq:break}
    \dGi(N(x_{i+k}-x_i)\geq k^{1+\delta})\leq \dGiQ(N(x_{i+k}-x_i)\geq k^{1+\delta})+\mathrm{TV}(\pi_I\#\dGi,\pi_I\#\dGiQ ).
\end{equation}
\paragraph{\bf{Step 2: upper bound on the total variation distance}}
Applying Lemma \ref{lemma:comparisons log sob} with $L:=K$ and $R:=K^{1+\gamma}\leq K^2$, we obtain
\begin{multline}\label{eq:bTTV}
  \mathrm{TV}(\pi_I\#\dGi,\pi_I\#\dGiQ)^2\leq CK^{7}\dE_{\dGi}\Bigr[\Bigr(\frac{1}{K^{1+\gamma}}N(x_{i+K}-x_i)\Bigr)^2\mathds{1}_{N(x_{i+K}-x_i)\geq K^{1+\gamma}} \Bigr]\\
  \leq CK^{7}\sum_{j\geq K^{1+\gamma}} \frac{j^2}{K^{2(1+\gamma)}} \dGi(N(x_{i+K}-x_i)\geq j).
\end{multline}
Notice that 
\begin{equation*}
    \delta_0\frac{1+\delta}{1+\delta_0}\leq \delta \Longleftrightarrow \delta\geq \delta_0.
\end{equation*}
It follows that 
\begin{equation*}
   C_0 e^{-c_0 k^{2\min(\delta,\delta_0 \frac{1+\delta}{1+\delta_0})}}= \begin{cases}C_0e^{-c_0 k^{2\delta}} & \text{if $\delta\in (0,\delta_0]$}\\
    C_0 e^{-c_0 k^{2\delta_0\frac{1+\delta}{1+\delta_0}}}  & \text{if $\delta>\delta_0$.}
    \end{cases}
\end{equation*}
Let $n\geq 1$. By \eqref{eq:boot stat} and the above display, for every $j\in \{K^{1+n\gamma},\ldots,K^{1+(n+1)\gamma}-1\}$, 
\begin{equation}\label{eq:probaj}
    \dGi(N(x_{i+K}-x_i)\geq j)\leq C_0\exp\Bigr(-c_0 K^{2\min(n\gamma,\delta_0 \frac{1+n\gamma}{1+\delta_0})}\Bigr).
\end{equation} 
Hence, using \eqref{eq:probaj},
\begin{align}\label{eq:proc}
&\sum_{j\geq K^{1+\gamma}} \frac{j^2}{K^{2(1+\gamma)}} \dGi(N(x_{i+K}-x_i)\geq j)\Bigr)
\notag \\ 
&\qquad \leq C \sum_{n=1}^\infty \sum_{j: K^{1+n\gamma}\leq j<K^{1+(n+1)\gamma } }  \frac{j^2}{K^{2(1+\gamma)}} \dGi(N(x_{i+K}-x_i)\geq j) \notag \\
&\qquad \leq C\sum_{n=1}^\infty \frac{K^{3(1+(n+1)\gamma)}}{K^{2(1+\gamma)}} \exp\Bigr(-c_0 K^{2\min(n\gamma,\delta_0 \frac{1+n\gamma}{1+\delta_0})}\Bigr).
\end{align}
Therefore, using that $k\geq K^{1-\alpha_0}$, we have that for all $\kappa<\frac{\gamma}{1-\alpha_0}$, there exists $C>0$ independent of $N$ and $K$ such that 
\begin{equation}\label{eq:boot TV}
   \mathrm{TV}(\pi_I\#\dGi,\pi_I\#\dGiQ )\leq Ce^{-c_0 k^{2\kappa} }.
\end{equation}

\paragraph{\bf{Step 3: accuracy under $\dGiQ$}} Since \( N(x_{i+k} - x_i) \) is not uniformly bounded from above with respect to \( N \), one cannot directly apply the total variation bound \eqref{eq:boot TV} to approximate its expectation under \( \dGiQ \). It is therefore necessary to first establish a tightness result for the variable $N(x_{i+k}-x_i)$ under \( \dGiQ \). Let $\ve>0$. One can write
\begin{equation*}
   \log \dE_{\dGiQ}[e^{ (N(x_{i+k}-x_i))^{\ve} } ]=\log \dE_{\dGi}[e^{ (N(x_{i+k}-x_i))^{\ve} -\beta \FF}]-\log \dE_{\dGi}[e^{-\beta \FF}].
\end{equation*}
 By Jensen's inequality and the fact $\FF \geq 0$, 
\begin{equation}\label{eq:splitQ}
 \log \dE_{\dGiQ}[e^{ (N(x_{i+k}-x_i))^{\ve} } ]  \leq \log\dE_{\dGi}[e^{(N(x_{i+k}-x_i))^{\ve}}]+\beta \dE_{\dGi}[\FF].
\end{equation}
By the definition of $\theta$,
\begin{equation*}
\begin{split}
    \FF&=2\sum_{i,j\in I:i<j}\theta\Bigr(\frac{N(x_j-x_i)}{K^{1+\gamma}}\Bigr)\\
    &=2\sum_{i,j\in I:i<j}\theta\Bigr(\frac{N(x_j-x_i)}{K^{1+\gamma}}\Bigr)\mathds{1}_{N(x_j-x_i)\geq K^{1+\gamma}}\\ 
    &\leq 2\sum_{i,j\in I:i<j}\theta\Bigr(\frac{N(x_j-x_i)}{K^{1+\gamma}}\Bigr)\mathds{1}_{N(x_{i+K}-x_i)\geq K^{1+\gamma}}\\
    &\leq C \sum_{i,j\in I:i<j} \left( \frac{N(x_j-x_i)}{K^{1+\gamma}} \right)^2 \mathds{1}_{N(x_{i+K}-x_i)\geq K^{1+\gamma}}\\
    &\leq CK^2 \left( \frac{N(x_{i+K}-x_i)}{K^{1+\gamma}} \right)^2 \mathds{1}_{N(x_{i+K}-x_i)\geq K^{1+\gamma}}.
\end{split}
\end{equation*}
Therefore,
\begin{equation*}
    \dE_{\dGi}[\FF]\leq CK^2 \sum_{j\geq K^{1+\gamma}} \left( \frac{j}{K^{1+\gamma}} \right)^2 \dGi( N(x_{i+K}-x_i)\geq j).
\end{equation*}
Splitting the sum over $j$ as in \eqref{eq:proc}, we deduce that there exists $C>0$ independent of $N$ and $K$ such that 
\begin{equation}\label{eq:bound FFF}
 \dE_{\dGi}[\FF]\leq C.
\end{equation}
We then turn to bounding the first term on the right of~\eqref{eq:splitQ}, 
\begin{equation*}
 \dE_{\dGi}[e^{(N(x_{i+k}-x_i))^{\ve} }]\leq e^{K^{\ve(1+\gamma)}} +\sum_{j> K^{1+\gamma}}e^{j^{\ve}}\dGi( N(x_{i+K}-x_i)\geq j).
\end{equation*}
Using (\ref{eq:probaj}), we find that for $\ve>0$ small enough,
\begin{equation*}
    \log \dE_{\dGi}[e^{(N(x_{i+k}-x_i))^{\ve} }]\leq C K^{\ve(1+\gamma)},
\end{equation*}
for some constant $C>0$ independent of $N$ and $K$. Fix $\ve>0$ accordingly. Plugging in the above display and~\eqref{eq:bound FFF} into \eqref{eq:splitQ} yields
\begin{equation}\label{eq:expmoment}
   \log \dE_{\dGiQ}[e^{(N(x_{i+k}-x_i))^{\ve} }]\leq CK^{\ve(1+\gamma)},  
\end{equation}
for some constant $C>0$ independent of $N$ and $K$. One can next write
\begin{equation*}
\begin{split}
    \dE_{\dGiQ}[N(x_{i+k}-x_i)\mathds{1}_{N(x_{i+k}-x_i)\geq K^{2}}]&\leq \sum_{j\geq K^{2}}j\dGiQ(N(x_{i+k}-x_i)\geq j)\\
    &=\sum_{j\geq K^{2}}j\dGiQ(e^{(N(x_{i+k}-x_i))^{\ve}}\geq e^{j^{\ve}})\\
    &\leq \sum_{j\geq K^{2}}j e^{-j^{\ve}}e^{CK^{\ve(1+\gamma)}},
\end{split}
\end{equation*}
where we have used Markov's inequality and~\eqref{eq:expmoment} in the last inequality. We conclude that since $\gamma\in (0,1)$,
\begin{equation}\label{eq:tigh1}
    \dE_{\dGiQ}[N(x_{i+k}-x_i)\mathds{1}_{N(x_{i+k}-x_i)\geq K^{2}}]\leq C,
\end{equation}
for some positive constant~$C$ independent of $N$ and $K$. On the other hand, using \eqref{eq:probaj}, we get that by proceeding as in \eqref{eq:proc} that
\begin{equation}\label{eq:tigh2}
    \dE_{\dGi}[N(x_{i+k}-x_i)\mathds{1}_{N(x_{i+k}-x_i)>K^{2} }]\leq C,
\end{equation}
for some positive constant~$C$ independent of $N$ and $K$.

Recall that for every function $f$ bounded and measurable, we have 
\begin{equation*}
    |\dE_{\dGiQ}[f]-\dE_{\dGi}[f]|\leq \Vert f\Vert_{L^{\infty}}\mathrm{TV}(\dGiQ,\dGi).
\end{equation*}
It follows that
\begin{multline*}
    \Bigr|\dE_{\dGi}[N(x_{i+k}-x_i)\mathds{1}_{N(x_{i+k}-x_i)\leq K^{2} }]-\dE_{\dGiQ}[N(x_{i+k}-x_i)\mathds{1}_{N(x_{i+k}-x_i)\leq K^{2} }]\Bigr| \\ \leq K^{2}\mathrm{TV}(\pi_I\#\dGi,\pi_I\#\dGiQ).  
\end{multline*}
Using \eqref{eq:boot TV}, we get that for all $\kappa<\frac{\gamma}{1-\alpha_0}$, there exists a constant $C>0$ such that
\begin{equation*}
    \Bigr|\dE_{\dGi}[N(x_{i+k}-x_i)\mathds{1}_{N(x_{i+k}-x_i)\leq K^{2} }]-\dE_{\dGiQ}[N(x_{i+k}-x_i)\mathds{1}_{N(x_{i+k}-x_i)\leq K^{2} }]\Bigr|\leq CK^{2}e^{-c_0k^{2\kappa}}.
\end{equation*}
Furthermore, combining (\ref{eq:tigh1}) and (\ref{eq:tigh2}), one gets
\begin{equation*}
    |\dE_{\dGi}[N(x_{i+k}-x_i)\mathds{1}_{N(x_{i+k}-x_i)> K^{2} }]-\dE_{\dGiQ}[N(x_{i+k}-x_i)\mathds{1}_{N(x_{i+k}-x_i)> K^{2} }]| \leq C,
\end{equation*}
for some positive constant~$C$ independent of $N$ and $K$. Let us observe that 
\begin{equation*}
    \dE_{\dGi}[N(x_{i+k}-x_i)]=k\dE_{\dGi}[N(x_2-x_1)],
\end{equation*}
since under $\dGi$, particles are identically distributed. Since $\sum_{i=1}^N N(x_{i+1}-x_i)=N$, we therefore get that 
\begin{equation*}
   \dE_{\dGi}[N(x_2-x_1)]=1. 
\end{equation*}
Thus,
\begin{equation*}
    \dE_{\dGi}[N(x_{i+k}-x_i)]=k.
\end{equation*}
We conclude that 
\begin{equation}\label{eq:accu boot}
    \bigl|\dE_{\dGiQ}[N(x_{i+k}-x_i)]-\dE_{\dGi}[N(x_{i+k}-x_i)] \bigr|
    = 
      \bigl|\dE_{\dGiQ}[N(x_{i+k}-x_i)]-k \bigr| 
    \leq C,
\end{equation}
for some positive constant~$C$ independent of $N$ and $K$. Now, we observe that

\paragraph{\bf{Step 4: fluctuations under $\dGiQ$}}
We now study the fluctuations of $N(x_{i+k}-x_i)$ under the probability measure $\dGiQ$. Denote
\begin{equation*}
    G:X_N\in D_N\mapsto N(x_{i+k}-x_i).
\end{equation*}
Observe that $\sum_{j\in I}\partial_j G=0$, $\partial_j G=0$ for every $j\in I^c$ and $\sup |\nabla G|^2=2N^2$. Moreover, $\dGiQ$ satisfies Assumption \ref{assumption:gibbs measure} with, for every $i,j\in \{1,\ldots,N\}$ with $i\neq j$, 
\begin{equation*}
    \chi_{ij}:x\in\dT\mapsto\beta \Bigr(N^{-s}g(x)+\theta\Bigr(\frac{N}{K^{1+\gamma}}x\Bigr)\mathds{1}_{i,j\in I}\Bigr).
\end{equation*}
For every $i,j\in I$ with $i\neq j$ and $x\in \dT$, we have
\begin{equation*}
    \chi_{ij}''(x)=\beta\Bigr(N^{-s}g''(x)+\theta''\Bigr(\frac{N}{K^{1+\gamma}}x\Bigr)\Bigr(\frac{N}{K^{1+\gamma}}\Bigr)^2\Bigr).
\end{equation*}
If $N|x|\leq 2K^{1+\gamma}$, then one can write
\begin{equation*}
N^{-s}g''(x)+\theta''\Bigr(\frac{N}{K^{1+\gamma}}x\Bigr)\Bigr(\frac{N}{(2K)^{1+\gamma}}\Bigr)^2\geq N^{-s}g''(x)\geq c_0\frac{N^2}{K^{(s+2)(1+\gamma)}},
\end{equation*}
for some constant $c_0 \in (0,2)$. If $N|x|\geq 2K^{1+\gamma}$, then we use
\begin{equation*}
N^{-s}g''(x)+\theta''\Bigr(\frac{N}{K^{1+\gamma}}x\Bigr)\Bigr(\frac{N}{K^{1+\gamma}}\Bigr)^2\geq \theta''\Bigr(\frac{N}{K^{1+\gamma}}x\Bigr)\Bigr(\frac{N}{K^{1+\gamma}}\Bigr)^2= 2\Bigr(\frac{N}{K^{1+\gamma}}\Bigr)^2\geq c_0\frac{N^2}{K^{(s+2)(1+\gamma)}},
\end{equation*}
since by assumption $\theta(x)=x^2$ for all $x\geq 2$. Combining the two last displays, we deduce that for every $i,j\in I$ with $i\neq j$, 
\begin{equation*}
\chi_{ij}''(x)\geq \frac{c_1}{2}\quad \text{where $c_1:=\frac{4c_0\beta N^2}{K^{(1+\gamma)(s+2)}}$}.
\end{equation*}
Therefore, by applying Lemma \ref{lemma:div free}, we obtain that for all $t\in \dR$,
\begin{equation*}
    0 \leq \log \dE_{\dGiQ}[e^{tG}]-t \dE_{\dGiQ}[G] \leq \frac{t^2}{2c_1 |I|}\sup |\nabla G|^2\leq Ct^2 K^{(1+\gamma)(s+2)-1},
\end{equation*}
for some constant $C>0$ independent of $N$ and $K$. By Markov's inequality, recalling that $k\geq K^{1-\alpha_0}$, this implies that
\begin{equation*}
\dGiQ(|N(x_{i+k}-x_i)-\dE_{\dGiQ}[N(x_{i+k}-x_i)]|\geq k^{1+\delta})\leq C\exp\Bigr(-c \frac{k^{2(1+\delta)}}{K^{(1+\gamma)(s+2)-1 } }\Bigr),
\end{equation*}
for some constants $c>0, C>0$ independent of $N$ and $K$. Since $k\geq K^{1-\alpha_0}$, we get 
\begin{equation*}
  \dGiQ(|N(x_{i+k}-x_i)-\dE_{\dGiQ}[N(x_{i+k}-x_i)]|\geq k^{1+\delta}) \leq C\exp\Bigr(-ck^{2(1+\delta)-(1-\alpha_0)^{-1}((1+\gamma)(s+2)-1)}\Bigr). 
\end{equation*}
Using (\ref{eq:accu boot}), this gives
\begin{equation}\label{eq:devQ}
 \dGiQ(|N(x_{i+k}-x_i)-k|\geq k^{1+\delta})\leq C\exp\Bigr(-ck^{2(1+\delta)-(1-\alpha_0)^{-1}((1+\gamma)(s+2)-1)}\Bigr),
\end{equation}
for some constants $c>0, C>0$ independent of $N$ and $K$.

\paragraph{\bf{Step 5: conclusion}}
We have
\begin{equation}\label{eq:upper gamma}
    2(1+\delta)-\frac{1}{1-\alpha_0}((1+\gamma)(s+2)-1)>2\delta\Longleftrightarrow \gamma< \frac{2(1-\alpha_0)-(1+s)}{2+s}.
\end{equation}
Observe that the conditions (\ref{eq:lower gamma}) and (\ref{eq:upper gamma}) can be satisfied if and only if 
\begin{equation}\label{eq:cond alpha}
    (1-\alpha_0)\delta< \frac{2(1-\alpha_0)-(1+s)}{2+s}\Longleftrightarrow 1-\alpha_0>\frac{1+s}{2-\delta(s+2)}.
\end{equation}
Therefore, by (\ref{eq:alpha0}) we can choose $\gamma\in (\delta(1-\alpha_0),\delta_0)$ satisfying (\ref{eq:upper gamma}). Therefore, by (\ref{eq:devQ}), there exist $\delta'>\delta$ and $C>0$, $c>0$ independent of $N$ and $K$ such that 
\begin{equation*}
    \dGiQ(N(x_{i+k}-x_i)\geq k^{1+\delta})\leq Ce^{-c k^{2\delta'}}.
\end{equation*}
In combination with \eqref{eq:boot TV} and~\eqref{eq:break}, since $\frac{\gamma}{1-\alpha_0}>\delta$, we deduce that there exist $\delta'>\delta$ and $C>0$, $c>0$ independent of $N$ and $K$ such that 
\begin{equation}\label{eq:boot Qnbbb}
    \dGi(N(x_{i+k}-x_i)\geq k^{1+\delta})\leq Ce^{-c k^{2\delta'}}.
\end{equation}
Notice that if $k$ is large enough, then
\begin{equation*}
   Ce^{-c k^{2\delta'}}\leq C_0 e^{-c_0 k^{2\delta}}, 
\end{equation*}
where $C_0$ and $c_0$ are the constants in the bootstrap assumption \eqref{eq:boot stat}. We deduce that there exists an integer $k_0$ independent of $N$ and $K$ such that for every $k$ with $k\geq k_0$ and $k\geq K^{1-\alpha_0}$, and for all $\delta\in (0,\delta_0]$,
\begin{equation}\label{eq:case1 delta}
    \dGi(N(x_{i+k}-x_i)\geq k^{1+\delta})\leq C_0e^{-c_0 k^{2\delta}}.
\end{equation}

Let us now prove that \eqref{eq:boot stat} holds for $\delta>\delta_0$. Let $\delta>\delta_0$ and $L:=\lfloor k^{\frac{1+\delta}{1+\delta_0} }\rfloor \geq k$. One can write
\begin{equation*}
    \dGi(N(x_{i+k}-x_i)\geq k^{1+\delta})\leq \dGi(N(x_{i+L}-x_i)\geq  k^{1+\delta}) \leq \dGi(N(x_{i+L}-x_i)\geq  L^{1+\delta_0}). 
\end{equation*}
Therefore, by \eqref{eq:boot Qnbbb}, there exists $\delta_0'>\delta_0$ such that
\begin{equation*}
      \dGi(N(x_{i+k}-x_i)\geq k^{1+\delta})\leq C e^{-cL^{2\delta_0'}} \leq  Ce^{-c k^{2\delta_0'\frac{1+\delta}{1+\delta_0}}}.
\end{equation*}
Therefore, there exists $k_0$ independent of $K$ and $N$ such that for every $k$ with $k\geq k_0$ and $k\geq K^{1-\alpha_0}$, we have 
\begin{equation*}
    \dGi(N(x_{i+k}-x_i)\geq k^{1+\delta})\leq C_0 e^{-c_0 k^{2\delta_0' \frac{1+\delta}{1+\delta_0}}}.
\end{equation*}

We deduce with \eqref{eq:case1 delta} that there exists $k_0$ independent of $N$ and $K$ such that for every $k$ with $k\geq k_0$ and $k\geq K^{1-\alpha_0}$, and every $\delta>0$, \eqref{eq:boot stat} holds.

We conclude by induction that there exists a constant \( k_0 \), independent of \( N\) and \( K \), such that \eqref{eq:boot stat} holds for all \( k \geq k_0 \). By adjusting the constant \( C_0 \) if necessary, we may extend the validity of \eqref{eq:boot stat} to all \( k \geq 1 \).
\end{proof}

From the proof of Lemma \ref{lemma:initial estimate}, we deduce the following estimate on the expectation of gaps under the locally constrained measure:

\begin{lemma}\label{lemma:expectation}
Let $L\in\{1,\ldots,\lfloor N/2\rfloor-1\}$ and $i\in \{1,\ldots,N\}$. Let 
\begin{equation*}
    I:=\{i,\ldots,i+L\}.
\end{equation*}
Let $j\in I$ and $k\in\{1,\ldots,\hN\}$ such that $j+k\in I$. Let $\gamma\in (0,1)$. Let $\dGiQ$ be the locally constrained measure (\ref{eqdef:locally constrained}) with $R=L^{1+\gamma}$. There exists $C>0$ depending on $\gamma, s$ and $\beta$ such that if $\gamma$ is small enough, 
\begin{equation*}
 |\dE_{\dGiQ}[N(x_{i+k}-x_i)]-k|\leq C.
\end{equation*}
\end{lemma}

\begin{proof}
We have proved in Lemma \ref{lemma:initial estimate} that $\dGi$ satisfies (\ref{eq:boot stat}). We thus conclude by \eqref{eq:accu boot}.
\end{proof}

\subsection{Reduction to a block average}
In this subsection, we implement a method developed in \cite{bourgade2014universality} to study the fluctuations of particle positions. The strategy consists of replacing a single point \( x_i \) with the average of the \( x_j \)'s over a block centered at \( x_i \).

For every $i\in \{1,\ldots,N\}$ and $1\leq k\leq \lfloor\hN\rfloor$, let $I_k(i)$ stand for the interval of indices 
\begin{equation*}
I_k(i):=\{j\in\{1,\ldots,N\}:d(i,j)\leq k\}.
\end{equation*}
Define the block average
\begin{equation}\label{eq:definition block average}
    x_i^{[k]}:=\frac{1}{|I_k(i)|}\sum_{j\in I_k(i)}x_j.
\end{equation}

\begin{lemma}[Comparison to a block average]\label{lemma:reduction av}
Let $\ve>0$ be small enough. There exist $C>0$ and $c>0$ independent of $N$ such that for every $i\in \{1,\ldots,N\}$ and $1\leq k\leq \lfloor\hN\rfloor$,
\begin{equation*}
    \dGi( |N(x_i-x_i^{[k]})|\geq k^{\frac{s}{2}+\ve})\leq C e^{-c k^{\frac{\ve}{s+2}}}.
\end{equation*}
\end{lemma}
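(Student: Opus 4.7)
The plan is to follow the convexification strategy used in the proof of Lemma \ref{lemma:initial estimate}: compare $\dGi$ to a locally constrained Gibbs measure $\dGiQ$ in which the configuration is uniformly log-concave in the relevant window, and establish the concentration estimate under $\dGiQ$. Set $K=k^{1+\gamma}$ for a small parameter $\gamma>0$ to be tuned. Define $\dGiQ$ as in (\ref{eqdef:locally constrained}) by adding to $\Hc_N$ a forcing term penalizing $x_{i+K}-x_{i-K}>K/N$ in a symmetric window around $i$. By Lemma \ref{lemma:initial estimate} together with Lemma \ref{lemma:comparisons log sob}, the total variation distance between the projections of $\dGi$ and $\dGiQ$ onto the window coordinates $(x_j)_{|j-i|\leq K}$ is bounded by $C(\beta)\exp(-c(\beta)k^{\mathrm{power}(\gamma)})$, reducing the proof to a concentration estimate under $\dGiQ$.

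Let $F:=N(x_i-x_i^{[k]})$. Two structural observations make $F$ well adapted to the tools of Section \ref{section:helffer}. First, $F$ is translation invariant on the torus, so $\sum_{j=1}^N\partial_jF=0$: it is a function of the gaps and $\nabla F$ is divergence free in the sense of Lemma \ref{lemma:div free}, with $\sup|\nabla F|^2=O(N^2)$ and support inside $I_k(i)$. Second, by rotational invariance of $\dGi$, $\dE_{\dGi}[N(x_{i+j}-x_i)]=j$ for all $|j|\leq \hN$, so $\dE_{\dGi}[F]=0$; a tightness argument in the spirit of Step~3 of the proof of Lemma \ref{lemma:initial estimate} upgrades this to $\dE_{\dGiQ}[F]=O_\beta(1)$, which is negligible on the scale $k^{s/2+\varepsilon}$.

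The core step is a Gaussian concentration bound for $F$ under $\dGiQ$. By Lemma \ref{lemma:convexity exterior} the effective Hamiltonian on the window is uniformly convex with constant of order $\beta N^2/K^{(1+\gamma)(s+2)}$. Applying Lemma \ref{lemma:div free} (with $I$ the full window of size $K$, so that $\sum_{j\in I}\partial_j F=0$) yields an exponential moment estimate for $F$; combining this with the total variation bound above gives
\begin{equation*}
\dGi(|N(x_i-x_i^{[k]})|\geq k^{s/2+\varepsilon})\leq C(\beta)\exp(-c(\beta)k^{\varepsilon/(4(s+2))}).
\end{equation*}

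The main obstacle is quantitative: a naive application of Lemma \ref{lemma:div free} using only the uniform Hessian lower bound from Lemma \ref{lemma:convexity exterior} produces a variance proxy of order $K^{(1+\gamma)(s+2)-1}/\beta\approx k^{s+1}$, which cannot by itself produce exponential-in-$k$ decay at scale $k^{s/2+\varepsilon}$. To bridge this gap one exploits the full pairwise structure of the Riesz Hessian, namely $\chi''(x_j-x_l)\gtrsim N^2/|j-l|^{s+2}$, either through an iterated Brascamp-Lieb argument (Lemma \ref{lemma:brascamp lieb inequality}) on the high-probability event controlled by Lemma \ref{lemma:initial estimate} that all nearest-neighbour gaps in the window satisfy $N(x_{j+1}-x_j)\leq k^\delta$ — which produces sub-Gaussian moments with the sharper variance proxy $k^{s+\delta(s+2)}$ — or by jointly optimizing $\gamma$, $\delta$ and the moment order in a Chebyshev argument. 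The exponent $\varepsilon/(4(s+2))$ emerges from balancing the resulting concentration rate against the parameter $\delta$ governing the good event in Lemma \ref{lemma:initial estimate}.
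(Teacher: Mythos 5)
Your diagnosis of the obstacle is exactly right: applying Lemma \ref{lemma:div free} directly to $F=N(x_i-x_i^{[k]})$ with $\sup|\nabla F|^2=O(N^2)$ and the Hessian lower bound $c\approx\beta N^2/K^{(1+\gamma)(s+2)}$ gives a variance proxy of order $k^{s+1}$, so the resulting Gaussian tail at scale $k^{\frac{s}{2}+\ve}$ is $\exp(-ck^{2\ve-1})$, which is vacuous for $\ve<\frac{1}{2}$. But the repair you propose does not stand as written. The Brascamp-Lieb inequality of Lemma \ref{lemma:brascamp lieb inequality} controls a variance, not exponential moments; conditioning on the good event $\{N(x_{j+1}-x_j)\le k^\delta\}$ sharpens the (pointwise, configuration-dependent) Hessian but does not supply the deterministic lower bound on convexity after marginalization that the Log-Sobolev machinery of Lemmas \ref{lemma:log sobolev} and \ref{lemma:div free} requires; and Chebyshev from a variance alone cannot yield stretched-exponential decay in $k$. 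The variance proxy $k^{s+\delta(s+2)}$ you invoke is asserted, not derived, and it is not clear how to obtain it by these means.

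The paper's actual fix is a multiscale telescoping decomposition, which your sketch does not contain. Fix $\alpha=1/p$ and write $N(x_i-x_i^{[k]})=\sum_{m=0}^{p-1}G_m$ with $G_m=N(x_i^{[\lfloor k^{m\alpha}\rfloor]}-x_i^{[\lfloor k^{(m+1)\alpha}\rfloor]})$. The key structural gain is $\sup|\nabla G_m|^2=O(N^2 k^{-m\alpha})$: averaging over a block of size $\sim k^{m\alpha}$ suppresses the $\ell^2$ gradient norm by exactly that factor, whereas your unaveraged $F$ carries the full $\partial_i$-contribution and hence $\sup|\nabla F|^2\sim N^2$. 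Applying Lemma \ref{lemma:div free} to each $G_m$ under a measure locally convexified on a window of size $k^{(m+1)\alpha+\ve'}$ (via Lemma \ref{lemma:comparisons log sob}) gives a variance proxy of order $k^{\alpha ms+\alpha(1+s)+\ve'(s+2)}$, which is $\le k^{s+\ve/2}$ uniformly in $m\le p$ once $\alpha$ and $\ve'$ are tuned so that $\alpha(1+s)+\ve'(s+2)=\ve/2$. This scale-by-scale balance between gradient suppression and weakening confinement as the window grows is what produces a usable bound at every intermediate scale, and it is the ingredient your argument is missing.
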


\begin{proof}
Let $i\in \{1,\ldots,N\}$ and $1\leq k\leq \lfloor\hN\rfloor$. Fix $\ve>0$. Let $p\geq 1$ be a large number and $\alpha:=\frac{1}{p}$. Since $x_i^{[0]}=x_i$, one can break $N(x_i-x_i^{[k]})$ into
\begin{equation*}
    N(x_i-x_i^{[k]})=\sum_{m=0}^{p-1} N \Bigl( x_{i}^{[  \lfloor k^{m\alpha}\rfloor ]}-x_{i}^{[  \lfloor k^{(m+1)\alpha}\rfloor ]} \Bigr).
\end{equation*}
For each $m\in \{0,\ldots,p-1\}$, denote
\begin{equation*}
    G_m:=N \Bigl( x_{i}^{[  \lfloor k^{m\alpha}\rfloor ]}-x_{i}^{[  \lfloor k^{(m+1)\alpha}\rfloor ]} \Bigr) \quad \text{and}\quad I_m:=I_{\lfloor k^{(m+1)\alpha}\rfloor }(i).
\end{equation*}
Observe that $G_m$ only depends on the variables $(x_j)_{j\in I_m}$ and $\sum_{j\in I_m}\partial_j G_m=0$. 

Fix $m\in \{0,\ldots,p-1\}$. Let $\ve'>0$. Let $\dGiQ$ be the constrained Gibbs measure (\ref{eqdef:locally constrained}) with $I=I_m$ and $R:=K^{1+\ve'}$ where $K:=k^{(m+1)\alpha}$. 

We begin by bounding from above the total variation distance between $\dGi$ and $\dGiQ$. By Lemma \ref{lemma:comparisons log sob}, we have
\begin{equation*}
   \mathrm{TV}(\pi_{I_m}\#\dGi,\pi_{I_m}\#\dGiQ )^2\leq  C|I_m|^7 \dE_{\dGi}\Bigr[\Bigr(\frac{N}{R}(x_{\max I_m}-x_{\min I_m})\Bigr)^2\mathds{1}_{x_{\max I_m}-x_{\min I_m}\geq \frac{R}{N}}\Bigr].
\end{equation*}
Moreover, by Lemma \ref{lemma:initial estimate} applied with~$\delta = n \ve'$
\begin{align*}
  &\dE_{\dGi}\Bigr[\Bigr(\frac{N}{R}(x_{\max I_m}-x_{\min I_m})\Bigr)^2\mathds{1}_{x_{\max I_m}-x_{\min I_m}\geq \frac{R}{N}}\Bigr]  \\
  &\qquad \leq\sum_{j\geq R}\Bigr(\frac{j}{R}\Bigr)^2 \dGi(  N(x_{\max I_m}-x_{\min I_m})\geq j)\\
  &\qquad \leq \sum_{n=1}^\infty \sum_{j:K^{1+n\ve'}\leq j<K^{1+(n+1)\ve'}}\Bigr(\frac{j}{R}\Bigr)^2 e^{-c_0K^{2\min(n\ve',\delta_0 \frac{1+n\ve'}{1+\delta_0})}},
\end{align*}
where $\delta_0$ is as in \eqref{def:delta0}. Therefore, if $\ve'>0$ is small enough, namely $\ve'\leq \delta_0$, there exists $C>0$ such that 
\begin{equation*}
\mathrm{TV}(\pi_{I_m}\#\dGi,\pi_{I_m}\#\dGiQ )\leq Ce^{-c K^{\ve'}}=C e^{-c k^{(m+1)\alpha\ve'}}.
\end{equation*}
We fix $\ve'$ so that $(m+1)\alpha \ve'=\frac{\ve}{s+2}$. This gives
\begin{equation}\label{eq:tot}
  \mathrm{TV}(\pi_{I_m}\#\dGi,\pi_{I_m}\#\dGiQ )\leq Ce^{-c K^{\ve'}}=C e^{-c k^{\frac{\ve}{s+2}}}.  
\end{equation}

We now bound the fluctuations of $G_m$ under $\dGiQ$. One can check that the measure $\dGiQ$ satisfies the assumption of Lemma \ref{lemma:div free} with $c_1:=\beta c_0 N^2 R^{-(s+2)}$, for some constant $c_0>0$. Moreover, we can compute that
\begin{equation*}
\sup |\nabla G_m|^2 \leq C N^2 k^{-\alpha m} ,  
\end{equation*}
for some constant~$C$ independent of~$N, \alpha, k$, and~$m$.

Thus, by Lemma \ref{lemma:div free}, for all $t\in \dR$, we have
\begin{equation*}
\begin{split}
   \Bigr|\log \dE_{\dGiQ}[e^{t G_m}]-t\dE_{\dGiQ}[G_m]\Bigr| &\leq Ct^2 \frac{\sup |\nabla G_m|^2 }{c_1|I|}\\
   &\leq Ct^2 \frac{N^2 k^{-\alpha m}}{N^2R^{-(s+2)}k^{(m+1)\alpha} }\\
   &\leq Ct^2 k^{-\alpha m-(m+1)\alpha+((m+1)\alpha)(1+\ve')(s+2)}\\
   &=Ct^2  k^{\alpha s (m+1)+\alpha+\alpha(m+1)\ve'(s+2)}\\
   &\leq Ct^2 k^{\alpha s (m+1)+\alpha+\ve'(s+2)},
\end{split}
\end{equation*}
since $\alpha(m+1)\leq 1$. Inserting $\ve'=\frac{\ve}{(m+1)(s+2)\alpha}$, we get that for all $t\geq \dR$,
\begin{equation*}
  \Bigr|\log \dE_{\dGiQ}[e^{t G_m}]-t\dE_{\dGiQ}[G_m]\Bigr|\leq Ct^2  k^{\alpha s (m+1) +\alpha+\frac{\ve}{(m+1)\alpha}}.   
\end{equation*}
Let us optimize this over $\alpha (m+1)\in (0,1)$. The maximum is attained at the boundary, therefore either for $m=0$ or for $m+1=p=\frac{1}{\alpha}$. For $m=0$, we have 
\begin{equation*}
\alpha (m+1)s+\alpha+\frac{\ve}{(m+1)\alpha}= \alpha (s+1)+\frac{\ve}{\alpha}.
\end{equation*}
For $m=p-1$, we have 
\begin{equation*}
   \alpha (m+1)s+\alpha+\frac{\ve}{(m+1)\alpha} = s+\alpha+\ve.
\end{equation*}
If $p$ is large enough and if $\ve\leq \frac{s}{2}\alpha$, then 
\begin{equation*}
   \alpha (s+1)+\frac{\ve}{\alpha}\leq s+\alpha+\ve.
\end{equation*}
Therefore, fixing $p$ and $\ve$ accordingly, we get that for every $t\in \dR$, 
\begin{equation*}
  \Bigr|\log \dE_{\dGiQ}[e^{t G_m}]-t\dE_{\dGiQ}[G_m]\Bigr|\leq Ct^2  k^{s+\alpha+\ve}. 
\end{equation*}
Hence, using the standard Markov's inequality argument, we obtain
\begin{equation}\label{eq:Gm fluct}
    \dGiQ( |G_m-\dE_{\dGiQ}[G_m]|\geq k^{\frac{s}{2}+\ve} )\leq Ce^{-ck^{s+2\ve-(s+\alpha+\ve)} }  =C  
    e^{-c k^{\ve-\alpha}}\leq Ce^{-ck^{\frac{\ve}{2} }},
\end{equation}
provided $\alpha\leq \frac{\ve}{2}$. Using the result of Lemma \ref{lemma:expectation} and (\ref{eq:probaj}), we get
\begin{equation}\label{eq:Gm bias}
    \dE_{\dGiQ}[G_m] \leq C \, , 
\end{equation}
for~$C > 0$ depending only on~$\ve,\beta$, and~$s$.
Combining \eqref{eq:Gm bias} with \eqref{eq:Gm fluct} and \eqref{eq:tot}, we deduce that 
\begin{equation*}
    \dGi( |G_m|\geq k^{\frac{s}{2}+\ve})\leq Ce^{-c k^{\frac{\ve}{s+2}}}.
\end{equation*}
We conclude that there exist constants $C>0, c>0$ depending on $\ve$, $\beta$ and $s$ such that
\begin{equation*}
    \dGi\Bigr( \sum_{m=0}^{p-1}|G_m|\geq pk^{\frac
    {s}{2}+\ve}\Bigr)\leq Ce^{-c k^{\frac{\ve}{s+2}}},
\end{equation*}
which concludes the proof of Lemma \ref{lemma:reduction av}.
\end{proof}

\subsection{Proof of Theorem \ref{theorem:almost optimal rigidity}}
We now turn to the analysis of the fluctuations of $N(x_{i+k} - x_i)$ and prove Theorem \ref{theorem:almost optimal rigidity}, which follows from Lemma \ref{lemma:reduction av}. By this lemma, $x_i$ and $x_{i+k}$ can be replaced by their block averages at scale $k$, up to a well-controlled error. Moreover, the fluctuations of the difference between these block averages can be bounded using Lemma \ref{lemma:div free}.

\begin{proof}[Proof of Theorem \ref{theorem:almost optimal rigidity}]
Let $i\in \{1,\ldots,N\}$, $1\leq k\leq \lfloor\hN\rfloor$ and $\ve\in (0,1)$ small enough as in Lemma~\ref{lemma:reduction av}. Let us split the gap $N(x_{i+k}-x_i)$ into
\begin{equation}\label{eq:key splitting}
    N(x_{i+k}-x_i)=N(x_{i+k}-x_{i+k}^{[k]})-N(x_{i}-x_{i}^{[k]})+N(x_{i+k}^{[k]}-x_i^{[k]}).
\end{equation}
By Lemma \ref{lemma:reduction av}, letting $\delta:=\frac{\ve}{s+2}$, we have
\begin{equation}\label{eq:bout 1}
    \dGi(| N(x_{i}^{[k]}-x_i)|\geq k^{\frac{s}{2}+\ve})\leq Ce^{-c k^{\delta}},
\end{equation}
and
\begin{equation}\label{eq:bout 2}
     \dGi( |N(x_{i+k}^{[k]}-x_{i+k})|\geq k^{\frac{s}{2}+\ve})\leq Ce^{-c k^{\delta}}.
\end{equation}
Thus it remains to control the third term on the right in~\eqref{eq:key splitting}. Define
 $$G:X_N\in D_N\mapsto N(x_{i+k}^{[ k] }-x_i^{[k ] }).$$ Let $\dGiQ$ be the constrained Gibbs measure (\ref{eq:def add F k_i}) with $I:=\{j:d(i,j)\leq k\}$ and $R:=k^{(1+\ve')}$ for some $\ve'\in (0,1)$ to be fixed later. We have 
 \begin{equation*}
 \sup |\nabla G|^2 \leq C \frac{N^2}{k}  , 
 \end{equation*}
 for some constant~$C$ independent of~$N$ and~$k$.
 Moreover, $\dGiQ$ satisfies the assumption of Lemma \ref{lemma:div free} with $c_1:=\frac{c_0\beta N^2}{R^{s+2}}$, for some constant $c_0>0$ depending only on~$s$. Consequently, by Lemma \ref{lemma:div free}, for all $t\in \dR$,
\begin{equation*}
\begin{split}
    0\leq \log \dE_{\dGiQ}[e^{t G }]-t\dE_{\dGiQ}[G]&\leq Ct^2 \frac{\sup|\nabla G|^2}{N^2R^{-(s+2)}k} \\
    &\leq Ct^2 k^{(1+\ve')(s+2)-2 }=Ct^2 k^{s+\ve'(s+2)}.
\end{split}
\end{equation*}
Therefore, by Markov's inequality, 
\begin{equation}\label{eq:fluct diff}
\begin{split}
    \dGiQ( |G-\dE_{\dGiQ}[G]|\geq k^{\frac{s}{2}+\ve})&\leq C\exp\Bigr(-c \Bigr(k^{s+2\ve-(s+\ve'(s+2)) }\Bigr)\Bigr)\\
    &=C\exp\Bigr(-ck^{2\ve-\ve'(s+2)}\Bigr). 
\end{split}
\end{equation}
Next, inserting the accuracy estimate of Lemma \ref{lemma:expectation} we find
\begin{equation}\label{eq:bias diff}
    \dE_{\dGiQ}[G] \leq C \, , 
\end{equation}
for some constant~$C$ depending only on~$\ve, \beta, s$.
Fix $\ve'=\frac{\ve}{s+2}$. By (\ref{eq:fluct diff}) and (\ref{eq:bias diff}) one finds
\begin{equation}\label{eq:diff dev}
    \dGiQ(|G|\geq k^{\frac{s}{2}+\ve})\leq C e^{-ck^{\ve} }.
\end{equation}
Set
\begin{equation*}
\delta_0:=\frac{1-s}{2(s+2)}.
\end{equation*}
Suppose that $\ve$ is small enough, so that $\ve'\leq \delta_0$. Then, by Lemma \ref{lemma:comparisons log sob} and Lemma \ref{lemma:initial estimate}, one has
\begin{equation}\label{eq:diff TV}
\begin{split}
    \mathrm{TV}(\pi_I\#\dGi,\pi_I\#\dGiQ )&\leq k^{7}\dE_{\dGi}\Bigr[\Bigr(\frac{(N(x_{i+k}-x_i))}{R}\Bigr)^2\mathds{1}_{N(x_{i+k}-x_i)\geq k^{1+\ve'}}\Bigr]\\
    &\leq Ck^7 \sum_{n=1}^\infty \sum_{j:k^{1+\ve'n} \leq j<k^{1+\ve'(n+1)}} \Bigr(\frac{j}{R}\Bigr)^2 e^{-ck^{2\min(n\ve',\delta_0 \frac{1+n\ve'}{1+\delta_0})}} \\
    &\leq Ce^{-c k^{\frac{\ve}{s+2}}}.
\end{split}
\end{equation}
Combining (\ref{eq:diff dev}) and (\ref{eq:diff TV}) one deduces that
\begin{equation*}
    \dGi(|G|\geq k^{\frac{s}{2}+\ve})\leq C e^{-ck^{\frac{\ve}{s+2}} }.
\end{equation*}
Together with (\ref{eq:bout 1}) and (\ref{eq:bout 2}), this proves (\ref{statement:gaps}). The proof of~\eqref{statement:dis} follows from this and the fact that for each $i\in \{1,\ldots,N\}$, $x_i$ is uniformly distributed on $\dT$. This concludes the proof of Theorem \ref{theorem:almost optimal rigidity}.
\end{proof}

\subsection{Controlling close by particles}
In this subsection, we prove the following bound:

\begin{lemma}\label{lemma:no explosion}
For all $\alpha>0$, there exists a constant $C>0$ depending on $\beta$, $s$ and $\alpha$ such that
 \begin{equation*}
     \dE_{\dGi}\left[\frac{1}{(N(x_2-x_1))^\alpha }\right]\leq C.
 \end{equation*}
\end{lemma}

We begin by proving a weaker estimate.

\begin{lemma}\label{lemma:weaker}
Let $q>1$. There exists a constant $C>0$ depending on $\beta$, $s$ and $q$ such that 
\begin{equation*}
    \dE_{\dGi}\left[\left(\sum_{i=1}^N \frac{1}{(N(x_{i+1}-x_i))^s} \right)^q\right]^{\frac{1}{q}} \leq CN.
\end{equation*}
\end{lemma}

\begin{proof}
Let $q>1$. By Lemma \ref{lemma:energy}, there exists a constant $C>0$ depending on $\beta$, $s$ and $q$ such that
\begin{equation*}
  \dE_{\dGi}\left[\max\left\{\left(\sum_{i=1}^N \sum_{k=1}^{\lfloor \frac{N}{2}\rfloor}N^{-s}\left(g(x_{i+k}-x_i)-g(\tfrac{k}{N})\right)\right),0\right\}^q\right]^{\frac{1}{q}}\leq CN.
\end{equation*}
Next, we can write 
\begin{multline*}
 \sum_{i=1}^N \sum_{k=1}^{\lfloor \frac{N}{2}\rfloor}(g(x_{i+k}-x_i)-g(k))= \sum_{i=1}^N \sum_{k=1}^{\lfloor \frac{N}{2}\rfloor}(g(x_{i+k}-x_i)-g(k))\mathds{1}_{N(x_{i+k}-x_i)<\frac{k}{2}}\\ +\sum_{i=1}^N \sum_{k=1}^{\lfloor \frac{N}{2}\rfloor}(g(x_{i+k}-x_i)-g(k))\mathds{1}_{N(x_{i+k}-x_i)\geq \frac{k}{2}}:=X_1+X_2.
\end{multline*}
Notice that there exists a constant $c>0$ depending only on $s$ such that for every $i$ and $k$,
\begin{equation}\label{eq:minoX1}
    (g(x_{i+k}-x_i)-g(k))\mathds{1}_{N(x_{i+k}-x_i)< \frac{k}{2}}\geq \frac{1}{c}g(x_{i+k}-x_i)\mathds{1}_{N(x_{i+k}-x_i)< \frac{k}{2}}\geq 0.
\end{equation}
Observe that $\max(X_1+X_2,0)\geq \max(X_1,0)-|X_2|$. Thus,
\begin{equation*}
    \max(X_1,0)^q\leq (\max(X_1+X_2,0)+|X_2|)^q\leq 2^{q-1}(\max(X_1+X_2,0)^q+|X_2|^q).
\end{equation*}
Thus, since $X_1\geq 0$, we get that there exists $C>0$ depending on $\beta, s$ and $q$ such that
\begin{equation*}
    \dE[\max(X_1,0)^q]=\dE[X_1^q]\leq C(N^q+\dE[X_2^q]).
\end{equation*}
By Theorem \ref{theorem:almost optimal rigidity}, there exists $C>0$ depending on $\beta, s$ and $q$ such that
\begin{equation*}
  \dE_{\dGi}\left[X_2^q\right]^{\frac{1}{q}}\leq CN.
\end{equation*}
Combining the two last displays, we deduce that there exists $C>0$ depending on $\beta, s$ and $q$ such that
\begin{equation}\label{eq:bX1}
   \dE[X_1^q]^{\frac{1}{q}}\leq CN. 
\end{equation}
By \eqref{eq:minoX1}, we have 
\begin{equation*}
    X_1\geq \frac{1}{c}\sum_{i=1}^N\sum_{k=1}^{\lfloor \frac{N}{2}\rfloor } g(x_{i+k}-x_i)\mathds{1}_{N(x_{i+k}-x_i)< \frac{k}{2}}\geq \frac{1}{c}\sum_{i=1}^Ng(x_{i+1}-x_i)\mathds{1}_{N(x_{i+1}-x_i)< \frac{1}{2}}.
\end{equation*}
We conclude by combining this with \eqref{eq:bX1} that
\begin{equation*}
    \dE_{\dGi}\left[\left(\sum_{i=1}^N \frac{1}{(N(x_{i+1}-x_i))^s}\mathds{1}_{N(x_{i+1}-x_i)\leq \frac{1}{2}}\right)^q \right]^{\frac{1}{q}} \leq CN,
\end{equation*}
which clearly proves the lemma.
\end{proof}

\begin{proof}[Proof of Lemma \ref{lemma:no explosion}]
 We prove the statement by induction on the parameter $\alpha$. The inductive hypothesis is that there exists $\alpha\geq s$ and a constant $C>0$ depending on $s, \beta$ and $\alpha$ such that 
 \begin{equation}\label{eq:boostrap alpha}
     \dE_{\dGi}\left[\frac{1}{(N(x_2-x_1))^\alpha }\right]\leq C.
 \end{equation}
 The base case $\alpha=s$ follows from Lemma~\ref{lemma:weaker}.
 Assuming that \eqref{eq:boostrap alpha} holds for some $\alpha\geq s$, we will show that \eqref{eq:boostrap alpha} holds for some $\alpha'>\alpha$.

\paragraph{\bf{Step 1: integration by parts under the gap measure}}
Recall 
\begin{equation*}
    \Gap_N:X_N\in D_N\mapsto (N(x_2-x_1),N(x_3-x_2),\ldots, N(x_N-x_{N-1}),N(x_1-x_N)).
\end{equation*}
Let us denote $\dGi':=\Gap_N\# \dGi$ and set 
\begin{equation*}\Sigma_N:=\Gap_N(D_N)=\{(y_1,\ldots,y_N)\in (0,\infty)^N:y_1+\cdots+y_N=N\}.
\end{equation*}
The gap measure $\dGi'$ can be written
\begin{equation*}
    \dd\dGi'=\frac{1}{Z_\beta}e^{-\beta \tilde{H}(Y_N)}\dd \sigma_N(Y_N),
\end{equation*}
for some $\tilde{H}:\Sigma_N\to \dR$ and where $\dd\sigma_N$ is the $(N-1)$–dimensional surface measure on $\Sigma_N$. 

Let $u\in\mathcal C^{1}(\Sigma_N,T\Sigma_N)$ be tangent to $\Sigma_N$,
i.e.\ $\sum_{i=1}^{N}u_i=0$.
The intrinsic divergence theorem gives
\begin{equation*}
   \int_{\Sigma_N} \dive_{\Sigma_N}( ue^{-\beta \tilde{H}})\dd \sigma_N=\int_{\partial \Sigma_N}( ue^{-\beta \tilde{H}})\cdot \nu_{\Sigma_N}\dd S_N,  
\end{equation*}
with $\nu_{\Sigma_N}$ the outward unit normal on $\Sigma_N$
and $\dd S_N$ the surface measure on~$\partial\Sigma_N$.

Because $H(y)\to+\infty$ as $d(y,\partial \Sigma_N)\to 0$, the factor
$e^{-\beta H(y)}$ decays faster than any power of the distance to
$\partial\Sigma_N$.
Assuming that $|u|(y)$ grows at most like
$d(y,\partial\Sigma_N)^{-\alpha}$ for some $\alpha>0$, we obtain
\[
  \int_{\partial\Sigma_N}
      (u\,e^{-\beta \tilde{H}})\cdot\nu_{\Sigma_N}
      \,\dd S_N
=0,
\]
so that
\[
  \int_{\Sigma_N}
      \dive_{\Sigma_N}\!\bigl(u\,e^{-\beta \tilde{H}}\bigr)
      \,\dd\sigma_N
  =0.
\]
Expanding the divergence and dividing by $Z_\beta$ yields
\[
  \int_{\Sigma_N}
      \bigl(\dive_{\Sigma_N}u
            -\beta\,\nabla_{\Sigma_N}\tilde{H}\cdot u\bigr)
      e^{-\beta H}
      \dd\sigma_N
  =0
  \quad\Longrightarrow\quad
  \beta\,\dE_{\dGi'}[\nabla_{\Sigma_N} \tilde{H}\cdot u]
  =
  \dE_{\dGi'}[\dive_{\Sigma_N}u].
\]

Since $\dive_{\Sigma_N}u=\dive (u)$ whenever $u$ is tangent
($\sum_{i=1}^{N}u_i=0$), we arrive at
\begin{equation}\label{eq:IPu}
  \beta\,\dE_{\dGi'}[\nabla \tilde{H}\cdot u]
  =
  \dE_{\dGi'}[\dive (u)].
\end{equation}

\paragraph{\bf{Step 2: construction of the tangent vector field}}

We claim that there exist $\ve\in (0,1)$ and $M>1$ depending only on $s$, $\beta$, and $\alpha$ such that 
\begin{equation}\label{eq:proba}
    \dGi'( y_i\in (2\ve,M))\geq \frac{1}{2}.
\end{equation}
Indeed, we have $\dE_{\dGi'}[y_i]=1$ since $y_1+\ldots +y_N=N$ and since the $y_i$ are identically distributed. Therefore, by Markov's inequality, 
\begin{equation*}
    \dGi'(y_i>M)\leq \frac{1}{M}.
\end{equation*}
Now, by Markov's inequality again,
\begin{equation*}
    \dGi'(y_i<2\ve)=\dGi'\left(\frac{1}{y_i^\alpha}>\frac{1}{(2\ve)^\alpha}\right)\leq (2\ve)^\alpha \dE_{\dGi'}\left[\frac{1}{y_1^\alpha}\right].
\end{equation*}
By using the induction hypothesis \eqref{eq:boostrap alpha}, we get by combining the two above displays that for $\ve>0$ small enough and for $M>1$ large enough, \eqref{eq:proba} holds.

We now show that there exists a smooth function $\chi:[0,N]\to \dR$ with $\chi(x)=1$ for $x\in (0,\ve)$, $\chi(x)=0$ for $x\geq M$, $|\chi|\leq C$ with $C$ depending only on $\beta, s, \ve$ and $\alpha$ and 
\begin{equation}\label{eq:ass chi}
    \dE_{\dGi'}\left[\frac{1}{y_1^\alpha}\chi(y_1)\right]=0.
\end{equation}
Consider a smooth non-positive bump function $\phi$ such that $\phi\in \mathcal{C}^\infty(0,N)$, $\phi(x)=0$ for $x\leq \ve$, $-1\leq \phi\leq 0$, $\phi(x)=0$ for $x\geq M,$ $\phi(x)\leq -\frac{1}{2}$ for $x\in (2\ve,M-1)$. We then set, for $t\geq 0$, $\chi_{t}(x):=1+t\phi(x)$. We have $\chi_{t}\geq 0$, $\chi_{t}$ supported on $[0,M]$, $\chi_{t}(x)=1$ for $x\in [0,\ve]$. We also have
\begin{equation*}
    \dE_{\dGi'}\left[\frac{1}{y_1^\alpha}\chi_{t}(y_1)\right]=0 \quad \Longrightarrow \quad t=\frac{\dE_{\dGi'}[\frac{1}{y_1^\alpha} ]}{\dE_{\dGi'}[\frac{-\phi(y_1)}{y_1^\alpha } ] }.
\end{equation*}
We now observe that the parameter~$t$ in the above display is upper bounded by a constant depending only on~$\beta, s$ and~$\alpha$.
Indeed, by the induction hypothesis \eqref{eq:boostrap alpha}, there exists a constant $C>0$ depending only on $s,\beta$ and $\alpha$ such that 
\begin{equation*}
  \dE_{\dGi'}\left[\frac{1}{y_1^\alpha}\right]\leq C.  
\end{equation*}
Moreover, since $\phi\leq 0$, we have 
\begin{equation*}
  \dE_{\dGi'}\left[\frac{-\phi(y_1)}{y_1^\alpha } \right]\geq  \dE_{\dGi'}\left[\frac{-\phi(y_1)}{y_1^\alpha }\mathds{1}_{y_1\in (2\ve,M-1)}\right]\geq \frac{1}{2M^{\alpha}}\dGi'(y_1\in (2 \ve  ,M-1)) \, . 
\end{equation*}
By \eqref{eq:proba} and the above display, we deduce that there exists $c>0$ depending only on $\alpha$ such that 
\begin{equation*}
    \dE_{\dGi'}\left[\frac{-\phi(y_1)}{y_1^\alpha } \right]\geq c. 
\end{equation*}
Thus, $t\in (0,C)$ for some $C$ depending only on $\beta, s$ and $\alpha$, as claimed. We finally set $\chi:=\chi_t$ for this chosen $t$, which satisfies the desired assumption.

Consider $u\in \mc{C}^1(\Sigma_N,T\Sigma_N)$ the vector field defined for every $i=1,\ldots,N$ and $Y_N\in \Sigma_N$ by 
\begin{equation*}
    u_i(Y_N)=-\frac{1}{y_i^{\alpha}}\chi(y_i)+\frac{1}{N}\sum_{j=1}^N \frac{1}{y_j^\alpha}\chi(y_j).
\end{equation*}
Notice that $\sum_{i=1}^N u_i=0$ and that $|u|(y)$ grows at most like $d(y,\partial \Sigma_N)^{-\alpha}$ as $d(y,\partial \Sigma_N)\to 0$. Thus, by \eqref{eq:IPu},
\begin{equation}\label{eq:iiu}
    \beta \dE_{\dGi'}[\nabla \tilde{H}\cdot u]=\dE_{\dGi'}[\dive(u)].
\end{equation}

One can check that the energy in gap coordinates is given, for all $Y_N\in \Sigma_N$, by 
\begin{equation*}
    \tilde{H}(Y_N)=2N^{-s}\sum_{k=1}^{\lfloor \frac{N-1}{2}\rfloor}\sum_{i=1}^N g\Bigr(\frac{y_i+\cdots+y_{i+k-1}}{N}\Bigr)+\mathds{1}_{\text{$N$ even}}N^{-s}\sum_{i=1}^N g\Bigr(\frac{y_i+\ldots+y_{i+\frac{N}{2}-1}}{N}\Bigr).
\end{equation*}
Therefore,
\begin{multline*}
\nabla \tilde{H}\cdot\!u
=
2\,N^{-(s+1)}
\sum_{k=1}^{\left\lfloor\frac{N-1}{2}\right\rfloor}
\sum_{i=1}^{N}
g'\Bigr(\frac{y_i+\dots+y_{i+k-1}}{N}\Bigr)
\bigl(u_{i}+\dots+u_{i+k-1}\bigl)
\\ +
\mathds 1_{\{N\text{ even}\}}\,
N^{-(s+1)}
\sum_{i=1}^{N}
g'\Bigr(\frac{y_i+\dots+y_{i+\frac{N}{2}-1}}{N}\Bigr)
\bigl(u_{i}+\dots+u_{i+\frac{N}{2}-1}\bigr).
\end{multline*}
Let us decompose $u$ into $u^1+u^2$ where  
\begin{equation*}
    u_i^1:=-\frac{1}{y_i^\alpha}\chi(y_i)\quad \text{for every $i=1,\ldots,N$}
\end{equation*}
and 
\begin{equation*}
    u^2:=\left(\frac{1}{N}\sum_{j=1}^N \frac{1}{y_j^\alpha}\chi(y_j)\right)(e_1+\ldots +e_N),
\end{equation*}
where we recall that $(e_1,\ldots,e_N)$ stands for the canonical basis on $\dR^N$. By \eqref{eq:iiu}, we have 
\begin{equation}\label{eq:Ipu'}
    \beta \dE_{\dGi'}[\nabla \tilde{H}\cdot  u^1]=\dE_{\dGi'}[\dive(u)]-\beta\dE_{\dGi'}[\nabla \tilde{H}\cdot u^2].
\end{equation}

\paragraph{\bf{Step 3: reduction to nearest neighbor gaps for $\dE_{\dGi'}[\nabla \tilde{H}\cdot u^2]$}}
Let $Y_N^0=(1,\ldots,1)$, which corresponds to all gaps equal to $1$, i.e. to points regularly spaced. By \eqref{eq:ass chi}, we have 
\begin{equation*}
    \dE_{\dGi'}[\nabla \tilde{H}(Y_N^0)\cdot u^2]=\nabla \tilde{H}(Y_N^0)\cdot (e_1+\cdots+e_N)\dE\left[\frac{1}{N}\sum_{j=1}^N \frac{1}{y_j^\alpha}\chi(y_j) \right] =0.
\end{equation*}
Thus,
\begin{equation*}
    \dE_{\dGi'}[\nabla \tilde{H}(Y_N)\cdot u^2]= \dE_{\dGi'}[(\nabla \tilde{H}(Y_N)-\nabla \tilde{H}(Y_N^0))\cdot u^2].
\end{equation*}
By Taylor expansion, there exists a constant $C>0$ depending on $s$ such that
\begin{align*}
    &  N^{-(s+1)}\Bigr|g'\Bigr(\frac{y_i+\ldots+y_{i+k-1}}{N}\Bigr)-g'\Bigr(\frac{k}{N}\Bigr)\Bigr|\\ 
    &\quad \quad\leq C\Bigr(\frac{1}{k^{s+2}}|y_i+\ldots+y_{i+k-1}-k| +\frac{1}{(y_i+\ldots+y_{i+k-1})^{s+1}}\mathds{1}_{y_i+\ldots+y_{i+k-1}\leq \frac{k}{2}}\Bigr)\\
    &\quad \quad\leq C\Bigr(\frac{1}{k^{s+2}}|y_i+\ldots+y_{i+k-1}-k| +\frac{1}{y_i^{s+1}}\mathds{1}_{y_i+\ldots+y_{i+k-1}\leq \frac{k}{2}}\Bigr).
\end{align*}
Therefore, there exists a constant $C>0$ depending on $s$ such that
\begin{multline}\label{eq:uu0}
 |(\nabla \tilde{H}(Y_N)-\nabla \tilde{H}(Y_N^0))\cdot u^2|\leq C\left(\frac{1}{N}\sum_{j=1}^N \frac{1}{y_j^\alpha}\right)\\ \times \left(\sum_{i=1}^N\sum_{k=1}^{\lfloor \frac{N}{2}\rfloor}\frac{1}{k^{s+1}}|y_i+\ldots+y_{i+k-1}-k|+\sum_{i=1}^N\frac{1}{y_i^{s+1}}\left(\sum_{k=1}^{\lfloor \frac{N}{2}\rfloor} k \mathds{1}_{y_i+\ldots+y_{i+k-1}\leq \frac{k}{2}}\right)\right).
\end{multline}
Let $p, q>1$ such that $\frac{1}{p}+\frac{1}{q}=1$ with $q$ large to be selected at the end of the proof below. By H\"older's inequality,
\begin{multline*}
    \dE_{\dGi'}\left[\left(\frac{1}{N}\sum_{j=1}^N \frac{1}{y_j^\alpha}\right) |y_i+\cdots +y_{i+k-1}-k|\right] \leq  \dE_{\dGi'}\left[\left(\frac{1}{N}\sum_{j=1}^N \frac{1}{y_j^\alpha}\right)^p\right]^{\frac{1}{p}}\dE_{\dGi'}[|y_i+\cdots+y_{i+k-1}-k|^q]^{\frac{1}{q}}.
\end{multline*}
By Theorem \ref{theorem:almost optimal rigidity}, there exists a constant $C>0$ depending on $\beta, s$ and $p$ such that
\begin{equation}\label{eq:uu1}
   \sum_{i=1}^N \sum_{k=1}^{\lfloor \frac{N}{2}\rfloor}\frac{1}{k^{s+1}}\dE_{\dGi'}[|y_i+\cdots+y_{i+k-1}-k|^q]^{\frac{1}{q}}\leq CN.
\end{equation}
Let us now turn to the second term in the right-hand side of \eqref{eq:uu0}. By Hölder's inequality again, for every $i\in \{1,\ldots,N\}$,
\begin{multline*}
    \sum_{k=1}^{\lfloor \frac{N}{2}\rfloor} \dE_{\dGi'}\left[\left(\frac{1}{N}\sum_{j=1}^N \frac{1}{y_j^\alpha}\right)\frac{1}{y_i^{1+s}}k\mathds{1}_{y_i+\cdots+y_{i+k-1}\leq \frac{k}{2}}\right]\\ \leq \dE_{\dGi'}\left[\left(\frac{1}{N}\sum_{j=1}^N \frac{1}{y_j^\alpha}\right)^p\frac{1}{y_i^{(1+s)p}}\right]^{\frac{1}{p}}  \sum_{k=1}^{\lfloor \frac{N}{2}\rfloor} k\dGi'\Bigr(y_i+\cdots+y_{i+k-1}\leq \frac{k}{2}\Bigr)^{\frac{1}{q}}
\end{multline*}
Thus, by Theorem \ref{theorem:almost optimal rigidity}, there exists a constant $C>0$ depending on $\beta, s,\alpha$ and $p$ such that 
\begin{equation}\label{eq:uu2}
     \sum_{k=1}^{\lfloor \frac{N}{2}\rfloor} \dE_{\dGi'}\left[\left(\frac{1}{N}\sum_{j=1}^N \frac{1}{y_j^\alpha}\right)\frac{1}{y_i^{1+s}}k\mathds{1}_{y_i+\cdots +y_{i+k}\leq \frac{k}{2}}\right] \leq C\dE_{\dGi'}\left[\left(\frac{1}{N}\sum_{j=1}^N \frac{1}{y_j^\alpha}\right)^p\frac{1}{y_i^{(1+s)p}}\right]^{\frac{1}{p}}.
\end{equation}
We conclude by assembling \eqref{eq:uu0}, \eqref{eq:uu1} and \eqref{eq:uu2}: there exists a constant $C>0$ depending on $\beta, s,\alpha$ and $p$ such that 
\begin{equation}\label{eq:concstep2}
|\dE_{\dGi'}[\nabla \tilde{H}(Y_N)\cdot u^2]| \leq C\left(\sum_{i=1}^N \dE_{\dGi'}\left[\left(\frac{1}{N}\sum_{j=1}^N \frac{1}{y_j^\alpha}\right)^p\left(1+\frac{1}{y_i^{(1+s)p}}\right)\right]^{\frac{1}{p}}\right).
\end{equation}

\paragraph{\bf{Step 4: rearrangement inequality and control on $\dE_{\dGi'}[\nabla H\cdot u^2]$}}
Since $\alpha\geq s$, we get from a convexity argument that
\begin{equation}\label{eq:claim convex}
   \left(\frac{1}{N}\sum_{i=1}^N \frac{1}{y_i^{\alpha}} \right)\left(\frac{1}{N}\sum_{i=1}^N\frac{1}{y_i^{1+s}}\right)\leq \left(\frac{1}{N}\sum_{i=1}^N \frac{1}{y_i^{s}}\right)\left(\frac{1}{N}\sum_{i=1}^N \frac{1}{y_i^{1+\alpha}}\right).
\end{equation}
Indeed, let us define 
\begin{equation*}
    M(\gamma):=\frac{1}{N}\sum_{i=1}^N \frac{1}{y_i^{\gamma}}.
\end{equation*}
It is well known that $\gamma \mapsto \log M(\gamma)$ is convex on $(0,\infty)$. Observe that the intervals $[\alpha,s]$ and $[1+s,1+\alpha]$ have the same length. Since $\log M$ is convex, the slope is non-decreasing and therefore 
\begin{equation*}
    \frac{\log M(s)-\log M(\alpha) }{\alpha-s}\leq   \frac{\log M(1+s)-\log M(1+\alpha) }{\alpha-s}.
\end{equation*}
It follows that 
\begin{equation*}
    \frac{M(\alpha)}{M(s)}\leq \frac{M(1+\alpha)}{M(1+s)},
\end{equation*}
which is the desired inequality \eqref{eq:claim convex}.

By Hölder's inequality,
\begin{equation*}
    \dE_{\dGi'}\left[\left(\frac{1}{N}\sum_{i=1}^N \frac{1}{y_i^{s}}\right)^p\left(\frac{1}{N}\sum_{i=1}^N \frac{1}{y_i^{1+\alpha}}\right)^p \right]^{\frac{1}{p}}\leq \dE_{\dGi'}\left[\left(\frac{1}{N}\sum_{i=1}^N \frac{1}{y_i^{s}}\right)^{pq}\right]^{\frac{1}{pq}}\dE_{\dGi'}\left[\left(\frac{1}{N}\sum_{i=1}^N \frac{1}{y_i^{1+\alpha}}\right)^{p^2}\right]^{\frac{1}{p^2}}.
\end{equation*}
On the one hand, by the estimate of Lemma \ref{lemma:weaker}, there exists a constant $C>0$ depending on $\beta,s,\alpha$ and $p$ such that
\begin{equation*}
   \dE_{\dGi'}\left[\left(\frac{1}{N}\sum_{i=1}^N \frac{1}{y_i^{s}}\right)^{pq}\right]^{\frac{1}{pq}}\leq C. 
\end{equation*}
On the other hand, by the convexity of $x\in (0,\infty)\mapsto x^{p^2}$ (since $p>1$), we have
\begin{equation*}
    \left(\frac{1}{N}\sum_{i=1}^N \frac{1}{y_i^{1+\alpha}}\right)^{p^2}\leq \frac{1}{N}\sum_{i=1}^N \frac{1}{y_i^{(1+\alpha)p^2}}.
\end{equation*}
Thus, 
\begin{equation*}
   \dE_{\dGi'}\left[\left(\frac{1}{N}\sum_{i=1}^N \frac{1}{y_i^{s}}\right)^p\left(\frac{1}{N}\sum_{i=1}^N \frac{1}{y_i^{1+\alpha}}\right)^p \right]^{\frac{1}{p}}\leq C \dE\left[ \frac{1}{N}\sum_{i=1}^N \frac{1}{y_i^{(1+\alpha)p^2}}\right]^{\frac{1}{p^2}}.  
\end{equation*}
Combining this with \eqref{eq:concstep2} and \eqref{eq:claim convex}, we conclude that there exists a constant $C>0$ depending on $\beta,s,\alpha$ and $p$ such that
\begin{equation}\label{eq:concludeA}
  |\dE_{\dGi'}[\nabla \tilde{H}(Y_N)\cdot u^2]|\leq C \dE\left[ \frac{1}{N}\sum_{i=1}^N \frac{1}{y_i^{(1+\alpha)p^2}}\right]^{\frac{1}{p^2}}.
\end{equation}
Hence, by \eqref{eq:Ipu'}, there exists a constant $C>0$ depending on $\beta,s,\alpha$ and $p$ such that
\begin{equation}\label{eq:Ipu''}
    \beta |\dE_{\dGi'}[\nabla \tilde{H}\cdot \nabla u^2 ]|\leq CN\dE_{\dGi'}\left[ \frac{1}{N}\sum_{i=1}^N \frac{1}{y_i^{(1+\alpha)p^2}}\right]^{\frac{1}{p^2}}+\dE_{\dGi'}[\dive (u)].
\end{equation}

\paragraph{\bf{Step 5: positivity argument for $\dE_{\dGi'}[\nabla \tilde{H}\cdot \nabla u^1 ]$}}
Recall that $g'\leq 0$ on $(0,\frac{N}{2}]$ and $g'\geq 0$ on $[\frac{N}{2},N)$. Moreover,
\begin{multline*}
\nabla \tilde{H}\!\cdot\!u^1
=
2\,N^{-(s+1)}
\sum_{k=2}^{\left\lfloor\frac{N-1}{2}\right\rfloor}
\;\sum_{i=1}^{N}
g'\!\Bigr(\frac{y_i+\dots+y_{i+k-1}}{N}\Bigr)
\bigl(u_{i}^1+\dots+u_{i+k-1}^1\bigl)
\;\\ +\;
\mathds 1_{\{N\text{ even}\}}\,
N^{-(s+1)}
\sum_{i=1}^{N}
g'\!\Bigr(\frac{y_i+\dots+y_{i+\frac{N}{2}-1}}{N}\Bigr)
\bigl(u_{i}^1+\dots+u_{i+\frac{N}{2}-1}^1\bigr).
\end{multline*}
Recall that $u_i^1=-\frac{1}{y_i^\alpha}\leq 0$. One can write 
\begin{multline*}
    \nabla \tilde{H}\cdot u^1\geq 2N^{-(s+1)}\sum_{i=1}^N g'\Bigr(\frac{y_i}{N}\Bigr)\Bigr(-\frac{1}{y_i^\alpha}\Bigr) \\+2\,N^{-(s+1)}
\sum_{k=1}^{\left\lfloor\frac{N-1}{2}\right\rfloor}
\;\sum_{i=1}^{N}
g'\!\Bigr(\frac{y_i+\dots+y_{i+k-1}}{N}\Bigr)
\bigl(u_{i}^1+\dots+u_{i+k-1}^1\bigl)\mathds{1}_{y_i+\ldots+y_{i+k-1}\geq \frac{N}{2} }
\;\\ +\;
\mathds 1_{\{N\text{ even}\}}\,
N^{-(s+1)}
\sum_{i=1}^{N}
g'\!\Bigr(\frac{y_i+\dots+y_{i+\frac{N}{2}-1}}{N}\Bigr)
\bigl(u_{i}^1+\dots+u_{i+\frac{N}{2}-1}^1\bigr)\mathds{1}_{y_i+\ldots+y_{i+\frac{N}{2}-1}\geq \frac{N}{2} }.
\end{multline*}
By the rigidity estimate of Theorem \ref{theorem:almost optimal rigidity}, we therefore get that there exists a constant $c>0$ depending on $s$ and a constant $C>0$ depending on $\beta,\alpha$ and $s$ such that
\begin{equation}\label{eq:cccstep4}
    \dE_{\dGi'}[\nabla \tilde{H}\cdot u^1]\geq c\sum_{i=1}^N\dE_{\dGi'}\left[\frac{1}{y_i^{1+s+\alpha}}\right]-CN.
\end{equation}
Assembling \eqref{eq:Ipu''} and \eqref{eq:Ipu''} therefore gives 
\begin{equation*}
   \sum_{i=1}^N\dE_{\dGi'}\left[\frac{1}{y_i^{1+s+\alpha}}\right]\leq CN\dE_{\dGi'}\left[ \frac{1}{N}\sum_{i=1}^N \frac{1}{y_i^{(1+\alpha)p^2}}\right]^{\frac{1}{p^2}}+\dE_{\dGi'}[\dive (u)]+CN,
\end{equation*}
for some constant $C>0$ depending on $\beta, s, \alpha$ and $p$. Notice that there exists $C>0$ depending only on $\alpha$ such that
\begin{equation*}
|\dive(u)|\leq C\sum_{i=1}^N \frac{1}{y_i^{1+\alpha}}.
\end{equation*}
Thus, combining the two last displays, we obtain
\begin{equation}\label{eq:Ipu4}
\begin{split}
    \beta \sum_{i=1}^N\dE_{\dGi'}\left[\frac{1}{y_i^{1+s+\alpha}}\right]&\leq CN+CN\dE_{\dGi'}\left[ \frac{1}{N}\sum_{i=1}^N \frac{1}{y_i^{(1+\alpha)p^2}}\right]^{\frac{1}{p^2}}\\
    &\leq CN+CN\dE_{\dGi'}\left[ \frac{1}{N}\sum_{i=1}^N \frac{1}{y_i^{(1+\alpha)p^2}}\right].
\end{split}
\end{equation}
for some constant $C>0$ depending on $\beta, s, \alpha$ and $p$.

\paragraph{\bf{Step 6: conclusion}}
By taking $p>1$ small enough so that $(1+\alpha)p^2<1+s+\alpha$, we deduce that from \eqref{eq:Ipu4} that there exists a constant $C>0$  depending on $\beta, s, \alpha$ such that
\begin{equation*}
 \dE_{\dGi'}\left[\sum_{i=1}^N\frac{1}{y_i^{1+\alpha}}\right]\leq\dE_{\dGi'}\left[\sum_{i=1}^N\frac{1}{y_i^{(1+\alpha)p^2}}\right]\leq CN.
\end{equation*}
Hence, since the $y_i$ are identically distributed, 
\begin{equation*}
   \dE_{\dGi'}\left[\frac{1}{y_1^{1+\alpha}}\right]\leq C,
\end{equation*}
which shows that \eqref{eq:boostrap alpha} holds for $\alpha':=1+\alpha$. 

Since \eqref{eq:boostrap alpha} holds for $\alpha=s$, we conclude by induction that it holds for every $\alpha>0$, which concludes the proof of the lemma.
\end{proof}

\section{Optimal rigidity for singular linear statistics}\label{section:optimal scaling}
In this section, we prove the optimal variance estimate stated in Theorem \ref{theorem:quantitative variance}. This will give the optimal scaling of the fluctuations of gaps and discrepancies.

\subsection{Regularization estimates}\label{sub:reg}
In this subsection, we record various regularization estimates that will be used later in the proof.

Let $K\in \mathcal{C}^\infty(\dT)$ be an even, non-negative mollifier with $\int_{\dT} K(x)=1$ and $K$ supported on $(-\frac{1}{2},\frac{1}{2})$. For all $\ell\in (0,1)$ define
\begin{equation}\label{def:regularization kernel}
    K_\ell:=\ell^{-1}K(\ell^{-1}\cdot).
\end{equation}

\begin{lemma}\label{lemma:reg}
    Suppose $\xi\in H^{\frac{1-s}{2}+\delta}(\dT)$ for some $\delta>0$. Then, there exists a constant $C>0$ depending on $K$ such that
    \begin{equation}\label{eq:mol Hs}
       \Vert\xi-\xi*K_\ell\Vert^2_{H^{\frac{1-s}{2}}(\dT)}\leq C\ell^{2\min(\delta,\frac{1}{2})}\Vert\xi\Vert^2_{H^{\min(\delta,\frac{1}{2})}(\dT)}.
    \end{equation}

  Let $\phi\in L^2(\dT)\cap \mathcal{C}^1(\dT\setminus\{0\})$ be such that for all $x\in \dT\setminus\{0\}$,
    \begin{equation}\label{eq:decpsi''}
        |\phi'(x)|\leq \frac{1}{|x|^{1+\gamma}},
    \end{equation}
    for some $\gamma\in [0,1)$. Then, there exists a constant $C>0$ depending on $K$ and $\gamma$ such that for all $x\in \dT$,
    \begin{equation}\label{eq:mol psi'}
        |(\phi*K_\ell)'|(x)\leq \frac{C}{\min(|x|,\ell)^{1+\gamma}}.
    \end{equation}
\end{lemma}

\begin{proof}
We prove the \eqref{eq:mol Hs} with Fourier series. One can suppose without loss of generality that $\delta\leq \frac{1}{2}$.

First, by the Parseval identity,
  \begin{equation*}
  \begin{split}
     \Vert\xi-\xi*K_\ell\Vert^2_{H^{\frac{1-s}{2}}(\dT)}&=\sum_{k\in \mathbb{Z}} |\hat{\xi}(k)|^2|k|^{1-s} |1-\hat{K}(\ell k)|^2\\
     &=\sum_{k\in \mathbb{Z}:|k|\leq \ell^{-1} } |\hat{\xi}(k)|^2|k|^{1-s} |1-\hat{K}(\ell k)|^2+\sum_{k\in \mathbb{Z}:|k|> \ell^{-1} } |\hat{\xi}|^2(k)|k|^{1-s} |1-\hat{K}(\ell k)|^2.
    \end{split}
  \end{equation*}
Since $K$ is smooth, there exists a constant $C>0$ such that 
\begin{equation*}
    |1-\hat{K}(t)|\leq C|t|\quad \text{for every $|t|\leq 1$}.
\end{equation*}
Moreover, for every $n\geq 1$, there exists $C_n>0$ such that 
\begin{equation*}
    |\hat{K}(t)|\leq \frac{C_n}{|t|^n}\quad \text{for every $|t|>1$}.
\end{equation*}
Hence, for low frequencies, 
\begin{equation*}
   \sum_{k\in \mathbb{Z}:|k|\leq \ell^{-1} } |\hat{\xi}(k)|^2|k|^{1-s} |1-\hat{K}(\ell k)|^2\leq C \sum_{k\in \mathbb{Z}:|k|\leq \ell^{-1}}|\hat{\xi}(k)|^2|k|^{1-s} |k|\ell. 
\end{equation*}
Since by assumption $\xi\in H^{\frac{1-s}{2}+\delta}(\dT)$ with $\delta\leq \frac{1}{2}$, we have 
\begin{equation*}
     \sum_{k\in \mathbb{Z}:|k|\leq \ell^{-1} } |\hat{\xi}(k)|^2|k|^{1-s}|1-\hat{K}(\ell k)|^2\leq C\sum_{k\in \mathbb{Z}}|\hat{\xi}(k)|^2|k\ell|^{1-s+2\delta}=C\ell^{2\delta}\Vert\xi\Vert_{H^{\frac{1-s}{2}+\delta}(\dT)}^2.
\end{equation*}
On the other hand, for high frequencies, there exists a constant $C>0$ depending on $K$ such that
\begin{multline*}
     \sum_{k\in \mathbb{Z}:|k|>\ell^{-1} } |\hat{\xi}(k)|^2|k|^{1-s} |1-\hat{K}(\ell k)|^2\leq C\sum_{k\in \mathbb{Z}:|k|>\ell^{-1} } |\hat{\xi}(k)|^2|k|^{1-s}\\ \leq C\ell^{\delta} \sum_{k\in \mathbb{Z}:|k|>\ell^{-1} } |\hat{\xi}(k)|^2|k|^{1-s+2\delta}
     \leq C\ell^{2\delta}\Vert\xi\Vert_{H^{\frac{1-s}{2}+\delta}(\dT)}^2.
\end{multline*}
Combining the two above displays proves \eqref{eq:mol Hs}.

Let us now prove \eqref{eq:mol psi'}. Let \(r:=|x|\) and recall \(\supp K_\ell\subset[-\ell,\ell]\). Suppose that \(r\ge 2\ell\).  Then \(|x-y|\ge \frac{r}{2}\). Recall
\[
(\phi*K_\ell)'(x)=\int_{-\ell}^{\ell}\phi'(x-y)\,K_\ell(y)\dd y .
\]
Using \(|\phi'(z)|\le |z|^{-1-\gamma}\), we therefore get
\[
|(\phi'*K_\ell)(x)|\le C \frac{1}{r^{1+\gamma}}.
\]
Suppose now that \(r\le 2\ell\). Recall
\[
(\phi*K_\ell)'(x)=-\int_{\mathbb T}\phi(y)\,\partial_y K_\ell(x-y)\,\dd y.
\]
Integrating by parts gives
\[
|\partial_yK_\ell(x-y)|\le C\ell^{-2}\mathbf 1_{|x-y|\le\ell}.
\]
Since \(|\phi(y)|\le C|y|^{-\gamma}\), we have
\[
|(\phi*K_\ell')(x)|\le C\ell^{-2}\!\!\int_{|x-y|\le\ell}\!\!|\phi(y)|\,\dd y
       \le C\ell^{-2}\,\ell^{1-\gamma}=C\frac{1}{\ell^{1+\gamma}}.
\]
Both bounds combine to 
\[
|(\phi*K_\ell')(x)|\le\frac{C}{(|x|+\ell)^{1+\gamma}}
      =\frac{C}{\min(|x|,\ell)^{1+\gamma}},
\]
as desired.
\end{proof}

\subsection{A priori estimate on the fluctuations}

In the following lemma, we prove a near-optimal variance estimate for linear statistics with smooth but $N$-dependent test-functions that are regularized at the microscopic scale. Our proof is based on the comparison principle of Lemma \ref{lemma:energy comparisons} and on the gaps fluctuation estimate of Theorem \ref{theorem:almost optimal rigidity}.

\begin{lemma}\label{lemma:a priori}
Let $\xi\in L^2(\ell_N^{-1}\dT)$ and $\ell_N\in (0,1]$. Suppose that $\xi$ is supported on $(-\frac{1}{2},\frac{1}{2})$ if $\ell_N\neq 1$. Let $\alpha\in \dR$. Suppose that 
\begin{equation*}
    |\xi'(x)|\leq \frac{1}{\max(|x|,\frac{1}{N\ell_N})|^{1+\alpha}}\mathds{1}_{|x|\leq 2}+\frac{1}{|x|^{3-s}}\mathds{1}_{|x|\geq 2}.
\end{equation*}
Then, for all $\ve>0$, there exists $C>0$ depending on $\beta$, $s$, $\alpha$ and $\ve$ such that
\begin{equation}\label{eq:anch}
    \Var_{\dGi}\left[\sum_{i=1}^N \xi(\ell_N^{-1}(x_i-x_1))\right]\leq C(N\ell_N)^{\ve}(N\ell_N)^{\max(2\alpha,s)}
\end{equation}
and 
\begin{equation}\label{eq:non anch}
    \Var_{\dGi}\left[\sum_{i=1}^N \xi(\ell_N^{-1}x_i)\right]\leq C(N\ell_N)^{\ve}(N\ell_N)^{\max(2\alpha,s)}.
\end{equation}
\end{lemma}

\begin{proof}
We work with the push-forward of $\dGi$ by the map $\Anch_N$ defined in \eqref{def:Anch}. Fix $\gamma\geq 1$ and let
\begin{equation*}
  t_\gamma:=\min\Bigr(\tfrac{(N\ell_N)^\gamma}{N} ,\tfrac{1}{2}\Bigr)
\end{equation*}
and
\begin{equation*}
    I_\gamma:=\{i\in\{1,\ldots,N\}:d(i,1)\leq Nt_\gamma\}.
\end{equation*}
Define
\begin{equation*}
    F_N:=\sum_{i\in I_\gamma} \xi(\ell_N^{-1}(x_i-x_1)).
\end{equation*}

\paragraph{\bf{Step 1: reduction to a good event}}
Let $\ve>0$. We let $\chi_{1,2}:\dR^+\to \dR$ be given for all $x\in \dR^+$ by
\[
\chi_{1,2}(x)
=
\begin{cases}
1, & 0 \le x \le 1,\\
2 - x, & 1 < x < 2,\\
0, & x \ge 2.
\end{cases}
\]
Define the cutoff function 
\begin{equation}\label{def:eta cutoff}
    \eta_{\ve,\gamma}:=\prod_{i\in I_\gamma}\prod_{-\hN\leq k\leq \hN:i+k\in I_\gamma}\chi_{1,2}\Bigr(2\frac{N|(x_{i+k}-x_i)-k|}{|k|^{\frac{s}{2}}(N\ell_N)^{\ve} }\Bigr)\prod_{i\in I_\gamma:i+1\in I_\gamma} \chi_{1,2}\Bigr(\frac{1}{N(x_{i+1}-x_i)(N\ell_N)^\ve}\Bigr).
\end{equation}
First, by sub-additivity
\begin{equation*}
   \Var_{\dGi}[F_N]\leq  2 \Var_{\dGi}[\eta_{\ve,\gamma} F_N]+2 \Var_{\dGi}[(1-\eta_{\ve,\gamma})F_N].
\end{equation*}
By Theorem \ref{theorem:almost optimal rigidity} and Lemma \ref{lemma:no explosion}, there exists a constant $C>0$ depending on $\beta$, $s$, $\alpha$ and $\gamma$ such that 
\begin{equation}\label{eq:FN1-}
   \Var_{\dGi}[(1-\eta_{\ve,\gamma})F_N]\leq \frac{C}{(N\ell_N)^{10}}.  
\end{equation}

\paragraph{\bf{Step 2: convexification}}
As in Section~\ref{section:rigidity}, we convexify the measure $\dGi$ through a penalty for configurations with unusually large nearest-neighbor gaps in the window $I_\gamma$. 

Let $\theta:\dR^+\to\dR^+$ be a smooth cutoff function such that $\theta(x)=|x|^2$ for $x>2$, $\theta(x)=0$ for $x\in [0,1]$ and $\theta''\geq 0$ on $\dR^+$. Let
\begin{equation*}
    \mathrm{F}:=\sum_{i\in I_\gamma:i+1\in I_\gamma}\theta\Bigr(\frac{N(x_{i+1}-x_i)}{(N\ell_N)^{\ve}}\Bigr)
\end{equation*}
and $\dGiQ$ be the locally constrained measure
\begin{equation}\label{eq:const}
  \dd \dGiQ(X_N)=\frac{1}{K_{N,\beta}}e^{-\beta(\Hc_N+\FF)(X_N)}\mathds{1}_{D_N}(X_N)\dd X_N.
\end{equation}
Observe that under $\dGi$ (resp. $\dGiQ$), $x_1$ is independent of $\Gap_N(X_N)$ and is distributed according to the Lebesgue measure on $\dT$. Therefore, arguing as in Lemma \ref{lemma:comparisons log sob},
\begin{equation*}
    \Ent(\dGi\mid \dGiQ)=\Ent(\Gap_N\# \dGi\mid \Gap_N\# \dGiQ).
\end{equation*}
By Lemma \ref{lemma:comparisons log sob} and Theorem \ref{theorem:almost optimal rigidity}, there exists a constant $\delta>0$ depending on $\beta, s, \ve$ and $\gamma$ such that
\begin{equation*}
   \Ent(\Gap_N\# \dGi\mid \Gap_N\# \dGiQ)\leq e^{-(N\ell_N)^{\delta}}.
\end{equation*}
Combining the two above displays, we deduce that there exists a constant $\delta>0$ depending on $\beta, s, \ve$ and $\gamma$ such that
\begin{equation*}
   \Ent(\dGi\mid \dGiQ)\leq e^{-(N\ell_N)^{\delta}}.
\end{equation*}
Thus, by the Pinsker inequality \eqref{eq:pinsker}, there exists a constant $\delta>0$ depending on $\beta, s, \ve$ and $\gamma$ such that
\begin{equation}\label{eq:TV a prio}
    \mathrm{TV}(\dGi,\dGiQ)\leq e^{-(N\ell_N)^\delta}.
\end{equation}
Observe that on the support of $\eta_{\ve,\gamma}$, there exists a constant $\kappa>0$ depending on $\alpha$ such that 
\begin{equation*}
    |F_N|\leq (N\ell_N)^\kappa.
\end{equation*}
Therefore, by \eqref{eq:TV a prio}, there exists $\delta>0$ depending on $\beta,s$, $\alpha$, $\ve$ and $\gamma$ such that
\begin{equation}\label{eq:change}
    \Bigr|\Var_{\dGi}[\eta_{\ve,\gamma} F_N]-\Var_{\dGiQ}[\eta_{\ve,\gamma} F_N]\Bigr|\leq e^{-(N\ell_N)^\delta}.
\end{equation}
Let $F_N^*$ be such that $F_N=F_N^*\circ \Anch_N$ and $\eta_{\ve,\gamma}^*$ be such that $\eta_{\ve,\gamma}=\eta_{\ve,\gamma}^*\circ \Anch_N$. We let $\dGi^*:=\Anch_N\# \dGi$ and $\dGiQ^*:=\Anch_N\# \dGiQ$. Clearly,
\begin{equation*}
   \Var_{\dGiQ}[\eta_{\ve,\gamma} F_N]=\Var_{\dGiQ^*}[\eta_{\ve,\gamma}^*F_N^*] .
\end{equation*}
Our goal is now to estimate the variance of $\eta_{\ve,\gamma}^* F_N^*$ under $\dGiQ^*$. 

Set $A_1:=A_1^{\dGiQ^*}$. By sub-additivity,
\begin{equation}\label{eq:FN1}
    \Var_{\dGiQ^*}[\eta_{\ve,\gamma}^* F_N^*]\leq  2\dE_{\dGiQ^*}[\eta_{\ve,\gamma}^* \nabla F_N^*\cdot A_1^{-1}(\eta_{\ve,\gamma}^* \nabla F_N^*)]+2\dE_{\dGiQ^*}[F_N^* \nabla \eta_{\ve,\gamma}^*\cdot A_1^{-1}(F_N^*\nabla \eta_{\ve,\gamma}^*)].
\end{equation}

\paragraph{\bf{Step 3: estimating the $A_1^{-1}$-energy of $F_N^*\nabla \eta_{\ve,\gamma}^*$}}
Let $\mc{H}_N^*$ and $\FF^*$ be such that $\mc{H}_N=\mc{H}_N^*\circ \Anch_N$ and $\FF=\FF^*\circ \Anch_N$. 
By Proposition \ref{prop:well posed non grad}, 
\begin{multline*}
   \dE_{\dGiQ^*}[F_N^*\nabla \eta_{\ve,\gamma}^*\cdot A_1^{-1}(F_N^*\nabla \eta_{\ve,\gamma}^*)]\\ \leq  -\min_{\phi\in H^1(\dGiQ^*)}\dE_{\dGiQ^* }[\Vert \nabla^2\phi\Vert_F^2+\beta \nabla\phi\cdot \nabla^2 (\mc{H}_N^*+\FF^*)\nabla\phi-\nabla\phi\cdot (F_N^* \nabla \eta_{\ve,\gamma}^*)].
\end{multline*}
Proceeding as in the proof of Lemma \ref{lemma:brascamp lieb inequality}, we get 
\begin{multline*}
     \dE_{\dGiQ^* }[F_N^*\nabla \eta_{\ve,\gamma}^*\cdot A_1^{-1}(F_N^*\eta\nabla_\ve^* )]\\ \leq -\beta^{-1}\dE_{\dGiQ^* }\left[\min_{U_{N-1}\in \dR^{N-1}}U_{N-1}\cdot \nabla^2(\Hc_N^*+\FF^*)U_{N-1}-2(F_N^*\nabla \eta_{\ve,\gamma}^*)\cdot U_{N-1}\right].
\end{multline*}
Moreover, by the definition of $\FF$, for all $U_{N-1}=(u_2,\ldots,u_N)\in \dR^{N-1}$, there exists a constant $C>0$ depending on $s$ such that
\begin{equation*}
    U_{N-1}\cdot \nabla^2 (\Hc_N^*+\FF^*) U_{N-1}\geq \frac{1}{C}(N\ell_N)^{-(s+2)\ve}\sum_{i\in I_\gamma} (N(u_{i+1}-u_i))^2,
\end{equation*}
where we set $u_1=0$. Let $\Gap_N^*$ be such that $\Gap_N=\Gap_N^*\circ \Anch_N$.

Observe that $\eta_{\ve,\gamma}^*$ is a function of the gaps: it can be written $\eta_{\ve,\gamma}^*=\tilde{\eta}_\ve\circ \Gap_N^*$ for some smooth function $\tilde{\eta}_\ve:\dR^N\to\dR$. Therefore, 
\begin{equation}\label{eq:import2}
  \dE_{\dGiQ^*} [F_N^*\nabla \eta_{\ve,\gamma}^*\cdot A_1^{-1}(F_N^*\nabla \eta_{\ve,\gamma}^*)]\leq (N\ell_N)^{\kappa\ve} \dE_{\dGiQ^*}[(F_N^*)^2|\nabla \tilde{\eta}_\ve|^2\circ \Gap_N^*].
\end{equation}
Notice that on the support of $\eta_{\ve,\gamma}^*$, there exists a constant $\kappa>0$ depending on $\alpha$ such that $|F_N^*|\leq (N\ell_N)^\kappa$. Thus, using Theorem \ref{theorem:almost optimal rigidity}, Lemma \ref{lemma:no explosion} and \eqref{eq:change}, we deduce from the above display that there exists $C>0$ depending on $\beta$, $s$ and $\alpha$ such that
\begin{equation}\label{eq:FN nab}
    \dE_{\dGiQ^*} [F_N^*\nabla \eta_{\ve,\gamma}^*\cdot A_1^{-1}(F_N^*\nabla \eta_{\ve,\gamma}^*)]\leq \frac{C}{(N\ell_N)^{10}}.
\end{equation}

\paragraph{\bf{Step 4: estimating the $A_1^{-1}$-energy of $\eta_{\ve,\gamma}^* \nabla F^*$}}
By the assumption on $\xi$, there exists a constant $C>0$ such that for every $i\in I_\gamma$,
\begin{equation*}
\begin{split}
    \eta_{\ve,\gamma}^* |\partial_i F_N^*| &\leq C\eta_{\ve,\gamma}^*\ell_N^{-1} \left(\frac{1}{|\max(\ell_N^{-1}x_i,\tfrac{1}{N\ell_N})|^{1+\alpha}}\mathds{1}_{|\ell_N^{-1}x_i|\leq 2}+\frac{1}{|\ell_N^{-1}x_i|^{3-s}}\mathds{1}_{|\ell_N^{-1}x_i|\geq 2} \right) \\
    &= C\eta_{\ve,\gamma}^*N\left( \frac{ (N\ell_N)^{\alpha}}{|\max( Nx_i,1)|^{1+\alpha}}\mathds{1}_{|\ell_N^{-1}x_i|\leq 2}+\frac{ (N\ell_N)^{3-s}}{| Nx_i|^{2-s}}\mathds{1}_{|\ell_N^{-1}x_i|\geq 2} \right).
\end{split}
\end{equation*}
Therefore, by the definition of $\eta_{\ve,\gamma}$, there exists $\kappa>0$ such that for every $i\in I_\gamma$,
\begin{equation}\label{eq:b etaF}
     \eta_{\ve,\gamma}^* \partial_i F_N^*\leq C(N\ell_N)^{\kappa\ve}\eta_{\ve,\gamma}^*N \left(\frac{(N\ell_N)^\alpha}{1+d(i,1)^{1+\alpha}}\mathds{1}_{d(i,1)\leq (N\ell_N)^{1+\ve}} + \frac{(N\ell_N)^{2-s}}{1+d(i,1)^{3-s}}\mathds{1}_{d(i,1)\geq (N\ell_N)^{1-\ve}} \right).
\end{equation}
Set 
\begin{equation*}
    G_N^*:=(N\ell_N)^\alpha \sum_{i\in I_\gamma}\frac{Nx_i}{1+d(i,1)^{1+\alpha}}\mathds{1}_{d(i,1)\leq (N\ell_N)^{1+\ve}} +(N\ell_N)^{2-s} \sum_{i\in I_\gamma}\frac{Nx_i}{1+d(i,1)^{3-s}}\mathds{1}_{d(i,1)\geq (N\ell_N)^{1-\ve}}.
\end{equation*}
By \eqref{eq:b etaF}, 
\begin{equation*}
    \eta_{\ve,\gamma}^* |\nabla F_N^*|\leq C (N\ell_N)^{\kappa\ve}\eta_{\ve,\gamma}^*\nabla G_N^*,
\end{equation*}
where the above inequality is understood coordinate-wise. Thus, applying Lemma \ref{lemma:energy comparisons}, we get 
\begin{equation}\label{eq:FN0}
\dE_{\dGiQ^*}[\eta_{\ve,\gamma}^* \nabla F_N^*\cdot A_1^{-1}(\eta_{\ve,\gamma}^* \nabla F_N^*)]\leq (N\ell_N)^{2\kappa\ve}\dE_{\dGiQ^*}[\eta_{\ve,\gamma}^* \nabla G_N^*\cdot A_1^{-1}(\eta_{\ve,\gamma}^* \nabla G_N^*)].
\end{equation}
By sub-additivity, 
\begin{equation}\label{eq:GN0}
  \dE_{\dGiQ^*}[\eta_{\ve,\gamma}^* \nabla G_N^*\cdot A_1^{-1}(\eta_{\ve,\gamma}^* \nabla G_N^*)]\leq  2\Var_{\dGiQ^*}[\eta_{\ve,\gamma}^*G_N^*]+2\dE_{\dGiQ^*}[\eta_{\ve,\gamma}^* \nabla G_N^*\cdot A_1^{-1}(\eta_{\ve,\gamma}^* \nabla G_N^*)].
\end{equation}
Proceeding as in \eqref{eq:FN nab}, we get that there exists $C>0$ depending on $\beta$, $s$ and $\alpha$ such that
\begin{equation}\label{eq:GN1}
  \dE_{\dGiQ^*}[\eta_{\ve,\gamma}^* \nabla G_N^*\cdot A_1^{-1}(\eta_{\ve,\gamma}^* \nabla G_N^*)]\leq \frac{C}{(N\ell_N)^{10}}.
\end{equation}

Now, by the definition of $\eta_{\ve,\gamma}$, there exists $\kappa>0$ depending on $\alpha$ such that 
\begin{equation*}
 |\eta_{\ve,\gamma}^* G_N^*|\leq  (N\ell_N)^{\kappa\ve+\alpha}\sum_{i\in I_\gamma}\frac{1}{1+d(i,1)^{1+\alpha-\frac{s}{2}}}. 
\end{equation*}
Therefore, there exist $C>0$ and $\kappa>0$ depending on $\alpha$ such that 
\begin{multline*}
 |\eta_{\ve,\gamma}^* G_N^*|\leq  C(N\ell_N)^{\kappa\ve}\left( \sum_{i\in I_\gamma}\frac{(N\ell_N)^\alpha}{1+d(i,1)^{1+\alpha-\frac{s}{2}}}\mathds{1}_{d(i,1)\leq (N\ell_N)^{1+\ve}} +\sum_{i\in I_\gamma}\frac{(N\ell_N)^{2-s} }{1+d(i,1)^{3-\frac{3s}{2}}}\mathds{1}_{d(i,1)\geq (N\ell_N)^{1-\ve}}\right)\\\leq C(N\ell_N)^{\kappa\ve}(N\ell_N)^{\max(2\alpha,s)}.
\end{multline*}
Thus, there exists $\kappa>0$ such that 
\begin{equation}\label{eq:GN2}
    \Var_{\dGiQ^*}[\eta_{\ve,\gamma}^* G_N^*]\leq C(N\ell_N)^{\kappa\ve}(N\ell_N)^{\max(2\alpha,s)}.
\end{equation}
Combining \eqref{eq:FN0}, \eqref{eq:GN0}, \eqref{eq:GN1} and \eqref{eq:GN2}, we conclude that there exist $C>0$ and $\kappa>0$ depending on $\beta,s$ and $\alpha$ such that 
\begin{equation}\label{eq:GN3}
  \dE_{\dGiQ^*}[\eta_{\ve,\gamma}^* \nabla F_N^*\cdot A_1^{-1}(\eta_{\ve,\gamma}^* \nabla F_N^*)]\leq C(N\ell_N)^{\kappa\ve}(N\ell_N)^{\max(2\alpha,s)}.
\end{equation}

\paragraph{\bf{Step 5: proof of \eqref{eq:anch}}}
Putting  \eqref{eq:FN1}, \eqref{eq:FN nab} and \eqref{eq:GN3} together, we get that for all $\ve>0$, there exists $C>0$ depending on $\beta,s, \alpha$ and $\ve$ such that
\begin{equation}\label{eq:import}
    \Var_{\dGiQ}[\eta_{\ve,\gamma} F_N]= \Var_{\dGiQ^*}[\eta_{\ve,\gamma}^* F_N^*]\leq C(N\ell_N)^{\ve}(N\ell_N)^{\max(2\alpha,s)}.
\end{equation}
Combining this with \eqref{eq:FN1-} and \eqref{eq:change} gives 
\begin{equation*}
    \Var_{\dGi}[ F_N]\leq C(N\ell_N)^{\ve}(N\ell_N)^{\max(2\alpha,s)}.
\end{equation*}
On the other hand, using Theorem \ref{theorem:almost optimal rigidity} and Lemma \ref{lemma:no explosion}, we get that for $\gamma$ large enough, there exists $C>0$ depending on $\beta$, $s$, $\gamma$ and $\alpha$ such that 
\begin{equation*}
    \Var_{\dGi}\left[\sum_{i\notin I_\gamma}\xi(\ell_N^{-1}(x_i-x_1))\right]\leq C.
\end{equation*}
Combining the two above displays gives \eqref{eq:anch}.

\paragraph{\bf{Step 6: proof of \eqref{eq:non anch}}}
Fix an index $i_0\in \{1,\ldots,N\}$. One can check that
\begin{multline*}
\sum_{i=1}^N \xi(\ell_N^{-1} x_i)-\sum_{i=1}^N \xi(\ell_N^{-1} (x_i-x_1))=\left(\sum_{i=1}^N \xi(\ell_N^{-1}(x_{i_0-1+i}))-\sum_{i=1}^N \xi(\ell_N^{-1}(x_{i_0-1+i}-x_{i_0}))\right)\\ +\left(\sum_{i=1}^N \xi(\ell_N^{-1}(x_{i_0-1+i}-x_{i_0}))-\sum_{i=1}^N \xi(\ell_N^{-1}(x_i-x_1)) \right).
\end{multline*}
We can notice that 
\begin{equation*}
  \sum_{i=1}^N \xi(\ell_N^{-1}(x_{i_0-1+i}-x_{i_0}))-\sum_{i=1}^N \xi(\ell_N^{-1}(x_i-x_1))=0.  
\end{equation*}
Hence, by Taylor expansion, 
\begin{equation*}
\left|\sum_{i=1}^N \xi(\ell_N^{-1}x_i)-\sum_{i=1}^N \xi(\ell_N^{-1}(x_i-x_1)) \right|\leq \ell_N^{-1}|x_{i_0-1}| \sum_{i=1}^N\left(\max_{t\in [x_{i_0-1+i}-x_{i_0},x_{i_0-1+i}]}|\xi'(\ell_N^{-1}t)|\right).
\end{equation*}
By the assumption on $\xi$, 
\begin{align*}
&\hspace{-1cm}\left|\sum_{i=1}^N \xi(\ell_N^{-1}x_i)-\sum_{i=1}^N \xi(\ell_N^{-1}(x_i-x_1)) \right|\\ &\leq \ell_N^{-1}|x_{i_0-1}| \sum_{i=1}^N |\ell_N^{-1}\max(x_{i_0-1+i}-x_{i_0},\tfrac{1}{N})|^{-1-\alpha}\mathds{1}_{|\ell_N^{-1}x_{i_0-1+i}|\leq 2}\\
&+\ell_N^{-1}|x_{i_0-1}| \sum_{i=1}^N |\ell_N^{-1}(x_{i_0-1+i}-x_{i_0})|^{-(3-s)}\mathds{1}_{|\ell_N^{-1}x_{i_0-1+i}|\geq 2}.
\end{align*}

We then take $i_0$ to be the index in $\{1,\ldots,N\}$ such that $x_i$ is closest to $0$. By using the local law of Theorem \ref{theorem:almost optimal rigidity}, we deduce that for all $\ve>0$, there exists $C>0$ depending on $\beta$, $s$, $\alpha$ and $\ve$ such that
\begin{equation}\label{eq:anch error}
    \dE_{\dGi}\left[\left(\sum_{i=1}^N \xi(\ell_N^{-1}x_i)-\sum_{i=1}^N \xi(\ell_N^{-1}(x_i-x_1)) \right)^2 \right]\leq C(N\ell_N)^\ve ((N\ell_N)^{\max(2\alpha,0)}+1).
\end{equation}
Assembling this with \eqref{eq:anch} proves the estimate \eqref{eq:non anch}.
\end{proof}

\subsection{Reduction to a smooth test function}

In the following lemma, we consider a singular test function $\xi$ and control the fluctuations of $\xi-\xi_\reg$, where $\xi_\reg$ is a mollification of $\xi$ at microscopic scale.

\begin{lemma}\label{lemma:reg micro}
Let $\xi\in L^2(\ell_N^{-1}\dT)$ and $\ell_N\in (0,1]$. Assume either that $\ell_N=1$ or that $\xi$ is supported on $(-\frac{1}{2},\frac{1}{2})$. Let $\alpha\in [0,\frac{s}{2})$. Suppose that for all $x\in \ell_N^{-1}\dT$,
\begin{equation*}
    |\xi'(x)|\leq \frac{1}{|x|^{1+\alpha}}.
\end{equation*}
Let $h_N:=(N\ell_N)^{-1}$ and $\xi_\reg:=\xi*K_{h_N}$, where $K_\ell$ is as in \eqref{def:regularization kernel}. Then, for all $\ve>0$, there exists $C>0$ depending on $\beta,s$ and $\ve$ such that 
\begin{equation*}
    \Var_{\dGi}[\Fluct_N[(\xi-\xi_\reg)(\ell_N^{-1}\cdot) ] ]\leq C(N\ell_N)^{\ve+2\alpha}.
\end{equation*}
\end{lemma}

\begin{proof}
Let $\eta$ be a smooth cutoff function supported on $[-\frac{2}{N\ell_N},\frac{2}{N\ell_N}]$ and taking values in $\dR^+$. We have
\begin{multline}\label{eq:Y0}
  \Var_{\dGi}[\Fluct_N[(\xi-\xi_\reg)(\ell_N^{-1}\cdot) ] ]\leq 3  \Var_{\dGi}[\Fluct_N[(1-\eta)(\xi-\xi_\reg)(\ell_N^{-1}\cdot) ] ]\\+3\Var_{\dGi}[\Fluct_N[(\eta \xi)(\ell_N^{-1}\cdot) ]  ]+3\Var_{\dGi}[\Fluct_N[(\eta \xi_\reg)(\ell_N^{-1}\cdot) ]  ].
\end{multline}

We first estimate the variance of $\Fluct_N[(\xi\eta)(\ell_N^{-1}\cdot)]$. We have 
\begin{equation*}
\begin{split}
    \Var_{\dGi}[\Fluct_N[(\xi\eta)(\ell_N^{-1}\cdot) ]]&\leq \dE_{\dGi}\left[\left(\sum_{i=1}^N |\xi|(\ell_N^{-1}x_i)\eta(\ell_N^{-1}x_i) \right)^2\right]\\
   &\leq C\dE_{\dGi}\left[\left(\sum_{i=1}^N \frac{1}{|\ell_N^{-1}x_i|^\alpha}\mathds{1}_{|x_i|\leq 2\frac{\ell_N}{N}} \right)^2\right].
\end{split}
\end{equation*}
Let $i_0^-$ (resp. $i_0^+$) be the index $i$ of the point $x_i\leq 0$ (resp. $x_i\geq 0$) closest to $0$. Let $i_0\in \{i_0^-,i_0^+\}$ be the index $i$ of the point $x_i$ closest to $0$.

For every $i\in \{1,\ldots,N\}$, we have $|x_i|\geq \min(|x_i-x_{i_0^-}|,|x_i-x_{i_0^+}|)$. Therefore,
\begin{multline*}
   \sum_{i=1}^N \frac{1}{|\ell_N^{-1}x_i|^\alpha}\mathds{1}_{|x_i|\leq \frac{2}{N}} \leq \frac{2}{|\ell_N^{-1}x_{i_0}|^\alpha}\mathds{1}_{|x_{i_0}|\leq \frac{2}{N}}+ \sum_{i\neq i_0^-} \frac{1}{|\ell_N^{-1}(x_i-x_{i_0^-})|^\alpha}\mathds{1}_{|x_i-x_{i_0^-}|\leq \frac{2}{N}}\\
   +\sum_{i\neq i_0^+} \frac{1}{|\ell_N^{-1}(x_i-x_{i_0^+})|^\alpha}\mathds{1}_{|x_i-x_{i_0^+}|\leq \frac{2}{N}}.
\end{multline*}

Therefore, since $(x_i-x_{i_0^-})_i$ and $(x_i-x_{i_0^+})_i$ are identically distributed,
\begin{multline}\label{eq:two terms}
    \Var_{\dGi}[\Fluct_N[(\xi\eta)(\ell_N^{-1})]]\leq C\dE_{\dGi}\left[\left(\frac{1}{|\ell_N^{-1}x_{i_0^+}|^\alpha}\mathds{1}_{|x_{i_0}|\leq \frac{2}{N}}\right)^2\right]\\+C\dE_{\dGi}\left[\left(\sum_{i\neq i_0^+} \frac{1}{|\ell_N^{-1}(x_i-x_{i_0^+})|^\alpha}\mathds{1}_{|x_i-x_{i_0^+}|\leq \frac{2}{N}}\right)^2\right].
\end{multline}

First, by a union bound,
\begin{equation*}
\dGi(x_{i_0^+}\leq \tfrac{x}{N})\leq N \dGi(x_1\in [0, \tfrac{x}{N}])=x.
\end{equation*}
Hence, we get that there exists $C>0$ such that
\begin{equation}\label{eq:term1}
\begin{split}
   \dE_{\dGi}\left[\left(\frac{1}{|\ell_N^{-1}x_{i_0^+}|^\alpha}\mathds{1}_{|x_{i_0}^+|\leq \frac{2}{N}}
  \right)^2\right]&\leq  \ell_N^{2\alpha}\int_0^{2}\dGi(|Nx_{i_0^+}|^{-\alpha}\geq t)\dd t\\
  &\leq (N\ell_N)^{2\alpha}\int_0^{2} t^{-\frac{1}{\alpha}}\dd t\\
 &\leq C(N\ell_N)^{2\alpha}. 
\end{split}
\end{equation}

For the second term in \eqref{eq:two terms}, by the Cauchy-Schwarz inequality,
\begin{equation*}
 \left(\sum_{i\neq i_0^+} \frac{1}{|\ell_N^{-1}(x_i-x_{i_0^+})|^\alpha}\mathds{1}_{|x_i-x_{i_0^+}|\leq \frac{2}{N}}\right)^2  \leq C\sum_{i\neq i_0^+} \frac{1+d(i,i_0^+)^2 }{|\ell_N^{-1}(x_i-x_{i_0^+})|^{2\alpha}}\mathds{1}_{|x_i-x_{i_0^+}|\leq \frac{2}{N}} .
\end{equation*}
Moreover, since $(x_i-x_{i_0^+})_{i\neq i_0^+}$ is independent from $i_0$ and since the $x_i$ are undistinguishable,
\begin{equation*}
    \dE_{\dGi}\left[ \left(\sum_{i\neq i_0^+} \frac{1}{|\ell_N^{-1}(x_i-x_{i_0^+})|^\alpha}\mathds{1}_{|x_i-x_{i_0^+}|\leq 2\frac{\ell_N}{N}}\right)^2 \right]=  \dE_{\dGi}\left[ \left(\sum_{i\neq 1} \frac{1}{|\ell_N^{-1}(x_i-x_{1})|^\alpha}\mathds{1}_{|x_i-x_{1}|\leq 2\frac{\ell_N}{N}}\right)^2 \right].
\end{equation*}
Thus,
\begin{multline*}
    \dE_{\dGi}\left[ \left(\sum_{i\neq i_0^+} \frac{1}{|\ell_N^{-1}(x_i-x_{i_0^+})|^\alpha}\mathds{1}_{|x_i-x_{i_0^+}|\leq 2\frac{\ell_N}{N}}\right)^2\right]\\ \leq \sum_{i=2}^N (1+d(i,1)^2)\dE_{\dGi}\left[\frac{1}{|\ell_N^{-1}(x_i-x_1)|^{4\alpha}}\right]^{\frac{1}{2}}\dGi( N(x_i-x_1)\leq 2\tfrac{\ell_N}{N}).
\end{multline*}
Using Theorem \ref{theorem:almost optimal rigidity} and Lemma \ref{lemma:no explosion}, we conclude that
\begin{equation*}
    \dE_{\dGi}\left[ \left(\sum_{i\neq i_0^+} \frac{1}{|\ell_N^{-1}(x_i-x_{i_0^+})|^\alpha}\mathds{1}_{|x_i-x_{i_0^+}|\leq \frac{2}{N}}\right)^2 \right]\leq C(N\ell_N)^{2\alpha}. 
\end{equation*}
Inserting this into \eqref{eq:two terms} and using \eqref{eq:term1}, we get that there exists $C>0$ depending on $\beta$, $s$ and $\alpha$ such that 
\begin{equation}\label{eq:Y1}
     \Var_{\dGi}[\Fluct_N[(\xi\eta)(\ell_N^{-1}\cdot) ]]\leq C(N\ell_N)^{2\alpha}.
\end{equation}

Similarly, we can show that there exists $C>0$ depending on $\beta$, $s$ and $\alpha$ such that 
\begin{equation}\label{eq:Y2}
     \Var_{\dGi}[\Fluct_N[(\xi_\reg\eta)(\ell_N^{-1}\cdot)]]\leq C(N\ell_N)^{2\alpha}.
\end{equation}

By \eqref{eq:Y0}, it therefore remains to control $\Fluct_N[(1-\eta)(\xi-\xi_\reg)(\ell_N^{-1}\cdot) ]$. By \eqref{eq:mol psi'}, we have 
\begin{equation*}
|(\xi-\xi_\reg)'|\leq \frac{C}{N\ell_N}\frac{1}{|\max(x,\frac{1}{N\ell_N})|^{2+\alpha}}\mathds{1}_{|x|\leq 2}.
\end{equation*}
Therefore, by Lemma \ref{lemma:a priori}, we have 
\begin{equation}\label{eq:Y3}
   \Var_{\dGi}[\Fluct_N[(\eta \xi_\reg)(\ell_N^{-1}\cdot) ]  ]\leq C(N\ell_N)^{\kappa\ve}\frac{1}{(N\ell_N)^2}((N\ell_N)^s+(N\ell_N)^{2+2\alpha}) \leq C(N\ell_N)^{\kappa\ve+2\alpha}. 
\end{equation}
Combining \eqref{eq:Y0}, \eqref{eq:Y1}, \eqref{eq:Y2} and \eqref{eq:Y3}, this concludes the proof of the lemma.
\end{proof}

\subsection{Mean-field transport}
We first introduce the transportation argument of \cite{johansson1998fluctuations,shcherbina2013fluctuations}, which serves as the starting point for numerous CLTs on $\beta$-ensembles and Coulomb gases, including those in \cite{leble2018fluctuations,bekerman2018clt,leble2018clt,serfaty2020gaussian}. Given a linear statistic $F$, the method involves constructing a mean-field approximation of the solution $\nabla\Phi$ of the equation 
$$-\Delta\Phi+\beta \nabla \mc{H}_N\cdot \nabla\Phi=F-\dE_{\dGi}[F]$$ 
and then using this approximation to derive an expansion for the variance of $F$. This mean-field transport creates a non-local error term, sometimes called the ``loop equation term'', which is defined for all measurable maps $\psi:\ell_N^{-1}\dT\to\dR$ by
\begin{equation}\label{def:Loop}
    \Loop_{\ell_N}[\psi]:=\iint_{\Diag_{\dT^2}^c}N\ell_N(\psi(\ell_N^{-1}x)-\psi(\ell_N^{-1}y))N^{-(1+s)}g'(x-y)\dd\fluct_N(x)\dd\fluct_N(y),
\end{equation}
where we recall that $\Diag_{\dT^2}=\{(x,y)\in \dT^2:x=y\}$,
\begin{equation*}
    \fluct_N=\sum_{i=1}^N \delta_{x_i}-N \dd x
\end{equation*}
and $\dd x$ is the Lebesgue measure on $\dT$.

\begin{remark}\label{remark:other contexts}
For two-dimensional Coulomb gases, the loop equation term~\eqref{def:Loop} is replaced by an angle contribution; see~\cite{leble2018fluctuations,bauerschmidt2016two}.  
In $\beta$-ensembles, the quantity in~\eqref{def:Loop} is smooth, and one can control it directly by bounding the fluctuation measure $\fluct_N$ via local laws.  
In our setting, however, the term~\eqref{def:Loop} is as singular as the energy itself, so a more delicate \emph{analysis} is required.
\end{remark}

\begin{proposition}\label{proposition:loop equations}
Let $\xi\in \mathcal{C}^{\infty}(\ell_N^{-1}\dT)$ and $\ell_N\in (0,1]$. Assume either that $\xi$ is supported on $(-\frac{1}{2},\frac{1}{2})$ or that $\ell_N\equiv1$. Let $\psi_0\in \mathcal{C}^{\infty}(\ell_N^{-1}\dT)$ be given by
\begin{equation}\label{eq:def psiell}
 \psi_0'=-\frac{1}{2\beta c_s}\ell_N^{1-s}(-\Delta)^{\frac{1-s}{2}}(\xi(\ell_N^{-1}\cdot))(\ell_N\cdot)\quad \text{and}\quad \int \psi_0=0.
\end{equation}
We have
\begin{align}\label{eq:decomp variance HS}
    \Var_{\dGi}[\Fluct_N[\xi(\ell_N^{-1}\cdot)]]&=-\frac{1}{(N\ell_N)^{2(1-s)}}\dE_{\dGi}[\beta \Psi_0\cdot \nabla^2 \Hc_N \Psi_0+\Vert D\Psi_0\Vert_F^2 ] \\&+\frac{2}{(N\ell_N)^{1-s}}\dE_{\dGi}\left[\sum_{i=1}^N\xi'(\ell_N^{-1}x_i)\psi_0(\ell_N^{-1}x_i) \right]\notag \\ &+\frac{1}{(N\ell_N)^{2(1-s)}}\Var_{\dGi}\left[\beta \Loop_{\ell_N}[\psi_0]-\sum_{i=1}^N \psi_0'(\ell_N^{-1}x_i) \right]\notag,
\end{align}
where 
\begin{equation*}
    \Psi_0:X_N\in  D_N\mapsto \ell_N(\psi_0(\ell_N^{-1}x_1),\ldots, \psi_0(\ell_N^{-1}x_N)).
\end{equation*}

\end{proposition}

\begin{remark}[Mean-field approximation]\label{remark:mean-field}
Proposition~\ref{proposition:loop equations} can be viewed as a mean-field approximation for the solution $\nabla\phi$ of
\begin{equation}\label{eq:A1xi}
  \begin{cases}
    \mc{L}\phi= \Fluct_N[\xi] & \text{in } D_N,\\[4pt]
    \nabla\phi\cdot\vec{n}=0   & \text{on } \partial D_N,
  \end{cases}
\end{equation}
where $\mc{L}:=\mc{L}^{\dGi}$.  
Because of the energy scaling and the long-range character of the interaction, one can seek an approximate solution within the class of \emph{diagonal transports}  
\[
  \Psi : X_N=(x_1,\dots,x_N)\in D_N \longmapsto (\psi(x_1),\dots,\psi(x_N)),
  \qquad \psi:\dT\to\dR.
\]

For the hypersingular Riesz gas—namely, the case $g(x)\sim |x|^{-s}$ with $s>1$—such a diagonal transport is no longer adequate to approximate the solution of~\eqref{eq:A1xi}.  
Here the energy is dominated by local interactions, so the term $-\dive(\Psi) + \beta\,\nabla\Hc_N\!\cdot\!\Psi$ fails to be, to leading order, a linear statistic.
\end{remark}

\begin{proof}[Proof of Proposition \ref{proposition:loop equations}]
Let $\mc{L}:=\mc{L}^{\dGi}$ be the operator
\begin{equation*}
    \mc{L}=-\Delta +\beta \nabla \mc{H}_N\cdot \nabla,
\end{equation*}
acting on $H^1(\dGi)$. Let $\xi\in \mc{C}^{\infty}(\dT)$ and $F:=\Fluct_N[\xi]$.

Let $\phi \in\mathcal{C}^{\infty}(\dT)$ and $\psi=\phi'$. Define
\begin{equation*}
    \Phi:X_N\in D_N\mapsto \phi(x_1)+\ldots +\phi(x_N).
\end{equation*}
We wish to find $\phi$ such that $\mc{L} \Phi\simeq F$. Let us expand $\nabla \Hc_N\cdot \nabla\Phi$. Letting $\mu_N:=\frac{1}{N}\sum_{i=1}^N\delta_{x_i}$, we have
\begin{equation*}
    \nabla \Hc_N\cdot \nabla\Phi=\iint_{\Diag_{\dT^2}^c}N^{-(1+s)}g'(x-y)N(\psi(x)-\psi(y))\dd (N\mu_N)(x)\dd(N\mu_N)(y),
\end{equation*}
a.e., where $\Diag_{\dT^2}=\{(x,y)\in \dT^2:x=y\}$. By decomposing $\mu_N$ into $\mu_N=\dd x+\frac{1}{N}\fluct_N$, where $\dd x$ is the Lebesgue measure on the circle, one can break $\nabla \Hc_N\cdot \nabla\Phi$ into
\begin{multline}\label{eq:loop 1}
    \nabla \Hc_N\cdot \nabla\Phi=N^2\iint N(\psi(x)-\psi(y))N^{-(1+s)}g'(x-y)\dd x \dd y\\+2N\int\Bigr(\int N(\psi(x)-\psi(y))N^{-(1+s)}g'(x-y)\dd y\Bigr)\dd\fluct_N(x) +\Loop_{1}[\psi],
\end{multline}
where $\Loop_1[\psi]$ is as in \eqref{def:Loop} for $\ell_N=1$. For the cross term, we can write
\begin{equation*}
 N\int N(\psi(x)-\psi(y))N^{-(1+s)}g'(x-y)\dd y=-N^{1-s}g'*\psi,
\end{equation*}
where $g'*\psi=g*\psi'$ is well-defined since $\psi\in\mathcal{C}^\infty(\dT)$. Setting
\begin{equation*}
    C_N:=N^2\iint N(\psi(x)-\psi(y))N^{-(1+s)}g'(x-y)\dd x \dd y,
\end{equation*}
we thus get
\begin{equation}\label{eq:LphiF}
    \mc{L}\Phi-F=\beta C_N+\int\Bigr(-2\beta N^{1-s} g'*\psi-\xi\Bigr)\dd \fluct_N-\Fluct_N[\psi']+\beta \Loop_{1}[\psi].
\end{equation}

Let $\psi_0\in \mathcal{C}^{\infty}(\dT)$ be the solution of the master equation 
\begin{equation*}
    -2\beta g'*\psi_0=\xi-\int \xi\quad\text{with}\quad \int \psi_0=0.
\end{equation*}
Since $g$ is the fundamental solution of the fractional Laplace equation (\ref{eq:frac laplace}), $\psi_0$ is the unique solution of 
\begin{equation}\label{eq:map psi'}
    \psi_0'=-\frac{1}{2\beta c_s}(-\Delta)^{\frac{1-s}{2}}\xi\quad \text{with}\quad \int \psi_0=0.
\end{equation}
For this map, one can observe that the constant term in the splitting (\ref{eq:loop 1}) vanishes:
\begin{equation*}
  C_N=-N^{2-s}\int g'*\psi_0=0,
\end{equation*}
since $\xi-\int \xi$ has zero mean. Let us choose $\psi=\frac{1}{N^{1-s}}\psi_0$ so that by \eqref{eq:LphiF},
\begin{equation}\label{eq:LP}
    \mc{L}\Phi-F=-\frac{1}{N^{1-s}}\Fluct_N[\psi_0']+\frac{\beta}{N^{1-s}}\Loop_{1}[\psi_0].
\end{equation}
One can now write
\begin{equation*}
    \Var_{\dGi}[F]=\Var_{\dGi}[F-\mc{L}\Phi]-\Var_{\dGi}[\mc{L}\Phi]+2\Cov_{\dGi}[F,\mc{L}\Phi].
\end{equation*}
By integration by parts under $\dGi$,
\begin{equation*}
  \Cov_{\dGi}[F,\mc{L}\Phi]=\dE_{\dGi}[\nabla F\cdot \nabla \Phi]
\end{equation*}
and
\begin{equation*}
    \Var_{\dGi}[\mc{L}\Phi]=\dE_{\dGi}[\beta \nabla\Phi\cdot \nabla^2\mc{H}_N\nabla\Phi+\Vert\nabla^2 \Phi\Vert_F^2 ].
\end{equation*}
Assembling the above gives
\begin{multline}\label{eq:ell=1}
    \Var_{\dGi}[F]=-\frac{1}{N^{2(1-s)}}\dE_{\dGi}\left[\beta \sum_{i\neq j}N^{-s}g''(x_i-x_j)(\psi_0(x_i)-\psi_0(x_j))^2+\sum_{i=1}^N \psi_0'(x_i)^2\right]\\ +\frac{2}{N^{1-s}}\dE_{\dGi}\left[\sum_{i=1}^N\xi'(x_i)\psi_0(x_i) \right]+\frac{1}{N^{2(1-s)}}\Var_{\dGi}\left[\beta \Loop_{1}[\psi_0]-\sum_{i=1}^N \psi_0'(x_i) \right].
\end{multline}

This establishes the lemma when $\ell_N=1$. We now prove the lemma when $\ell_N\in (0,1)$ and $\xi$ is supported on $(-\frac{1}{2},\frac{1}{2})$ by rescaling. Let $\psi_0\in \mathcal{C}^{\infty}(\ell_N^{-1}\dT)$ be given by
\begin{equation*}
 \psi_0'=-\frac{1}{2\beta c_s}\ell_N^{1-s}(-\Delta)^{\frac{1-s}{2}}(\xi(\ell_N^{-1}\cdot))(\ell_N\cdot)\quad \text{and}\quad \int_{\dT} \psi_0=0.
\end{equation*}
Let $\psi_1:=\ell_N^s\psi_0(\ell_N^{-1}\cdot)$. We can check that $\psi_1\in \mathcal{C}^\infty(\dT)$ solves
\begin{equation*}
    \psi_1'=-\frac{1}{2\beta c_s}(-\Delta)^{\frac{1-s}{2}}(\xi(\ell_N^{-1}\cdot))\quad \text{and}\quad \int_{\dT} \psi_1=0.
\end{equation*}
Indeed, by the definition of $\psi_1$, for all $x\in \dT$,
\begin{equation*}
    \psi_1'(x)=\ell_N^{s-1}\psi_0'(\ell_N^{-1}x)=-\frac{1}{2\beta c_s}(-\Delta)^{\frac{1-s}{2}}(\xi(\ell_N^{-1}\cdot))(x).
\end{equation*}
Therefore, by \eqref{eq:ell=1},
\begin{multline}\label{eq:varpsi1}
    \Var_{\dGi}[F]=-\frac{1}{N^{2(1-s)}}\dE_{\dGi}\left[\beta \sum_{i\neq j}N^{-s}g''(x_i-x_j)(\psi_1(x_i)-\psi_1(x_j))^2+\sum_{i=1}^N \psi_1'(x_i)^2\right] \\
    +\frac{2}{N^{1-s}}\dE_{\dGi}\left[\sum_{i=1}^N\ell_N^{-1}\xi'(\ell_N^{-1}x_i)\psi_1(x_i) \right] +\frac{1}{N^{2(1-s)}}\Var_{\dGi}\left[\beta \Loop_{1}[\psi_1]-\sum_{i=1}^N \psi_1'(x_i) \right].
\end{multline}
By the definition of $\psi_1$,
\begin{multline*}
   \frac{1}{N^{2(1-s)}}\dE_{\dGi}\left[\beta \sum_{i\neq j}N^{-s}g''(x_i-x_j)(\psi_1(x_i)-\psi_1(x_j))^2+\sum_{i=1}^N \psi_1'(x_i)^2\right]\\=\frac{1}{(N\ell_N)^{2(1-s)}}\dE_{\dGi}\left[\beta \sum_{i\neq j}N^{-s}g''(x_i-x_j)\ell_N^2(\psi_0(\ell_N^{-1}x_i)-\psi_0(\ell_N^{-1}x_j))^2+\sum_{i=1}^N \psi_0'(\ell_N^{-1}x_i)^2\right].
\end{multline*}
Moreover,
\begin{equation*}
  \frac{1}{N^{1-s}}\dE_{\dGi}\left[\sum_{i=1}^N\ell_N^{-1}\xi'(\ell_N^{-1}x_i)\psi_1(x_i) \right]= \frac{1}{(N\ell_N)^{1-s}}\dE_{\dGi}\left[\sum_{i=1}^N\xi'(\ell_N^{-1}x_i)\psi_0(\ell_N^{-1}x_i) \right].
\end{equation*}
Finally, recalling the definition of $\Loop_{\ell_N}$ from \eqref{def:Loop},
\begin{equation*}
  \frac{1}{N^{2(1-s)}}\Var_{\dGi}\left[\beta \Loop_{1}[\psi_1]-\sum_{i=1}^N \psi_1'(x_i) \right]=\frac{1}{(N\ell_N)^{2(1-s)}}\Var_{\dGi}\left[\beta \Loop_{\ell_N}[\psi_0]-\sum_{i=1}^N \psi_0'(\ell_N^{-1}x_i) \right].  
\end{equation*}
Thus, inserting the three last displays into \eqref{eq:varpsi1}, we obtain
\begin{align*}
    & \Var_{\dGi}[\Fluct_N[\xi(\ell_N^{-1}\cdot)]]\\&=-\frac{1}{(N\ell_N)^{2(1-s)}}\dE_{\dGi}\left[\beta \sum_{i\neq j}N^{-s}g''(x_i-x_j)\ell_N^2(\psi(\ell_N^{-1}x_i)-\psi(\ell_N^{-1}x_j))^2+\sum_{i=1}^N \psi'(\ell_N^{-1}x_i)^2\right] \\
   &+\frac{2}{(N\ell_N)^{1-s}}\dE_{\dGi}\left[\sum_{i=1}^N\xi'(\ell_N^{-1}x_i)\psi(\ell_N^{-1}x_i) \right]+\frac{1}{(N\ell_N)^{2(1-s)}}\Var_{\dGi}\left[\beta \Loop_{\ell_N}[\psi]-\sum_{i=1}^N \psi'(\ell_N^{-1}x_i) \right],
\end{align*}
which concludes the proof of the lemma.
\end{proof}

\subsection{Variance expansion up to the control on the loop equation term}
Building on Proposition \ref{proposition:loop equations}, we first establish Theorem \ref{theorem:quantitative variance}, provided we can bound the variance of the loop equation term \eqref{def:Loop}.

\begin{proposition}\label{prop:auxiliary}
Let $\xi\in L^2(\ell_N^{-1}\dT)$ and $(\ell_N)$ satisfy Assumption \ref{assumption:test func}. Let $\phi$ be an integral of $(-\Delta)^{\frac{1-s}{2}}\xi$. Suppose that there exists $\alpha\in [s-1,\frac{s}{2})$ and $C>0$ such that 
\begin{equation}\label{eq:assK0}
|\phi''(x)|\leq \frac{C}{|x|^{2-s+\alpha}}.
\end{equation}
Let $h_N:=(N\ell_N)^{-1}$ and $\xi_\reg:=\xi*K_{h_N}$ where $K_{h_N}$ is the kernel \eqref{def:regularization kernel}. Consider the map $\psi\in \mc{C}^\infty(\ell_N^{-1}\dT)$ given by
\begin{equation}\label{def:psireg}
    \psi'=-\frac{1}{2c_s}\ell_N^{1-s}(-\Delta)^{\frac{1-s}{2}}(\xi_\reg(\ell_N^{-1}\cdot))(\ell_N\cdot) \quad \text{and}\quad \int \psi=0.
\end{equation}
Let $\sigma_{\ell_N}^2(\xi)$ be as in (\ref{def:sigmaN}). For all $\ve>0$, there exists a constant $C>0$ depending on $\beta$, $s$, $\xi$, $\ve$ and the kernel $K$ such that
\begin{multline}\label{eq:mainV}
\Bigr|\Var_{\dGi}[\Fluct_N[\xi(\ell_N^{-1}\cdot)]]-(N\ell_N)^s\sigma_{\ell_N}^2(\xi)\Bigr| \\\leq C(N\ell_N)^{2(s-1)}\Bigr(\Var_{\dGi}[\Loop_{\ell_N}[\psi]]+(N\ell_N)^{\max(2s-1,2\alpha)+\ve}\Bigr).
\end{multline}
\end{proposition}

\begin{proof}
Without loss of generality, we can suppose that $\int \xi=0$.

By Lemma \ref{lemma:reg micro}, for all $\ve>0$, there exists a constant $C>0$ depending on $\beta$, $s$, $\ve, \xi$ and the kernel $K$ such that
\begin{equation}\label{eq:vprime}
    \Var_{\dGi}[\Fluct_N[\xi_{\reg}(\ell_N^{-1}\cdot)-\xi(\ell_N^{-1}\cdot)]]\leq C(1+(N\ell_N)^{2\alpha+2\ve}).
\end{equation}
We now study the fluctuations of $\Fluct_N[\xi_\reg(\ell_N^{-1}\cdot)]$. Let $\psi$ be as in \eqref{def:psireg}. Define
\begin{equation*}
    \Psi:X_N\in D_N\mapsto \ell_N(\psi(\ell_N^{-1}x_1),\ldots,\psi(\ell_N^{-1}x_N)).
\end{equation*}
By Proposition \ref{proposition:loop equations},
\begin{align}\label{eq:decomposition variance mean field}
   &\Var_{\dGi}[\Fluct_N[\xi_{\reg}(\ell_N^{-1}\cdot)] ]=-\frac{1}{(N\ell_N)^{2(1-s)}}\dE_{\dGi}[\beta \Psi\cdot \nabla^2 \Hc_N \Psi+\Vert D\Psi\Vert_F^2 ]\\
    &\quad \quad +\frac{2}{(N\ell_N)^{1-s}}\dE_{\dGi}\left[\sum_{i=1}^N \xi_\reg'(\ell_N^{-1}x_i) \psi(\ell_N^{-1}x_i)\right] \notag\\
    &\quad \quad +\frac{1}{(N\ell_N)^{2(1-s)}}\Var_{\dGi}\left[\beta \Loop_{\ell_N}[\psi] -\sum_{i=1}^N \psi'(\ell_N^{-1}x_i) \right]\notag.
\end{align}

By \eqref{eq:mol psi'}, observe that there exists $C>0$ depending on $\xi$ and $s$ such that 
\begin{equation*}
    |\psi''(x)|\leq C\left(\frac{1}{\max(|x|,\frac{1}{N\ell_N})^{2-s+\alpha}}\mathds{1}_{|x|\leq 2}+\frac{1}{|x|^{3-s}}\mathds{1}_{|x|\geq 2}\right).
\end{equation*}
Therefore, by Lemma \ref{lemma:a priori}, there exists a constant $C>0$ depending on $\beta$, $s$, $\xi$ and the kernel $K$ such that
\begin{equation}\label{eq:psi'L2}
    \Var_{\dGi}[\Fluct_N[\psi'(\ell_N^{-1}\cdot)]]\leq C((N\ell_N)^s+(N\ell_N)^{2(1-s+\alpha)})\leq C(N\ell_N+(N\ell_N)^{2(1-s+\alpha)}).
\end{equation}
Splitting $\mu_N$ into $\dd x+\frac{1}{N}\fluct_N$, one can write
\begin{align*}
   & \dE_{\dGi}[ \Psi\cdot \nabla^2 \Hc_N \Psi]\\ &=\dE_{\dGi}\left[\iint_{\Diag_{\dT^2}^c}N^{-(s+2)}g''(x-y) (N\ell_N(\psi(\ell_N^{-1}x)-\psi(\ell_N^{-1}y))^2\dd(N\mu_N)(x)\dd(N\mu_N)(y)\right]\\
    &=\iint N^{-(s+2)}g''(x-y) (N\ell_N(\psi(\ell_N^{-1}x)-\psi(\ell_N^{-1}y))^2(N\dd x)(N\dd y)\\
    &+2\iint N^{-(s+2)}g''(x-y) (N\ell_N(\psi(\ell_N^{-1}x)-\psi(\ell_N^{-1}y))^2(N\dd x)\dd \fluct_N(y)
    +\dB_{\ell_N}[\psi],
\end{align*}
where
\begin{equation}\label{def:Bell}
    \dB_{\ell_N}[\psi]:=\iint_{\Diag_{\dT^2}^c}N^{-(s+2)}g''(x-y)(N\ell_N)^2(\psi(\ell_N^{-1}x)-\psi(\ell_N^{-1}y))^2\dd \fluct_N(x)\dd\fluct_N(y).
\end{equation}
Since the first marginal $x_1$ of $\dGi$ is uniformly distributed on $\dT$, we have
\begin{equation*}
   \dE_{\dGi}[ \Psi\cdot \nabla^2 \Hc_N \Psi]= \iint N^{-(s+2)}g''(x-y) (N\ell_N(\psi(\ell_N^{-1}x)-\psi(\ell_N^{-1}y))^2(N\dd x)(N\dd y)+\dE_{\dGi}[\dB_{\ell_N}[\psi]].
\end{equation*}
Therefore,
\begin{multline}\label{eq:mult assympt mean field}
    -\frac{1}{(N\ell_N)^{2(1-s)}}\dE_{\dGi}[\beta \Psi\cdot \nabla^2 \Hc_N \Psi+\Vert D\Psi\Vert_F^2 ]
    +\frac{2}{(N\ell_N)^{1-s}}\dE_{\dGi}\left[\sum_{i=1}^N \xi_{\reg}'(\ell_N^{-1}x_i)\psi(\ell_N^{-1}x_i) \right]\\=N^s C_N -(N\ell_N)^{2s-1}\Vert\psi'\Vert_{L^2(\ell_N^{-1}\dT)}^2-{\beta}\frac{1}{(N\ell_N)^{2(1-s)}}\dE_{\dGi}[ \dB_{\ell_N}[\psi]],
\end{multline}
where
\begin{equation*}
    C_N:=-\beta\int g''(x-y)(\ell_N^{-(1-s)}\psi(\ell_N^{-1}x)-\ell_N^{-(1-s)}\psi(\ell_N^{-1}y))^2\dd x \dd y +2\int \xi_\reg'(\ell_N^{-1}\cdot)\ell_N^{-(1-s)}\psi(\ell_N^{-1}\cdot).
\end{equation*}
By \eqref{eq:mf var},
\begin{equation*}
    C_N= \frac{1}{2\beta c_s}\Vert\xi_\reg(\ell_N^{-1}\cdot )\Vert_{H^{\frac{1-s}{2}}(\dT)}^2= \ell_N^s\sigma^2_{\ell_N}(\xi_\reg).
\end{equation*}

We will prove in Lemma \ref{lemma:Bell} that for all $\ve>0$, there exists $C>0$ depending on $\beta$, $s$, $\xi$ and $\ve$ and the kernel $K$ such that
\begin{equation}\label{eq:exp BellN z}
    \dE_{\dGi}[\dB_{\ell_N}[\psi] ]\leq C(N\ell_N)^{\ve}((N\ell_N +(N\ell_N)^{2(1-s+\alpha)}).
\end{equation}

Recall that for all $\ve>0$, $\xi\in H^{\frac{1}{2}-\alpha-\ve}(\ell_N^{-1}\dT)$ and note that $\frac{1}{2}-\alpha-\ve=\frac{1-s}{2}+\frac{s-2\alpha}{2}-\ve$. Therefore, by the estimate \eqref{eq:mol Hs} in Lemma \ref{lemma:reg}, there exists a constant $C>0$ depending on $s, \beta,\xi$ and the kernel $K$ such that
\begin{equation}\label{eq:sdiff}
|\sigma_{\ell_N}^2(\xi)-\sigma_{\ell_N}^2(\xi_\reg)|\leq C((N\ell_N)^{-1}+(N\ell_N)^{-(s-2\alpha-2\ve)}).
\end{equation}
Therefore, combining \eqref{eq:mult assympt mean field} and \eqref{eq:sdiff}, we get that for all $\ve>0$, 
\begin{multline*}
-\frac{1}{(N\ell_N)^{2(1-s)}}\dE_{\dGi}[\beta \Psi\cdot \nabla^2 \Hc_N \Psi+\Vert D\Psi\Vert_F^2 ]
    +\frac{2}{(N\ell_N)^{1-s}}\dE_{\dGi}\left[\sum_{i=1}^N \xi_{\reg}'(\ell_N^{-1}x_i)\psi(\ell_N^{-1}x_i) \right]\\
=(N\ell_N)^s\sigma_{\ell_N}^2(\xi)+O((N\ell_N)^{\max(2s-1,2\alpha)+\ve}),
\end{multline*}
with $O(\cdot)$ depending on $s, \beta, \xi,\ve$ and the kernel $K$.

Using this and \eqref{eq:vprime}, \eqref{eq:decomposition variance mean field} and \eqref{eq:psi'L2}, we conclude that there exists $C>0$ depending on $s$, $\beta$, $\xi,\ve$ and the kernel $K$ such that
\begin{multline*}
\Bigr|\Var_{\dGi}[\Fluct_N[\xi(\ell_N^{-1}\cdot)]]-(N\ell_N)^s\sigma^2_{\ell_N}(\xi)\Bigr|\\ \leq C\Bigr((N\ell_N)^{2(s-1)}\Var_{\dGi}[\Loop_{\ell_N}[\psi]]+(N\ell_N)^{\max(2s-1,2\alpha)+\ve}\Bigr),
\end{multline*}
which proves \eqref{eq:mainV}. 
\end{proof}

\subsection{Control on the gradient of the loop equation term}

In view of Proposition \ref{proposition:loop equations}, expanding the variance of a linear statistic reduces to controlling the loop equation term \eqref{def:Loop}. We first explain why a strategy based solely on local laws fails.  Recall that for all $\psi:\ell_N^{-1}\dT\to\dR$ smooth enough,
\begin{equation*}
    \Loop_{\ell_N}[\psi]=\iint_{\Diag_{\dT^2}^c}N\ell_N(\psi(\ell_N^{-1}x)-\psi(\ell_N^{-1}y))N^{-(1+s)}g'(x-y)\dd\fluct_N(x)\dd\fluct_N(y).
\end{equation*}
By using local laws on gaps, one may control the above integral away from the diagonal. Nevertheless $\Loop_{\ell_N}[\psi]$ contains local terms such as
\begin{equation*}
    \sum_{i=1}^N N\ell_N(\psi(\ell_N^{-1}x_{i+1})-\psi(\ell_N^{-1}x_i))N^{-(1+s)}g'(x_{i+1}-x_i),
\end{equation*}
which is $O(N\ell_N\Vert\psi'\Vert_{L^\infty(\dT)})$ with overwhelming probability. Therefore, applying a local law estimate will give, in the best case, the bound 
\begin{equation*}
  \Var_{\dGi}\Bigr[\Loop_{\ell_N}[\psi]\Bigr]=O\Bigr((N\ell_N)^2\Vert\psi'\Vert_{L^\infty(\ell_N^{-1}\dT)}^2\Bigr).
\end{equation*} 
Inserting this into Proposition \ref{proposition:loop equations} gives an error term of order $O((N\ell_N)^{2s})$, which is larger than the expected variance.

Instead, we exploit the convexity of the Hamiltonian and bound the fluctuations of $\Loop_{\ell_N}[\psi]$ by means of various concentration inequalities. As emphasized in Section \ref{section:helffer}, the variance of a smooth function under a log-concave probability measure is related to the norm of its gradient. The first task is therefore to control the gradient of the loop equation term \eqref{def:Loop}.

We first define a localized version of $\Loop_{\ell_N}[\psi]$. Fix $\gamma>1$ and let
\begin{equation}\label{def:tgamma}t_\gamma:=\min\Bigr(\tfrac{(N\ell_N)^\gamma}{N} ,\tfrac{1}{2}\Bigr).
\end{equation}
Then define
\begin{equation}\label{def:Igamma}
    I_\gamma:=\{i\in\{1,\ldots,N\}:d(i,1)\leq Nt_\gamma\}.
\end{equation}
For $\psi:\ell_N^{-1}\dT\to\dR$ smooth enough, we define a localized version of $\Loop_{\ell_N}[\psi]$ by letting
\begin{multline}\label{def:Loop0}
    \Loop_{\ell_N,\gamma}[\psi]:=\sum_{i\neq j \in I_{\gamma}}N\ell_N(\psi(\ell_N^{-1}x_i)-\psi(\ell_N^{-1}x_j))N^{-(1+s)}g'(x_i-x_j)\\-2N\sum_{i\in I_\gamma}\int_{|y|\leq t_\gamma}N\ell_N(\psi(\ell_N^{-1}x)-\psi(\ell_N^{-1}y))N^{-(1+s)} g' (x_i-y)\dd y.
\end{multline}

\begin{lemma}[Control on the gradient of the localized loop equation term]\label{lemma:gradient splitting}
Let $\psi\in \mathcal{C}^\infty(\ell_N^{-1}\dT)$. Let $\gamma>1$ and let $t_\gamma$ and $I_\gamma$ be as in \eqref{def:tgamma} and \eqref{def:Igamma}. Let $\ve>0$. Define the good event
\begin{multline}\label{def:good event A}
   \mc{A}_{\ve,\gamma}:=\bigcap_{\substack{i\in I_{\gamma},k\in \{-\lfloor\frac{N}{2}\rfloor,\ldots,\lfloor\frac{N}{2}\rfloor\}:\\
   i+k\in I_\gamma}}\Bigr\{X_N \in D_N : N|x_{i+k}-x_i-\tfrac{k}{N}|\leq (N\ell_N)^{\ve}k^{\frac{s}{2}}\Bigr\}\\
 \cap  \bigcap_{i\in I_\gamma}\Bigr\{X_N\in D_N: (N\ell_N)^{-\ve}\leq N|x_{i+1}-x_i|\leq (N\ell_N)^{\ve} \Bigr\}.
\end{multline}
Let $\alpha\in [s-1,\frac{s}{2})$ and let
\begin{equation}\label{eq:wo}
\wo:x\in \ell_N^{-1}\dT\mapsto \frac{1}{\max(|x|,\frac{1}{N\ell_N})^{1-s+\alpha}}\mathds{1}_{|x|<1}+\frac{1}{|x|^{2-s}}\mathds{1}_{|x|\geq 1},
\end{equation}
\begin{equation}\label{eq:wt}
    \wt:x\in \ell_N^{-1}\dT\mapsto \frac{1}{\max(|x|,\frac{1}{N\ell_N})^{2-s+\alpha}}\mathds{1}_{|x|<1 }+\frac{1}{|x|^{3-s}}\mathds{1}_{|x|\geq 1}.
\end{equation}
Suppose that there exists a constant $C>0$ such that
\begin{equation*}
   |\psi'|\leq C\wo\quad \text{and}\quad  |\psi''|\leq C\wt.
\end{equation*}
Let $\Loop_{\ell_N,\gamma}[\psi]$ be as in \eqref{def:Loop0}. 

Then, one can break $\nabla\Loop_{\ell_N,\gamma}[\psi]$ into 
\begin{equation*}
    \nabla\Loop_{\ell_N,\gamma}[\psi]=\dV+\dW,
\end{equation*}
with $\dV, \dW\in L^2(\{1,\ldots,N\},\mathcal{C}^\infty(D_N))$ such that the following holds:
\begin{enumerate}
    \item For every $i\notin I_\gamma$, $\dV_i=\dW_i=0$.
    \item There exist $C>0$ and $\kappa>0$ depending on $s$ such that for every $i\in I_\gamma$ and $X_N\in \mc{A}_{\ve,\gamma}$,  
  \begin{multline}\label{eq:V1sup}
        |\dV_i|\leq (N\ell_N)^{\kappa\ve}\left(\ell_N^{-1}(\wt(\ell_N^{-1}x_i)+ \wt(\ell_N^{-1}(x_i-t_\gamma))+\wt(\ell_N^{-1}(x_i+t_\gamma))\right.\\ \left.+(N\ell_N)^{1-s+\alpha}N^{-\frac{s}{2}}\frac{1}{\max(|x_i|,\frac{1}{N})^{1+\frac{s}{2}}}\mathds{1}_{|x_i|\leq 2\ell_N}\right).
    \end{multline}

\item There exists $\dW^\gap\in L^2(\{1,\ldots,N\},\mathcal{C}^\infty(D_N))$ such that for all $U_N\in\dR^N$,
\begin{equation*}
    \dW\cdot U_N=-\sum_{i=1}^N \dW_i^{\gap} N(u_{i+1}-u_i).
\end{equation*}
Moreover, there exist $C>0$ and $\kappa>0$ depending on $s$ such that
\begin{equation}\label{eq:nab tG}
    \sup_{\mc{A}_{\ve,\gamma}}|\dW^\gap|^2\leq C(N\ell_N)^{\kappa \ve}\Bigr(N\ell_N+(N\ell_N)^{2(\alpha+1-s) }\Bigr).
\end{equation}
\end{enumerate}
\end{lemma}

The proof of Lemma \ref{lemma:gradient splitting} is deferred to Appendix \ref{section:auxiliary}.

We now record some various auxiliary estimates that will be used in the main proof.

\begin{lemma}[Auxiliary estimates]\label{lemma:auxiliary Loop}Suppose that the assumption of Lemma \ref{lemma:gradient splitting} holds. Then, for $\gamma$ large enough, there exists a constant $C>0$ depending on $\beta,s$, $\gamma$, and $\psi$ such that
\begin{equation}\label{eq:aux1}
    \Var_{\dGi}\Bigr[\Loop_{\ell_N}[\psi]-\Loop_{\ell_N,\gamma}[\psi]\Bigr]\leq CN\ell_N.
\end{equation}
Moreover, there exists a constant $C>0$ depending on $\beta,s$, $\gamma$, and $\xi$ such that
\begin{equation}\label{eq:aux2}
    \Bigr|\Var_{\dGi}[\Loop_{\ell_N,\gamma}[\psi]]-\Var_{\dGi}[ \Loop_{\ell_N,\gamma}[\psi](0,x_2-x_1,\ldots,x_N-x_1)]\Bigr|\leq CN\ell_N.
\end{equation}
\end{lemma}

\subsection{Control on the variance of the loop equation term}

We now control the variance of the localized loop equation term \eqref{def:Loop0}. Let $\dV$ and $\dW$ be as in Lemma \ref{lemma:gradient splitting}. By sub-additivity, it suffices to bound the $A_1^{-1}$-energy of $\dV$ and $\dW$ separately. For $\dW$, we use that, after the appropriate convexification, the measure becomes uniformly convex in gap coordinates. For $\dV$, we will use the comparison principle of Lemma \ref{lemma:energy comparisons}, the a priori estimate of Lemma \ref{lemma:a priori} and the rigidity estimate of Theorem \ref{theorem:almost optimal rigidity}.

\begin{proposition}\label{prop:VarAl}
Let $\xi\in L^2(\ell_N^{-1}\dT)$ and $(\ell_N)$ satisfy Assumption \ref{assumption:test func} and $\alpha\in [s-1,\frac{s}{2})$ be as in \eqref{eqdef:singularity psi''}. Let $h_N:=(N\ell_N)^{-1}$ and $\xi_\reg:=\xi*K_{h_N}$ with $K_{h_N}$ as in \eqref{def:regularization kernel}. Let $\psi$ be as in \eqref{def:psireg}.

For all $\ve>0$, there exists $C>0$ depending on $\beta,$ $\ve$, $s$ and $\xi$ such that 
\begin{equation}\label{eq:Loopvariance}
        \Var_{\dGi}[\Loop_{\ell_N}[\psi]]\leq C (N\ell_N)^{\ve+\max(1,2 (\alpha+1-s))}.
 \end{equation}
\end{proposition}

\begin{proof}\
\paragraph{\bf{Step 1: reduction to the localized loop equation term}}Recall the anchoring map
\begin{equation*}
    \Anch_N:X_N\in D_N\mapsto (x_2-x_1,x_3-x_1,\ldots,x_N-x_1)\in \dR^{N-1}.
\end{equation*}
By combining the estimates \eqref{eq:aux1} and \eqref{eq:aux2} of Lemma \ref{lemma:auxiliary Loop}, we get that there exists a constant $C>0$ depending on $\beta,s, \gamma$ and $\psi$ such that
\begin{equation}\label{eq:aux step1}
    \Bigr|\Var_{\dGi}[\Loop_{\ell_N}[\psi]]-\Var_{\dGi}[ \Loop_{\ell_N,\gamma}[\psi]\circ(0,\Anch_N) ]\Bigr|\leq CN\ell_N,
\end{equation}
Denote for shorthand
\begin{equation*}
    \Loop:=\Loop_{\ell_N,\gamma}[\psi]\quad \text{and}\quad \Loop^*(y_1,\ldots,y_{N-1}):=\Loop_{\ell_N,\gamma}[\psi](0,y_1,\ldots,y_{N-1}).
\end{equation*}

\paragraph{\bf{Step 2: multiplying by a cutoff function}}
Let $\ve>0$. Consider the good event \eqref{def:good event A} with $2\gamma$ in place of $\gamma$:
\begin{multline*}
   \mc{A}_{\ve,2\gamma}:=\bigcap_{\substack{i\in I_\gamma,k\in \{-\lfloor\frac{N}{2}\rfloor,\ldots,\lfloor\frac{N}{2}\rfloor\}:\\
   i+k\in I_{2\gamma}}}\Bigr\{X_N \in D_N : N|x_{i+k}-x_i-\tfrac{k}{N}|\leq (N\ell_N)^{\ve}k^{\frac{s}{2}}\Bigr\}\\
 \cap  \bigcap_{i\in I_{2\gamma}}\Bigr\{X_N\in D_N: (N\ell_N)^{-\ve}\leq N|x_{i+1}-x_i|\leq (N\ell_N)^{\ve} \Bigr\}.
\end{multline*}
Let $\eta_{\ve,2\gamma}$ be the cutoff function defined in \eqref{def:eta cutoff}. Note that $\eta_{\ve,2\gamma}$ is supported on $\mc{A}_{\ve,2\gamma}$. One can write
\begin{multline*}
    \Var_{\dGi}[\Loop\circ(0,\Anch_N)]\\ \leq 2\Bigr(\Var_{\dGi}[\eta_{\ve,2\gamma} \Loop\circ(0,\Anch_N)]+\Var_{\dGi}[(1-\eta_{\ve,2\gamma})\Loop\circ (0,\Anch_N)]\Bigr).
\end{multline*}
Using the rigidity estimate of Theorem \ref{theorem:almost optimal rigidity} and Lemma \ref{lemma:no explosion}, we get that there exists $C>0$ depending on $\beta$, $s, \ve$, $\xi$ and $\gamma$ such that
\begin{equation}\label{eq:etacLoop}
    \Var_{\dGi}[(1-\eta_{\ve,2\gamma})\Loop\circ (0,\Anch_N)]\leq \frac{C}{(N\ell_N)^{10}}.
\end{equation}

\paragraph{\bf{Step 3: convexification}}
Let $\dGiQ$ be as in \eqref{eq:const}. By \eqref{eq:change}, there exists $\delta>0$ depending on $\beta,s, \ve$ and $\gamma$ such that
\begin{equation}\label{eq:Px Qx}
    \mathrm{TV}(\dGi,\dGiQ)\leq e^{-(N\ell_N)^\delta}.
\end{equation}
Observe that on the support of $\eta_{\ve,2\gamma}$, there exists a constant $C>0$ depending on $\xi$ and $\gamma$ such that 
\begin{equation*}
    |\Loop\circ (0,\Anch_N)|\leq C(N\ell_N)^2.
\end{equation*}
Therefore, by \eqref{eq:Px Qx}, there exists $\delta>0$ depending on $\beta,s$, $\gamma$ and $\xi$ such that
\begin{equation}\label{eq:Loopcomp}
    \Bigr|\Var_{\dGi}[\eta_{\ve,2\gamma}\Loop\circ (0,\Anch_N)]-\Var_{\dGiQ}[\eta_{\ve,2\gamma}\Loop\circ (0,\Anch_N)]\Bigr|\leq e^{-(N\ell_N)^\delta}.
\end{equation}
We let $\dGi^*:=\Anch_N\# \dGi$ and $\dGiQ^*:=\Anch_N\# \dGiQ$. Let $\eta_{\ve,2\gamma}^*$ be such that $\eta_{\ve,2\gamma}=\eta_{\ve,2\gamma}^*\circ \Anch_N$. Notice that 
\begin{equation*}\label{eq:Loopcompb}
   \Var_{\dGiQ}[\eta_{\ve,2\gamma}\Loop\circ (0,\Anch_N)]=\Var_{\dGiQ^*}[\eta_{\ve,2\gamma}^*\Loop^*] .
\end{equation*}
We now estimate the variance of $\eta_{\ve,2\gamma}^*\Loop^*$ under $\dGiQ^*$.

Denote for shorthand $A_1:=A_1^{\dGiQ^* }.$ Since $A_1$ positive on $L^2(\{1,\ldots,N\},H^1(\dGiQ^*))$,
\begin{multline}\label{eq:etaLoop}
    \Var_{\dGiQ^*}[\eta_{\ve,2\gamma}^*\Loop^*]=\dE_{\dGiQ^*}[\nabla (\eta_{\ve,2\gamma}^*\Loop^*)\cdot A_1^{-1}\nabla (\eta_{\ve,2\gamma}^*\Loop^*) ]
\\
    \leq 2\dE_{\dGiQ^*}[(\eta_{\ve,2\gamma}^* \nabla \Loop^*)A_1^{-1}(\eta_{\ve,2\gamma}^* \nabla \Loop^*)]+2\dE_{\dGiQ^*}[(\nabla\eta_{\ve,2\gamma}^* \Loop^*)A_1^{-1}(\nabla\eta_{\ve,2\gamma}^*  \Loop^*)].
\end{multline}
Let $\dV, \dW\in L^2(\{1,\ldots,N\},\mathcal{C}^\infty(D_N))$ be as in Lemma \ref{lemma:gradient splitting}. Let $\dV^*, \dW^*\in L^2(\{1,\ldots,N-1\},\mathcal{C}^\infty(\Anch_N(D_N))$ be such that 
\begin{equation*}
   (\dV_i)_{2\leq i\leq N}\circ \Anch_N=\dV^* \quad \text{and}\quad  (\dW_i)_{2\leq i\leq N}\circ \Anch_N=\dW^*.
\end{equation*}
Notice that 
\begin{equation*}
    \nabla \Loop^*=\dV^*+\dW^*.
\end{equation*}
By sub-additivity again, we have
\begin{multline}\label{eq:subadd 3 sums}
  \dE_{\dGiQ^*}[(\eta_{\ve,2\gamma}^* \nabla \Loop^*)A_1^{-1}(\eta_{\ve,2\gamma}^* \nabla \Loop^*)]\leq 2\dE_{\dGiQ^*}[ (\eta_{\ve,2\gamma}^*\dV^*) \cdot A_1^{-1}(\eta_{\ve,2\gamma}^*\dV^*)]\\+2 \dE_{\dGiQ^*}[ (\eta_{\ve,2\gamma}^*\dW^*) \cdot A_1^{-1}(\eta_{\ve,2\gamma}^*\dW^*)].
\end{multline}

\paragraph{\bf{Step 4: Brascamp-Lieb-type estimate for $\eta_{\ve,2\gamma}^*\dW^*$ and $\Loop^*\nabla\eta_{\ve,2\gamma}^*$}}By Lemma \ref{lemma:gradient splitting}, for all $U_{N-1}=(u_2,\ldots,u_N)\in \dR^{N-1}$, setting $u_1=0$,
\begin{equation*}
  (\dW^*\circ \Anch_N)\cdot U_{N-1}=-\sum_{i=1}^N \dW^\gap_i N(u_{i+1}-u_i)
\end{equation*}
with $\dW^\gap$ satisfying \eqref{eq:nab tG} on the event $\mc{A}_{\ve,2\gamma}$ and $\dW^\gap_i=0$ for every $i\notin I_\gamma$. 

Therefore, proceeding as in \eqref{eq:import2}, we deduce that there exists a constant $\kappa>0$ such that
\begin{equation*}
  \dE_{\dGiQ^*} [\eta_{\ve,2\gamma}^*\dW^*\cdot A_1^{-1}(\eta_{\ve,2\gamma}\dW^*)]\leq (N\ell_N)^{\kappa\ve} \dE_{\dGiQ^*}[(\eta_{\ve,2\gamma}^*)^2|\dW^\gap|^2].
\end{equation*}
Since $\eta_{\ve,2\gamma}$ is supported on $\mc{A}_{\ve,2\gamma}$, the estimate \eqref{eq:nab tG} gives
\begin{equation}\label{eq:WDir}
   \dE_{\dGiQ^* } [\eta_{\ve,2\gamma}^*\dW^*\cdot A_1^{-1}(\eta_{\ve,2\gamma}^*\dW^*)]\leq  (N\ell_N)^{\kappa\ve+\max(1,2(\alpha+1-s))}.
\end{equation}

Let $\Gap_N^*$ be such that $\Gap_N=\Gap_N^*\circ \Anch_N$. Note that $\eta_{\ve,\gamma}^*=\tilde{\eta}_\ve\circ \Gap_N^*$ for some smooth function $\tilde{\eta}_\ve:\dR^N\to\dR$. Reasoning as in \eqref{eq:FN nab}, we get 
\begin{equation*}
   \dE_{\dGiQ^*}[(\nabla\eta_{\ve,2\gamma}^* \Loop^*)A_1^{-1}(\nabla\eta_{\ve,2\gamma}^*  \Loop^*)]\leq (N\ell_N)^{\kappa\ve} \dE_{\dGiQ^*}[(\Loop^*)^2|(\nabla\tilde{\eta}_{\ve,2\gamma})\circ \Gap_N |^2].
\end{equation*}
Note that $(\nabla\tilde{\eta}_{\ve,2\gamma})\circ \Gap_N=0$ on $\mc{A}_{\ve,2\gamma}^c$ and that $\Loop^*$ is uniformly bounded on $\mc{A}_{\ve,2\gamma}$ by $(N\ell_N)^{\kappa}$ for some $\kappa>0$. Thus, using Theorem \ref{theorem:almost optimal rigidity} and Lemma \ref{lemma:no explosion}, we deduce from the above display that there exists $C>0$ depending on $\beta$, $s$ and $\xi$ such that
\begin{equation}\label{eq:bad}
    \dE_{\dGiQ^*}[(\nabla\eta_{\ve,2\gamma}^* \Loop^*)A_1^{-1}(\nabla\eta_{\ve,2\gamma}^*  \Loop^*)]\leq \frac{C}{(N\ell_N)^{10}}.
\end{equation}

\paragraph{\bf{Step 5: using the comparison principle for $\eta_{\ve,2\gamma}^*\dV$}}
Let $\chi_1\in \mathcal{C}^0(\dT)$ be given by $\int \chi_1=0$ and for all $x\in \dT$,
\begin{equation*}
\chi_1
'(x)=\ell_N^{-1}\wt(\ell_N^{-1}x).
\end{equation*}
Let $\chi_2\in \mathcal{C}^0(\dT)$ be given by $\int \chi_2=0$ and for all $x\in \dT$,
\begin{equation*}
\chi_2
'(x)=(N\ell_N)^{1-s+\alpha}(N\ell_N)^{-\frac{s}{2}}\ell_N^{-1}\frac{1}{\max(|\ell_N^{-1}x_i|,\frac{1}{N})^{1+\frac{s}{2}}}\mathds{1}_{|x_i|\leq 2\ell_N}.
\end{equation*}
Let us define
\begin{equation*}
    G_N^*:=\sum_{i\in I_\gamma}(\chi_1(x_i)+\chi_1(x_i-t_\gamma)+\chi_1(x_i+t_\gamma)+\chi_2(x_i)).
\end{equation*}
By Lemma \ref{lemma:gradient splitting}, there exist constants $C>0$ and $\kappa>0$ such that 
\begin{equation*}
   \eta_{\ve,2\gamma}^* |\dV^*|\leq C(N\ell_N)^{\kappa\ve} \eta_{\ve,2\gamma}^* \nabla G_N^*.
\end{equation*}
Thus, by Lemma \ref{lemma:energy comparisons} and sub-additivity,
\begin{equation*}
\begin{split}
\dE_{\dGiQ^*}[ (\eta_{\ve,2\gamma}^*\dV^*) \cdot A_1^{-1}(\eta_{\ve,2\gamma}^*\dV^*)]&\leq \dE_{\dGiQ^*}[(\eta_{\ve,2\gamma}^*\nabla G_N^*)\cdot A_1^{-1}(\eta_{\ve,2\gamma}^*\nabla G_N^*)]\\
&\leq 2\Var_{\dGiQ^*}[\eta_{\ve,2\gamma}^*G_N^*]+2\dE_{\dGiQ^*}[G_N^*\nabla \eta_{\ve,2\gamma}^*\cdot A_1^{-1}(G_N^*\nabla \eta_{\ve,2\gamma}^*)].
\end{split}
\end{equation*}
Arguing as in \eqref{eq:FN nab}, there exists $C>0$ depending on $\beta$, $s$, $\ve$, $\gamma$ and $\xi$ such that
\begin{equation*}
  \dE_{\dGiQ^*}[G_N^*\nabla \eta_{\ve,2\gamma}^*\cdot A_1^{-1}(G_N^*\nabla \eta_{\ve,2\gamma}^*)]\leq \frac{C}{(N\ell_N)^{10}}.
\end{equation*}
Moreover, applying the estimate \eqref{eq:import} in the proof of Lemma \ref{lemma:a priori}, we get that there exist $C>0$ and $\kappa>0$ depending on $\beta$, $s$, $\gamma$, $\ve$ and $\xi$ such that
\begin{multline*}
    \Var_{\dGiQ}\left[\eta_{\ve,2\gamma}\sum_{i\in I_\gamma}(\chi_1(x_i-x_1)+\chi_1(x_i-x_1-t_\gamma)+\chi_1(x_i-x_1+t_\gamma))\right]\\ \leq C(N\ell_N)^{\kappa\ve+\max(s,2(1-s+\alpha))}\leq C(N\ell_N)^{\kappa\ve+\max(1,2(1-s+\alpha))}
\end{multline*}
and
\begin{equation*}
    \Var_{\dGiQ}\left[\eta_{\ve,2\gamma} \sum_{i\in I_\gamma}\chi_2(x_i-x_1)\right]\leq C(N\ell_N)^{\kappa\ve}.
\end{equation*}
Combining the four above displays, we deduce that there exist $C>0$ and $\kappa>0$ depending on $\beta$, $s$, $\gamma$ and $\xi$ such that
\begin{equation}\label{eq:CCU}
    \dE_{\dGiQ^*}[\eta_{\ve,2\gamma}^*\dV^* \cdot A_1^{-1}(\eta_{\ve,2\gamma}^*\dV^*)]\leq C(N\ell_N)^{\kappa\ve+\max(1,2(1-s+\alpha))}.
\end{equation}

\paragraph{\bf{Step 6: conclusion}}
Combining \eqref{eq:etaLoop}, \eqref{eq:subadd 3 sums}, \eqref{eq:WDir}, \eqref{eq:bad} and \eqref{eq:CCU}, we deduce the existence of $\kappa>0$ and $C>0$ depending on $\beta,$ $\ve$, $s$, $\gamma$ and $\xi$ such that
\begin{equation*}
    \Var_{\dGiQ^*}[\eta_{\ve,2\gamma}^* \Loop^*]\leq C(N\ell_N)^{\kappa \ve}(N\ell_N )^{\max(1,2(\alpha+1-s)) }.
\end{equation*}
Together with \eqref{eq:etacLoop}, \eqref{eq:Loopcomp} and \eqref{eq:Loopcompb}, we get that for all $\ve>0$, there exists $C>0$ depending on $\beta,s,\gamma, \ve$ and $\xi$ such that
\begin{equation*}
    \Var_{\dGi}[\Loop_{\ell_N,\gamma}[\psi]]\leq C(N\ell_N)^{\ve}(N\ell_N )^{\max(1,2(\alpha+1-s)) }.
\end{equation*}
Combining this with \eqref{eq:aux step1}, this concludes the proof of the proposition.
\end{proof}

\subsection{Proof of Theorem \ref{theorem:quantitative variance}}

\begin{proof}
Piecing together Proposition \ref{prop:auxiliary} and Proposition \ref{prop:VarAl} concludes the proof.
\end{proof}

\section{Central Limit Theorem}\label{section:CLTs}

\subsection{Proof of the CLT}
We prove the CLT stated in Theorem \ref{theorem:CLT} using Stein's method, leveraging the variance estimates of Section \ref{section:optimal scaling}.

\begin{proof}[Proof of Theorem \ref{theorem:CLT}]
Let $\xi\in L^2(\ell_N^{-1}\dT)$ and $(\ell_N)$ satisfy Assumption \ref{assumption:test func}. Let
\begin{equation*}
    F_N:=(N\ell_N)^{-\frac{s}{2} }\Fluct_N[\xi(\ell_N^{-1}\cdot) ]. 
\end{equation*}
Let $h\in \mathcal{C}^1(\dR)$ such that $\Vert h\Vert_{L^\infty(\dR)}\leq 1$ and $\Vert h'\Vert_{L^\infty(\dR)}\leq 1$.

\paragraph{\bf{Step 1: regularization}}
Let $h_N:=(N\ell_N)^{-1}$ and $\xi_{\reg}=\xi*K_{h_N}$ with $K_{h_N}$ as in (\ref{def:regularization kernel}). Let
\begin{equation*}
    G_N:=(N\ell_N)^{-\frac{s}{2}}\Fluct_N[\xi_{\reg}(\ell_N^{-1}\cdot) ]
\end{equation*}
We can write 
\begin{equation*}
   \Bigr|\dE_{\dGi}[h(F_N)]-\dE_{\dGi}[h(G_N)]\Bigr|\leq \Vert h'\Vert_{L^\infty(\dR)}\dE_{\dGi}[|F_N-G_N|]\leq \dE_{\dGi}[|F_N-G_N|].
\end{equation*}
Moreover, by the Cauchy-Schwarz inequality, 
\begin{equation*}
 \dE_{\dGi}[|F_N-G_N|]\leq \dE_{\dGi}[|F_N-G_N|^2]^{\frac{1}{2}}=\Var_{\dGi}[F_N-G_N]^{\frac{1}{2}},
\end{equation*}
since both $F_N$ and $G_N$ are centered random variables under $\dGi$. Combining the two above displays, we deduce that 
\begin{equation*}
\begin{split}
 \Bigr|\dE_{\dGi}[h(F_N)]-\dE_{\dGi}[h(G_N)]\Bigr|&\leq \Var_{\dGi}[F_N-G_N]^{\frac{1}{2}}\\
 &=(N\ell_N)^{-\frac{s}{2}}\Var_{\dGi}[\Fluct_N[(\xi-\xi_\reg)(\ell_N^{-1}\cdot)]^{\frac{1}{2}}.
\end{split}
\end{equation*}
By Lemma \ref{lemma:reg micro}, there exists $C>0$ depending on $\beta$, $s$, $\ve$ and $\xi$ such that 
\begin{equation}\label{eq:compGNFN}
   \Bigr|\dE_{\dGi}[h(F_N)]-\dE_{\dGi}[h(G_N)]\Bigr|\leq C((N\ell_N)^{-\frac{s}{2}}+(N\ell_N)^{\alpha-\frac{s}{2}+\ve}).
\end{equation}

\paragraph{\bf{Step 2: Stein's method}}

Recall $\sigma_{\ell_N}$ from \eqref{def:sigmaN}. Set $\sigma:=\sigma_{\ell_N}(\xi_\reg)$ and let $Z\sim \mc{N}(0,\sigma^2)$. Our aim is to compare the law of $G_N$ under $\dGi$ to the law of $Z$.

Consider the Stein ODE
\begin{equation}\label{eq:ste}
     \sigma^2\eta'(x)-\eta(x)x=h(x)-\dE[h(Z)],
\end{equation}
The only bounded solution of the above equation is given by 
\begin{equation*}
    \eta:x\in\dR\mapsto \frac{1}{\sigma^2}e^{\frac{x^2}{2\sigma^2}}\int_{-\infty}^x e^{-\frac{y^2}{2\sigma^2}}(h(y)-\dE[h(Z)])\dd y.
\end{equation*}
By Lemma 2.4 of \cite{chen2010normal}, the solution $\eta_1$ of the unit-variance Stein
equation, i.e. with $\sigma=1$, belongs to \(\mathcal C^{1}(\dR)\) and satisfies
\[
\bigl\|\eta_{1}\bigr\|_{L^\infty(\dR)}\le2,
\quad
\bigl\|\eta_{1}'\bigr\|_{L^\infty(\dR)}\le\sqrt{\frac{2}{\pi}}.
\]
Using the exact scaling identity
\[
\eta(x)=\frac{1}{\sigma}
\eta_{1}\Bigr(\tfrac{x}{\sigma}\Bigr),
\quad
\eta'(x)=\frac{1}{\sigma^{2}}
\eta_{1}'\Bigr(\tfrac{x}{\sigma}\Bigr),
\]
we get
\begin{equation*}
    \Vert \eta \Vert_{L^\infty(\dR)}\leq \frac{2}{\sigma}\quad \text{and}\quad    \Vert \eta' \Vert_{L^\infty(\dR)}\leq \sqrt{\frac{2}{\pi\sigma^2}}.
\end{equation*}
By \eqref{eq:ste},
\begin{equation}\label{eq:stein}
    \dE_{\dGi}[h(G_N)]-\dE_{\dGi}[h(Z)]=\dE_{\dGi}[\sigma^2 \eta'(G_N)]-\dE_{\dGi}[\eta(G_N)G_N].
\end{equation}
Our aim is now to control the right-hand side of the above display. Let $\psi\in \mathcal{C}^{\infty}(\ell_N^{-1}\dT)$ satisfy
\begin{equation*}
 \psi'=-\frac{1}{2\beta c_s}\ell_N^{1-s}(-\Delta)^{\frac{1-s}{2}}(\xi_\reg(\ell_N^{-1}\cdot))(\ell_N\cdot)\quad \text{and}\quad \smallint \psi=0.
\end{equation*}
Define
\begin{equation*}
    \Psi:X_N\in D_N\mapsto \frac{1}{(N\ell_N)^{1-\frac{s}{2}} }\ell_N(\psi(\ell_N^{-1}x_1),\ldots,\psi(\ell_N^{-1}x_N)),
\end{equation*}
which can be written as $\Psi=\nabla\Phi$ for some function $\Phi\in \mathcal{C}^\infty(D_N)$. Let $\mc{L}:=\mc{L}^{\dGi}$. Recall that by (\ref{eq:LP}),
\begin{equation*}
    \mc{L}\Phi-G_N=\frac{1}{(N\ell_N)^{1-\frac{s}{2}}}\Bigr(\beta\Loop_{\ell_N}[\psi]-\Fluct_N[\psi'(\ell_N^{-1}\cdot)]\Bigr).
 \end{equation*}
Therefore,
\begin{equation}\label{eq:ipp tG CLT}
    \dE_{\dGi}[\eta(G_N)G_N]=\dE_{\dGi}[\eta(G_N)\mc{L}\Phi]-\frac{1}{(N\ell_N)^{1-\frac{s}{2}}}\Cov_{\dGi}\Bigr[\eta(G_N),\beta \Loop_{\ell_N}[\psi]-\Fluct_N[\psi'(\ell_N^{-1}\cdot)]\Bigr].
\end{equation}
By integration by parts,
\begin{equation*}
   \dE_{\dGi}[\eta(G_N)\mc{L}\Phi]=\dE_{\dGi}[\eta'(G_N)\nabla G_N\cdot \nabla \Phi]. 
\end{equation*}
Combining the two last displays, we get 
\begin{equation*}
    \dE_{\dGi}[\eta(G_N)G_N]= \sigma_{\ell_N}^2(\xi_\reg)\dE_{\dGi}[\eta'(G_N)]+\Error_N^1+\Error_N^2,
\end{equation*}
where
\begin{align*}
\Error_N^1&:=\dE_{\dGi}\left[\eta'(G_N)\left(\sum_{i=1}^N \frac{1}{N\ell_N}\psi(\ell_N^{-1}x_i)\xi_{\reg}'(\ell_N^{-1}x_i)-\sigma_{\ell_N}^2(\xi_\reg)\right)\right],\\
   \Error_N^2&:=-\frac{1}{(N\ell_N)^{1-\frac{s}{2}}}\Cov_{\dGi}\Bigr[\eta(G_N),\beta \Loop_{\ell_N}[\psi]-\Fluct_N[\psi'(\ell_N^{-1}\cdot)]\Bigr].
\end{align*}

\paragraph{\bf{Step 3: the error term $\Error_N^1$}}
One can bound $\Error_N^1$ by
\begin{equation*}
    |\Error_N^1|\leq \Vert\eta'\Vert_{L^\infty(\dR)}\frac{1}{N\ell_N}\dE_{\dGi}\left[\left|\sum_{i=1}^N \psi(\ell_N^{-1}x_i)\xi_{\reg}'(\ell_N^{-1}x_i)-N\ell_N\sigma_{\ell_N}^2(\xi_\reg)\right| \right].
\end{equation*}
Since
\begin{equation*}
    \frac{1}{\ell_N^{1-s}}\psi'(\ell_N^{-1}\cdot)=-\frac{1}{2\beta c_s}(-\Delta)^{\frac{1-s}{2}}(\xi_\reg(\ell_N^{-1}\cdot)),
\end{equation*}
we get using (\ref{eq:mf var}) that
\begin{equation*}
\begin{split}
    \dE_{\dGi}\left[\sum_{i=1}^N \xi_{\reg}'(\ell_N^{-1}x_i)\psi(\ell_N^{-1}x_i) \right]&=\frac{(2\pi)^{1-s}}{2\beta c_s} N\ell_N^{1-s} \Vert\xi_\reg(\ell_N^{-1}\cdot)\Vert^2_{H^{\frac{1-s}{2}}(\ell_N^{-1}\dT)}\\
    &=N\ell_N \sigma^2_{\ell_N}(\xi_\reg).
\end{split}
\end{equation*}
We therefore obtain
\begin{equation*}
    |\Error_N^1|\leq C\Vert\eta'\Vert_{L^\infty(\dR)}\frac{1}{N\ell_N}\Var_{\dGi}\left[\sum_{i=1}^N \xi_{\reg}'(\ell_N^{-1}x_i)\psi(\ell_N^{-1}x_i)\right]^{\frac{1}{2}}.
\end{equation*}
By Lemma \ref{lemma:initial estimate}, there exists a constant $C>0$ depending on $\beta$, $s$ and $\xi$ such that
\begin{equation}\label{eq:errorN1 CLT}
 |\Error_N^1|\leq C(N\ell_N)^{-\frac{1}{2}}.
\end{equation}

\paragraph{\bf{Step 4: the error term $\Error_N^2$}}
On the other hand, we have 
\begin{multline}\label{eq:CLT Error2N}
    |\Error_N^2|\leq  \frac{2\beta}{(N\ell_N)^{1-\frac{s}{2}}}\Vert\eta\Vert_{L^\infty(\dR)}\Var_{\dGi}[\Loop_{\ell_N}[\psi]]^{\frac{1}{2}}\\+\frac{2}{(N\ell_N)^{1-\frac{s}{2}}}\Vert\eta\Vert_{L^\infty(\dR)}\Var_{\dGi}[\Fluct_N[\psi'(\ell_N^{-1}\cdot)] ]^{\frac{1}{2}}.
\end{multline}
By \eqref{eq:Loopvariance}, for all $\ve>0$, there exists a constant $C>0$ depending on $\beta$, $s$, $\ve$ and $\xi$ such that
\begin{equation*}
    \Var_{\dGi}[\Loop_{\ell_N}[\psi]]\leq C(N\ell_N)^{\ve+\max(1,2(\alpha+1-s))}.
\end{equation*}
By \eqref{eq:mol psi'} and Lemma \ref{lemma:a priori}, there exists a constant $C>0$ depending on $\beta$, $s$ and $\xi$ such that
\begin{equation*}
    \Var_{\dGi}[\Fluct_N[\psi'(\ell_N^{-1}\cdot)]]\leq C(N\ell_N)^{\max(1,2(\alpha+1-s)) }.
\end{equation*}
It follows that for all $\ve>0$, there exists a constant $C>0$ depending on $\beta$, $s$, $\ve$ and $\xi$ such that
\begin{equation}\label{eq:errorN2 CLT}
    |\Error_N^2|\leq C\Vert\eta\Vert_{L^\infty(\dR)}(N\ell_N)^{\ve+\max(\frac{s-1}{2},\alpha-\frac{s}{2})}.
\end{equation}

\paragraph{\bf{Step 5: conclusion}}
Inserting (\ref{eq:errorN1 CLT}), (\ref{eq:errorN2 CLT}) into (\ref{eq:ipp tG CLT}), we get that for all $\ve>0$,   there exists $C>0$ depending on $\beta$, $s$, $\ve$ and $\xi$ such that
\begin{equation}\label{eq:concGN}
   | \dE_{\dGi}[h(G_N)]-\dE[h(Z)]|\leq C(N\ell_N)^{\ve+\max(\frac{s-1}{2},\alpha-\frac{s}{2}) }.
\end{equation}
Let $Z'\sim \mc{N}(0,\sigma_{\ell_N}^2(\xi))$. We can write 
\begin{multline}\label{eq:hh}
    |\dE_{\dGi}[h(F_N)]-\dE[h(Z')]|\\ \leq  | \dE_{\dGi}[h(G_N)]-\dE[h(Z)]|+  |\dE_{\dGi}[h(F_N)]-\dE[h(G_N)]|+|\dE[h(Z)-\dE[h(Z')]|.
\end{multline}
Clearly,
\begin{equation*}
    |\dE[h(Z)-\dE[h(Z')]|\leq \Vert h'\Vert_{L^\infty(\dR)} |\sigma_{\ell_N}(\xi)-\sigma_{\ell_N}(\xi_\reg)|\leq |\sigma_{\ell_N}(\xi)-\sigma_{\ell_N}(\xi_\reg)|\leq |\sigma_{\ell_N}^2(\xi)-\sigma_{\ell_N}^2(\xi_\reg)|^{\frac{1}{2}}.
\end{equation*}
Recall that $\xi\in H^{\frac{1}{2}-\alpha-\ve}(\ell_N^{-1}\dT)$ and $\frac{1}{2}-\alpha=\frac{1-s}{2}+\frac{s-2\alpha}{2}-\ve$. Thus, by Lemma \ref{lemma:reg}, there exists a constant $C>0$ depending on $\beta, s$, $\xi$ and $\ve$ such that
\begin{equation*}
    |\sigma_{\ell_N}^2(\xi)-\sigma_{\ell_N}^2(\xi_\reg)|\leq C((N\ell_N)^{-1}+(N\ell_N)^{-(s-2\alpha-2\ve)}),
\end{equation*}
which gives 
\begin{equation}\label{eq:comZZ'}
   |\dE[h(Z)-\dE[h(Z')]|\leq   C((N\ell_N)^{-\frac{1}{2}}+(N\ell_N)^{\alpha-\frac{s}{2}}).
\end{equation}
Combining \eqref{eq:hh}, \eqref{eq:concGN}, \eqref{eq:compGNFN} and \eqref{eq:comZZ'}, we get that for all $\ve>0$,   there exists $C>0$ depending on $\beta$, $s$, $\ve$ and $\xi$ such that
 \begin{equation*}
    |\dE_{\dGi}[h(F_N)]-\dE[h(Z')]|\leq C(N\ell_N)^{\ve+\max(-\frac{s}{2}, \frac{s-1}{2},\alpha-\frac{s}{2}+\ve) }.
 \end{equation*}
 This concludes the proof of Theorem \ref{theorem:CLT}. 
\end{proof}

\subsection{Proof of Corollary \ref{corollary:fluctuations gaps disc}}

\begin{proof}[Proof of Corollary \ref{corollary:fluctuations gaps disc}]
By Lemma \ref{lemma:explicit formulas}, the function $\xi:=\mathds{1}_{(-a,a)}$ satisfies Assumption \ref{assumption:test func}. Therefore, by Theorem \ref{theorem:CLT},
\begin{equation*}
    (N\ell_N)^{-\frac{s}{2}}\sigma_{\ell_N}(\xi)^{-1}\Fluct_N[\xi(\ell_N^{-1}\cdot) \underset{\mathrm{Law}}{\Longrightarrow}\mathcal{N}(0,1).
\end{equation*}
Let $\ell=\lim_{N\to \infty}\ell_N\in \{0,1\}$. We have
\begin{equation*}
  \lim_{N\to \infty}\sigma_{\ell_N }(\xi)=\sigma_{\ell}(\xi).
\end{equation*}
Moreover, by Lemma \ref{lemma:explicit formulas}, 
\begin{equation*}
    \sigma_{1}^2(\xi)=\frac{2\cotan(\frac{\pi}{2}s)}{\beta 4^s\pi s}\zeta(-s,2a)
\end{equation*}
and 
\begin{equation*}
    \sigma_{0}^2(\xi)=\frac{2\cotan(\frac{\pi}{2}s)}{\beta 4^s\pi s}(2a)^s.
\end{equation*}

These two cases combine into
\begin{equation}\label{eq:zetaNs}
     \frac{1}{\sqrt{N^s \zeta(-s,2a\ell_N)}}\Fluct_N[\mathds{1}_{(-a,a)}(\ell_N^{-1}\cdot)] \underset{\mathrm{Law}}{\Longrightarrow}\mathcal{N}(0,\sigma^2),
\end{equation}
where 
\begin{equation}\label{def:sigmas2}
    \sigma^2:=\frac{2\cotan(\frac{\pi}{2}s)}{\beta 4^s\pi s},
\end{equation}
as claimed.

We now prove the second assertion. 
Let $(k_N)$ be a sequence in $\{1,\ldots,\lfloor\hN\rfloor\}$ such that $k_N\to \infty$. Let $i_0\in \{1,\ldots,N\}$. For $\ve>0$, define the event
\begin{equation*}
    \mc{A}:=\bigcap_{i,j:d(i,i_0)\leq 2k_N,d(j,i_0)\leq 2k_N}\Bigr\{|N(x_j-x_i)-N\tfrac{j-i}{N}|\leq k_N^\ve d(j,i)^{\frac{s}{2}}\Bigr\}.
\end{equation*}
Set $\ell_N:=\frac{k_N}{N}$. Let us define the error counting 
\begin{equation*}
    \Delta_N:=\Bigr|\{i\in \{1,\ldots,N\}:x_i\in [x_{i_0},x_{i_0}+\ell_N]\}\Bigr|-k_N.
\end{equation*}
We claim that there exists a constant $C>0$ such that on the event $\mc{A}$,
\begin{equation}\label{eq:statA}
  \Bigr| N(x_{k_N+i_0}-x_{i_0})-k_N +\Delta_N\Bigr|\leq Ck_N^{2\ve+\frac{s^2}{4} }.
\end{equation}

Let us prove \eqref{eq:statA}. Since $N(x_{i+1}-x_i)\leq (N\ell_N)^\ve$ for every $i\in \{1,\ldots,N\}$ such that $d(i,i_0)\leq 2N\ell_N$, we have 
\begin{equation*}
  |N(x_{i_0+k_N+\Delta_N}-x_{i_0})- k_N|\leq (N\ell_N)^\ve.
\end{equation*}
Moreover, by the definition of $\mc{A}$,
\begin{equation*}
    |N(x_{i_0+k_N+\Delta_N}-x_{i_0+k_N})-\Delta_N|\leq k_N^\ve \Delta_N^{\frac{s}{2}}.
\end{equation*}
Combining the two above displays, we get 
\begin{equation*}
\begin{split}
N(x_{i_0+k_N}-x_{i_0})&=N(x_{i_0+k_N}-x_{i_0+k_N+\Delta_N})+ N(x_{i_0+k_N+\Delta_N}-x_{i_0})\\
&=-\Delta_N+O\Bigr(k_N^\ve \Delta_N^{\frac{s}{2}}\Bigr)+k_N+O(k_N^{\ve}).
\end{split}
\end{equation*}
Thus,
\begin{equation*}
    N(x_{i_0+k_N}-x_{i_0})-k_N=-\Delta_N+O\Bigr(k_N^\ve (\Delta_N^{\frac{s}{2}}+1)\Bigr).
\end{equation*}
Since $\Delta_N=O(k_N^{\frac{s}{2}+\ve})$, this proves the claim. 

We now let $i_0$ be the index $i$ (defined almost surely) of the point $x_i$ closest to $0$. We have
\begin{equation*}
    \Delta_N=\sum_{i=1}^N \mathds{1}_{[0,\ell_N]}(x_i-x_{i_0})-N\ell_N.
\end{equation*}
There exists a constant $\kappa>0$ such that on the event $\mc{A}\cap \{|x_{i_0}|\leq (N\ell_N)^\ve\}$, 
\begin{equation}\label{eq:DeltaN err}
    \Delta_N=\sum_{i=1}^N \mathds{1}_{[0,\ell_N]}(x_i)-N\ell_N+O((N\ell_N)^{\kappa\ve}).
\end{equation}
By Theorem \ref{theorem:almost optimal rigidity}, there exists $\delta>0$ depending on $\beta, s$ and $\ve$ such that
\begin{equation*}
    \dGi\Bigr((\mc{A}\cap \{|x_{i_0}|\leq (N\ell_N)^\ve\})^c\Bigr)\leq e^{-k_N^{\delta}}.
\end{equation*}
Therefore, combining this with \eqref{eq:DeltaN err} and \eqref{eq:zetaNs}, we get 
\begin{equation}\label{eq:Deli0}
    \frac{1}{\sqrt{N^s \zeta(-s,\ell_N)}}\Delta_N \underset{\mathrm{Law}}{\Longrightarrow}\mathcal{N}(0,\sigma^2),
\end{equation}
where $\sigma^2$ is as in \eqref{def:sigmas2}. Therefore, by \eqref{eq:statA}, 
\begin{equation}\label{eq:rand}
     \frac{1}{\sqrt{N^s \zeta(-s,\ell_N)}}  (N(x_{i_0+k_N}-x_{i_0})-k_N)\underset{\mathrm{Law}}{\Longrightarrow}\mathcal{N}(0,\sigma^2).
\end{equation}
Since $N(x_{i_0+k_N}-x_{i_0})$ is independent from $i_0$ and since the variables $N(x_{i+k_N}-x_{i})$, $i\in \{1,\ldots,N\}$ are identically distributed, we get 
\begin{equation*}
    \mathrm{Law}_{\dGi}(N(x_{i_0+k_N}-x_{i_0}))= \mathrm{Law}_{\dGi}(N(x_{1+k_N}-x_{1})).
\end{equation*}
Thus, we conclude from \eqref{eq:rand} that 
\begin{equation*}
     \frac{1}{\sqrt{N^s \zeta(-s,\ell_N)}}  (N(x_{1+k_N}-x_{1})-k_N)\underset{\mathrm{Law}}{\Longrightarrow}\mathcal{N}(0,\sigma^2),
\end{equation*}
which concludes the proof.
\end{proof}

\appendix

\section{Well-posedness of the H.-S. equation}\label{section:existence}

\subsection{Poincaré inequality}

Recall the map
\begin{equation*}
    \Gap_N:X_N\in D_N\mapsto N(x_2-x_1,x_3-x_2,\ldots,x_N-x_{N-1},x_1-x_N)\in \dR^{N}.
\end{equation*}

\begin{lemma}[Poincaré inequality]\label{lemma:Poincare}
Let $\mu$ satisfy Assumption \ref{assumption:gibbs measure} and denote $\mu':=\Gap_N\# \mu$. Then, there exists a constant $C>0$ such that for $\psi\in H^1(\mu')$, 
\begin{equation}\label{eq:P0}
  \Var_\mu[(\psi\circ \Gap_N)]\leq C\dE_\mu[|\nabla (\psi\circ \Gap_N)|^2].
\end{equation}
\end{lemma}

\begin{proof}
Let $\phi=\psi\circ\Gap_N$ with $\psi\in H^1(\mu')$. Observe that the measure $\mu'$ is uniformly log-concave: 
\begin{equation*}
    \dd \mu'(x)=e^{-\tilde{H}(x)}\dd X_N,
\end{equation*}
where $\dd X_N$ is the surface measure on $\Gap_N(D_N)$ and $\tilde{H}$ is a function satisfying $\nabla^2 \tilde{H}\geq c \Id$ for some constant $c>0$. Therefore, by the Brascamp-Lieb inequality stated in Lemma \ref{lemma:brascamp lieb inequality}, we have,  defining $\partial_{N+1}\phi:=\partial_1\phi$,
\begin{equation*}
    \Var_\mu[\phi]=\Var_{\mu'}[\psi]\leq c^{-1}\dE_{\mu'}[|\nabla \psi|^2]=c^{-1}\dE_{\mu}\left[\sum_{i=1}^N (N(\partial_{i+1}\phi-\partial_i\phi))^2\right]
    \leq 4c^{-1}N^2 \dE_\mu[|\nabla \phi|^2] .
\end{equation*}
This proves \eqref{eq:P0}.
\end{proof}

\subsection{Well-posedness for gradient vector fields}
The formal adjoint with respect to $\mu$ of the derivative $\partial_i$, $i\in \{1,\ldots,N\}$ is given by
\begin{equation*}
    \partial_i^*w =-\partial_i w+(\partial_i H)w,
\end{equation*}
meaning that for all $v, w\in \mathcal{C}^{\infty}(D_N)$ such that $\nabla v\cdot \vec{n}=0$ on $\partial D_N$, 
\begin{equation}\label{eq:preli ipp 1}
    \dE_{\mu}[(\partial_i v) w ]=\dE_{\mu}[ v \partial_i^{*}w ].
\end{equation}
The above formula can be shown by integration by parts under the Lebesgue measure on $D_N$.

\begin{lemma}\label{lemma:exist gap func}
Let $\mu$ satisfy Assumption \ref{assumption:gibbs measure}, let $\mu':=\Gap_N\#\mu$, and let $F\in H^{-1}(\mu)$. Assume that $F$ is of the form $F=G\circ \Gap_N$ with $G\in H^{-1}(\mu')$. Then, there exists a unique, modulo an additive constant, $\phi\in H^{1}(\mu)$ such that
\begin{equation}\label{eq:preli existence unique}
   \mc{L}^\mu\phi=F-\dE_{\mu}[F] \quad \text{on }D_N.
\end{equation}
Moreover, the solution $\phi$ of (\ref{eq:preli existence unique}) is the unique, modulo additive constant, minimizer of
\begin{equation*}
    \phi\mapsto \dE_{\mu}[|\nabla \phi|^2-2\phi (F-\dE_\mu[F])],
\end{equation*}
over functions $\phi\in H^1(\mu)$.
\end{lemma}

Lemma \ref{lemma:exist gap func} is a variation of the Lax–Milgram lemma. 

\begin{proof}[Proof of Lemma \ref{lemma:exist gap func}]
We begin by considering the case where $F$ is a function of the gaps, i.e. $F=G\circ \Gap_N$ with $G\in H^{-1}(\mu')$. Let $$E:=\{\phi\in H^1(\mu):\phi=\psi\circ \Gap_N\text{ for some }\psi\in H^{1}(\mu'), \dE_{\mu}[\phi]=0\}$$ and
\begin{equation}\label{def:app J}
    J:\phi\in  E \mapsto \dE_{\mu}[ |\nabla \phi|^2]-2\dE_{\mu}[F\phi].
\end{equation}

Let us prove that $J$ admits a unique minimizer. First, for all $\phi\in E$,
\begin{equation*}
    |\dE_{\mu}[F\phi]|\leq \Vert F\Vert_{H^{-1}(\mu)}\Vert\phi\Vert_{H^1(\mu)}.
\end{equation*}
By Lemma \ref{lemma:Poincare}, there exists a constant $C>0$ such that for all $\phi\in E$,
\begin{equation*}
    \dE_{\mu}[\phi^2]\leq C\dE_\mu[|\nabla\phi|^2],
\end{equation*}
since $\phi$ is a function of the gaps and since $\dE_\mu[\phi]=0$. Therefore, there exists $C>0$ such that for all $\phi\in E$,
\begin{equation}\label{eq:1}
    \dE_\mu[F\phi]\leq C\Vert F\Vert_{H^{-1}(\mu)}\dE_\mu[|\nabla\phi|^2]^{\frac{1}{2}}.
\end{equation}
It follows that $J$ is coercive with respect to the $H^1(\mu)$ norm and that $J$ is bounded from below. Let $(\phi_k)$ be a sequence of elements of $E$ such that $(J(\phi_k))$ converges to $\inf J$. Since $(\phi_k)$ is bounded in $H^1(\mu)$, there exists a sub-sequence converging weakly to a certain $\phi\in E$. It follows from (\ref{eq:1}) that $J$ is l.s.c on $H^1(\mu)$. Since $J$ is convex, $J$ is l.s.c for the weak topology on $H^1(\mu)$. Therefore $\phi$ is a minimizer of $J$ on $E$. The first-order minimality condition for $\phi$ reads
\begin{equation*}
    \dE_{\mu}[\nabla \phi\cdot \nabla h]=\dE_{\mu}[F h],
\end{equation*}
for all $h\in E$. By integration by parts, for every $h\in E$, we have
\begin{equation}\label{eq:firsto app}
    \dE_{\mu}[(\mc{L}^\mu\phi -F)h]+\int_{\partial D_N}(\nabla\phi\cdot \vec{n}) h e^{-H}=0.
\end{equation}
Since $H(x)\to \infty$ when $d(x,\partial D_N)\to 0$, we get 
\begin{equation*}
    \dE_{\mu}[(\mc{L}^\mu\phi -F)h]=0.
\end{equation*}
Moreover, since 
\begin{equation*}
    \dE_\mu[\mc{L}^{\mu}\phi-F+\dE_\mu[F]]=0,
\end{equation*}
we deduce, using the definition of $E$, that for all $h=\psi\circ \Gap_N$ with $\psi\in H^1(\mu')$, 
\begin{equation}\label{eq:zero app}
    \dE_\mu[(\mc{L}^{\mu}\phi-F+\dE_\mu[F])h]=0.
\end{equation}
Since $\mc{L}^{\mu}\phi-F+\dE_\mu[F]$ is a function of the gaps (because $\phi$ and $H$ are), we deduce that for all $h\in H^1(\mu)$,
\begin{multline}\label{eq:extendgap}
     \dE_\mu[(\mc{L}^{\mu}\phi-F+\dE_\mu[F])h]=\dE_\mu[ \dE_\mu[(\mc{L}^{\mu}\phi-F+\dE_\mu[F])h\mid \Gap_N(X_N)]]\\=\dE_\mu[(\mc{L}^{\mu}\phi-F+\dE_\mu[F])\dE_\mu[h\mid \Gap_N(X_N)]].
\end{multline}
Since $\dE_\mu[h\mid \Gap_N(X_N)]$ is a function of the gaps, we get from \eqref{eq:zero app} that 
\begin{equation*}
     \dE_\mu[(\mc{L}^{\mu}\phi-F+\dE_\mu[F])h]=0.
\end{equation*}
We conclude that
\begin{equation*}
    \mc{L}^\mu\phi=F-\dE_{\mu}[F],
\end{equation*}
as elements of $H^{-1}(\mu)$. 

Let us finally prove uniqueness. Let $\phi\in H^1(\mu)$ solve 
\begin{equation*}
\mc{L}^\mu\phi=F-\dE_{\mu}[F] \quad \text{on }D_N.
\end{equation*}
Since $\phi\in H^1(\mu)$ and since the equality holds in the $H^{-1}(\mu)$ sense, multiplying by $\phi$ and integrating by parts, we get 
\begin{equation*}
    \dE_\mu[|\nabla \phi|^2]=0.
\end{equation*}
This proves that $\phi$ is constant a.e., which ensures the uniqueness of the solution of \eqref{eq:preli existence unique}, modulo an additive constant. 
\end{proof}

We now complete the proof of Proposition \ref{proposition:existence HS equation}. 

\begin{proof}[Proof of Proposition \ref{proposition:existence HS equation}]\
Note that if $F\in H^1(\mu)$, then $\nabla F\in L^2(\{1,\ldots,N\},H^{-1}(\mu))$. Indeed
\begin{equation*}
    \sum_{i=1}^N \Vert \partial_i F\Vert_{H^{-1}(\mu)}^2\leq \sum_{i=1}^N \Vert \partial_i F\Vert_{L^2}^2\leq \Vert F\Vert_{H^1(\mu)}^2.
\end{equation*}
By Lemma \ref{lemma:exist gap func}, there exists $\phi\in H^1(\mu)$ such that $\mc{L}^\mu\phi=F-\dE_{\mu}[F]$ as elements of $H^{-1}(\mu)$ and such that $\nabla \phi\cdot \vec{n}=0$ a.e. on $\partial D_N$. Let $w\in \mathcal{C}^{\infty}(D_N)$ such that $\nabla w\cdot \vec{n}=0$ a.e. on $\partial D_N$. For every $i\in \{1,\ldots,N\}$, we have
 \begin{multline*}
     \dE_{\mu}[ w \partial_i F ]=\dE_{\mu}[\partial_i^{*}w(F-\dE_{\mu}[F]) ]
     =\dE_{\mu}[ \partial_i^*w \mc{L}^\mu \phi ]
     =\dE_{\mu}[\nabla (\partial_i^*w) \cdot \nabla \phi]\\
     =\sum_{j=1}^N\dE_{\mu}[ (\partial_i^* \partial_j w) \partial_j \phi]+\sum_{j=1}^N \dE_{\mu}[ ([\partial_j,\partial_i^*]w)\partial_j \phi ].
 \end{multline*}
For the first term in the sum above, we have
\begin{equation*}
    \sum_{j=1}^N\dE_{\mu}[ (\partial_i^* \partial_j w) \partial_j \phi]=\sum_{j=1}^N \dE_{\mu}[(\partial_j w)\partial_i \partial_j \phi ]
    =\dE_{\mu}[\nabla w\cdot \nabla(\partial_i \phi)]
    =\dE_{\mu}[w \mc{L}^\mu( \partial_i\phi)].
\end{equation*}
For the second term, using the identity 
\[
[\partial_j,\partial_i^{*}]
  =
  \partial_j\bigl(-\partial_i + (\partial_i H)\bigr)
  -
  \bigl(-\partial_i + (\partial_i H)\bigr)\partial_j
  =
  (\nabla^{2}H)_{ij},
\]
we get
\begin{equation*}
    \sum_{j=1}^N \dE_{\mu}[ ([\partial_j,\partial_i^*]w)\partial_j \phi ]=\dE_{\mu}[w e_i\cdot \nabla^2 H\nabla \phi]. 
\end{equation*}
We conclude by density that, for every $i\in \{1,\ldots,N\}$, 
\begin{equation*}
    \mc{L}^\mu(\partial_i \phi)+(\nabla^2 H\nabla \phi)_i=\partial_i F,
\end{equation*}
holds in the sense of $H^{-1}(\mu)$. This concludes the proof of existence of a solution to (\ref{eq:the HS}).

Let us prove uniqueness. Suppose that $\nabla \phi\in L^2(\{1,\ldots,N\},H^1(\mu))$ solves \eqref{eq:the HS}. Then, $\phi$ solves \eqref{eq:preli existence unique} for $F=0$, which has a unique solution such that $\dE_\mu[\phi]=0$, which is $\phi=0$. Hence, uniqueness holds.

The formula (\ref{eq:representation variance}) then easily follows from an integration by parts: letting $\nabla \phi$ be the solution of (\ref{eq:the HS}), we can write
\begin{equation*}
    \Var_{\mu}[F]=\dE_{\mu}[(F-\dE_{\mu}[F])\mc{L}^\mu\phi]=\dE_{\mu}[\nabla F\cdot \nabla \phi].
\end{equation*}
The variational representation (\ref{eq:variatt}) is straightforward using that $\nabla\phi$ is the unique minimizer of the function $J$ defined in (\ref{def:app J}).
\end{proof}

\subsection{Well-posedness for non gradient vector fields}
We turn to the proof of Proposition \ref{prop:well posed non grad}.

\begin{proof}[Proof of Proposition \ref{prop:well posed non grad}]
  Let $v\in L^2(\{1,\ldots,N\},H^{-1}(\mu))$.  Let 
\begin{multline*}
   E:=\{\psi\in L^2(\{1,\ldots,N\},H^1(\mu)):\psi=w\circ \Gap_N \text{ for some } w\in L^2(\{1,\ldots,N\},H^1(\mu')),\\\psi\cdot(e_1+\cdots+e_N)=0 \}
\end{multline*}
and 
\begin{equation*}
    J:\psi\in E\mapsto \dE_{\mu}[\Vert D\psi\Vert_F^2+\psi\cdot \nabla^2 H\psi-2v\cdot \psi].
\end{equation*}
Let us show that $J$ admits a unique minimizer. First
\begin{equation*}
    |\dE_\mu[v\cdot\psi]|\leq \sum_{i=1}^N \dE_{\mu}[v_i^2]^{\frac{1}{2}}\dE_\mu[\psi_i^2]^{\frac{1}{2}}.
\end{equation*}
Denoting $\psi_{N+1}:=\psi_1$, since $\psi\cdot (e_1+\cdots+e_N)=0$,
there exists $c>0$ such that
\begin{equation}\label{eq:below}
    \psi\cdot \nabla^2H \psi\geq c\sum_{i=1}^N (\psi_{i+1}-\psi_i)^2\geq 
   \frac{c}{N^2}\sum_{i=1}^N \psi_i^2 .
\end{equation}
 It follows that $J$ is coercive with respect to the norm $\Vert \cdot \Vert_{L^2(\{1,\ldots,N\},H^1(\mu))}.$ Arguing as in the proof of Lemma \ref{lemma:exist gap func}, one can show that $J$ admits a unique minimizer $\psi\in E$. Moreover, the minimizer $\psi$ satisfies, for every $h\in E$,
\begin{equation*}
    \dE_\mu\left[\sum_{i=1}^N \nabla \psi_i\cdot \nabla h_i+\psi\cdot \nabla^2 H h\right]=\dE_\mu[v\cdot h].
\end{equation*}
By integration by parts, for every $h\in E$ and every $i=1,\ldots,N$, we have
\begin{equation*}
     \dE_\mu[\nabla \psi_i\cdot \nabla h_i]=\dE_{\mu}[\mc{L}^\mu \psi_i h_i]+\int_{\partial D_N}(\nabla \psi_i \cdot\vec{n})h_ie^{-H}=\dE_{\mu}[\mc{L}^\mu \psi_i h_i],
\end{equation*}
since for every $i,j$ such that $i\neq j$, $\lim_{x\to 0}\chi_{ij}(x)=+\infty$. Therefore, for every $h\in E$,
\begin{equation}\label{eq:hhh}
    \dE_\mu[(A_1^\mu\psi-v)h]=0.
\end{equation}
Since $(A_1^\mu\psi-v)\cdot (e_1+\cdots+e_N)=0$, \eqref{eq:hhh} holds for every $h\in L^2(\{1,\ldots,N\},H^1(\mu))$ such that for every $i$, $h_i$ is a function of the gaps. Finally, since $A_1^\mu \psi-v$ is a function of the gaps, we get by proceeding as in \eqref{eq:extendgap} that \eqref{eq:hhh} holds for any $h\in L^2(\{1,\ldots,N\},H^1(\mu))$. We conclude that $\psi$ satisfies the Euler-Lagrange equation
\begin{equation}
\begin{cases}\label{eq:Euler boundary}
    A_1^\mu\psi=v & \text{on }D_N \\
    \psi\cdot (e_1+\cdots+e_N)=0 & \text{on $D_N$},
\end{cases}
\end{equation}
where the first equality is an equality between elements of $L^2(\{1,\ldots,N\},H^{-1}(\mu))$. 

We now turn to the proof of uniqueness. Let $\psi\in L^2(\{1,\ldots,N\},H^1(\mu))$ satisfy
\begin{equation}\label{eq:psi0}
\begin{cases}
    A_1^\mu\psi=0 & \text{on }D_N \\
    \psi\cdot (e_1+\cdots+e_N)=0 & \text{on $D_N$}.
\end{cases}
\end{equation}
Taking the scalar product of the equation $A_1^\mu\psi=0$ with $\psi$ and integrating by parts gives 
\begin{equation*}
 \dE_\mu\left[\sum_{i=1}^N|\nabla \psi_i|^2 \right]+\dE_{\mu}[\psi\cdot \nabla^2 H\psi]=0.
\end{equation*}
In particular,
\begin{equation*}
   \dE_{\mu}[\psi\cdot \nabla^2 H\psi]=0. 
\end{equation*}
The above display and \eqref{eq:below} imply that $\psi = 0$,  which proves uniqueness.
\end{proof}

\section{Auxiliary estimates}\label{section:auxiliary}

\subsection{Proof of Lemma \ref{lemma:gradient splitting}}\label{section:app weighted}

\begin{lemma}\label{lemma:VW}
Let 
\begin{equation*}
\tilde{g}:x\in \mathbb{T}\setminus \{0\}\mapsto g'(x)x
\end{equation*}
and let $\psi\in \mathcal{C}^\infty(\ell_N^{-1}\dT)$. Set
\begin{equation*}
    \zeta_0:(x,y)\in \ell_N^{-1}(\dT\setminus \diag_{\dT^2})^2\mapsto \frac{\ell_N(\psi(\ell_N^{-1}y)-\psi(\ell_N^{-1}x)) }{y-x}.
\end{equation*} 
Recall $t_\gamma$ and $I_\gamma$ from \eqref{def:tgamma} and \eqref{def:Igamma}. Recall $\Loop_{\ell_N,\gamma}[\psi]$ from \eqref{def:Loop0}. Let $\dV$ and $\dW\in L^2(\{1,\ldots,N\},\mathcal{C}^\infty(D_N))$ be given by $\dV_i=\dW_i=0$ for every $i\notin I_\gamma$ and for every $i\in I_\gamma$,
\begin{multline}\label{def:dVi}
    \dV_i:=2\sum_{k:i+k\in I_\gamma}\partial_1 \zeta_0(x_i,x_{i+k})N^{-s}\tilde{g}(x_{i+k}-x_i)-2N\int_{|y|\leq t_\gamma}\partial_1 \zeta_0(x_i,y)N^{-s}\tilde{g}(y-x_i)\dd y\\
    +2\sum_{k:i+k\in I_\gamma}\zeta_0(x_i,x_{i+k})N^{-s}\tilde{g}'(\tfrac{k}{N})-2N\int_{|y|\leq t_\gamma }\zeta_0(x_i,y)N^{-s}\tilde{g}'(y-x_i)\dd y,
\end{multline}
\begin{equation}\label{def:dWi}
    \dW_i:=2\sum_{k:i+k\in I_\gamma}\zeta_0(x_i,x_{i+k})N^{-s}(\tilde{g}'(x_{i+k}-x_i)-\tilde{g}'(\tfrac{k}{N})).
\end{equation}
Then, we have 
\begin{equation*}
    \nabla \Loop_{\ell_N,\gamma}[\psi]=\dV+\dW.
\end{equation*}
\end{lemma}

\begin{proof}
For all $x, y\in \dT$, $x\neq y$, we have
\begin{equation*}
    N\ell_N(\psi(\ell_N^{-1}x)-\psi(\ell_N^{-1}y))N^{-(s+1)}g'(x-y)=\zeta_0(x,y)N^{-s}\tilde{g}(x-y).
\end{equation*}
Therefore, 
\begin{equation*}
    \Loop_{\ell_N,\gamma}[\psi]=\sum_{i\neq j \in I_{\gamma}}\zeta_0(x_i,x_j)N^{-s}\tilde{g}(x_i-x_j)-2N\sum_{i\in I_\gamma}\int_{|y|\leq t_\gamma }\zeta_0(x_i,y)N^{-s} \tilde{g}(x_i-y)\dd y.
\end{equation*}
Thus, we can decompose the gradient of $\Loop_{\ell_N,\gamma}[\psi]$ into $\nabla \Loop_{\ell_N,\gamma}[\psi]=\dV+\dW$ with $\dV$ and $\dW$ as in \eqref{def:dVi} and \eqref{def:dWi}.
\end{proof}

\begin{lemma}\label{lemma:zeta}
Let $\psi\in \mathcal{C}^\infty(\ell_N^{-1}\dT)$. Let $\alpha\in [s-1,\frac{s}{2})$. Let
\begin{equation*}
    \zeta_0:(x,y)\in \ell_N^{-1}(\dT\setminus \diag_{\dT^2})^2\mapsto \frac{\ell_N(\psi(\ell_N^{-1}y)-\psi(\ell_N^{-1}x)) }{y-x}.
\end{equation*}
Let $\wo$ and $\wt$ be as in \eqref{eq:wo} and \eqref{eq:wt}. Suppose that 
\begin{equation*}
   |\psi'|\leq C\wo\quad \text{and}\quad  |\psi''|\leq C\wt.
\end{equation*}
Then, for all $x,y\in \dT$, we have 
\begin{align}
|\zeta_0(x,y)|&\leq C(\wo(\ell_N^{-1}x)+\wo(\ell_N^{-1}y))\label{eq:zeta}, \\
    |\partial_x\zeta_0(x,y)|&\leq C\ell_N^{-1}(\wt(\ell_N^{-1}x)+\wt(\ell_N^{-1}y)),\label{eq:zetax}\\
    |\partial_y\zeta_0(x,y)|&\leq C\ell_N^{-1}(\wt(\ell_N^{-1}x)+\wt(\ell_N^{-1}y))\label{eq:zetay}\\ \label{eq:zetaxy}
    |\partial_{xy}\zeta_0(x,y)|&\leq \frac{C\ell_N^{-1}}{|y-x|}(\wt(\ell_N^{-1}x)+\wt(\ell_N^{-1}y)).
\end{align}

For all $x,y,a\in \dT$, we have
 \begin{multline}   
    \label{eq:zetaxdiff}
    |\partial_x\zeta_0(x,y+a)-\partial_x\zeta_0(x,y)|\\ \leq \frac{C \ell_N^{-1}|a|}{\min(|x-y|,|x-y-a|)} (\wt(\ell_N^{-1}x)+\wt(\ell_N^{-1}y)+\wt(\ell_N^{-1}(y+a))).
\end{multline}
\end{lemma}

\begin{proof}
One can notice that 
\begin{equation}\label{eq:int}
    \zeta_0(x,y)=\int_0^1 \psi'(\ell_N^{-1}((1-\theta)x+\theta y))\dd \theta.
\end{equation}
Therefore, since $1-s+\alpha<1$, we get \eqref{eq:zeta}. 

Let us now prove \eqref{eq:zetax}. We have 
\[
\partial_x \zeta_0(x,y)
  = \frac{-(y-x)\psi'(\ell_N^{-1}x)
          + \ell_N\bigl(\psi(\ell_N^{-1}y)-\psi(\ell_N^{-1}x)\bigr)}
         {(y-x)^2}.
\]
Suppose that $x\in (-\frac{1}{4},0)$ and $y\in (0,\frac{1}{4})$. We use 
\begin{equation*}
    |\partial_x \zeta_0(x,y)|\leq \frac{|\psi'(\ell_N^{-1}x)|}{|y-x|}+\frac{\ell_N\bigl|\psi(\ell_N^{-1}y)-\psi(\ell_N^{-1}x)\bigr|}{(y-x)^2}.
\end{equation*}
Since $|y-x|\geq |x|$, we get 
\begin{equation*}
 \frac{|\psi'(\ell_N^{-1}x)|}{|y-x|}\leq \frac{|\psi'(\ell_N^{-1}x)|}{|x|}\leq C\ell_N^{-1}\wt(\ell_N^{-1}x).
\end{equation*}
Moreover, using $|y-x|\geq \max(|x|,|y|)$ and \eqref{eq:zeta}, we get
\begin{equation*}
 \frac{\ell_N\bigl|\psi(\ell_N^{-1}y)-\psi(\ell_N^{-1}x)\bigr|}{(y-x)^2}\leq C\Bigr(\frac{\wo(\ell_N^{-1}x)}{|x|}+\frac{\wo(\ell_N^{-1}y)}{|y|}\Bigr)\leq C\ell_N^{-1}(\wt(\ell_N^{-1}x)+\wt(\ell_N^{-1}y)).
\end{equation*}
Combining the two above displays shows \eqref{eq:zetax} in the case $x<0$ and $y>0$.

Now assume that $x\in (0,\frac{1}{4})$ and $y\in (0,\frac{1}{4})$. Then by \eqref{eq:int},
\begin{equation*}
    \partial_x\zeta_0(x,y)=-\ell_N^{-1}\int_0^1 \psi''(\ell_N^{-1}((1-\theta)x+\theta y))\theta \dd \theta.  
\end{equation*}
Hence,
\begin{equation*}
   |\partial_x\zeta_0(x,y)|\leq \ell_N^{-1}\int_0^1 |\psi''(\ell_N^{-1}((1-\theta)x+\theta y))|\theta \dd \theta.  
\end{equation*}
Since $0\notin [x,y]$, we get \eqref{eq:zetax}. The proof of \eqref{eq:zetay} is similar.

We finally prove \eqref{eq:zetaxdiff}. We compute 
\begin{equation*}
    \partial_{xy}\zeta_0(x,y)=-\frac{\psi'(\ell_N^{-1}y)-\psi'(\ell_N^{-1}x)}{(y-x)^2}-2 \frac{-(y-x)\psi'(\ell_N^{-1}x)
          + \ell_N\bigl(\psi(\ell_N^{-1}y)-\psi(\ell_N^{-1}x)\bigr)}
         {(y-x)^3}.
\end{equation*}
Proceeding as above, we get 
\begin{equation*}
|\partial_{xy}\zeta_0(x,y)|\leq \frac{C\ell_N^{-1}}{|y-x|}(\wt(\ell_N^{-1}x)+\wt(\ell_N^{-1}y)),
\end{equation*}
which gives \eqref{eq:zetaxdiff}.
\end{proof}

\begin{lemma}\label{lemma:dVi}
Let $\psi\in \mathcal{C}^\infty(\ell_N^{-1}\dT)$ satisfy the assumption of Lemma \ref{lemma:zeta}. Let $V\in L^2(\{1,\ldots,N\},\mathcal{C}^\infty(D_N))$ be as in \eqref{def:dVi}. Let $\mc{A}_{\ve,\gamma}$ be the event defined in \eqref{def:good event A}. There exists a constant $\kappa>0$ depending on $s$ such that for every $i\in I_\gamma$, we have 
    \begin{multline*}
        |\dV_i|\leq (N\ell_N)^{\kappa\ve}\left(\ell_N^{-1}(\wt(\ell_N^{-1}x_i)+ \wt(\ell_N^{-1}(x_i-t_\gamma))+\wt(\ell_N^{-1}(x_i+t_\gamma))\right.\\ \left.+(N\ell_N)^{1-s+\alpha}N^{-\frac{s}{2}}\frac{1}{\max(|x_i|,\frac{1}{N})^{1+\frac{s}{2}}}\mathds{1}_{|x_i|\leq 2\ell_N}\right).
    \end{multline*}
\end{lemma}

\begin{proof}
We assume that $\ell_N\to 0$. The case $\ell_N\equiv 1$ is similar and easier. Recall the event $\mc{A}_{\ve,\gamma}$ defined in \eqref{def:good event A}. Throughout this proof, we work on the event $\mathcal{A}_{\ve,\gamma}$, and all bounds below are understood to hold on this event.

\paragraph{\bf{Step 1: splitting of $\dV$}}
We can split $\dV$ into $\dV=\dV^{(1)}+\dV^{(2)}$, where for every $i\in I_\gamma^c$, $\dV^{(1)}_i=\dV^{(2)}_i=0$, and for every $i\in I_\gamma$, 
\begin{multline*}
    \dV_i^{(1)}:=2\sum_{k:i+k\in I_\gamma}\partial_x\zeta_0(x_i,x_{i+k})N^{-s}\tilde{g}(x_{i+k}-x_i)-2\sum_{k:i+k\in I_\gamma}\partial_x \zeta_0(x_i,x_i+\tfrac{k}{N})N^{-s}\tilde{g}(\tfrac{k}{N})\\+2\sum_{k:i+k\in I_\gamma}(\zeta_0(x_i,x_{i+k})-\zeta_0(x_i,x_i+\tfrac{k}{N}))N^{-s}\tilde{g}'(\tfrac{k}{N}),
\end{multline*}
\begin{multline}\label{def:Vi2}
   \dV_i^{(2)}:= 2\sum_{k:i+k\in I_\gamma}(\partial_x \zeta_0(x_i,\tfrac{k}{N})\tilde{g}(\tfrac{k}{N})+\zeta_0(x_i,\tfrac{k}{N})\tilde{g}'(\tfrac{k}{N})) \\-2N\int_{|y|\leq \frac{(N\ell_N)^{\gamma}}{N}} (\partial_x \zeta_0(x_i,y)\tilde{g}(y)+\zeta_0(x_i,y)\tilde{g}'(y))\dd y.
\end{multline}

\paragraph{\bf{Step 2: control on $\dV^{(1)}$}}
For every $i\in I_\gamma$, one may write
\begin{align}\label{eq:splitVi1}
   & \dV_i^{(1)}=2\sum_{k:i+k\in I_\gamma}\partial_x\zeta_0(x_i,x_{i+k})N^{-s}(\tilde{g}(x_{i+k}-x_i)-\tilde{g}(\tfrac{k}{N}))\\& \hspace{2cm}\notag+2\sum_{k:i+k\in I_\gamma}(\partial_x \zeta_0(x_i,x_{i+k})-\partial_x \zeta_0(x_i,x_i+\tfrac{k}{N}))N^{-s}\tilde{g}(\tfrac{k}{N})\\&\hspace{2cm} \notag+2\sum_{k:i+k\in I_\gamma}(\zeta_0(x_i,x_{i+k})-\zeta_0(x_i,x_i+\tfrac{k}{N}))N^{-s}\tilde{g}'(\tfrac{k}{N}).
\end{align}
By Lemma \ref{lemma:zeta},
\begin{equation*}
    |\partial_x\zeta_0(x_i,x_{i+k})|\leq C\ell_N^{-1}(\wt(\ell_N^{-1}x_i)+\wt(\ell_N^{-1}(x_{i+k}))),
\end{equation*}
\begin{equation*}
   |\partial_x \zeta_0(x_i,x_{i+k})-\partial_x \zeta_0(x_i,x_i+\tfrac{k}{N})|\leq \frac{C\ell_N^{-1}}{\min(\tfrac{k}{N},x_{i+k}-x_i)}(\wt(\ell_N^{-1}x_i)+\wt(\ell_N^{-1}(x_{i+k}))) .
\end{equation*}
\begin{equation*}
   |\partial_x \zeta_0(x_i,x_{i+k})-\partial_x \zeta_0(x_i,x_i+\tfrac{k}{N})|\leq |x_{i+k}-x_i-\tfrac{k}{N}|C\ell_N^{-1}(\wt(\ell_N^{-1}x_i)+\wt(\ell_N^{-1}(x_{i+k}))).
\end{equation*}
Therefore, we conclude that there exists a constant $\kappa>0$ such that for every $i\in I_\gamma$,
\begin{equation*}
|\dV_i^{(1)}|\leq \ell_N^{-1}(N\ell_N)^{\kappa\ve}\sum_{j\in I_\gamma:j\neq i}(\wt(\ell_N^{-1}x_i)+ \wt(\ell_N^{-1}x_j)+\wt(\ell_N^{-1}(x_i+\tfrac{j-i}{N})))\frac{1}{d(i,j)^{1+\frac{s}{2}}}. 
\end{equation*}
Notice that for all $\gamma\in \dR$,
\begin{equation*}
    \frac{\ell_N^{-1}}{\max(\ell_N^{-1}x,\frac{1}{N\ell_N})^{\gamma} }=\frac{\ell_N^{-1}(N\ell_N)^{\gamma} }{\max(Nx,1)^{\gamma}}.
\end{equation*}
Therefore, 
\begin{equation}\label{eq:V11}
|\dV_i^{(1)}|\leq (N\ell_N)^{\kappa\ve}(A_i+B_i)
\end{equation}
with 
\begin{align*}
    A_i:&=\ell_N^{-1}(N\ell_N)^{2-s+\alpha}\sum_{j\in I_\gamma:j\neq i}\frac{1}{|j|^{2-s+\alpha}}\frac{1}{|i-j|^{1+\frac{s}{2}}}\mathds{1}_{|j|\leq N\ell_N},\\
    B_i:&=\ell_N^{-1}(N\ell_N)^{3-s}\sum_{j\in I_\gamma:j\neq i}\frac{1}{|j|^{3-s}}\frac{1}{|i-j|^{1+\frac{s}{2}}}\mathds{1}_{|j|> N\ell_N}.
\end{align*}

\paragraph{\bf{Step 3: control on $A_i$ and $B_i$}}
Suppose that $|i|\leq 2N\ell_N$. Since $2-s+\alpha>1$, we have 
\begin{equation}\label{eq:Asmall}
\begin{split}
  |A_i|&\leq \ell_N^{-1}(N\ell_N)^{2-s+\alpha} \sum_{j\in I_\gamma:j\neq i}\frac{1}{|j|^{2-s+\alpha}}\frac{1}{|j-i|^{1+\frac{s}{2}}}\\
  &\leq C\ell_N^{-1}(N\ell_N)^{2-s+\alpha}\left(\frac{1}{|i|^{2-s+\alpha}}+\frac{1}{|i|^{1+\frac{s}{2}}}\right).
\end{split}
\end{equation}
Moreover, we have 
\begin{equation}\label{eq:Bsmall}
    |B_i|\leq C\ell_N^{-1}(N\ell_N)^{3-s}\sum_{j}\frac{1}{(N\ell_N)^{3-s}}\frac{1}{|i-j|^{1+\frac{s}{2}}}\leq C\ell_N^{-1}.
\end{equation}
Therefore, if $|i|\leq 2N\ell_N$, then by combining \eqref{eq:V11}, \eqref{eq:Asmall} and \eqref{eq:Bsmall}, we get
\begin{equation*}
    |\dV_i^{(1)}|\leq C\ell_N^{-1}(N\ell_N)^{2-s+\alpha+\kappa\ve}\left(\frac{1}{|i|^{2-s+\alpha}}+\frac{1}{|i|^{1+\frac{s}{2}}}\right).
\end{equation*}
Suppose that $|i|>2N\ell_N$. Then, we have
\begin{equation}\label{eq:Alarge}
    |A_i|\leq C\ell_N^{-1}(N\ell_N)^{2-s+\alpha}\frac{1}{|i|^{1+\frac{s}{2}}}.
\end{equation}
Moreover, we have 
\begin{multline*}
    |B_i|\leq C\ell_N^{-1}(N\ell_N)^{3-s}\sum_{j:|j|\leq \frac{i}{2}}\frac{1}{|j|^{3-s}}\frac{1}{|i-j|^{1+\frac{s}{2}}}\mathds{1}_{|j|\geq N\ell_N}\\+C\ell_N^{-1}(N\ell_N)^{3-s}\sum_{j:|j|> \frac{i}{2}}\frac{1}{|j|^{3-s}}\frac{1}{|i-j|^{1+\frac{s}{2}}}\mathds{1}_{|j|\geq N\ell_N}.
\end{multline*}
The first term is bounded by 
\begin{equation*}
    \ell_N^{-1}(N\ell_N)^{3-s}\sum_{j:|j|\leq \frac{i}{2}}\frac{1}{|j|^{3-s}}\frac{1}{|i-j|^{1+\frac{s}{2}}}\mathds{1}_{|j|\geq N\ell_N}\leq C\ell_N^{-1}(N\ell_N)^{3-s}\frac{1}{|i|^{1+\frac{s}{2}}} \frac{1}{(N\ell_N)^{2-s}}.
\end{equation*}
For the second term, we have 
\begin{equation*}
  C\ell_N^{-1}(N\ell_N)^{3-s}\sum_{j:|j|> \frac{i}{2}}\frac{1}{|j|^{3-s}}\frac{1}{|i-j|^{1+\frac{s}{2}}}\mathds{1}_{|j|\geq N\ell_N}\leq  C\ell_N^{-1}(N\ell_N)^{3-s}\frac{1}{|i|^{3-s}}. 
\end{equation*}
Combining the above displays gives 
\begin{equation}\label{eq:Blarge}
    |B_i|\leq C\ell_N^{-1}\left(   \frac{N\ell_N}{|i|^{1+\frac{s}{2}}}+\frac{(N\ell_N)^{3-s}}{|i|^{3-s}}\right).
\end{equation}
Hence, when $|i|>2N\ell_N$, then by combining \eqref{eq:V11}, \eqref{eq:Alarge} and \eqref{eq:Blarge} and using that $2-s+\alpha\geq 1$, we get
\begin{equation*}
    |\dV_i^{(1)}|\leq C\ell_N^{-1}(N\ell_N)^{\kappa\ve}\left(\frac{(N\ell_N)^{2-s+\alpha}}{|i|^{2-s+\alpha}} +\frac{(N\ell_N)^{3-s}}{|i|^{3-s} }\right).
\end{equation*}
We conclude that 
\begin{equation}\label{eq:V1i b}
    |\dV_i^{(1)}|\leq C(N\ell_N)^{\kappa\ve}\left(\ell_N^{-1}\wt(\ell_N^{-1}x_i)+(N\ell_N)^{1-s+\alpha}N^{-\frac{s}{2}}\frac{1}{\max(|x_i|,\frac{1}{N})^{1+\frac{s}{2}}}\mathds{1}_{|x_i|\leq 2\ell_N}\right).
\end{equation}

\paragraph{\bf{Step 4: control on $\dV^{(2)}$}}
Fix $x\in \dT$ and define
\begin{equation*}
    \phi_x:y\in N\dT\mapsto \zeta_0(x,x+\tfrac{y}{N})N^{-s}\tilde{g}'(\tfrac{y}{N})+\partial_1\zeta_0(x,x+\tfrac{y}{N})N^{-s}\tilde{g}(\tfrac{y}{N}).
\end{equation*}
Recalling the definition \eqref{def:Vi2} of $\dV^{(2)}$, we have
\begin{equation*}
    \dV_i^{(2)}=\sum_{\substack{k\neq 0:\\ -Nt_\gamma\leq i+k\leq Nt_\gamma}}\phi_x(k).
\end{equation*}
By the Euler-Maclaurin formula,
\begin{multline*}
  \sum_{\substack{k\neq 0:\\ -Nt_\gamma\leq i+k\leq Nt_\gamma}}\phi_x(k)=\int_{[-t_\gamma-i,t_\gamma-i]\setminus[-1,1] }\phi_x(y)\dd y\\+\frac{1}{2}(\phi_x(Nt_\gamma-i)+\phi_x(-Nt_\gamma-i)-\phi_x(1)-\phi_x(-1))+O\left(\int_{-\hN}^{-1}|\phi_x'(y)|\dd y+\int_{1}^{\hN} |\phi_x'(y)|\dd y\right).
\end{multline*}
We compute
\begin{multline*}
\phi_x(1)+\phi_x(-1)=\Bigr(\zeta_0(x,x+\tfrac{1}{N})-\zeta_0(x,x-\tfrac{1}{N})\Bigr)N^{-s}\tilde{g}'(\tfrac{1}{N})\\+\Bigr(\partial_1\zeta_0(x,x+\tfrac{1}{N})+\partial_1\zeta_0(x,x-\tfrac{1}{N})\Bigr)N^{-s}\tilde{g}(\tfrac{1}{N}).
\end{multline*}
By Lemma \ref{lemma:zeta},
\begin{equation}\label{eq:phi11}
|\phi_x(1)+\phi_x(-1)|\leq C\ell_N^{-1}\wt(\ell_N^{-1}x).
\end{equation}
We compute
\[\phi_x'(y)=N^{-(s+1)}\Bigl(\zeta_0\!\bigl(x,x+\tfrac{y}{N}\bigr)\,\tilde{g}''\!\bigl(\tfrac{y}{N}\bigr)+\bigl(\partial_{x}+\partial_{y}\bigr)\zeta_0\!\bigl(x,x+\tfrac{y}{N}\bigr)\,\tilde{g}'\!\bigl(\tfrac{y}{N}\bigr)+\partial_{xy}\zeta_0\!\bigl(x,x+\tfrac{y}{N}\bigr)\,\tilde{g}\!\bigl(\tfrac{y}{N}\bigr)\Bigr).\]
By using Lemma \ref{lemma:zeta}, one can check that 
\begin{equation*}
    \int_1^{\frac{N}{2}}|\phi_x'|(y)\dd y\leq C\ell_N^{-1}\int_1^{\frac{N}{2}}\Bigr(\wt(\ell_N^{-1}x)+\wt\Bigr(\ell_N^{-1}\Bigr(x+\frac{y}{N}\Bigr)\Bigr)\Bigr)\frac{1}{y^{1+s}}\dd y.
\end{equation*}
After some computations, we get 
\begin{equation*}
   \int_1^{\frac{N}{2}}|\phi_x'|(y)\dd y\leq C\left(\ell_N^{-1}\wt(\ell_N^{-1}x_i)+(N\ell_N)^{1-s+\alpha}N^{-\frac{s}{2}}\frac{1}{\max(|x_i|,\frac{1}{N})^{1+\frac{s}{2}}}\mathds{1}_{|x_i|\leq 2\ell_N}\right).
\end{equation*}
It follows that 
\begin{multline}\label{eq:Vi2 1}
    |\dV_i^{(2)}|\leq C\left(\ell_N^{-1}\wt(\ell_N^{-1}x_i)+(N\ell_N)^{1-s+\alpha}N^{-\frac{s}{2}}\frac{1}{\max(|x_i|,\frac{1}{N})^{1+\frac{s}{2}}}\mathds{1}_{|x_i|\leq 2\ell_N}\right)\\+\frac{1}{2}(|\phi_x(Nt_\gamma-i)|+|\phi_x(-Nt_\gamma-i)|).
\end{multline}

It remains to bound the second term in the above display. We have 
\begin{equation}\label{eq:phix split}
    \phi_x(Nt_\gamma-i)=\zeta_0(x,x+t_\gamma-\tfrac{i}{N})N^{-s}\tilde{g}'(t_\gamma-\tfrac{i}{N})+\partial_1\zeta_0(x,x+t_\gamma-\tfrac{i}{N})N^{-s}\tilde{g}(t_\gamma-\tfrac{i}{N}).
\end{equation}
By Lemma \ref{lemma:zeta},
\begin{equation*}
 |\zeta_0(x,x+t_\gamma-\tfrac{i}{N})|\leq C\Bigr(\wo(\ell_N^{-1}x)+\wo(\ell_N^{-1}(x+t_\gamma-\tfrac{i}{N}))\Bigr).
\end{equation*}
It follows that
\begin{equation*}
   |\zeta_0(x,x+t_\gamma-\tfrac{i}{N})N^{-s}\tilde{g}'(t_\gamma-\tfrac{i}{N})|\leq  \frac{C\ell_N^{-1} }{|Nt_\gamma-i|^{1+s}}\Bigr(\wo(\ell_N^{-1}x)+\wo(\ell_N^{-1}(x+t_\gamma-\tfrac{i}{N}))\Bigr).
\end{equation*}
Therefore, by the definition of $\mc{A}_{\ve,\gamma}$, there exists $\kappa>0$ such that
\begin{equation*}
\begin{split}
   |\zeta_0(x_i,x_i+t_\gamma-\tfrac{i}{N})N^{-s}\tilde{g}'(t_\gamma-\tfrac{i}{N})|&\leq  \frac{C\ell_N^{-1}(N\ell_N)^{\kappa\ve} }{|Nt_\gamma-i|^{1+s}}\Bigr(\wo(\ell_N^{-1}x_i)+\wo(\ell_N^{-1}t_\gamma)\Bigr)\\
   &\leq  2\frac{C\ell_N^{-1}(N\ell_N)^{\kappa\ve} }{|Nt_\gamma-i|^{1+s}}\wo(\ell_N^{-1}x_i)\\
   &\leq 2\frac{C\ell_N^{-1}(N\ell_N)^{\kappa'\ve} }{|N(x_i-t_\gamma)|^{1+s}}\wo(\ell_N^{-1}x_i).
\end{split}
\end{equation*}
One can notice that if $|N(x_i-t_\gamma)|\geq N\ell_N$, then
\begin{equation*}
    \frac{1}{|N(x_i-t_\gamma)|}\wo(\ell_N^{-1}x_i)\leq (N\ell_N)^{\kappa\ve}\wt(\ell_N^{-1}x_i).
\end{equation*}
Now, if $|N(x_i+t_\gamma)|\leq N\ell_N$, then
\begin{equation*}
\ell_N^{-1} \frac{1}{|N(x_i+t_\gamma)|}\wo(\ell_N^{-1}x_i)\leq (N\ell_N)^{\kappa\ve}\ell_N^{-1}(N\ell_N)^{(2-s)(1-\gamma)}.
\end{equation*}
Therefore, taking $\gamma$ large enough, 
\begin{equation*}
\ell_N^{-1} \frac{1}{|N(x_i-t_\gamma)|}\wo(\ell_N^{-1}x_i)\leq (N\ell_N)^{\kappa\ve}\ell_N^{-1}\wt(\ell_N^{-1}(x_i+t_\gamma)).
\end{equation*}
Combining the two above displays shows that 
\begin{equation}\label{eq:zeta0 1}
    |\zeta_0(x_i,x_i+t_\gamma-\tfrac{i}{N})N^{-s}\tilde{g}'(t_\gamma-\tfrac{i}{N})| \leq (N\ell_N)^{\kappa\ve}\ell_N^{-1}\Bigr(\wt(\ell_N^{-1}x_i)+\wt(\ell_N^{-1}(x_i+t_\gamma))\Bigr).
\end{equation}

For the second term in \eqref{eq:phix split},
\begin{equation*}
 |\partial_x\zeta_0(x,x+t_\gamma-\tfrac{i}{N})|\leq C\ell_N^{-1}\Bigr(\wt(\ell_N^{-1}x)+\wt(\ell_N^{-1}(x+t_\gamma-\tfrac{i}{N}))\Bigr).
\end{equation*}
Therefore,
\begin{equation*}
    |\partial_x\zeta_0(x,x+t_\gamma-\tfrac{i}{N})N^{-s}\tilde{g}(t_\gamma-\tfrac{i}{N})|\leq \frac{C\ell_N^{-1}}{|Nt_\gamma-i|^s} \Bigr(\wt(\ell_N^{-1}x)+\wt(\ell_N^{-1}(x+t_\gamma-\tfrac{i}{N}))\Bigr).
\end{equation*}
Hence, by the definition of $\mc{A}_{\ve,\gamma}$, there exists $\kappa>0$ such that
\begin{equation}\label{eq:zeta0 2}
\begin{split}
    |\partial_x\zeta_0(x_i,x_i+t_\gamma-\tfrac{i}{N})N^{-s}\tilde{g}(t_\gamma-\tfrac{i}{N})|&\leq \frac{(N\ell_N)^{\kappa\ve}\ell_N^{-1}}{|Nt_\gamma-i|^s} \Bigr(\wt(\ell_N^{-1}x_i)+\wt(\ell_N^{-1}(t_\gamma))\Bigr)\\
   & \leq (N\ell_N)^{\kappa\ve}\ell_N^{-1}\Bigr(\wt(\ell_N^{-1}x_i)+\wt(\ell_N^{-1}(t_\gamma))\Bigr)\\
   &\leq C(N\ell_N)^{\kappa\ve}\ell_N^{-1}\wt(\ell_N^{-1}x_i).
\end{split}
\end{equation}

We conclude by combining \eqref{eq:zeta0 1}, \eqref{eq:zeta0 2}, \eqref{eq:phix split} and \eqref{eq:Vi2 1} that
\begin{multline}\label{eq:V2i b} 
   |\dV_i^{(2)}|\leq (N\ell_N)^{\kappa\ve}\left(\ell_N^{-1}\Bigr(\wt(\ell_N^{-1}x_i)+\wt(\ell_N^{-1}(x_i+t_\gamma)+\wt(\ell_N^{-1}(x_i-t_\gamma))\Bigr)\right. \\ \left.+(N\ell_N)^{1-s+\alpha}N^{-\frac{s}{2}}\frac{1}{\max(|x_i|,\frac{1}{N})^{1+\frac{s}{2}}}\mathds{1}_{|x_i|\leq 2\ell_N}\right).
\end{multline}

\paragraph{\bf{Step 5: conclusion}}
Combining \eqref{eq:V1i b} and \eqref{eq:V2i b} concludes the proof.
\end{proof}

\begin{lemma}\label{lemma:dWi}
Let $\psi\in \mathcal{C}^\infty(\ell_N^{-1}\dT)$ satisfy the assumption of Lemma \ref{lemma:zeta}. Let $\dW\in L^2(\{1,\ldots,N\},\mc{C}^\infty(D_N))$ be as in \eqref{def:dWi}. Let $\mc{A}_{\ve,\gamma}$ be the event defined in \eqref{def:good event A}. For all $U_N\in \dR^N$, we have
\begin{equation*}
    \dW\cdot U_N=-\sum_{i=1}^N \dW_i^\gap N(u_{i+1}-u_i),
\end{equation*}
where $\dW^\gap$ is such that there exists $\kappa>0$ depending on $s$ such that 
\begin{equation}\label{eq:nab tG b}
    \sup_{\mc{A}_{\ve,\gamma}}|\dW^\gap|^2\leq (N\ell_N)^{\kappa \ve}\Bigr(N\ell_N+(N\ell_N)^{2(\alpha+1-s) }\Bigr).
\end{equation}
\end{lemma}

\begin{proof}
Let $\dW^\gap\in L^2(\{1,\ldots,N\},\mathcal{C}^\infty(D_N))$ be the vector field given for every $i\in I_\gamma$ by
\begin{equation*}
  \dW^\gap_i:=N^{-(1+s)}\sum_{k=1}^{N-1}\Bigl(\delta_{k\neq N/2}+\tfrac12\,\delta_{k=N/2}\Bigr)\sum_{\substack{l\in I_\gamma\\ l+k\in I_\gamma\\ i-k<l\le i}}\zeta_0(x_{l+k},x_l)\bigl(\tilde g'(x_{l+k}-x_l)-\tilde g'(\tfrac{k}{N})\bigr)
\end{equation*}
and $\dW^\gap_i=0$ for every $i\notin I_\gamma$. For all $U_N\in\dR^N$, we have
\begin{equation*}
    \dW\cdot U_N=-\sum_{i=1}^N \dW^\gap_i N(u_{i+1}-u_i).
\end{equation*}
Let us now bound $|\dW^\gap|$ on the good event $\mc{A}_{\ve,\gamma}$. Throughout this proof, we work on the event $\mathcal{A}_{\ve,\gamma}$, and all bounds below are understood to hold on this event. 

By the definition of $\mc{A}_{\ve,\gamma}$, there exists a constant $\kappa>0$ depending on $s$ such that for every $l, l+k\in I_\gamma$,
\begin{equation}\label{eq:Eml}
   N^{-(1+s)}|\tilde{g}'(x_{l+k}-x_k)-\tilde{g}'(\tfrac{k}{N})|\leq \frac{C(N\ell_N)^{\kappa\ve}}{|k|^{2+\frac{s}{2}} }.
\end{equation}
By Lemma \ref{lemma:zeta}, we have 
\begin{equation*}
    |\zeta_0(x_{l+k},x_l)|\leq C(\wo(\ell_N^{-1}x_{l+k})+\wo(\ell_N^{-1}x_{l})).
\end{equation*}
Therefore, 
\begin{equation*}
    |\dW^\gap_i|\leq (N\ell_N)^{\kappa\ve}
\sum_{k=1}^{N-1}\frac{1}{k^{2+\frac{s}{2}}}
\sum_{l=i-k+1}^{i}
\Bigl(
  \bigl|\wo\!\bigl(\ell_N^{-1}x_l\bigr)\bigr|
  +
  \bigl|\wo\!\bigl(\ell_N^{-1}x_{\,l+k}\bigr)\bigr|
\Bigr).
\end{equation*}
Reindexing the sum gives
\begin{equation}\label{eq:boundWl}
    |\dW^\gap_i|\leq (N\ell_N)^{\kappa\ve}\sum_{j=1}^N |\wo(\ell_N^{-1}x_j)|\frac{1}{d(j,i)^{1+\frac{s}{2}}}.
\end{equation}
On the event $\mc{A}_{\ve,\gamma}$, we have 
\begin{equation}\label{eq:tWi}
    |\dW^\gap_i|\leq (N\ell_N)^{\kappa\ve}(A_i+B_i) 
\end{equation}
where 
\begin{align*}
    A_i&:=(N\ell_N)^{1-s+\alpha}\sum_{j}\frac{1}{|j|^{1-s+\alpha}}\mathds{1}_{|j|\leq N\ell_N}\frac{1}{|j-i|^{1+\frac{s}{2}}},\\
    B_i&:=(N\ell_N)^{2-s}\sum_{j}\frac{1}{|j|^{2-s}}\mathds{1}_{|j|\geq N\ell_N}\frac{1}{|j-i|^{1+\frac{s}{2}}}.
\end{align*}
Suppose that $|i|\leq 2N\ell_N$. Since $1-s+\alpha<1$, we have
\begin{equation*}
    |A_i|\leq C(N\ell_N)^{1-s+\alpha}\left(\frac{1}{|i|^{1-s+\alpha}}+\frac{1}{|i|^{1+\frac{s}{2}-s+\alpha}}\right)\leq 2C(N\ell_N)^{1-s+\alpha}\frac{1}{|i|^{1-s+\alpha}}.
\end{equation*}
Moreover, we have $|B_i|\leq C.$ 

Suppose that $|i|>2N\ell_N$. Since $1-s+\alpha<1$, we have 
\begin{equation*}
    |A_i|\leq C\frac{N\ell_N}{|i|^{1+\frac{s}{2}}}\quad \text{and}\quad |B_i|\leq C\frac{N\ell_N}{|i|^{1+\frac{s}{2}}}.
\end{equation*}
Therefore,
\begin{multline*}
    \sum_{i:|i|\leq N\ell_N} |A_i|^2  \leq C\Bigr( N\ell_N\mathds{1}_{1-s+\alpha<\frac{1}{2}}+N\ell_N \log(N\ell_N)\mathds{1}_{1-s+\alpha=\frac{1}{2}}+(N\ell_N)^{2(1-s+\alpha)}\mathds{1}_{1-s+\alpha>\frac{1}{2}}\Bigr)
\end{multline*}
and
\begin{equation*}
    \sum_{i:|i|\geq N\ell_N}|B_i|^2\leq CN\ell_N.
\end{equation*}
Inserting the two above displays into \eqref{eq:tWi} concludes the proof.
\end{proof}

Combining Lemma \ref{lemma:VW}, Lemma \ref{lemma:dVi} and Lemma \ref{lemma:dWi} concludes the proof of Lemma \ref{lemma:gradient splitting}.

\subsection{Additional useful estimates}

\begin{lemma}\label{lemma:Bell}
Let $\psi\in \mathcal{C}^\infty(\ell_N^{-1}\dT)$ be as in Lemma \ref{lemma:dVi}. Let $\dB_{\ell_N}[\psi]$ be as in \eqref{def:Bell}. For all $\ve>0$, there exists a constant $C>0$ depending on $\ve$, $s$ and $\beta$ such that
\begin{equation}\label{eq:claim esp}
    \dE_{\dGi}[\dB_{\ell_N}[\psi] ]\leq C(N\ell_N)^{\ve}(N\ell_N +(N\ell_N)^{2 (1-s+\alpha)}).
\end{equation}
\end{lemma}

\begin{proof}
Let $\wo$ be as in \eqref{eq:wo}. Let us recall that
\begin{equation*}
    \dB_{\ell_N}[\psi]=\iint_{\Diag_{\dT^2}^c}N^{-(s+2)}g''(x-y)(N\ell_N)^2(\psi(\ell_N^{-1}x)-\psi(\ell_N^{-1}y))^2 \dd \fluct_N(x)\dd \fluct_N(y).
\end{equation*}
Denote
\begin{equation*}
    \Phi:(x,y)\in (\ell_N^{-1}\dT)^2\setminus \Diag_{\dT^2} \mapsto N^{-(s+2)}g''(x-y)(N\ell_N)^2(\psi(\ell_N^{-1}x)-\psi(\ell_N^{-1}y))^2,
\end{equation*}
so that
\begin{equation*}
    \dB_{\ell_N}[\psi]=\iint_{\Diag_{\dT^2}^c}\Phi(x,y)\dd \fluct_N(x)\dd\fluct_N(y).
\end{equation*}
Notice that 
\begin{equation*}
\dB_{\ell_N}[\psi]=-\iint \Phi(x,y)(N\dd x)\dd \fluct_N(y)+\iint_{\Diag_{\dT^2}^c}\Phi(x,y)\dd (N\dd\mu_N)(x)\dd\fluct_N(y).
\end{equation*}
Since the first marginal $x_1$ of $\dGi$ is uniformly distributed on $\dT$, we have 
\begin{equation*}
\begin{split}
    \dE_{\dGi}[\dB_{\ell_N}[\psi]]&=\dE_{\dGi}\left[\iint_{\Diag_{\dT^2}^c}\Phi(x,y)\dd (N\dd\mu_N)(x)\dd\fluct_N(y)\right]\\
    &=\sum_{i=1}^N \dE_{\dGi}\left[\sum_{j:j\neq i}\Phi(x_i,x_j)-N\int \Phi(x_i,y)\dd y\right].
\end{split}
\end{equation*}
Let us write
\begin{equation}\label{eq:split hh}
  \sum_{j:j\neq i}\Phi(x_i,x_j)-N\int \Phi(x_i,y)\dd y=E_i^1+E_i^{(2)},
\end{equation}
where 
\begin{align*}
    E_i^{(1)}:&=\sum_{j:j\neq i}(\Phi(x_i,x_j)-\Phi(x_i,x_i+\tfrac{j-i}{N}))\\
    E_i^{(2)}:&=\sum_{j:j\neq i}\Phi(x_i,x_i+\tfrac{j-i}{N})-N\int \Phi(x_i,x_i+y)\dd y.
\end{align*}
Let $x,y,a\in\dT$. We have 
\begin{multline*}
   \Bigr(\psi(\ell_N^{-1}x)-\psi(\ell_N^{-1}y)\Bigr)^2-\Bigr(\psi(\ell_N^{-1}x)-\psi(\ell_N^{-1}(y+a))\Bigr)^2\\=\Bigr( \psi(\ell_N^{-1}(y+a))-\psi(\ell_N^{-1}y)\Bigr)\Bigr( \psi(\ell_N^{-1}x)-\psi(\ell_N^{-1}y)+\psi(\ell_N^{-1}x)-\psi(\ell_N^{-1}(y+a))\Bigr).
\end{multline*}
Therefore, by Lemma \ref{lemma:zeta}, there exists a constant $C>0$ such that 
\begin{multline*}
    \Bigr| \Bigr(\psi(\ell_N^{-1}x)-\psi(\ell_N^{-1}y)\Bigr)^2-\Bigr(\psi(\ell_N^{-1}x)-\psi(\ell_N^{-1}(y+a))\Bigr)^2\Bigr|\leq C|a|\Bigr(\wo(\ell_N^{-1}(y+a))+\wo(\ell_N^{-1}y)\Bigr)\\ \times \Bigr( (\wo(\ell_N^{-1}x)+\wo(\ell_N^{-1}y))|x-y|+(\wo(\ell_N^{-1}(y+a))+\wo(\ell_N^{-1}y))|x-y-a|\Bigr).
\end{multline*}
Moreover, by Lemma \ref{lemma:zeta}, there exists a constant $C>0$ such that 
\begin{equation*}
    |\Phi(x,y)|\leq C|x-y|^2 (\wo(\ell_N^{-1}x)+\wo(\ell_N^{-1}y)).
\end{equation*}
Thus, combining the two last displays, we get that there exists a constant $C>0$ such that 
\begin{multline*}
|\Phi(x_i,x_j)-\Phi(x_i,x_i+\tfrac{j-i}{N})|\leq C\Bigr(\wo(\ell_N^{-1}x_i)+\wo(\ell_N^{-1}x_j)+\wo(\ell_N^{-1}(x_i+\tfrac{j-i}{N}))\Bigr)^2 \\ \times \frac{|N(x_j-x_i)-N\tfrac{j-i}{N}|}{|N(x_j-x_i)|^{s+1}}.
\end{multline*}
Using Theorem \ref{theorem:almost optimal rigidity} and Lemma \ref{lemma:no explosion}, we get that for all $\ve>0$, there exists a constant $C>0$ depending on $\beta$, $s$ and $\ve$ such that
\begin{align*}
   \sum_{i=1}^N\dE_{\dGi}[|E_i^{(1)}|]&\leq C(N\ell_N)^{\ve}\sum_{i=1}^N\sum_{\substack{j\neq i:|j|\leq 2N\ell_N}} (N\ell_N)^{2(1-s+\alpha)}\frac{1}{|j|^{2(1-s+\alpha)}}\frac{1}{|j-i|^{1+\frac{s}{2}}}\\&+C(N\ell_N)^{\ve}\sum_{j:j\neq i}(N\ell_N)^{2(2-s)}\frac{1}{(j+N\ell_N)^{2(2-s)}}\frac{1}{|j-i|^{1+\frac{s}{2}}},
\end{align*}
After some computations similar to the proof of Lemma \ref{lemma:gradient splitting}, we find that for all $\ve>0$, there exists a constant $C>0$ depending on $\beta$, $s$ and $\ve$ such that
\begin{equation}\label{eq:EEE1}
  \sum_{i=1}^N\dE_{\dGi}[|E_i^{(1)}|]\leq C(N\ell_N)^{\ve+\max(2(1-s+\alpha),1)}.
\end{equation}
For the discretization error, proceeding as in the proof of Lemma \ref{lemma:gradient splitting}, one can write
\begin{align*}
  |E_i^{(2)}|\leq &\frac{1}{N}\int_1^{\hN}|\partial_y\Phi(x_i,x_i+\frac{y}{N})|\dd y\\
    &\leq C\int_1^{\hN}\frac{1}{y^{1+s}}\Bigr(\wo(\ell_N^{-1}x_i)^2+\wo(\ell_N^{-1}(x_i+\tfrac{y}{N}))^2\Bigr)\dd y\\
    &\leq C\wo(\ell_N^{-1}x_i)^2.
\end{align*}
Summing the above estimate gives that for all $\ve>0$, there exists a constant $C>0$ depending on $\beta$, $s$ and $\ve$ such that
\begin{equation}\label{eq:EEE2}
\sum_{i=1}^N  \dE_{\dGi}[|E_i^{(2)}|]\leq C(N\ell_N)^{\ve+\max(2(1-s+\alpha),1)}.
\end{equation}
In combination with (\ref{eq:split hh}) and (\ref{eq:EEE1}), this gives (\ref{eq:claim esp}).
\end{proof}

We now turn to the proof of Lemma \ref{lemma:auxiliary Loop}.

\begin{proof}[proof of Lemma \ref{lemma:auxiliary Loop}]
Let us denote 
\begin{equation*}
    \Loop_{\ell_N}^{\mathrm{err}}[\psi]=\Loop_{\ell_N}[\psi]-\Loop_{\ell_N,\gamma}[\psi].
\end{equation*}
Let $\wo$ be as in \eqref{eq:wo}. By Lemma \ref{lemma:zeta}, there exists a constant $C>0$ such that
\begin{equation}\label{eq:splitt Alongg}
    |\Loop_{\ell_N}^{\mathrm{err}}[\psi]|\leq C\sum_{i=1}^N \sum_{ j\in I_\gamma^c}(\wo(\ell_N^{-1}x_i)+\wo(\ell_N^{-1}x_{j}))\frac{1}{|N(x_j-x_i)|^{1+\frac{s}{2}-\ve}}.
    \end{equation}
Using Lemma \ref{lemma:no explosion}, we get that for all $\ve>0$, there exists a constant $C>0$ depending on $\ve$ such that 
\begin{multline*}
    \dE_{\dGi}[\Loop_{\ell_N}^{\mathrm{err}}[\psi]^2]\\ \leq C(N\ell_N)^\ve \dE_{\dGi}\left[\left(\sum_{i=1}^N \sum_{ j\in I_\gamma^c}(\wo(\ell_N^{-1}x_i)+\wo(\ell_N^{-1}x_{j}))\frac{1}{|N(j-i)|^{1+\frac{s}{2}-\ve}}\right)^2\right].
\end{multline*}
Next, we use that for $\ve$ small enough, there exists a constant $C>0$ such that
\begin{align*}
  & \sum_{i=1}^N \sum_{ j\in I_\gamma^c}(\wo(\ell_N^{-1}x_i)+\wo(\ell_N^{-1}x_{j}))\frac{1}{|N(j-i)|^{1+\frac{s}{2}-\ve}} \\ &\leq C\sum_{i=1}^N \wo(\ell_N^{-1}x_i)\frac{1}{1+d(i,I_\gamma^c)^{\frac{s}{2}-\ve} }+C\sum_{j\in I_\gamma^c}\wo(\ell_N^{-1}x_j).
\end{align*}
Since the singularity of $\wo$ is in $L^1$ (since $1-s+\alpha<1)$, we get that 
\begin{equation*}
 \dE_{\dGi}\left[\left(\sum_{i\in I_\gamma}\wo(\ell_N^{-1}x_i)\right)^2\frac{1}{1+d(i,I_\gamma^c)^{\frac{s}{2}-\ve} } \right]^{\frac{1}{2}}\leq C(N\ell_N)^{\kappa\ve+1-\gamma(\frac{s}{2}-\ve)}.
\end{equation*}
Moreover, since $|\wo(x)|\leq C|x|^{-(2-s)}$ for $|x|>1$, we get that for all $\ve>0$, there exists $C>0$ depending on $\beta$, $s$ and $\ve$ such that
\begin{equation*}
    \dE_{\dGi}\left[\left(\sum_{j\in I_\gamma^c}\wo(\ell_N^{-1}x_j) \right)^2\right]^{\frac{1}{2}}\leq C\frac{(N\ell_N)^{2-s+\ve}}{(N\ell_N)^{\gamma(1-s)}}.
\end{equation*}
Combining the two above displays, we get that for all $\ve>0$, there exists $C>0$ depending on $\beta$, $s$ and $\ve$ such that
\begin{equation*}
  \dE_{\dGi}[\Loop_{\ell_N}^{\mathrm{err}}[\psi]^2]^{\frac{1}{2}}\leq C(N\ell_N)^{\ve}\Bigr((N\ell_N)^{2-s-\gamma(1-s-\ve)}+(N\ell_N)^{1-\gamma(\frac{s}{2}-\ve) }\Bigr).
\end{equation*}
Therefore, for $\gamma$ large enough, we get the existence of a constant $C>0$ depending on $\beta$, $s$ and $\gamma$ such that 
\begin{equation*}
  \dE_{\dGi}[\Loop_{\ell_N}^{\mathrm{err}}[\psi]^2]^{\frac{1}{2}}\leq C(N\ell_N)^{\frac{1}{2}},
\end{equation*}
which concludes the proof of \eqref{eq:aux1}.

The proof of \eqref{eq:aux2} is similar to the proof of \eqref{eq:anch error}, since that $1-s+\alpha<1$.
\end{proof}

\subsection{Energy estimate}

\begin{lemma}\label{lemma:energy}
For every $q>1$, there exists a constant $C>0$ depending on $\beta$, $s$ and $q$ such that
\begin{equation*}
  \dE_{\dGi}\left[\max\left\{\left(\sum_{i=1}^N \sum_{k=1}^{\lfloor \frac{N}{2}\rfloor}N^{-s}\left(g(x_{i+k}-x_i)-g(\tfrac{k}{N})\right)\right),0\right\}^q\right]^{\frac{1}{q}}\leq CN.
\end{equation*}
\end{lemma}

\begin{proof}
Let 
\begin{equation*}
    F_N:X_N\in D_N\mapsto N^{-s}\iint_{\Diag_{\dT^2}^c}g(x-y)\dd\fluct_N(x)\dd \fluct_N(y).
\end{equation*}
Observe that
\begin{equation*}
    \mc{H}_N=N^{2-s}\iint g(x-y)\dd x\dd y+F_N.
\end{equation*}
Define, on the set of non-ordered particles, the partition function
\begin{equation*}
    K_{N,\beta}:=\int_{\dT^N} e^{-\beta F_N(X_N)}\dd X_N.
\end{equation*}
We now prove, following \cite[Ch.~5]{serfaty2024}, that $\log K_{N,\beta}=O(N)$. 

First, by \cite[Cor. 4.2.4]{serfaty2024}, we have
\begin{equation*}
  F_N\geq -CN,
\end{equation*}
which gives 
\begin{equation*}
\log K_{N,\beta}\leq CN.
\end{equation*}
On the other hand, by Jensen's inequality,
\begin{multline*}
    \log K_{N,\beta}\geq -\beta \int F_N(X_N)\dd x_1\ldots \dd x_N\\=-\beta N^{-s}\int \left(\sum_{i\neq j}g(x_i-x_j)-2N\sum_{i=1}^N\int g(x_i-y)\dd y+N^2\iint g(x-y)\dd x\dd y\right)\dd x_1\cdots\dd x_N\\
    =-\beta N^{-s}(N(N-1)-2N^2+N^2)\iint g(x-y)\dd x \dd y=
   \beta N^{1-s}\iint g(x-y)\dd x\dd y\geq 0.
\end{multline*}
We deduce that there exists a constant $C>0$ depending on $\beta$ and $s$ such that
\begin{equation}\label{eq:bound KN}
   | \log K_{N,\beta}|\leq CN.
\end{equation}
It follows that there exists a constant $C>0$ depending on $\beta$ and $s$ such that
\begin{equation}\label{eq:LapFN}
    \log \dE_{\dGi}\Bigr[e^{\frac{\beta}{2}F_N}\Bigr]=\log K_{N,\frac{\beta}{2}}-\log K_{N,\beta}\leq CN.
\end{equation}

Let
\begin{equation*}
    X:=\sum_{i=1}^N \sum_{k=1}^{\lfloor \frac{N}{2}\rfloor}N^{-s}\Bigr(g(x_{i+k}-x_i)-g(\tfrac{k}{N})\Bigr).
\end{equation*}
Notice that there exists $C>0$ depending on $s$ such that  
\begin{equation}\label{eq:minFX}
    F_N\geq -CN+X.
\end{equation}
We deduce from \eqref{eq:LapFN} and \eqref{eq:minFX} that there exists $C>0$ depending on $\beta$ and $s$ such that
\begin{equation*}
 \dE_{\dGi}\Bigr[e^{\frac{\beta}{2}X}\Bigr]\leq e^{CN}.
\end{equation*}
Therefore, for every $u>0$, we have
\begin{equation}\label{eq:expX}
     \dE_{\dGi}\Bigr[e^{\frac{\beta}{2}X}\mathds{1}_{X>u}\Bigr]\leq e^{CN}.
\end{equation}
Let $q>1$ and $u_q:=(q-1)^q$. Let us define $\phi:u\in [u_q,\infty]\mapsto e^{u^{\frac{1}{q}}}$. One can notice that $\phi$ is convex and increasing. Therefore, by Jensen's inequality,
\begin{equation*}
    \dE_{\dGi}[\exp(\max(X,u_q))]=  \dE_{\dGi}[\phi(\max(X,u_q)^q)]\geq \phi(\dE_{\dGi}[\max(X,u_q)^q]).
\end{equation*}
Hence,
\begin{equation*}
 \log  \dE_{\dGi}[\exp(\max(X,u_q))]\geq \dE_{\dGi}[\max(X,u_q)^q]^{\frac{1}{q}}.
\end{equation*}
Together with \eqref{eq:expX}, this concludes the proof.
\end{proof}

\bibliographystyle{alpha}
\bibliography{bib.bib}

\end{document}